\numberwithin{equation}{section}
\definecolor{darkbrown}{HTML}{996633}
\newcommand{\logLogSlopeTriangle}[5]
{

    \pgfplotsextra
    {
        \pgfkeysgetvalue{/pgfplots/xmin}{\xmin}
        \pgfkeysgetvalue{/pgfplots/xmax}{\xmax}
        \pgfkeysgetvalue{/pgfplots/ymin}{\ymin}
        \pgfkeysgetvalue{/pgfplots/ymax}{\ymax}

        \pgfmathsetmacro{\xArel}{#1}
        \pgfmathsetmacro{\yArel}{#3}
        \pgfmathsetmacro{\xBrel}{#1-#2}
        \pgfmathsetmacro{\yBrel}{\yArel}
        \pgfmathsetmacro{\xCrel}{\xArel}

        \pgfmathsetmacro{\lnxB}{\xmin*(1-(#1-#2))+\xmax*(#1-#2)} 
        \pgfmathsetmacro{\lnxA}{\xmin*(1-#1)+\xmax*#1} 
        \pgfmathsetmacro{\lnyA}{\ymin*(1-#3)+\ymax*#3} 
        \pgfmathsetmacro{\lnyC}{\lnyA+#4*(\lnxA-\lnxB)}
        \pgfmathsetmacro{\yCrel}{\lnyC-\ymin)/(\ymax-\ymin)} 

        \coordinate (A) at (rel axis cs:\xArel,\yArel);
        \coordinate (B) at (rel axis cs:\xBrel,\yBrel);
        \coordinate (C) at (rel axis cs:\xCrel,\yCrel);

        \draw[black]   (A)-- node[pos=0.5,anchor=north] {\scriptsize{1}}
                    (B)-- 
                    (C)-- node[pos=0.,anchor=west] {\scriptsize{\color{#5}#4}} 
                    (A);
    }
}
\newtheorem{theorem}{Theorem}
\newtheorem{lemma}[theorem]{Lemma}
\theoremstyle{remark}
\newtheorem{remark}[theorem]{Remark}
\theoremstyle{definition}
\newtheorem{definition}[theorem]{Definition}
\newcommand{\st}{\,:\,}
\newcommand{\Real}{\mathbb{R}}
\newcommand{\Ball}[2]{B(#1,#2)}
\newcommand{\bigO}{\mathcal{O}}
\DeclareRobustCommand{\bvec}[1]{\boldsymbol{#1}}
  \renewcommand{\bvec}[1]{#1}%
\newcommand{\uvec}[1]{\underline{\bvec{#1}}}
\newcommand{\cvec}[1]{\bvec{\mathcal{#1}}}
\newcommand{\ul}[1]{\underline{#1}}
\newcommand{\rdofE}[1]{\bvec{R}_{#1,E}}
\newcommand{\srdofE}[1]{{R}_{#1,E}} 
\newcommand{\rdofV}[1]{\bvec{R}_{#1,V}}
\newcommand{\rdofFg}[1]{\bvec{R}_{#1,\cvec{G},F}}
\newcommand{\rdofFgc}[1]{\bvec{R}^c_{#1,\cvec{G},F}}
\newcommand{\gsdofF}[1]{{G}_{#1,F}}
\newcommand{\gdofV}[1]{\bvec{G}_{#1,V}}
\newcommand{\gdofE}[1]{\bvec{G}_{#1,E}}
\newcommand{\dotp}[1]{{#1}'}
\DeclareMathOperator{\TGRAD}{\bf \nabla}
\DeclareMathOperator{\TDIV}{\bf \nabla \cdot}
\DeclareMathOperator{\TLAPLACIAN}{\bf \Delta}
\DeclareMathOperator{\DIV}{div}
\DeclareMathOperator{\ROT}{rot}
\DeclareMathOperator{\GRAD}{\bf grad}
\DeclareMathOperator{\VROT}{\bf rot}
\DeclareMathOperator{\CURL}{\bf curl}
\DeclareMathOperator{\ID}{{\bf I}_{3,3}}
\DeclareMathOperator{\Tr}{Tr}
\DeclareMathOperator{\InvDivGrad}{\boldsymbol{P}_{\Tr}}
\DeclareMathOperator{\tdot}{\bf :}
\newcommand{\Hv}[1]{\bvec{H}^1(#1)}
\newcommand{\Ls}[1]{L^2(#1)}
\newcommand{\Lt}[1]{\bvec{L}^2(#1)}
\newcommand{\Besov}[1]{\bvec{B}^{1/2,2}(#1)}
\newcommand{\ball}[2]{B(#1,#2)}
\newcommand{\uHgrad}{\underline{X}_{\GRAD,T}^k}
\newcommand{\uHgradF}{\underline{X}_{\GRAD,F}^k}
\newcommand{\uHgradE}{\underline{X}_{\GRAD,E}^k}
\newcommand{\uHgradh}{\underline{X}_{\GRAD,h}^k}
\newcommand{\uHrot}{\underline{X}_{\VROT,F}^k} 
\newcommand{\uHroth}{\underline{X}_{\VROT,h}^k} 
\newcommand{\uHcurl}{\underline{\bvec{X}}_{\CURL,T}^k}
\newcommand{\uHcurlF}{\underline{\bvec{X}}_{\CURL,F}^k}
\newcommand{\uHcurlE}{\underline{\bvec{X}}_{\CURL,E}^k}
\newcommand{\uHcurlh}{\underline{\bvec{X}}_{\CURL,h}^k}
\newcommand{\uHv}{\underline{\bvec{X}}_{\TGRAD,T}^k}
\newcommand{\uHvF}{\underline{\bvec{X}}_{\TGRAD,F}^k}
\newcommand{\uHvE}{\underline{\bvec{X}}_{\TGRAD,E}^k}
\newcommand{\uHvh}{\underline{\bvec{X}}_{\TGRAD,h}^k}
\newcommand{\uHvhstar}{\underline{\bvec{X}}_{\TGRAD,h,\star}^k}
\newcommand{\uHvhzero}{\underline{\bvec{X}}_{\TGRAD,h,0}^k}
\newcommand{\uHvhzeroD}{\underline{\bvec{X}}_{\TGRAD,h,D}^k}
\newcommand{\uLsF}{\underline{\bvec{X}}_{L^2,F}^k}
\newcommand{\uLs}{\underline{\bvec{X}}_{L^2,T}^k}
\newcommand{\uLsh}{\underline{\bvec{X}}_{L^2,h}^k}
\newcommand{\uLshstar}{\underline{\bvec{X}}_{L^2,h,\star}^k}
\newcommand{\uLt}{\underline{\bvec{X}}_{\bvec{L}^2,T}^{k+1}}
\newcommand{\uLth}{\underline{\bvec{X}}_{\bvec{L}^2,h}^{k+1}}
\newcommand{\Poly}[2][]{\mathcal{P}_{#1}^{#2}}
\newcommand{\bPoly}[2][]{\boldsymbol{\mathcal{P}}_{#1}^{#2}}
\newcommand{\Roly}[1]{\boldsymbol{\mathcal{R}}^{#1}}
\newcommand{\Rolyb}[1]{\overline{\boldsymbol{\mathcal{R}}}^{#1}}
\newcommand{\Goly}[1]{\boldsymbol{\mathcal{G}}^{#1}}
\newcommand{\Golyb}[1]{{\boldsymbol{\mathcal{G}}}^{#1}}
\newcommand{\EPoly}[1]{\widetilde{\boldsymbol{\mathcal{P}}}_{n,E}^{#1}(E)}
\newcommand{\NE}[1]{\boldsymbol{\mathcal{N}}^{#1}}
\newcommand{\RT}[1]{\boldsymbol{\mathcal{RT}}^{#1}}
\newcommand{\RTb}[1]{\overline{\boldsymbol{\mathcal{RT}}}^{#1}}
\newcommand{\FRTb}[1]{\widetilde{\boldsymbol{\mathcal{P}}}^{#1}(F)}
\newcommand{\vertiii}[1]{{\left\vert\kern-0.25ex\left\vert\kern-0.25ex\left\vert #1 
    \right\vert\kern-0.25ex\right\vert\kern-0.25ex\right\vert}}
\newcommand{\opnHgrad}[2][T]{\vertiii{#2}_{\GRAD,#1}}
\newcommand{\opnHcurl}[2][T]{\vertiii{#2}_{\CURL,#1}}
\newcommand{\opnNa}[2][T]{\vertiii{#2}_{\TGRAD,#1}}
\newcommand{\normNa}[2][T]{\Vert #2 \Vert_{\TGRAD,#1}}
\newcommand{\opnLt}[2][T]{\vertiii{#2}_{\bvec{L}^2,#1}}
\newcommand{\normLt}[2][T]{\Vert #2 \Vert_{\bvec{L}^2,#1}}
\newcommand{\normLs}[2][T]{\Vert #2 \Vert_{L^2,#1}}
\newcommand{\norm}[2][]{\left \|#2 \right \|_{#1}}
\newcommand{\seminorm}[2][]{\left |#2 \right |_{#1}}
\newcommand{\spLt}[3][T]{\left (#2, #3 \right )_{\bvec{L}^2,#1}}
\newcommand{\stLt}[3][T]{\text{s}_{\bvec{L}^2,#1} \left (#2, #3 \right )}
\newcommand{\spNa}[2]{\left (#1, #2 \right )_{\TGRAD,T}}
\newcommand{\stNa}[2]{\text{s}_{\TGRAD,T} \left (#1, #2 \right )}
\newcommand{\semiBtr}[2][\partial \Omega]{\vert #2 \vert^{\Diamond}_{\Besov{#1}}}
\newcommand{\normHNa}[1]{\Vert #1 \Vert_{\mu, \TGRAD, 1, h}}
\newcommand{\uIgrad}[1][T]{\ul{I}_{\GRAD,#1}^k}
\newcommand{\uIcurl}[1][T]{\ul{I}_{\CURL,#1}^k}
\newcommand{\uIH}[1][T]{\uvec{I}_{\TGRAD,#1}^k}
\newcommand{\uIL}[1][T]{\uvec{I}_{\bvec{L}^2(#1)}^k}
\newcommand{\uIgradh}{\ul{I}_{\GRAD,h}^k}
\newcommand{\uIroth}{\ul{I}_{\VROT,h}^k}
\newcommand{\uIcurlh}{\uvec{I}_{\CURL,h}^k}
\newcommand{\uIHh}{\uvec{I}_{\TGRAD,h}^k}
\newcommand{\uILh}{\uvec{I}_{\bvec{L}^2,h}^k}
\newcommand{\uILsh}{\ul{I}_{{L}^2,h}^k}
\newcommand{\lproj}[2]{\pi_{\mathcal{P},#2}^{#1}}
\newcommand{\vlproj}[2]{\boldsymbol{\pi}_{\cvec{P},#2}^{#1}}
\newcommand{\Rproj}[2][T]{\bvec{\pi}_{\cvec{R},#1}^{#2}}
\newcommand{\Rcproj}[2][T]{\bvec{\pi}_{\cvec{R},#1}^{c,#2}}
\newcommand{\Gproj}[2][T]{\bvec{\pi}_{\cvec{G},#1}^{#2}}
\newcommand{\Gcproj}[2][T]{\bvec{\pi}_{\cvec{G},#1}^{c,#2}}
\newcommand{\Gbcproj}[2][T]{\bvec{\pi}_{\Golyb{},#1}^{c,#2}}
\newcommand{\RTproj}[2][T]{\bvec{\pi}_{\RT{},#1}^{#2}}
\newcommand{\RTbproj}[2][T]{\bvec{\pi}_{\RTb{},#1}^{#2}}
\newcommand{\FRTbproj}[1]{\bvec{\pi}_{\widetilde{\boldsymbol{\mathcal{P}}},F}^{#1}}
\newcommand{\bgvec}[2][T]{\bvec{#2}_{\Goly{},#1}}
\newcommand{\bgcvec}[2][T]{\bvec{#2}_{\Golyb{},#1}^c}
\newcommand{\rkvec}[2][T]{\bvec{#2}_{\Roly{},#1}}
\newcommand{\rkcvec}[2][T]{\bvec{#2}_{\Roly{},#1}^c}
\newcommand{\brvec}[2][T]{\bvec{#2}_{\Rolyb{},#1}}
\newcommand{\brcvec}[2][T]{\bvec{#2}_{\Rolyb{},#1}^c}
\newcommand{\FRTnvec}[1]{\bvec{#1}_{\boldsymbol{\mathcal{P}},\nF}}
\newcommand{\LiftNaF}[1]{\bvec{R}_{#1}}
\newcommand{\GF}{\bvec{G}_{F}^{k}}
\newcommand{\GFp}{\bvec{G}_{F}^{\perp k}}
\newcommand{\GT}{\bvec{G}_{T}^{k}}
\newcommand{\uGE}{\uvec{G}_E^{k}}
\newcommand{\uGF}{\uvec{G}_F^{k}}
\newcommand{\uGT}{\uvec{G}_T^{k}}
\newcommand{\uGh}{\uvec{G}_h^{k}}
\newcommand{\CE}{\bvec{C}_{E}^{k}}
\newcommand{\CF}{{C}_{F}^{k}}
\newcommand{\bCF}{\bvec{C}_{F}^{k}}
\newcommand{\CT}{\bvec{C}_{T}^{k}}
\newcommand{\uCF}{\uvec{C}_F^{k}}
\newcommand{\uCT}{\uvec{C}_T^{k}}
\newcommand{\uCh}{\uvec{C}_h^{k}}
\newcommand{\NaE}{\TGRAD_E^{k+2}}
\newcommand{\NaF}{\TGRAD_F^{k+1}}
\newcommand{\NaT}{\TGRAD_T^{k+1}}
\newcommand{\uNaF}{\underline{\TGRAD}_F^{k+1}}
\newcommand{\uNaT}{\underline{\TGRAD}_T^{k+1}}
\newcommand{\uNah}{\underline{\TGRAD}_h^{k+1}}
\newcommand{\DT}{D_T^{k}}
\newcommand{\Dh}{\underline{D}_h^{k}}
\newcommand{\trna}{\gamma_{\TGRAD,F}^{k+2}}
\newcommand{\trnac}{\widetilde{\gamma_{\TGRAD,F}^{k+2}}^c}
\newcommand{\trgrad}{\gamma_{\GRAD,F}^{k+1}}
\newcommand{\trcurl}{\gamma_{t,\ROT,F}^{k}}
\newcommand{\pna}{P_{\TGRAD,T}^{k+1}}
\newcommand{\AdjG}{\tilde{\mathcal{E}}_{\TGRAD,h}}
\newcommand{\AdjL}{\tilde{\mathcal{E}}_{\TLAPLACIAN,h}}
\newcommand{\Hh}{\mathcal{H}}
\newcommand{\Mh}{\mathcal{M}_h}
\newcommand{\Th}{\mathcal{T}_h}
\newcommand{\Fh}{\mathcal{F}_h}
\newcommand{\Eh}{\mathcal{E}_h}
\newcommand{\Xh}{\mathcal{X}_h}
\newcommand{\Vh}{\mathcal{V}_h}
\newcommand{\FT}{\mathcal{F}_T}
\newcommand{\ET}{\mathcal{E}_T}
\newcommand{\EF}{\mathcal{E}_F}
\newcommand{\VE}{\mathcal{V}_E}
\newcommand{\Ech}{\mathcal{E}_h}
\newcommand{\nE}{\bvec{t}_E}
\newcommand{\nFE}{\bvec{n}_{FE}}
\newcommand{\nF}{\bvec{n}_F}
\newcommand{\nOmega}{\bvec{n}_{\Omega}}
\newcommand{\wFE}{\omega_{FE}}
\newcommand{\wTF}{\omega_{TF}}
\newcommand{\ttr}[1]{\bvec{#1}_{t,F}} 
\newcommand{\Tttr}[1]{\bvec{#1}_{\otimes t,F}} 
\newcommand{\Tttrnb}[1]{{#1}_{\otimes t,F}} 
\newcommand{\aSk}{\text{a}_h}
\newcommand{\bSk}{\text{b}_h}
\newcommand{\ASk}{\mathcal{A}_h}
\newcommand{\LSk}{\mathcal{L}_h}
\newcommand{\Aerr}[1][h]{\mathcal{E}_{#1}}
\DeclareMathOperator{\Ker}{Ker}
\DeclareMathOperator{\Image}{Im}
\title{An arbitrary-order fully discrete Stokes complex on general polyhedral meshes.}	
\author[1]{Marien-Lorenzo Hanot \footnote{\href{mailto:marien-lorenzo.hanot@umontpellier.fr}{marien-lorenzo.hanot@umontpellier.fr}}}
\affil[1]{IMAG, UMR CNRS 5149 and Université de Montpellier, Montpellier, France}
\begin{document}
\maketitle

\begin{abstract}
In this paper we present an arbitrary-order fully discrete Stokes complex on general polyhedral meshes.
We enriche the fully discrete de Rham complex with the addition of a full gradient operator defined on vector fields 
and fitting into the complex.
We show a complete set of results on the novelties of this complex:
exactness properties, uniform Poincaré inequalities and primal and adjoint consistency.
The Stokes complex is especially well suited for problem involving Jacobian, divergence and curl,
like the Stokes problem or magnetohydrodynamic systems.
The framework developed here eases the design and analysis of scheme for such problems. 
Schemes built that way are nonconforming and benefit from the exactness of the complex.
We illustrate with the design and study of a scheme to solve the Stokes equations 
and validate the convergence rates with various numerical tests.
\medskip\\
\textbf{Keywords:} Discrete Stokes complex, Discrete de Rham complex, compatible discretization, polyhedral methods, arbitrary order
\smallskip\\
\textbf{MSC2010 classification:} 65N30, 65N99, 76D07
\end{abstract}

\section{Introduction.}
The exactness of the divergence free condition plays an important role in the numerical resolution
of incompressible fluid equations, \cite{2017DivConstraint} provides a detailed review.
This kind of conservation requires the discrete spaces to reproduce relevant algebraic properties of the continuous spaces.
Let $\Omega$ be a domain of $\Real^3$. This exactness can be expressed by the following differential complex:
\begin{equation} \label{cd:L2deRham}
\begin{tikzcd}
\Real \arrow[r,"i_\Omega"] & H^1(\Omega) \arrow[r,"\GRAD"] & \bvec{H}(\text{curl},\Omega) \arrow[r,"\CURL"] & \bvec{H}(\text{div}, \Omega) \arrow[r,"\DIV"] & L^2(\Omega) \arrow[r,"0"] & \lbrace 0 \rbrace.
\end{tikzcd}
\end{equation}
Many discrete counterparts of the complex \eqref{cd:L2deRham} have been developed.
See \cite{feec-cbms} for a thorough exposition and an extensive bibliography.
Although many partial differential equations can be expressed using the de Rham complex,
the lack of smoothness causes issues for some equations,
in particular for the Stokes equations (see \cite{ARNOLD2012}).
So a smoother variant more suited to the Stokes equations (hence called Stokes complex) has been considered.
In three dimensions the Stokes complex is written:
\begin{equation} \label{cd:Stokescomplex}
\begin{tikzcd}
\Real \arrow[r,"i_\Omega"] & H^2(\Omega) \arrow[r,"\GRAD"] & \bvec{H}^2(\Omega) \arrow[r,"\CURL"] & \bvec{H}^1(\Omega) \arrow[r,"\DIV"] & L^2(\Omega) \arrow[r,"0"] & \lbrace 0 \rbrace.
\end{tikzcd}
\end{equation}
The development of discrete counterparts of this smoother complex is much more complicated.
See \cite[Chapter~8.7]{feec-cbms} for a history.
Although such constructions exist (for example \cite{Nei2015})
they often have drawbacks.
Recurrent problems can be a large minimal degree and thus numerous unknowns 
as well as difficulties to enforce Dirichlet boundary conditions.
The subject is very active with many recent advances: \cite{hu2020family,huang2020nonconforming}.
Another issue of these constructions is that they are frequently constrained to conformal simplicial meshes,
which is limiting for some geometries as well as for the possibility of refinement or agglomeration.
A construction of the Stokes complex in virtual finite elements on general meshes has also been recently developed (see \cite{2020VEM}).

Our construction works on general polyhedral meshes and for arbitrary polynomial degrees.
The discrete spaces consist of polynomial spaces on the elements of all geometric dimensions: cells, faces, edges and vertices.
Compared to the virtual finite element method, the basis functions are explicitly known but do not live in a subspace of continuous functions.
The discrete differential operators are therefore necessarily different from the continuous operators.
They are constructed according to integration by parts formulae and in a sense converge with the discrete spaces to the continuous operators 
(see the consistency results of Section \ref{Consistencyresults}).
A discretization of the de Rham complex \eqref{cd:L2deRham} has been developed in detail by D. A. Di Pietro and J. Droniou \cite{ddr}.
One can find in the introduction a very complete comparison of the different methods leading to discrete de Rham complex on polytopal meshes.
Our paper is a continuation of \cite{ddr}:
Our construction is based upon it, and we add the necessary basis functions required for the increased smoothness of the Stokes complex.
We define and analyze in detail the Jacobian operator while checking its compatibility with the complex.

More precisely we show the exactness of the complex, the existence of uniform Poincaré inequalities and many consistency results
as well as a discrete version of the right inverse for the divergence for the discrete norm $\bvec{H}^1$.
Finally, we apply this to the Stokes equations: we show well-posedness, give an error estimate and find an optimal convergence rate of order $\bigO(h^{k+1})$,
$h$ being the size of the mesh and $k \geq 0$ the chosen polynomial degree.
We also explore other choices of boundary conditions and validate numerically every result.

The remaining of the paper is organized as follows.
In Section \ref{Setting} we introduce the general setting.
We define the discrete spaces and operators (interpolators, differential operators and norms) in Section \ref{Discretecomplex}.
In Section \ref{Complexproperty} we show that our construction is indeed a complex which is exact for contractible domains.
In Section \ref{Consistencyresults} we establish consistency properties, including primal and dual consistencies.
The Stokes equations are defined in Section \ref{Stokes} and other boundary conditions are studied in Section \ref{Alternateboundaryconditions}.
We display our numerical results in Section \ref{Numericaltests}.
Finally we prove technical propositions in the appendices: on polynomial spaces in appendix \ref{Resultsonpolynomialspaces} and on various lifts in appendix \ref{Tracelifting}.

\section{Setting.} \label{Setting}
This section is dedicated to the introduction of the setting and various notations that will be used throughout the paper.
We follow the conventions of \cite{ddr}.

\subsection{Mesh and orientation.}
In the following we consider a polyhedral domain $\Omega \subset \Real^3$ and keeping the notation of \cite{ddr}, 
for any set $Y \subset \Real^3$, we write $h_Y := \sup \lbrace \vert \bvec{x} - \bvec{y} \vert \st \bvec{x}, \bvec{y} \in Y \rbrace$ and $\vert Y \vert$ its Hausdorff measure.
We consider on this domain a mesh sequence $\Mh = \Th \cup \Fh \cup \Eh \cup \Vh$ parameterized by a positive real parameter $h \in \Hh$.
Here $\Th$ is a finite collection of open convex polyhedra such that $\overline{\Omega} = \cup_{T \in \Th} \overline{T}$ and $h = \max_{T \in \Th} h_T > 0$,
$\Fh$ is the collection of open polygonal faces of the cells, $\Eh$ is the collection of open polygonal edges, and $\Vh$ the collection of vertices.
This sequence must be regular in the sense of \cite[Definition~1.9]{hho} with the regularity constant $\rho$.
For any cell $T \in \Th$, we write $\FT$ the set of faces of this cell.
Likewise for any face $F \in \Fh$, we write $\EF$ the set of edges of this face.

We take $k \ge 0$ a fixed polynomial degree.
In the following most inequalities hold up to a positive constant. 
This constant depends only on some parameters, here on the chosen polynomial degree $k$, on the regularity parameter of the mesh sequence $\rho$ and on the domain $\Omega$.

We denote the inequality up to a positive constant by
\[ A \lesssim B 
\]
meaning there exists $C \in \Real^*_+$ depending only on some parameters (here usually only on $k$, $\rho$ and $\Omega$) such that $A \leq C B$.
We also write 
\[ A \approx B 
\]
meaning that $A \lesssim B$ and $B \lesssim A$.

For any $h$, we set the orientation of any face $F \in \Fh$ and any edge $E \in \Eh$ 
by prescribing a unit normal vector $\nF$ and unit tangent vector $\nE$.
For any face $F \in \Fh$ and any $E \in \EF$ we also define the unit vector $\nFE$ normal to $E$ lying in the plane tangent to $F$, 
and such that $(\nE,\nFE,\nF)$ is right-handed in $\Real^3$.
To keep track of the relative orientation we define for any $T \in \Th$ and $F \in \FT$, $\wTF \in \lbrace -1,1\rbrace$ 
such that $\wTF \nF$ points out of $T$, 
and for any $F \in \Fh$, $E \in \EF$ we define $\wFE \in \lbrace -1,1\rbrace$ such that $\wFE \nFE$ points out of $F$.
We also define $\nOmega$ as the outward pointing unit normal vector on the boundary $\partial \Omega$.
We note by ${}^\perp$ the rotation of angle $\pi/2$ in the oriented plane $F$.

\subsection{Polynomial spaces.}
For any entity $X \in \lbrace E, F, T\rbrace$, we denote by 
$\Poly{k}(X)$ the set of polynomials of total degree at most $k$ on $X$, 
by $\bPoly{k}(X)$ the set of vector valued polynomials,
and by $(\bPoly{k}(X)^\intercal)^3$ the set of triples of polynomials on $X$ forming the rows of a matrix valued polynomial.
We use the conventions
$\Poly{-1}(X) := \lbrace 0 \rbrace$ and 
$\Poly{0,k}(X) := \lbrace P \in \Poly{k}(X) \st \int_X P = 0 \rbrace$.
We also define the broken polynomial space
\begin{equation}
\Poly{k}(\Xh) := \lbrace P_h \in L^2(\Xh) \st \forall X \in \Xh, P_{h \vert X} \in \Poly{k}(X) \rbrace,
\end{equation}
as well as its continuous counterpart
\begin{equation}
\Poly[c]{k}(\Xh) := \lbrace P_h \in C^0(\Xh) \st \forall X \in \Xh, P_{h \vert X} \in \Poly{k}(X) \rbrace.
\end{equation}

\begin{remark} \label{rem:continuouspoly}
Continuous polynomials can be characterized by their values at the interface and their lower order moments on the elements.
An explicit construction is deduced from Lemma \ref{lemma:normpcz}. 
In the context of edges we can see the isomorphism between 
$\Poly[c]{k+2}(\Eh)$ and $\Poly{k}(\Eh) \times \Real^{\Vh}$.
\end{remark}

For the sake of readability we quote two lemmas on discrete spaces: \cite[Lemma~1.28 and Lemma~1.32]{hho} (in a slightly more restrictive setting):
\begin{lemma}[Discrete inverse Poincaré] \label{lemma:discretepoincare}
Let $X$ be an element of $\Th \cup \Fh \cup \Eh$. Let a positive integer $l$ and a real number $p \in [1,\infty]$ be fixed. Then, the following inequality holds: For all $v \in \Poly{l}(X)$,
\begin{equation}
\Vert \nabla v \Vert_{L^p(X)} \lesssim h_X^{-1} \Vert v \Vert_{L^p(X)},
\end{equation}
with hidden constant depending only on $\rho$, $l$ and $p$.
\end{lemma}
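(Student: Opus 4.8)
The plan is to reduce the whole estimate to a fixed reference configuration by an affine scaling, the only genuine difficulty being that the elements $X$ are general polytopes whose shapes are not affinely equivalent to a single reference element. I would circumvent this by sandwiching $X$ between two concentric balls of comparable radii, a fact supplied by the regularity of the mesh sequence, and by transferring the inequality onto those balls where the reference argument is clean. Throughout, let $d\in\{1,2,3\}$ denote the intrinsic dimension of $X$ (an edge, a face, or a cell), and let $\nabla$ be the corresponding intrinsic gradient.

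First I would record three elementary ingredients, each proved on a reference configuration and transported by scaling. (i) On the unit ball $\hat B=\ball{0}{1}\subset\Real^d$, the map $\hat v\mapsto\nabla\hat v$ is linear between the finite-dimensional spaces $\Poly{l}(\hat B)$ and $\bPoly{l-1}(\hat B)$; since $\Vert\cdot\Vert_{L^p(\hat B)}$ is a norm on each and all norms on a finite-dimensional space are equivalent, $\Vert\nabla\hat v\Vert_{L^p(\hat B)}\lesssim\Vert\hat v\Vert_{L^p(\hat B)}$ with constant depending only on $l$, $p$, $d$. (ii) For a radius $R>0$, the change of variables $\bvec x=\bvec x_0+R\hat{\bvec x}$ maps $\hat B$ onto $\ball{\bvec x_0}{R}$, preserves degree, and gives the homogeneities $\Vert\nabla v\Vert_{L^p(\ball{\bvec x_0}{R})}=R^{d/p-1}\Vert\nabla\hat v\Vert_{L^p(\hat B)}$ and $\Vert v\Vert_{L^p(\ball{\bvec x_0}{R})}=R^{d/p}\Vert\hat v\Vert_{L^p(\hat B)}$ (the $p=\infty$ case obtained by dropping the measure factors); combined with (i), this yields the inverse inequality on a ball, $\Vert\nabla v\Vert_{L^p(\ball{\bvec x_0}{R})}\lesssim R^{-1}\Vert v\Vert_{L^p(\ball{\bvec x_0}{R})}$. (iii) For two concentric balls $\ball{\bvec x_0}{r}\subset\ball{\bvec x_0}{R}$ of bounded ratio $R/r$, the same scaling together with norm equivalence on $\Poly{l}$ of the reference pair gives $\Vert v\Vert_{L^p(\ball{\bvec x_0}{R})}\lesssim\Vert v\Vert_{L^p(\ball{\bvec x_0}{r})}$, with constant depending only on $R/r$, $l$, $p$, $d$.

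Next I would invoke the mesh regularity in the sense of \cite[Definition~1.9]{hho}: for every $X\in\Th\cup\Fh\cup\Eh$ it furnishes a point $\bvec x_0\in X$ and an inscribed ball $\ball{\bvec x_0}{\rho h_X}\subset X$ (an interval when $X$ is an edge, a disk when a face, a ball when a cell). Setting $r:=\rho h_X$ and $R:=h_X$, and noting that $\bvec x_0\in X$ with $\operatorname{diam}X=h_X$ forces $X\subset\ball{\bvec x_0}{R}$, I obtain the sandwich $\ball{\bvec x_0}{r}\subset X\subset\ball{\bvec x_0}{R}$ with bounded ratio $R/r=\rho^{-1}$. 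Then I chain the pieces: monotonicity of the integral over the larger domain gives $\Vert\nabla v\Vert_{L^p(X)}\le\Vert\nabla v\Vert_{L^p(\ball{\bvec x_0}{R})}$; ingredient (ii) bounds this by $\lesssim R^{-1}\Vert v\Vert_{L^p(\ball{\bvec x_0}{R})}=h_X^{-1}\Vert v\Vert_{L^p(\ball{\bvec x_0}{R})}$; ingredient (iii) with $R/r=\rho^{-1}$ replaces the $\ball{\bvec x_0}{R}$-norm by the $\ball{\bvec x_0}{r}$-norm; and $\ball{\bvec x_0}{r}\subset X$ gives $\Vert v\Vert_{L^p(\ball{\bvec x_0}{r})}\le\Vert v\Vert_{L^p(X)}$. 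Composing these four estimates yields $\Vert\nabla v\Vert_{L^p(X)}\lesssim h_X^{-1}\Vert v\Vert_{L^p(X)}$ with hidden constant depending only on $\rho$, $l$, $p$ (the dimension $d\le 3$ being fixed).

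I expect the shape uniformity to be the conceptual crux. The main obstacle is precisely that a direct compactness argument over normalized element shapes is unavailable, because the family of admissible polytopes is not compact (for instance the number of faces of $X$ need not be bounded); the entire purpose of the ball sandwich is to shift the estimate onto the inscribed and circumscribed balls, where a single fixed reference configuration makes the finite-dimensional norm-equivalence argument legitimate and uniform in the element.
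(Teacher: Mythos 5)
The paper does not actually prove this lemma---it is quoted verbatim from \cite[Lemma~1.28]{hho}---so there is no in-paper argument to compare against; your proof is correct and is essentially the standard one used there: sandwich $X$ between the inscribed ball $\Ball{\bvec{x}_0}{\rho h_X}$ guaranteed by the mesh regularity and the circumscribed ball $\Ball{\bvec{x}_0}{h_X}$, and transport the finite-dimensional norm-equivalence argument from a reference ball by affine scaling. The only cosmetic imprecision is in your ingredient (iii): the constant should be said to depend on an \emph{upper bound} for $R/r$ rather than on $R/r$ itself, which one gets by monotonicity after replacing $\Ball{0}{r/R}$ with the fixed smaller ball $\Ball{0}{\rho}$ on the reference configuration.
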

\begin{lemma} \label{lemma:discretetrace}
Let $p \in [1,\infty]$ be a fixed real number and $l \ge 0$ be a fixed integer. 
Then for all $h \in \Hh$, all $T \in \Th$ (resp. $F \in \Fh$), all $F \in \Fh$ (resp. $E \in \Eh$), all $v \in \Poly{l}(F)$, 
\begin{equation}
\Vert v \Vert_{L^p(F)} \lesssim h^{-\frac{1}{p}}_T \Vert v \Vert_{L^p(T)} 
\end{equation}
with hidden constant depending only on $\rho$, $l$ and $p$.
\end{lemma}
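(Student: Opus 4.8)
The statement is the standard discrete trace inequality, and the plan is to derive it from a continuous trace inequality on the cell combined with the discrete inverse Poincaré inequality of Lemma~\ref{lemma:discretepoincare}. I would treat the pair $(T,F)$ explicitly with $v\in\Poly{l}(T)$ (the pair $(F,E)$ being handled by the identical argument, with $v\in\Poly{l}(F)$ and the ambient dimension lowered by one). Note first that $v$ is a genuine polynomial, hence continuous on $\overline{T}$, so the endpoint $p=\infty$ is immediate: $\Vert v\Vert_{L^\infty(F)}\le\Vert v\Vert_{L^\infty(\overline{T})}=\Vert v\Vert_{L^\infty(T)}$ while $h_T^{-1/\infty}=1$. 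I therefore focus on $p\in[1,\infty)$.

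The core ingredient I would establish is a continuous trace inequality
\begin{equation*}
h_T\,\Vert w\Vert_{L^p(\partial T)}^p \lesssim \Vert w\Vert_{L^p(T)}^p + h_T\,\Vert w\Vert_{L^p(T)}^{p-1}\,\Vert\nabla w\Vert_{L^p(T)}
\qquad\forall w\in W^{1,p}(T),
\end{equation*}
with constant depending only on $\rho$ and $p$. To prove it I would use that the mesh regularity of \cite[Definition~1.9]{hho} makes each $T$ star-shaped with respect to a ball of radius comparable to $\rho\,h_T$; letting $\bvec{x}_0$ be its center and $\bvec{\sigma}(\bvec{x})=\bvec{x}-\bvec{x}_0$, star-shapedness gives $\bvec{\sigma}\cdot(\wTF\nF)\gtrsim \rho\,h_T$ on $\partial T$, while $\vert\bvec{\sigma}\vert\le h_T$ and $\DIV\bvec{\sigma}=3$. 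Applying the divergence theorem to $\vert w\vert^p\bvec{\sigma}$, bounding the boundary integrand from below by $\rho\,h_T\vert w\vert^p$ and the volume term via $\nabla\vert w\vert^p=p\vert w\vert^{p-2}w\,\nabla w$ followed by Hölder's inequality, yields the displayed estimate (by density it suffices to argue for smooth $w$).

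It then remains to specialize $w$ to the polynomial $v\in\Poly{l}(T)$ and invoke Lemma~\ref{lemma:discretepoincare}, which furnishes $\Vert\nabla v\Vert_{L^p(T)}\lesssim h_T^{-1}\Vert v\Vert_{L^p(T)}$. Substituting this into the continuous trace inequality makes the second term $\lesssim\Vert v\Vert_{L^p(T)}^p$, so that $h_T\,\Vert v\Vert_{L^p(\partial T)}^p\lesssim\Vert v\Vert_{L^p(T)}^p$; taking $p$-th roots and restricting the left-hand side from $\partial T$ to the single face $F\subset\partial T$ gives exactly $\Vert v\Vert_{L^p(F)}\lesssim h_T^{-1/p}\Vert v\Vert_{L^p(T)}$. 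The one delicate point — and the place where mesh regularity is genuinely used — is securing the continuous trace inequality with a constant uniform over the mesh sequence and the correct $h_T$-scaling: everything hinges on the lower bound $\bvec{\sigma}\cdot\nF\gtrsim\rho\,h_T$, which is precisely what the inscribed ball of radius $\approx\rho\,h_T$ provides. The remaining manipulations (the chain rule for $\vert w\vert^p$ and the Hölder step for general $p$) are routine.
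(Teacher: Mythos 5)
Your proof is correct and is essentially the argument behind the result the paper simply cites (\cite[Lemma~1.31 and Lemma~1.32]{hho}): a continuous trace inequality obtained from the divergence theorem applied to $\vert w\vert^p\bvec{\sigma}$ using star-shapedness with respect to the inscribed ball, combined with the inverse inequality of Lemma \ref{lemma:discretepoincare}. No gaps; the $p=\infty$ endpoint and the dimension-lowered $(F,E)$ case are handled appropriately.
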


We will also use Koszul complements (see \cite[Section~2.4]{ddr}).
We consider for any face $T \in \Th$ a point $\bvec{x}_T$ such that $\Ball{\bvec{x}_T}{\rho h_T} \subset T$.
Then we define the following subspaces of $\bPoly{k}(T)$:
\begin{equation} \label{eq:defkoszul}
\begin{aligned}
&\Goly{k}(T) := \GRAD \Poly{k+1}(T), &\Goly{c,k}(T) := (\bvec{x} - \bvec{x}_T) \times \bPoly{k-1}(T),\\
&\Roly{k}(T) := \CURL \bPoly{k+1}(T), &\Roly{c,k}(T) := (\bvec{x} - \bvec{x}_T) \Poly{k-1}(T).
\end{aligned}
\end{equation}
These spaces are such that:
\begin{equation} \label{eq:compkoszul}
\bPoly{k}(T) = \Goly{k}(T) \oplus \Goly{c,k}(T) = \Roly{k}(T) \oplus \Roly{c,k}(T),
\end{equation}
however the sum is not orthogonal for the $L^2$ scalar product.

Similarly, in $2$ dimensions for any face $F \in \Fh$, we define:
\begin{equation} \label{eq:defkoszul2d}
\begin{aligned}
&\Goly{k}(F) := \GRAD \Poly{k+1}(F), &\Goly{c,k}(F) := (\bvec{x} - \bvec{x}_F)^\perp \Poly{k-1}(F),\\
&\Roly{k}(F) := \VROT \Poly{k+1}(F), &\Roly{c,k}(F) := (\bvec{x} - \bvec{x}_F) \Poly{k-1}(F).
\end{aligned}
\end{equation}
We also have the following isomorphisms:
\begin{align}
\VROT \st& \Poly{0,k}(F) \rightarrow \Roly{k-1}(F),& \label{eq:isovrot}\\
\DIV \st& \Roly{c,k}(F) \rightarrow \Poly{k-1}(F),& \DIV \st& \Roly{c,k}(T) \rightarrow \Poly{k-1}(T), \label{eq:isodiv}\\
&& \CURL \st& \Goly{c,k}(T) \rightarrow \Roly{k-1}(T). \label{eq:isocurl}
\end{align}
We can deduce from Lemma \ref{lemma:discretepoincare} that
$\vertiii{\VROT} \lesssim h^{-1}$, $\vertiii{\DIV} \lesssim h^{-1}$, $\vertiii{\CURL} \lesssim h^{-1}$ 
and from \cite[Lemma~A.9]{ddr} that 
$\vertiii{(\VROT)^{-1}} \lesssim h$,
$\vertiii{(\DIV)^{-1}} \lesssim h$,
$\vertiii{(\CURL)^{-1}} \lesssim h$.

For $X = \lbrace T,F \rbrace$ we define the local spaces of Nedelec and of Raviart-Thomas respectively by:
\begin{equation} \label{eq:defNEb}
\NE{k}(X) := \Goly{k-1}(X) \oplus \Goly{c,k}(X),\quad \RT{k}(X) := \Roly{k-1}(X) \oplus \Roly{c,k}(X).
\end{equation}
These spaces are strictly contained between $\bPoly{k-1}(X)$ and $\bPoly{k}(X)$.
Another important property given in \cite[Proposition~A.8]{ddr} is that for any cell $T \in \Th$ (resp. $F \in \Fh$)
and any face of this cell $F \in \FT$ (resp. $E \in \EF$):
\begin{equation} \label{eq:trNRT}
\begin{aligned}
\forall \bvec{v}_F \in \NE{k}(F),&\; (\bvec{v}_F)_{\vert E} \cdot \nE \in \Poly{k-1}(E),\\
\forall \bvec{w}_F \in \RT{k}(F),&\; (\bvec{w}_F)_{\vert E} \cdot \nFE \in \Poly{k-1}(E),\\
\forall \bvec{v}_T \in \NE{k}(T),&\; (\bvec{v}_T)_{\vert E} \cdot \nE \in \Poly{k-1}(E),\\
\forall \bvec{w}_T \in \RT{k}(T),&\; (\bvec{w}_T)_{\vert F} \cdot \nF \in \Poly{k-1}(F),\\
\forall \bvec{v}_T \in \NE{k}(T),&\; (\bvec{v}_T)_{\vert F} \times \nF \in \RT{k}(F).
\end{aligned}
\end{equation}

In order to fix the notation we write
\begin{equation}
(\Roly{c,k}(F)^\intercal)^2 = \begin{pmatrix} \Roly{c,k}(F)^\intercal \\ \Roly{c,k}(F)^\intercal \end{pmatrix},\;
(\Roly{c,k}(T)^\intercal)^3 = \begin{pmatrix} \Roly{c,k}(T)^\intercal \\ \Roly{c,k}(T)^\intercal \\ \Roly{c,k}(T)^\intercal \end{pmatrix}.
\end{equation}
We take differential operators to be acting row-wise on matrix valued functions, and we use the convention 
\begin{equation*}
\TGRAD \begin{pmatrix} v_1 \\ v_2 \\ v_3 \end{pmatrix} := 
\begin{pmatrix}
\partial_1 v_1 & \partial_2 v_1 & \partial_3 v_1 \\
\partial_1 v_2 & \partial_2 v_2 & \partial_3 v_2 \\
\partial_1 v_3 & \partial_2 v_3 & \partial_3 v_3 \end{pmatrix} .
\end{equation*}
We define the space $\Rolyb{c,k}(T)$ by
\begin{equation} \label{eq:defRbc}
\Rolyb{c,k}(T) := \lbrace W \in (\Roly{c,k}(T)^\intercal)^3 \st \Tr{W} = 0 \rbrace .
\end{equation}
An explicit description of this space is given by Lemma \ref{lemma:explicitRbCompl}.
Let us now construct a complement to this space.
First noticing that $\Tr((\Roly{c,k}(T)^\intercal)^3) = \Poly{0,k}(T)$,
we can consider the inverse operator
$\InvDivGrad^k : \Poly{0,k}(T) \rightarrow (\Roly{c,k}(T)^\intercal)^3$:
\begin{equation} \label{eq:defInvDivGrad}
\InvDivGrad^k := \begin{pmatrix} \DIV^{-1} \\ \DIV^{-1} \\ \DIV^{-1} \end{pmatrix} \circ \GRAD ,
\end{equation}
where $\DIV$ is the isomorphism from $\Roly{c,k}(T)$ into $\Poly{k-1}(T)$ given by \eqref{eq:isodiv}.
Then we define the space:
\begin{equation} \label{eq:defRb}
\Rolyb{k}(T) := \InvDivGrad^k \Poly{0,k}(T) .
\end{equation}
Lemma \ref{lemma:Rcdec} shows that the spaces $\Rolyb{c,k}(T)$ and $\Rolyb{k}(T)$ are complementary.
A similar construction of the spaces $\Rolyb{c,k}(F)$ and $\Rolyb{k}(F)$ holds in $2$ dimensions, and is used in Appendix \ref{2DComplex}.

\begin{remark} \label{rem:divInvDivGrad}
By construction, we have:
$\TDIV \Rolyb{k}(T) = \TDIV \InvDivGrad^k \Poly{0,k}(T) = \GRAD \Poly{k}(T) = \Goly{k-1}(T)$.
\end{remark}

\begin{remark} \label{rem:sequential}
These spaces are hierarchical since
$\Rolyb{c,k} \subset \Rolyb{c,k+1}, \Rolyb{k} \subset \Rolyb{k+1}$.
\end{remark}

We define a matrix valued equivalent to the Raviart-Thomas space as follows
\begin{equation} \label{eq:defRTb}
\RTb{k}(T) := \Rolyb{c,k}(T) \oplus \Rolyb{k-1}(T) \oplus (\Roly{k-1}(T)^\intercal)^3 .
\end{equation}

\begin{remark} \label{rem:qIinRT}
For $q \in \Poly{k}(T)$, we have $qI \in \Rolyb{k}(T) \oplus (\Roly{k}(T)^\intercal)^3$.
Indeed $\TDIV \left ( \InvDivGrad q - qI \right ) = 0$ so
$\InvDivGrad q - qI \in (\Roly{k}(T)^\intercal)^3$ by the isomorphism \eqref{eq:isodiv} and \eqref{eq:compkoszul}.
\end{remark}

\begin{lemma} \label{lemma:defGbc}
For $X \in \lbrace F,T \rbrace$, $\TDIV$ is an isomorphism from $\Rolyb{c,k+1}(X)$ to $\Goly{c,k}(X)$.
\end{lemma}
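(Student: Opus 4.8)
The plan is to trivialize both spaces through the outer-product structure hidden in $(\Roly{c,k+1}(X)^\intercal)^d$ (with $d=3$ for $X=T$ and $d=2$ for $X=F$), and then to recognize $\TDIV$ as a radial differential operator that is diagonal on homogeneous components. I will treat the two cases simultaneously, writing $d$ for the ambient dimension.

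First I would parametrize $\Rolyb{c,k+1}(X)$. Any $W\in(\Roly{c,k+1}(X)^\intercal)^d$ has rows $w_i=p_i(\bvec{x}-\bvec{x}_X)$ with $p_i\in\Poly{k}(X)$; equivalently $W$ is the outer product $\bvec{p}\otimes(\bvec{x}-\bvec{x}_X)$, where $\bvec{p}=(p_1,\dots,p_d)^\intercal\in\bPoly{k}(X)$. Then $\Tr W=\bvec{p}\cdot(\bvec{x}-\bvec{x}_X)$, so the defining constraint $\Tr W=0$ becomes $(\bvec{x}-\bvec{x}_X)\cdot\bvec{p}=0$. The key algebraic input is the Koszul exactness underlying \eqref{eq:compkoszul}: the kernel of the contraction $\bvec{p}\mapsto(\bvec{x}-\bvec{x}_X)\cdot\bvec{p}$ on $\bPoly{k}(X)$ is exactly $\Goly{c,k}(X)$ (namely $(\bvec{x}-\bvec{x}_X)\times\bPoly{k-1}(X)$ in three dimensions and $(\bvec{x}-\bvec{x}_X)^\perp\Poly{k-1}(X)$ in two). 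Hence $\bvec{p}\mapsto W$ is a linear isomorphism from $\Goly{c,k}(X)$ onto $\Rolyb{c,k+1}(X)$, and in particular the two spaces have equal dimension.

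Next I would compute $\TDIV$ in these coordinates. Acting row-wise,
\[ (\TDIV W)_i=\DIV(p_i(\bvec{x}-\bvec{x}_X))=(\bvec{x}-\bvec{x}_X)\cdot\GRAD p_i+d\,p_i, \]
so that $\TDIV W=(E+d)\bvec{p}$, where $E:=(\bvec{x}-\bvec{x}_X)\cdot\GRAD$ is the componentwise Euler (radial) derivative. Since $\Goly{c,k}(X)$ is a graded subspace whose homogeneous components live in degrees $1,\dots,k$, and $E$ acts as multiplication by the degree $r$ on each such component, the operator $E+d$ maps $\Goly{c,k}(X)$ into itself and acts there with the strictly positive eigenvalues $r+d\ge 1+d$. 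It is therefore a bijection of $\Goly{c,k}(X)$. Composing with the parametrization isomorphism of the previous step shows that $\TDIV$ restricted to $\Rolyb{c,k+1}(X)$ lands in $\Goly{c,k}(X)$ and is one-to-one and onto, which is the claim.

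The routine parts are the componentwise divergence computation and the grading bookkeeping. The main obstacle, and the only genuinely structural point, is the Koszul identification in the first step, i.e. proving that $(\bvec{x}-\bvec{x}_X)\cdot\bvec{p}=0$ forces $\bvec{p}\in\Goly{c,k}(X)$ and not merely that $\bvec{p}\perp(\bvec{x}-\bvec{x}_X)$ pointwise; I would secure this by decomposing $\bvec{p}$ into homogeneous parts and invoking exactness of the Koszul complex at the relevant slot (cf. \eqref{eq:compkoszul} and \cite[Section~2.4]{ddr}, or the explicit description in Lemma \ref{lemma:explicitRbCompl}), checking that the degree-zero part contributes nothing. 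Once that identification is in hand, the positivity of the eigenvalues $r+d$ makes injectivity and surjectivity automatic, so no dimension count beyond the equality furnished by the parametrization is needed.
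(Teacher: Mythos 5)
Your proof is correct, but it takes a genuinely different route from the paper's. The paper reduces the statement to Lemma \ref{lemma:isoRbG}, which rests on the explicit coordinate description of $\Rolyb{c,k+1}(T)$ obtained by repeated Euclidean division in Lemma \ref{lemma:explicitRbCompl}; the divergence is then computed in those coordinates and the bijection is exhibited by matching the parameters $C^i,\gamma,\beta,\lambda$ against the $Q_i$ by hand, with the case $X=F$ only asserted to follow by the same (easier) arguments. You instead parametrize $W=\bvec{p}\otimes(\bvec{x}-\bvec{x}_X)$ with $\bvec{p}\in\bPoly{k}(X)$, identify the trace-free constraint with $\bvec{p}\in\Goly{c,k}(X)$ via exactness of the Koszul complex at the first slot, and observe that $\TDIV W=((\bvec{x}-\bvec{x}_X)\cdot\GRAD+d)\,\bvec{p}$ acts diagonally on the homogeneous grading of $\Goly{c,k}(X)$ with strictly positive eigenvalues $r+d$. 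This treats $X=F$ and $X=T$ uniformly, avoids all explicit coordinates, and makes it transparent why the image is exactly $\Goly{c,k}(X)$; the Euler-operator positivity you invoke is in fact the same mechanism behind the automorphisms $(5+x\partial_x+y\partial_y+z\partial_z)$, $(4+x\partial_x+y\partial_y)$, etc.\ appearing in the paper's computation. The one point you should make fully explicit is the Koszul exactness itself: equation \eqref{eq:compkoszul} records only the direct-sum consequence, not the statement that the kernel of the contraction $\bvec{p}\mapsto(\bvec{x}-\bvec{x}_X)\cdot\bvec{p}$ on $\bPoly{k}(X)$ is precisely $\Goly{c,k}(X)$, so that fact must be proved or cited (it is standard, and it is exactly what Lemma \ref{lemma:explicitRbCompl} together with the final correspondence in the proof of Lemma \ref{lemma:isoRbG} establishes by hand in three dimensions). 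With that reference supplied, your argument is complete.
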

\begin{proof}
The proof for $X = F$ is given by Lemma \ref{lemma:isoRbG} for $X = T$ in the appendix.
The case $X = F$ is far easier and is provable with the same arguments.
\end{proof}

We will often need to view $2$-dimensional spaces as subspace of $\Real^3$. 
In particular we introduce two spaces related to the normal plane of an edge and to the tangent plane of a face.
\begin{definition} 
For any edge $E \in \Eh$ a natural $3$-dimensional vector is $\nE$.
We can arbitrarily complete it in an orthonormal basis of $\Real^3$
$(\nE,\bvec{n}_1,\bvec{n}_2)$. 
Assume that $\bvec{n}_1$ and $\bvec{n}_2$ are fixed once and for all on each edge.
We define the space 
\begin{equation} \label{eq:defEPoly}
\EPoly{k} = \lbrace p_1 \bvec{n}_1 + p_2 \bvec{n}_2 \st\; p_1, p_2 \in \Poly{k}(E) \rbrace .
\end{equation}

Likewise for any face $F \in \Fh$ assume there is a $F$-dependent fixed basis $(\nF,\bvec{n}_1,\bvec{n}_2)$ of $\Real^3$.
We define the space
\begin{equation} \label{eq:defFRTb}
\begin{aligned}
\FRTb{k} =&\; \left \lbrace \sum_{i,j = \lbrace 1,2 \rbrace} V_{i,j} \, \bvec{n}_i \otimes \bvec{n}_j + 
\sum_{i = \lbrace 1,2 \rbrace} w_i \, \nF \otimes \bvec{n}_i \st \right .\\
&\quad \left . \bvec{V} = (V)_{i,j} \in \Rolyb{c,k}(F) \oplus \Rolyb{k}(F) \oplus (\Roly{k}(F)^\intercal)^2 , \bvec{w} = (w)_i \in \bPoly{k}(F) \right \rbrace .
\end{aligned}
\end{equation}
\end{definition}
We will implicitly write $\FRTb{k} = (\Rolyb{c,k}(F) \oplus \Rolyb{k}(F) \oplus (\Roly{k}(F)^\intercal)^2) \oplus (\nF \otimes \bPoly{k}(F))$ 
to decompose it into its subcomponents. 
The last direct sum here is $L^2$-orthogonal, hence this will not cause any ambiguity in the scalar products.
The space $\FRTb{k}$ is isomorph to $M_{3,2}(\Poly{k}(F))$.
When embedded in the space of $3$ by $3$ tensor, elements of $\FRTb{k}$ are all orthogonal to $\nF$ on the right.

\section{Discrete complex.} \label{Discretecomplex}
We can now define the discrete complex. We start by giving the degree of freedom and the interpolator of the discrete spaces.
Then we define the discrete differential operators and give some basic properties on them.
To distinguish operators acting on scalar from those acting on vector we use the notation 
$\GRAD$ for the operator acting on scalar fields and giving vector fields, and $\TGRAD$ for the operator acting on vector fields and giving a tensor field.
\subsection{Complex definition.}
We define five discrete spaces $\uHgradh$, $\uHcurlh$, $\uHvh$, $\uLth$ and $\uLsh$.
Diagram \eqref{cd:Discretecomplex} summarizes their connection with each other and with their continuous counterpart.
Throughout the paper we will use the notations introduced in this section to refers to the components of discrete vectors.
\begin{equation} \label{cd:Discretecomplex}
\begin{tikzcd}  & & \bvec{L}^2(\Omega) \arrow[ddd, rounded corners, to path={ -- ([xshift=17ex]\tikztostart.east) 
 --node[right]{\scriptsize$\uILh$} ([xshift=17ex]\tikztotarget.east) 
 -- (\tikztotarget.east)}]& \\
H^2(\Omega) \arrow[r,"\GRAD"] \arrow[d,"\uIgradh"] & \bvec{H}^2 \arrow[r,"\CURL"] \arrow[d,"\uIcurlh"] & 
\bvec{H}^1(\Omega) \arrow[r,"\DIV"] \arrow[d,"\uIHh"] \arrow[u,"\TGRAD"] & 
L^2(\Omega) \arrow[d,"\uILsh"] \\
\uHgradh \arrow[r,"\uGh"] & \uHcurlh \arrow[r,"\uCh"] &
\uHvh \arrow[r,"\Dh"] \arrow[d,"\uNah"] &
\uLsh \\
&  & \uLth &
\end{tikzcd}
\end{equation}
Notice that the interpolators (defined in Section \ref{Interpolators}) require more smoothness than the spaces shown in \eqref{cd:Discretecomplex}.
Discrete spaces are defined by:
\begin{align}
\uHgradh :=& \lbrace \ul{q}_h = ((\gdofV{q})_{V \in \Vh}, (q_E,\gdofE{q})_{E \in \Eh}, (q_F, \gsdofF{q})_{F \in \Fh}, (q_T)_{T \in \Th}) 
\st \nonumber \\ 
&\quad
\begin{aligned}[t]
& \gdofV{q} \in \Real^3(V), \forall V \in \Vh,\\
&q_{E} \in \Poly[c]{k+1}(\Ech), 
\gdofE{q} \in \EPoly{k},
\forall E \in \Eh, \\
& q_F \in \Poly{k-1}(F), 
\gsdofF{q} \in \Poly{k-1}(F),
\forall F \in \Fh,\\
& q_T \in \Poly{k-1}(T), \forall T \in \Th
\rbrace , 
\end{aligned}\displaybreak[1]\\ 
\uHcurlh :=& \lbrace \uvec{v}_h = ((\rdofV{v})_{V \in \Vh},(\bvec{v}_E ,\rdofE{v})_{E \in \Eh}, (\rkvec[F]{v},\rkcvec[F]{v},v_F, \rdofFg{v}, \rdofFgc{v})_{F \in \Fh}, \\
&\ (\rkvec{v},\rkcvec{v})_{T \in \Th}) 
\st \nonumber
\begin{aligned}[t]
&\rdofV{v} \in \Real^3(V), \forall V \in \Vh,\\
&\bvec{v}_{E} \in \bPoly[c]{k+2}(\Ech), 
\rdofE{v} \in \bPoly{k+1}(E), \forall E \in \Eh, \\
& \rkvec[F]{v} \in \Roly{k-1}(F), 
\rkcvec[F]{v} \in \Roly{c,k}(F),
v_F \in \Poly{k-1}(F), \\
& \rdofFg{v} \in \Goly{k}(F), 
\rdofFgc{v} \in \Goly{c,k}(F), 
\forall F \in \Fh, \\
& \rkvec{v} \in \Roly{k-1}(T),
\rkcvec{v} \in \Roly{c,k}(T), \forall T \in \Th
\rbrace , 
\end{aligned}\displaybreak[1]\\
\uHvh :=& \lbrace \uvec{w}_h =  ((\bvec{w}_{E})_{E \in \Eh}, (w_F,\bgvec[F]{w},\bgcvec[F]{w})_{F \in \Fh}, (\bgvec{w},\bgcvec{w})_{T \in \Th})
\st \nonumber \\
& \quad
\begin{aligned}[t]
&\bvec{w}_{E} \in \bPoly[c]{k+3}(E), \forall E \in \Eh \\
&w_F \in \Poly{k}(F), \bgvec[F]{w} \in \Goly{k}(F), \bgcvec[F]{w} \in \Golyb{c,k}(F), \forall F \in \Fh,\\
&\bgvec{w} \in \Goly{k-1}(T), \bgcvec{w} \in \Golyb{c,k}(T), \forall T \in \Th 
\rbrace ,
\end{aligned}\displaybreak[1]\\
\uLth :=& \lbrace \uvec{W}_h = ((\bvec{W}_E)_{E \in \Eh}, (\bvec{W}_F)_{F \in \Fh}, (\bvec{W}_T)_{T \in \Th}) \st \nonumber \\
& \quad 
\begin{aligned}[t]
& \bvec{W}_E \in \bPoly{k+2}(E), \forall E \in \Eh,\\
& \bvec{W}_F \in \FRTb{k+1}, \forall F \in \Fh,
 \bvec{W}_T \in \RTb{k+1}(T), \forall T \in \Th
\rbrace , 
\end{aligned}\displaybreak[1]\\
\uLsh :=& \lbrace \ul{q}_h = ( (q_T)_{T \in \Th}) \st 
\begin{aligned}[t]
q_T \in \Poly{k}(T), \forall T \in \Th \rbrace .
\end{aligned}
\end{align}
Figure \ref{fig:tikzcddiff} summarizes the involvement of the various degrees of freedom with the differential operators.

\begin{figure}
\begin{tikzcd}[row sep=small]
T: &[-3em] \Poly{k-1}(T) \arrow[r,"\GRAD"] & \RT{k}(T) \arrow[r,"\CURL"] & \NE{k}(T) \arrow[r,"\DIV"] \arrow[dr,blue, "\TGRAD"] & \Poly{k}(T) \\
F: &[-3em] \Poly{k-1}(F) \arrow[r, "\GRAD"] & \RT{k}(F) \arrow[r, "\ROT"] & \Poly{k}(F) \arrow[dr,swap,blue,start anchor={[yshift=1ex]},"\GRAD"]& {\color{blue}\RTb{k+1}(T)}\\
& \Poly{k-1}(F) \arrow[r,"\GRAD^\perp"] \arrow[dr,"\text{Id}"]& |[alias=Y]| \Goly{k}(F) \oplus \Goly{c,k}(F)  & & {\color{blue} \FRTb{k+1}}\\
&  & \Poly{k-1}(F) \arrow[r,"\VROT"{name=UF}] & \Goly{k}(F) \oplus \Goly{c,k}(F) \arrow[ur,blue,start anchor={[yshift=-2ex,xshift=0.5em]},"\TGRAD"] \\
E:&[-3em] \EPoly{k} \arrow[r,"\GRAD^\perp"{name=Gt}] 
\arrow[dr,"\text{Id}"]
& |[alias=Z]| \bPoly{k+1}(E)  & & \\
 &  \Poly{k-1}(E) \arrow[r,end anchor={[yshift=-0.5ex]},pos=0.9,swap,"\GRAD"{name=G}] 
 &\begin{matrix}(\Poly{k}(E))^2 \\ \Poly{k}(E) \end{matrix} \arrow[r,"\CURL"{name=U}] & \bPoly{k+1}(E)  \arrow[r,blue,"\TGRAD"{name=GE}] & {\color{blue}\bPoly{k+2}(E)}\\
V:&[-3em] \Real = \Poly{k+1}(V) \arrow[dash, to=G, end anchor={[yshift=2.5ex,xshift=-0.5em]}] &\bPoly{k+2}(V) \arrow[dash, to=U] & \bPoly{k+3}(V) \arrow[blue, dash, to=GE]\\
& \Real^3 = \bPoly{k+2}(V) 
\arrow[ur, white, "\text{Id}"{name=GI}]
  \arrow[dash, crossing over, crossing over clearance=0.5ex,from=GI, to=Gt, start anchor={[xshift=-0.9em,yshift=-3.2ex]}, end anchor={[xshift=-0.6em,yshift=-0.1ex]}]
\arrow[ur, "\text{Id}"] & \bPoly{k+3}(V) \arrow[ur, "\text{Id}"]
 \arrow[dash, from=Z, to=U,bend left=10,end anchor={[yshift=-2.25ex,xshift=-0.35em]}, "\text{Id}"]
\arrow[dash, from=Y, to=UF, end anchor={[yshift=-2.15ex,xshift=-0.1em]}, "\text{Id}"]
\end{tikzcd}
\caption{Usage of the local degrees of freedom for the discrete differential operators.}
\label{fig:tikzcddiff}
\end{figure}
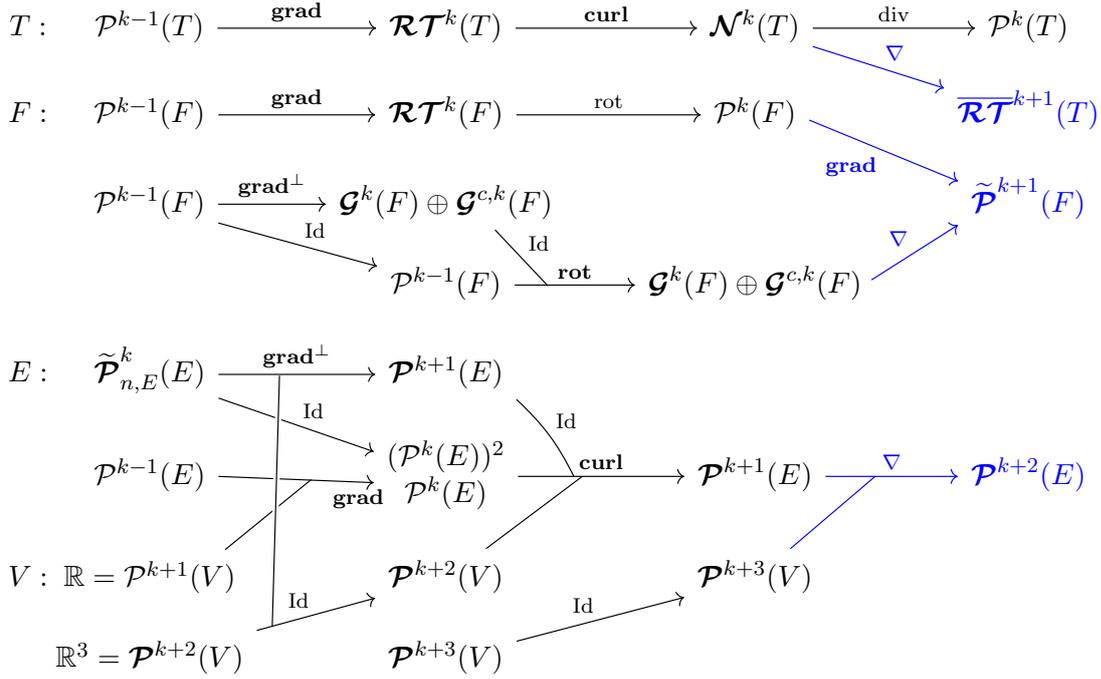

For a given cell $T$ we define the local discrete spaces $\uHgrad$, $\uHcurl$, $\uHv$, $\uLs$ and $\uLt$ as 
the restriction of the global one to $T$, i.e.\ containing only the components attached to $T$ and those attached to the faces, edges and vertices lying on its boundary. 
We define in the same way the local discrete spaces attached to a face $F$ or an edge $E$.

\subsection{Interpolators.} \label{Interpolators}
In this section we define the interpolator linking discrete spaces to their continuous counterpart.
Since we project on objects of lower dimension (edges and vertices) we will need a somewhat high smoothness for the continuous functions.
For a vertex $V \in \Vh$ we define $\bvec{x}_V \in \Real^3$ to be its coordinate.
The interpolator on the space $\uHgradh$ is defined for any $q \in C^1(\overline{\Omega})$ by
\begin{equation} \label{eq:defIgrad}
\begin{aligned}
\uIgradh q =&\; ((\GRAD{q}(\bvec{x}_V))_{V \in \Vh},(q_E, \vlproj{k}{E} (\nE \times (\GRAD{q} \times \nE)))_{E \in \Eh}, \\
&\;(\lproj{k-1}{F}(q),\lproj{k-1}{F}(\GRAD(q) \cdot \nF)_{F \in \Fh}, (\lproj{k-1}{T}(q))_{T \in \Th}), 
\end{aligned}
\end{equation}
where for any edge $E \in \Eh$, $q_E$ is such that $\lproj{k-1}{E}(q_E) = \lproj{k-1}{E}(q)$
and for any vertex $V \in \VE$, $q_{E} (\bvec{x}_V) = q(\bvec{x}_V)$.

The interpolator on the space $\uHcurlh$ is defined for any $\bvec{v} \in \bvec{C}^1(\overline{\Omega})$ by
\begin{equation} \label{eq:defIcurl}
\begin{aligned}
\uIcurlh{\bvec{v}} =&\; ((\CURL \bvec{v}(\bvec{x}_V))_{V \in \Vh},(\bvec{v}_E, \vlproj{k+1}{E}((\CURL \bvec{v} \cdot \nE ) \nE + \GRAD(\bvec{v} \cdot \nE) \times \nE))_{E \in \Eh},\\
&\; (\Rproj[F]{k-1}(\ttr{v}), \Rcproj[F]{k}(\ttr{v}), \lproj{k-1}{F}(\bvec{v} \cdot \nF),\\ 
&\;\Gproj[F]{k}(\nF \times (\TGRAD \bvec{v} \cdot \nF)),
\Gcproj[F]{k}(\nF \times (\TGRAD \bvec{v} \cdot \nF))
)_{F \in \Fh},\\
&\; (\Rproj{k-1}(\bvec{v}), \Rcproj{k}(\bvec{v}))_{T \in \Th}),
\end{aligned}
\end{equation}
where $\ttr{v}$ is the tangential trace of $\bvec{v}$ on $F$, 
and where for any edge $E \in \Eh$, $\bvec{v}_E$ is such that $\vlproj{k}{E}(\bvec{v}_E) = \vlproj{k}{E}(\bvec{v})$
and for any vertex $V \in \VE$, $\bvec{v}_{E} (\bvec{x}_V) = \bvec{v}(\bvec{x}_V)$.

The interpolator on the space $\uHvh$ is defined for any $\bvec{w} \in \bvec{C}^0(\overline{\Omega})$ by
\begin{equation} \label{eq:defIH}
\begin{aligned}
\uIHh{\bvec{w}} =&\; ((\bvec{w}_E)_{E \in \Eh}, (\lproj{k}{F}(\bvec{w} \cdot \nF), \Gproj[F]{k}(\ttr{w}),\Gbcproj[F]{k}(\ttr{w}))_{F \in \Fh},\\
&\; (\Gproj{k-1}(\bvec{w}),\Gbcproj{k}(\bvec{w}))_{T \in \Th}),
\end{aligned}
\end{equation}
where for any edge $E \in \Eh$, $\bvec{w}_E \in \bPoly{k+3}(E)$ is such that $\vlproj{k+1}{E}(\bvec{w}_E) = \vlproj{k+1}{E}(\bvec{w})$
and for any vertex $V \in \VE$, $\bvec{w}_{E}(\bvec{x}_V) = \bvec{w}(\bvec{x}_V)$.

The interpolator on the space $\uLth$ is defined for any $\bvec{W} \in \bvec{C}^0(\overline{\Omega})^3$ by
\begin{equation} \label{eq:defIL}
\uILh{\bvec{W}} = ((\vlproj{k+2}{E}(\bvec{W}\; \nE))_{E \in \Eh},(\FRTbproj{k+1}(\bvec{W}))_{F \in \Fh},(\RTbproj[T]{k+1}(\bvec{W}))_{T \in \Th}) .
\end{equation}

The interpolator on the space $\uLsh$ is defined for any $q \in L^2(\Omega)$ by 
\begin{equation} \label{eq:defILs}
\uILsh q = ( (\lproj{k}{T}(q))_{T \in \Th}) 
\end{equation}

\subsection{Discrete operators} \label{Discreteoperators}
\subsubsection{Gradient.}
In the following sections we define the discrete operators starting with the discrete gradient operator $\uGh$.
The operator $\uGh$ is the collection of the local discrete operators \eqref{eq:defuGT} acting on the edges, faces and cells.
For any edge $E \in \Eh$ we define the operator 
$\uGE : \uHgradE \rightarrow \uHcurlE$ such that
$\forall \ul{q}_{E} \in \uHgradE$
\begin{equation} \label{eq:defuGE}
\uGE \ul{q}_{E} = ((\bvec{0})_{V \in \VE},\bvec{v}_{E} , \dotp{\bvec{v}_E} \times \nE),
\end{equation}
where $\bvec{v}_E$ is such that 
$\vlproj{k}{E}(\bvec{v}_E) = \dotp{q_{E}} \nE + \gdofE{q}$ and $\forall V \in \VE$, 
$\bvec{v}_{E}(\bvec{x}_V) = \gdofV{q}$. 
We write $\dotp{q_{E}}$ the derivative of $q_E$ along the edge $E$ (oriented by $\nE$).

For any face $F \in \Fh$ we define the operator 
$\GF : \uHgradF \rightarrow \bPoly{k}(F)$ such that
$\forall \ul{q}_F \in \uHgradF$,
$\forall \bvec{w}_F \in \bPoly{k}(F)$
\begin{equation} \label{eq:defGF}
\int_F \GF \ul{q}_F \cdot \bvec{w}_F = - \int_F q_F \DIV \bvec{w}_F + \sum_{E \in \EF} \wFE \int_E q_{E} \bvec{w}_F \cdot \nFE .
\end{equation}
We also define 
$\GFp : \uHgradF \rightarrow \bPoly{k}(F)$ such that 
$\forall \ul{q}_F \in \uHgradF$,
$\forall \bvec{w}_F \in \bPoly{k}(F)$
\begin{equation} \label{eq:defGFp}
\int_F \GFp \ul{q}_F \cdot \bvec{w}_F = - \int_F \gsdofF{q} \ROT \bvec{w}_F - \sum_{E \in \EF} \wFE \int_E (\gdofE{q} \cdot \nF ) (\bvec{w}_F \cdot \nE) .
\end{equation}

The full operator $\uGF : \uHgradF \rightarrow \uHcurlF$
is defined to be the collection and projection of the local operators.
Explicitly for all $\ul{q}_h \in \uHgradF$
\begin{equation} \label{eq:defuGF}
\uGF \ul{q}_F = ((\uGE \ul{q}_{E})_{E \in \EF}, (\Rproj[F]{k-1}(\GF \ul{q}_F), \Rcproj[F]{k}(\GF \ul{q}_F),\gsdofF{q},\Gproj[F]{k}(\GFp \ul{q}_F),\Gcproj[F]{k}(\GFp \ul{q}_F))).
\end{equation}

The scalar trace $\trgrad : \uHgradF \rightarrow \Poly{k+1}(F)$ is defined such that $\forall \ul{q}_F \in \uHgradF$, 
$\forall \bvec{v}_F \in \Roly{c,k+2}(F)$,
\begin{equation} \label{eq:deftrgrad}
\int_F \trgrad \ul{q}_F \DIV \bvec{v}_F = - \int_F \GF \ul{q}_F \cdot \bvec{v}_F + \sum_{E \in \EF} \wFE \int_E q_E (\bvec{v}_F \cdot \nFE).
\end{equation}
\begin{remark} 
The relation \eqref{eq:deftrgrad} holds for all $\bvec{v}_F \in \Roly{k}(F) \oplus \Roly{c,k+2}(F)$. 
This is the same definition as for the discrete De Rham complex \cite{ddr}.
\end{remark}

\begin{remark}
We can also define a tangential trace in the same manner as \cite{ddr}.
It is not required thanks to the choice of norm \eqref{eq:defopnG} 
but should be considered to show consistency results for $\uHgradh$.
\end{remark}

For any cell $T \in \Th$ we define the operator 
$\GT : \uHgrad \rightarrow \bPoly{k}(T)$ such that
$\forall \ul{q}_T \in \uHgrad$,
$\forall \bvec{w}_T \in \bPoly{k}(T)$
\begin{equation} \label{eq:defGT}
\int_T \GT \ul{q}_T \cdot \bvec{w}_T = - \int q_T \DIV \bvec{w}_T + \sum_{F \in FT} \wTF \int_F \trgrad \ul{q}_{F} \bvec{w}_T \cdot \nF .
\end{equation}
Likewise we define the full operator $\uGT : \uHgrad \rightarrow \uHcurl$ for all $\ul{q}_T \in \uHgrad$ by
\begin{equation} \label{eq:defuGT}
\uGT \ul{q}_T = ((\uGE \ul{q}_E)_{E \in \ET}, (\uGF \ul{q}_F)_{F \in \FT}, (\Rproj{k-1}(\GT \ul{q}_T), \Rcproj{k}(\GT \ul{q}_T)).
\end{equation}

The global operator $\uGh$ is obtained by gathering the local operators $\uGT, T \in \Th$.
Since the interpolators require taking the full gradient even on edges we must consider functions to be defined in a small neighborhood 
$E \subset \mathcal{O}_E$ open in $\Real^3$. We also define $\bvec{X}$ to be the polynomial $\bvec{X}(\bvec{x}) = \bvec{x}$.
\begin{lemma}[Consistency properties] 
The discrete gradients and trace satisfy the following consistency properties for all $E \in \Eh$, $F \in \Fh$ and $T \in \Th$:
\begin{alignat}{3}
\uGE(\uIgrad[E] q) =&\; \uIcurl[E] (\GRAD q) \quad & \forall q &\in C^2(\mathcal{O}_E) \label{eq:consuGE} \\
\GF(\uIgrad[F] q) =&\; \GRAD q & \forall q &\in \Poly{k+1}(F) \label{eq:consGF} \\
\GFp(\uIgrad[F] (q \; \bvec{X}\cdot \nF )) =&\; -\VROT q & \forall q &\in \Poly{k+1}(F) \label{eq:consGFp} \\
\trgrad (\uIgrad[F] q) =&\; q & \forall q &\in \Poly{k+1}(F) \label{eq:constrgrad}\\
\lproj{k-1}{F} (\trgrad \ul{q}_F) =&\; q_F & \forall \ul{q}_F &\in \uHgradF \label{eq:conspitrgrad}\\
\GT(\uIgrad q) =&\; \GRAD q & \forall q &\in \Poly{k+1}(T) \label{eq:consGT}
\end{alignat}
\end{lemma}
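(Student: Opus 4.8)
The plan is to treat each identity by feeding the relevant interpolator into the defining relation of the discrete operator and integrating by parts to expose the continuous gradient; the recurring mechanism is that an $L^2$-projection against a test function of strictly lower degree is invisible, and that a polynomial is reconstructed \emph{exactly} on lower-dimensional entities. The first fact I would record is that for $q \in \Poly{k+1}(F)$ (or $q\in\Poly{k+1}(T)$) the edge reconstruction $q_E$ of \eqref{eq:defIgrad} coincides with $q_{|E}$: the restriction $q_{|E}$ satisfies $\lproj{k-1}{E}(q_{|E}) = \lproj{k-1}{E}(q)$ and matches $q$ at the two vertices, and these $k+2$ conditions determine an element of $\Poly{k+1}(E)$ uniquely. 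This collapses every edge boundary term to its exact value.

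For \eqref{eq:consGF} and \eqref{eq:consGT} I would insert $\uIgrad[F] q$ (resp.\ $\uIgrad q$) into \eqref{eq:defGF} (resp.\ \eqref{eq:defGT}) tested against $\bvec{w}_F \in \bPoly{k}(F)$ (resp.\ $\bvec{w}_T \in \bPoly{k}(T)$). Since $\DIV \bvec{w}$ is of degree $k-1$ the projection defining $q_F=\lproj{k-1}{F}(q)$ is invisible, the edge term is exact by the observation above, and in the cell case the face term is handled by \eqref{eq:constrgrad}, i.e.\ $\trgrad$ of the face data equals $q_{|F}$ (it depends only on $q_F$ and $q_E$, which agree whether one restricts $\uIgrad q$ or interpolates $q_{|F}$). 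The planar, resp.\ spatial, Green formula then rewrites the right-hand side as $\int \GRAD q \cdot \bvec{w}$, and as $\GRAD q$ itself lies in $\bPoly{k}$ the identity follows. Identity \eqref{eq:constrgrad} is obtained in the same spirit: substituting \eqref{eq:consGF} and $q_E=q_{|E}$ into \eqref{eq:deftrgrad} and integrating by parts leaves $\int_F \trgrad(\uIgrad[F]q)\,\DIV \bvec{v}_F = \int_F q\,\DIV \bvec{v}_F$, and since $\DIV$ maps $\Roly{c,k+2}(F)$ onto $\Poly{k+1}(F)$ by \eqref{eq:isodiv}, both sides agree in $\Poly{k+1}(F)$.

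Identity \eqref{eq:conspitrgrad} I would get by testing \eqref{eq:deftrgrad} against $\bvec{v}_F \in \Roly{c,k}(F)\subset\Roly{c,k+2}(F)$ only, then replacing $\GF\ul{q}_F$ using its definition \eqref{eq:defGF} at this same $\bvec{v}_F$: the two edge contributions cancel and one is left with $\int_F \trgrad \ul{q}_F\,\DIV \bvec{v}_F = \int_F q_F\,\DIV \bvec{v}_F$, whence $\lproj{k-1}{F}(\trgrad \ul{q}_F) = q_F$ by surjectivity of $\DIV$ onto $\Poly{k-1}(F)$. For \eqref{eq:consGFp} I would compute the data of $\uIgrad[F](q\,\bvec{X}\cdot\nF)$: its normal derivative $\GRAD(q\,\bvec{X}\cdot\nF)\cdot\nF$ restricts to $q$ on $F$, so the face datum is $\lproj{k-1}{F}(q)$, while the edge datum paired with $\nF$ reduces to $\lproj{k}{E}(q)$ because $\nF\perp\nE$; inserting these into \eqref{eq:defGFp}, discarding the harmless projections against $\ROT \bvec{w}_F \in \Poly{k-1}(F)$ and $\bvec{w}_F\cdot\nE \in \Poly{k}(E)$, and applying the planar Green formula for $\VROT$ yields exactly $-\VROT q$.

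The delicate point, which I expect to be the main obstacle, is the edge commutation \eqref{eq:consuGE}: unlike the others it is a genuine commutation of two interpolations rather than an exact reproduction, and it involves the tangential derivative of a reconstruction. I would split it into its three components. The vertex component is immediate from $\CURL \GRAD q = \bvec{0}$. For the $\bvec{v}_E$ component, integration by parts along $E$ against $\Poly{k}(E)$ (whose derivative lands in $\Poly{k-1}(E)$, where $q_E$ matches $q$, the endpoint terms cancelling) gives $\dotp{q_E} = \lproj{k}{E}(\GRAD q\cdot\nE)$, so that $\dotp{q_E}\,\nE + \vlproj{k}{E}(\nE\times(\GRAD q\times\nE)) = \vlproj{k}{E}(\GRAD q)$ via $\bvec{a} = (\bvec{a}\cdot\nE)\nE + \nE\times(\bvec{a}\times\nE)$; with the matching vertex values this forces the two degree-$(k+2)$ reconstructions to coincide. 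The last component requires $\dotp{\bvec{v}_E}\times\nE = \vlproj{k+1}{E}\bigl((\CURL\GRAD q\cdot\nE)\nE + \GRAD(\GRAD q\cdot\nE)\times\nE\bigr)$. Here $\CURL\GRAD q=\bvec{0}$ kills the first term and, the Hessian being symmetric, $\GRAD(\GRAD q\cdot\nE) = (\TGRAD\GRAD q)\nE$ is precisely the edge-derivative of $(\GRAD q)_{|E}$. Using $(\bvec{a}\times\nE)\cdot\bvec{r} = \bvec{a}\cdot(\nE\times\bvec{r})$, the claim reduces to showing that $\dotp{\bvec{v}_E}$ minus the edge-derivative of $(\GRAD q)_{|E}$ is $L^2$-orthogonal on $E$ to every $\nE\times\bvec{r}$ with $\bvec{r}\in\bPoly{k+1}(E)$; one final integration by parts on $E$ establishes this from the fact that $\bvec{v}_E$ matches $(\GRAD q)_{|E}$ both at the endpoints and through $\vlproj{k}{E}$.
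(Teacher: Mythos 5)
Your proposal is correct and takes essentially the same approach as the paper: the paper delegates \eqref{eq:consGF}, \eqref{eq:constrgrad}, \eqref{eq:conspitrgrad} and \eqref{eq:consGT} to the corresponding DDR lemma, and the arguments you supply for them are exactly the standard ones, while your treatment of \eqref{eq:consuGE} (edge integration by parts plus vertex matching to remove the projections, then the same argument one degree higher for $\dotp{\bvec{v}_E}\times\nE$) and of \eqref{eq:consGFp} (computing the interpolator data of $q\,\bvec{X}\cdot\nF$ and applying the planar Green formula) mirrors the paper's own computations.
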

\begin{proof}
Properties \eqref{eq:consGF}, \eqref{eq:constrgrad}, \eqref{eq:conspitrgrad} and \eqref{eq:consGT} 
are proven in \cite[Lemma~3.3]{ddr}.

\underline{Proof of \eqref{eq:consuGE}.}
The idea is to use integration by parts together with the continuity on vertices to remove the projection.
Let $E \in \Eh$, $q \in C^2(\mathcal{O}_E)$, $\uGE (\uIgrad[E] q) := ((\bvec{0})_{V \in \VE},\bvec{v}_E, \dotp{\bvec{v}_E} \times \nE$,
$q_E$ as in \eqref{eq:defIgrad}
and $\bvec{w}_E \in \bPoly{k+2}(E)$ such that $\vlproj{k}{E}(\bvec{w}_E) = \vlproj{k}{E}(\GRAD q)$ and 
$\bvec{w}_E(\bvec{x}_V) = \GRAD q (\bvec{x}_V)$ for $V \in \VE$.
We have $\bvec{0} = \CURL \GRAD q (\bvec{x}_V)$, $\forall V \in \VE$.
We must show that $\bvec{v}_E = \bvec{w}_E$ and that 
$\dotp{\bvec{v}_E} \times \nE = \vlproj{k+1}{E}( ((\CURL \GRAD q ) \cdot \nE) \nE + (\GRAD (\GRAD q \cdot \nE) \times \nE)) 
= \vlproj{k+1}{E}( (\GRAD (\GRAD q \cdot \nE) \times \nE))$.
Take a standard basis $(\bvec{e}_0,\bvec{e}_1,\bvec{e}_2)$ such that $\nE = (1,0,0)$ and $\bvec{v}_E := (v_0,v_1,v_2)$.
For all $\bvec{r} \in \bPoly{k}(E)$, 
\begin{equation*}
\begin{aligned}
\int_E \bvec{v}_E \cdot \bvec{r} =&\; \int_E \dotp{q_E} \, r_0 + \int_E \lproj{k}{E} (\partial_1 q) \, r_1 + \lproj{k}{E} (\partial_2 q) \, r_2 \\
=&\; - \int_E q_E \dotp{r_0} + [q_E \, r_0] + \int_E \lproj{k}{E} (\partial_1 q) \, r_1 + \lproj{k}{E} (\partial_2 q) \, r_2 \\
=&\; - \int_E q \dotp{r_0} + [q \, r_0] + \int_E \partial_1 q \, r_1 + \partial_2 q \, r_2 \\
=&\; \int_E \GRAD q \cdot \bvec{r}.
\end{aligned}
\end{equation*}
We use that $q_E(\bvec{x}_V) = q(\bvec{x}_V)$, $V \in \VE$ to get the third line.
Hence, $\vlproj{k}{E}(\bvec{v}_E) = \vlproj{k}{E}(\GRAD q)$ and 
since $\bvec{v}_E(\bvec{x}_V) = \GRAD q (\bvec{x}_V)$ we have $\bvec{v}_E = \bvec{w}_E$.
We conclude with the same argument for $\bvec{r} \in \bPoly{k+1}(E)$ applied to $\dotp{\bvec{v}_E} \times \nE$.

\underline{Proof of \eqref{eq:consGFp}.}
Let $q \in \Poly{k+1}(F)$. 
Since $q$ does not depend on the coordinate in the $\nF$ direction, we have $\GRAD (q \; \bvec{X}\cdot \nF) \cdot \nF = q$.
The relevant parts of $\uIgrad[F](q \; \bvec{X}\cdot \nF )$ are 
$\gsdofF{q \; \bvec{X}\cdot \nF} = \lproj{k-1}{F}( \GRAD (q \; \bvec{X}\cdot \nF) \cdot \nF) = \lproj{k-1}{F}(q)$ and 
$\gdofE{q \; \bvec{X}\cdot \nF} = \lproj{k}{F}( \GRAD (q \; \bvec{X}\cdot \nF) - (\GRAD (q \; \bvec{X}\cdot \nF) \cdot \nE ) \nE)$. 
Hence we have $\gdofE{q \; \bvec{X}\cdot \nF} \cdot \nF = \lproj{k}{E}(q)$ and 
for all $\bvec{w} \in \bPoly{k}(F)$,
\begin{equation*}
\begin{aligned}
\int_F \GFp (\uIgrad[F](q \; \bvec{X}\cdot \nF ) ) \cdot \bvec{w} =&\; 
- \int_F \lproj{k-1}{F}(q) \ROT \bvec{w} - \sum_{E \in \EF} \wFE \int_E \lproj{k}{E}(q)\, \bvec{w} \cdot \nE \\
=&\; - \int_F q \ROT \bvec{w} - \sum_{E \in \EF} \wFE \int_E q \, \bvec{w} \cdot \nE \\
=&\; - \int_F \VROT q \cdot \bvec{w} .
\end{aligned}
\end{equation*}

\end{proof}

\subsubsection{Curl.}
The operator $\uCh$ is the collection of the local discrete operators \eqref{eq:defuCT} acting on the edges and faces.
For any edge $E \in \Eh$ we define the operator 
$\CE : \uHcurlE \rightarrow \bPoly{k+3}(E)$ such that
$\forall \uvec{v}_{E} = ((\rdofV{v})_{V \in \VE},\bvec{v}_{E}, \rdofE{v}) \in \uHcurlE$
\begin{equation} \label{eq:defCE}
\vlproj{k+1}{E} (\CE \uvec{v}_{E}) = \rdofE{v} - \dotp{\bvec{v}_E} \times \nE , \quad \CE \uvec{v}_E (\bvec{x}_V) = \rdofV{v} .
\end{equation}

For any face $F \in \Fh$ we define the operators
$\CF : \uHcurlF \rightarrow \Poly{k}(F)$ and
$\bCF : \uHcurlF \rightarrow \bPoly{k}(F)$ for all
$\uvec{v}_F = ((\bvec{v}_E ,\rdofE{v})_{E \in \Eh}, (\rkvec[F]{v},\rkcvec[F]{v},v_F, \rdofFg{v}, \rdofFgc{v})) \in \uHcurlF$
such that 
$\forall r_F \in \Poly{k}(F)$
\begin{equation} \label{eq:defCF}
\int_F \CF \uvec{v}_F r_F = \int_F \rkvec[F]{v} \cdot \VROT r_F - \sum_{E \in \EF} \wFE \int_E (\bvec{v}_E \cdot \nE)\, r_F,
\end{equation}
and 
$\forall \bvec{r}_F \in \bPoly{k}(F)$
\begin{equation} \label{eq:defbCF}
\int_F \bCF \uvec{v}_F \cdot \bvec{r}_F = \int_F v_F \ROT \bvec{r}_F + \sum_{E \in EF} \wFE \int_E (\bvec{v}_{E} \cdot \nF) (\bvec{r}_F \cdot \nE) .
\end{equation}

The full operator $\uCF : \uHcurlF \rightarrow \uHvF$
is defined as the collection and projection of the local operators.
Explicitly for all $\uvec{v}_F \in \uHcurlF$
\begin{equation} \label{eq:defuCF}
\uCF \uvec{v}_F = ((\CE \uvec{v}_{E})_{E \in \EF}, (\CF \uvec{v}_F, \Gproj[F]{k}(\bCF \uvec{v}_F) + \rdofFg{v}, \Gbcproj[F]{k}(\bCF \uvec{v}_F) + \rdofFgc{v})).
\end{equation}

\begin{lemma}[Local complex property] \label{lemma:complexCFGF}
For all $F \in \Fh$ it holds:
\begin{equation}
\Image \uGF \subset \Ker \uCF .
\end{equation}
\end{lemma}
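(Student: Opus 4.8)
The plan is to show that applying $\uCF$ to any $\uGF \ul{q}_F$ yields zero component-by-component, mirroring the structure of the definition \eqref{eq:defuCF}. Since $\uCF$ is a collection of edge operators $\CE$ together with face operators $\CF$ and $\bCF$ (corrected by the gradient components $\rdofFg{v},\rdofFgc{v}$), I would verify the vanishing of each piece separately. The underlying principle is the discrete analogue of $\CURL \circ \GRAD = 0$, and each verification reduces to an integration-by-parts identity already encoded in the operator definitions.

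\textbf{Edge components.} First I would compute $\CE(\uGE \ul{q}_E)$. Writing $\uGE \ul{q}_E = ((\bvec{0})_V, \bvec{v}_E, \dotp{\bvec{v}_E}\times\nE)$ as in \eqref{eq:defuGE}, the defining relation \eqref{eq:defCE} gives $\vlproj{k+1}{E}(\CE(\uGE\ul{q}_E)) = (\dotp{\bvec{v}_E}\times\nE) - \dotp{\bvec{v}_E}\times\nE = \bvec{0}$, and the vertex values are $\rdofV{v}=\bvec{0}$ by construction of $\uGE$. Since an element of $\bPoly[c]{k+3}(E)$ is determined by its projection $\vlproj{k+1}{E}$ and its vertex values (Remark \ref{rem:continuouspoly}), this forces $\CE(\uGE\ul{q}_E)=\bvec{0}$.

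\textbf{Face scalar and gradient components.} Next I would treat $\CF(\uGF\ul{q}_F)$ using \eqref{eq:defCF}. The curl-type component of $\uGF\ul{q}_F$ is $\rkvec[F]{v}=\Rproj[F]{k-1}(\GF\ul{q}_F)$, and its edge part is $\bvec{v}_E$ from $\uGE$, whose tangential trace satisfies $\vlproj{k}{E}(\bvec{v}_E)=\dotp{q_E}\nE+\gdofE{q}$, so $\bvec{v}_E\cdot\nE = \dotp{q_E}$ up to the projection. Substituting into \eqref{eq:defCF} and using that $\VROT r_F \in \Roly{k-1}(F)$ so the projection $\Rproj[F]{k-1}$ can be dropped against it, the identity should collapse to $\int_F \GF\ul{q}_F\cdot\VROT r_F - \sum_E\wFE\int_E \dotp{q_E}\,r_F$; an integration by parts along each edge turns $\int_E\dotp{q_E}r_F$ into boundary terms at vertices plus $-\int_E q_E \dotp{r_F}$, and comparing with the definition \eqref{eq:defGF} of $\GF$ (testing against $\VROT r_F$, which is divergence-free) yields cancellation. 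The most delicate point will be the face-gradient components: I must show $\Gproj[F]{k}(\bCF(\uGF\ul{q}_F)) + \rdofFg{v} = \bvec{0}$, where $\rdofFg{v}=\Gproj[F]{k}(\GFp\ul{q}_F)$, and similarly for the complement. This requires evaluating $\bCF$ via \eqref{eq:defbCF} on the components of $\uGF\ul{q}_F$ and matching it against $-\GFp\ul{q}_F$ as defined in \eqref{eq:defGFp}: both involve $\int_F(\cdots)\ROT\bvec{r}_F$ and edge terms weighted by $\bvec{w}\cdot\nE$, so I expect the $v_F$-term of $\bCF$ and the $\gsdofF{q}$-term of $\GFp$ to coincide after noting $v_F=\gsdofF{q}$ in \eqref{eq:defuGF}, with the edge contributions matching through the relation $\bvec{v}_E\cdot\nF$ inherited from the vertex data of $\uGE$.

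\textbf{Main obstacle.} The hard part will be the gradient-component cancellation, because it couples the tangential-trace data on edges (the full vector $\bvec{v}_E$, including its normal-to-face component $\bvec{v}_E\cdot\nF$) with the $\GFp$ operator through the boundary integrals in \eqref{eq:defbCF} and \eqref{eq:defGFp}; I will need the consistency that $\bvec{v}_E\cdot\nF$ coming from $\uGE$ matches the datum $\gdofE{q}\cdot\nF$ that feeds $\GFp$, so that the two edge sums are identical. Once these algebraic matchings are made explicit the two expressions are term-for-term opposite, giving the claimed inclusion $\Image\uGF\subset\Ker\uCF$.
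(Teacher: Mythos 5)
Your proof follows the same route as the paper's: the edge components vanish directly from the definitions \eqref{eq:defuGE} and \eqref{eq:defCE}, the scalar component $\CF(\uGF\ul{q}_F)=0$ follows by testing \eqref{eq:defGF} against the divergence-free field $\VROT r_F$ and cancelling the vertex boundary terms around the closed loop $\partial F$, and the face-gradient components vanish because \eqref{eq:defbCF} and \eqref{eq:defGFp} are term-for-term opposite once $v_F=\gsdofF{q}$ and $\bvec{v}_E\cdot\nF=\gdofE{q}\cdot\nF$ are substituted. The matchings you single out as the main obstacle are precisely the ones the paper exploits, so the argument is correct and essentially identical to the paper's proof.
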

\begin{proof}
Let $\ul{q}_F \in \uHgradF$, we have to show that $\uCF (\uGF \ul{q}_F) = 0$.
We define $\uvec{v}_F = \uGF \ul{q}_F$.
It is immediate to check for the edges since $\vlproj{k+1}{E}(\CE \uvec{v}_E) = \rdofE{v} - \dotp{\bvec{v}_E} \times \nE$ with
$\rdofE{v} = \dotp{\bvec{v}_E} \times \nE$ and $\CE \uvec{v}_E (\bvec{x}_V) = \bvec{0}$.
Next in order to prove that $\CF \uvec{v}_F = 0$ take any $r_F \in \Poly{k}(F)$, then 
\begin{equation*}
\begin{aligned}
\int_F \CF \uvec{v}_F r_F =&\; \int_F \Rproj[F]{k-1}(\GF \ul{q}_F) \cdot \VROT r_F - \sum_{E \in \EF} \wFE \int_E \bvec{v}_E \cdot \nE \, r_F \\
=&\; \int_F \GF \ul{q}_F \cdot \VROT r_F - \sum_{E \in \EF} \wFE \int_E \dotp{q_E} \, r_F\\
=&\; -\int_F q_F \cdot \DIV(\VROT r_F) + \sum_{E \in \EF} \wFE \int_E q_E \VROT r_F \cdot \nFE - \dotp{q_E} \, r_F\\
=&\; \sum_{E \in \EF} \wFE \int_E q_E \, \dotp{r_F} - \dotp{q_E} \, r_F\\
=&\; \sum_{E \in \EF} \wFE [q_E \, r_F].
\end{aligned}
\end{equation*}
The last term is null since we integrate over a closed loop.
It remains to prove $\bCF \uvec{v}_F = 0$.
For any $\bvec{r}_F$ successively in $\Goly{k}(F)$ and $\Goly{c,k}(F)$ it is immediate to check that 
$\int_F \bCF (\uGF \ul{q}_F) \cdot \bvec{r}_F = 0$ since \eqref{eq:defbCF} and \eqref{eq:defGFp} are opposite.
\end{proof}

We define the tangential trace $\trcurl : \uHcurlF \rightarrow \Poly{k}(F)$ such that 
$\forall (r_F, \bvec{w}_F) \in \Poly{0,k+1}(F) \times \Roly{c,k}(F)$,
\begin{equation}
\int_F \trcurl \uvec{v}_F \cdot (\VROT r_F + \bvec{w}_F) = \int_F \CF \uvec{v}_F r_F + \sum_{E \in \EF} \wFE \int_E (\bvec{v}_E \cdot \nE) r_F + \int_F \rkcvec[F]{v} \cdot \bvec{w}_F .
\end{equation}
This is almost the same definition as in \cite{ddr}. As such we have almost the same properties.
\begin{lemma}[Properties of the tangential trace] \label{lemma:proptrcurl}
It holds
\begin{alignat}{3}
\Rproj[F]{k-1}(\trcurl \uvec{v}_F) = \rkvec[F]{v} \; &\text{and} \; 
\Rcproj[F]{k}(\trcurl \uvec{v}_F) = \rkcvec[F]{v} \quad & \forall \uvec{v}_F \in \uHcurlF,& \label{eq:trcurlXcurl}\\
\trcurl (\uIcurl[F] \bvec{v}) &= \vlproj{k}{F} \bvec{v} \quad & \forall \bvec{v} \in \NE{k+1}(F),& \label{eq:trcurlIcurl}\\
\RTproj[F]{k} (\trcurl (\uGF \ul{q}_F) )&= \RTproj[F]{k} (\GF \ul{q}_F) \quad & \forall \ul{q}_F \in \uHgradF.& \label{eq:trcurlGF}
\end{alignat}
\end{lemma}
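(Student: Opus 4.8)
The plan is to prove the three identities in the order \eqref{eq:trcurlXcurl}, then \eqref{eq:trcurlGF}, then \eqref{eq:trcurlIcurl}, exploiting that the defining relation for $\trcurl$ is tested against $\VROT r_F + \bvec{w}_F$ with $(r_F,\bvec{w}_F)\in\Poly{0,k+1}(F)\times\Roly{c,k}(F)$, a family that spans all of $\bPoly{k}(F)$ via the isomorphism \eqref{eq:isovrot} and the decomposition \eqref{eq:compkoszul}; in particular $\trcurl\uvec{v}_F$ is a well-defined element of $\bPoly{k}(F)$. As the remark preceding the lemma indicates, each step mirrors the corresponding computation for the discrete de Rham complex of \cite{ddr}, the only new ingredient being the bookkeeping for the extra component $\rkcvec[F]{v}$.

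For \eqref{eq:trcurlXcurl} I would split the test functions. First take $\bvec{w}_F = 0$ and let $r_F$ range over $\Poly{0,k}(F)$, so that $\VROT r_F$ ranges over all of $\Roly{k-1}(F)$ by \eqref{eq:isovrot}. Substituting the definition \eqref{eq:defCF} of $\CF$ into the defining relation of $\trcurl$, the two edge sums $\pm\sum_{E\in\EF}\wFE\int_E(\bvec{v}_E\cdot\nE)r_F$ cancel, leaving $\int_F\trcurl\uvec{v}_F\cdot\VROT r_F = \int_F\rkvec[F]{v}\cdot\VROT r_F$ for all such $r_F$; since $\rkvec[F]{v}\in\Roly{k-1}(F)$ this gives $\Rproj[F]{k-1}(\trcurl\uvec{v}_F)=\rkvec[F]{v}$. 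Then take $r_F=0$ and let $\bvec{w}_F$ range over $\Roly{c,k}(F)$: the relation collapses to $\int_F\trcurl\uvec{v}_F\cdot\bvec{w}_F=\int_F\rkcvec[F]{v}\cdot\bvec{w}_F$, which is exactly $\Rcproj[F]{k}(\trcurl\uvec{v}_F)=\rkcvec[F]{v}$.

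Identity \eqref{eq:trcurlGF} is then an immediate corollary. By \eqref{eq:defNEb} we have $\RT{k}(F)=\Roly{k-1}(F)\oplus\Roly{c,k}(F)$, and for $\uvec{v}_F=\uGF\ul{q}_F$ the components read $\rkvec[F]{v}=\Rproj[F]{k-1}(\GF\ul{q}_F)$ and $\rkcvec[F]{v}=\Rcproj[F]{k}(\GF\ul{q}_F)$ by \eqref{eq:defuGF}. Applying \eqref{eq:trcurlXcurl}, the difference $\trcurl(\uGF\ul{q}_F)-\GF\ul{q}_F$ has vanishing $L^2$-projection onto both $\Roly{k-1}(F)$ and $\Roly{c,k}(F)$, hence is $L^2$-orthogonal to their sum $\RT{k}(F)$; since $\RTproj[F]{k}$ is the orthogonal projection onto $\RT{k}(F)$, this yields $\RTproj[F]{k}(\trcurl(\uGF\ul{q}_F))=\RTproj[F]{k}(\GF\ul{q}_F)$.

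The consistency identity \eqref{eq:trcurlIcurl} is the substantive step. Here I would test $\trcurl(\uIcurl[F]\bvec{v})$ against a general $\VROT r_F+\bvec{w}_F$ and compare with $\int_F\bvec{v}\cdot(\VROT r_F+\bvec{w}_F)$, which equals $\int_F\vlproj{k}{F}\bvec{v}\cdot(\VROT r_F+\bvec{w}_F)$ since $\VROT r_F+\bvec{w}_F\in\bPoly{k}(F)$. The $\Roly{c,k}(F)$-part is direct: reading off $\rkcvec[F]{v}=\Rcproj[F]{k}(\ttr{v})$ from \eqref{eq:defIcurl} and using $\ttr{v}=\bvec{v}$ gives $\int_F\rkcvec[F]{v}\cdot\bvec{w}_F=\int_F\bvec{v}\cdot\bvec{w}_F$. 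For the $\VROT r_F$-part the obstacle is that $r_F$ has degree $k+1$, so \eqref{eq:defCF} cannot be applied verbatim; I first establish the curl consistency $\CF(\uIcurl[F]\bvec{v})=\ROT\bvec{v}$ for $\bvec{v}\in\NE{k+1}(F)$ (the two-dimensional analogue of the gradient consistency already proved, and the counterpart of the corresponding property in \cite{ddr}), using that $\ROT\bvec{v}\in\Poly{k}(F)$ and that the edge interpolation is exact in the needed degree, namely $\int_E(\bvec{v}_E\cdot\nE)s=\int_E(\bvec{v}\cdot\nE)s$ for $s\in\Poly{k+1}(E)$ because $\vlproj{k+1}{E}(\bvec{v}_E)=\vlproj{k+1}{E}(\bvec{v})$ and $(\bvec{v})_{\vert E}\cdot\nE\in\Poly{k}(E)$ by \eqref{eq:trNRT}. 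Once $\CF(\uIcurl[F]\bvec{v})=\ROT\bvec{v}$ is in hand, the two remaining terms become $\int_F\ROT\bvec{v}\,r_F+\sum_{E\in\EF}\wFE\int_E(\bvec{v}\cdot\nE)r_F$, which is exactly $\int_F\bvec{v}\cdot\VROT r_F$ by the two-dimensional integration-by-parts formula underlying \eqref{eq:defCF}. The main care needed throughout is tracking the orientation signs in the boundary integrals so that the edge contributions line up; the algebraic content beyond that is routine.
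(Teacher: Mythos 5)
Your proof is correct and takes essentially the same route as the paper, which simply defers to the corresponding result of \cite{ddr} and notes that the reduced edge continuity forces the restriction to $\RT{k}(F)$ in \eqref{eq:trcurlGF}; your derivation of \eqref{eq:trcurlGF} as a corollary of \eqref{eq:trcurlXcurl} via orthogonality to $\Roly{k-1}(F)+\Roly{c,k}(F)$ is a clean way to package that observation. One small imprecision in the proof of \eqref{eq:trcurlIcurl}: the interpolator only imposes $\vlproj{k}{E}(\bvec{v}_E)=\vlproj{k}{E}(\bvec{v})$, not $\vlproj{k+1}{E}(\bvec{v}_E)=\vlproj{k+1}{E}(\bvec{v})$ as you state, but the moment equality you need on the edges does hold because for $\bvec{v}\in\NE{k+1}(F)$ the restriction $\bvec{v}_{\vert E}\in\bPoly{k+2}(E)$ already satisfies all the conditions (degree-$k$ moments and the two vertex values) that uniquely determine $\bvec{v}_E$, so in fact $\bvec{v}_E=\bvec{v}_{\vert E}$.
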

\begin{proof}
The proof of \cite[Proposition~3.3]{ddr} almost works here,
the sole difference being the continuity of $\bvec{v}_E \cdot \nE$ in the boundary term:
we will only have $\lproj{k}{E} (\bvec{v}_E \cdot \nE) = \dotp{q_E}$ instead of $\bvec{v}_E \cdot \nE = \dotp{q_E}$.
This only affects the proof of \eqref{eq:trcurlGF}, for which we must restrict ourselves to test functions in 
$\Poly{0,k}(F) \times \Roly{c,k}(F)$ instead of 
$\Poly{0,k+1}(F) \times \Roly{c,k}(F)$.
This explains the addition of $\RTproj[F]{k}$ since $\VROT \Poly{0,k}(F) \oplus \Roly{c,k}(F) \approx \RT{k}(F)$.
\end{proof}

For any $T \in \Th$ we define the operator $\CT : \uHcurl \rightarrow \bPoly{k}(T)$ for all\\
$\uvec{v}_T = ((\uvec{v}_F)_{F \in \FT}, \rkvec{v},\rkcvec{v}) \in \uHcurl$ such that 
$\forall \bvec{r}_T \in \bPoly{k}(T)$, 
\begin{equation} \label{eq:defCT}
\int_T \CT \uvec{v}_T \cdot \bvec{r}_T = \int_T \rkvec{v} \cdot \CURL \bvec{r}_T + \sum_{F \in \FT} \wTF \int_F \trcurl \uvec{v}_F \cdot (\bvec{r}_T \times \nF) .
\end{equation}
The full operator $\uCT : \uHcurl \rightarrow \uHv$ is such that, for all $\uvec{v}_T \in \uHcurl$,
\begin{equation} \label{eq:defuCT}
\uCT \uvec{v}_T := ((\CE \uvec{v}_E)_{E \in \ET}, (\uCF \uvec{v}_F)_{F \in \FT}, \Gproj{k-1} (\CT \uvec{v}_T), \Gcproj{k} (\CT \uvec{v}_T)).
\end{equation}

\subsubsection{Jacobian.}
The operator $\uNah$ is the collection of the local discrete operators \eqref{eq:defuNaT}.
For all edge $E \in \Fh$ we define the operator 
$\NaE : \uHvE \rightarrow \bPoly{k+2}(E)$ for all $\bvec{w}_E \in \uHvE$ by
\begin{equation} \label{eq:defNaE}
\NaE \bvec{w}_E = \dotp{\bvec{w}_E} .
\end{equation}

For all face $F \in \Fh$ we define the operator  
$\NaF : \uHvF \rightarrow \FRTb{k+1}$ for all\\ 
$\uvec{w}_F = ((\bvec{w}_E)_{E \in \Eh}, w_F, \bgvec[F]{w}, \bgcvec[F]{w}) \in \uHvF$ 
such that
$\forall \bvec{V}_F = \FRTnvec{V} + \brcvec[F]{V} + \brvec[F]{V} + \rkvec[F]{V} \in \FRTb{k+1}$,
\begin{equation} \label{eq:defNaF}
\begin{aligned}
\int_F \NaF (\uvec{w}_F) \tdot \bvec{V}_F =& - \int_F \bgcvec[F]{w} \cdot \TDIV (\brcvec[F]{V}) 
- \int_F \bgvec[F]{w} \cdot \TDIV (\brvec[F]{V})
- \int_F w_F \DIV (\FRTnvec{V}) \\
&\quad + \sum_{E \in \EF} \wFE \int_E \bvec{w}_{E} \bvec{V}_F \nFE .
\end{aligned}
\end{equation}
We define the full operator $\uNaF : \uHvF \rightarrow \uLsF$ by 
\begin{equation} \label{eq:defuNaF}
\uNaF \uvec{w}_F = ((\NaE \bvec{w}_E)_{E \in \EF}, \NaF \uvec{w}_F) .
\end{equation}

We prove a first commutative property:
\begin{lemma} \label{lemma:NaFuIH}
For all $F \in \Fh$ it holds:
\begin{alignat}{3}
\NaF (\uIH[F] \bvec{w}) &= \FRTbproj{k+1}( \TGRAD \bvec{w}) , \quad \quad
&&\forall \bvec{w} \in \bvec{C}^1(\overline{F}). \label{eq:consNaF}
\end{alignat}
\end{lemma}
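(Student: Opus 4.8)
The plan is to reduce the identity to an integration-by-parts formula tested against the whole space $\FRTb{k+1}$, and then to verify it piece by piece along the decomposition of that space. Since $\FRTbproj{k+1}$ is the $L^2$-orthogonal projection onto $\FRTb{k+1}$ and $\bvec{V}_F \in \FRTb{k+1}$, one has $\int_F \FRTbproj{k+1}(\TGRAD \bvec{w}) \tdot \bvec{V}_F = \int_F \TGRAD \bvec{w} \tdot \bvec{V}_F$ for every test tensor $\bvec{V}_F \in \FRTb{k+1}$; because $\NaF(\uIH[F]\bvec{w})$ itself lives in $\FRTb{k+1}$, it therefore suffices to establish $\int_F \NaF(\uIH[F]\bvec{w}) \tdot \bvec{V}_F = \int_F \TGRAD \bvec{w} \tdot \bvec{V}_F$ for all such $\bvec{V}_F$. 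The starting point is the row-wise surface integration-by-parts formula (here $\TGRAD \bvec{w}$ is the tangential Jacobian on $F$, automatically orthogonal to $\nF$ on the right, consistently with $\FRTb{k+1}$):
\[ \int_F \TGRAD \bvec{w} \tdot \bvec{V}_F = - \int_F \bvec{w} \cdot \TDIV \bvec{V}_F + \sum_{E \in \EF} \wFE \int_E \bvec{w} \cdot (\bvec{V}_F \nFE), \]
which I would compare directly with the defining relation \eqref{eq:defNaF} using the decomposition $\bvec{V}_F = \FRTnvec{V} + \brcvec[F]{V} + \brvec[F]{V} + \rkvec[F]{V}$ and treating each summand separately.

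For the interior contributions the key is to track where $\TDIV$ sends each piece. The component $\rkvec[F]{V} \in (\Roly{k+1}(F)^\intercal)^2$ is row-wise a two-dimensional $\VROT$, hence $\TDIV \rkvec[F]{V} = 0$, which accounts for the absence of a matching term in \eqref{eq:defNaF}. For the remaining three pieces I would invoke Lemma \ref{lemma:defGbc} to get $\TDIV \brcvec[F]{V} \in \Goly{c,k}(F) = \Golyb{c,k}(F)$, the two-dimensional analogue of Remark \ref{rem:divInvDivGrad} to get $\TDIV \brvec[F]{V} \in \Goly{k}(F)$, and a direct computation $\TDIV(\nF \otimes \bvec{p}) = (\DIV \bvec{p})\,\nF$ with $\bvec{p} \in \bPoly{k+1}(F)$ and $\DIV \bvec{p} \in \Poly{k}(F)$ for the normal-row part $\FRTnvec{V}$. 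Since the first two divergences are tangent to $F$, one has $\bvec{w} \cdot \TDIV \brcvec[F]{V} = \ttr{w} \cdot \TDIV \brcvec[F]{V}$ and $\bvec{w} \cdot \TDIV \brvec[F]{V} = \ttr{w} \cdot \TDIV \brvec[F]{V}$, while $\bvec{w} \cdot \TDIV \FRTnvec{V} = (\bvec{w} \cdot \nF)\,\DIV \bvec{p}$. As each divergence lands exactly in the polynomial space targeted by the corresponding projection in the interpolator \eqref{eq:defIH}, the defining properties of the $L^2$-projections $\Gbcproj[F]{k}$, $\Gproj[F]{k}$ and $\lproj{k}{F}$ let me replace $\ttr{w}$ and $\bvec{w}\cdot\nF$ by $\bgcvec[F]{w}$, $\bgvec[F]{w}$ and $w_F$ respectively, reproducing the three interior terms of \eqref{eq:defNaF} verbatim.

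For the boundary term I would observe that $(\bvec{V}_F \nFE)_{\vert E}$ is a polynomial of degree at most $k+1$ (entries of an element of $\FRTb{k+1}$ have degree at most $k+1$ and $\nFE$ is constant), so that the edge matching $\vlproj{k+1}{E}(\bvec{w}_E) = \vlproj{k+1}{E}(\bvec{w})$ built into \eqref{eq:defIH} allows replacing $\bvec{w}$ by $\bvec{w}_E$ in $\sum_{E \in \EF} \wFE \int_E \bvec{w} \cdot (\bvec{V}_F \nFE)$ without altering its value. This yields the boundary term of \eqref{eq:defNaF} and closes the argument.

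I expect the main obstacle to be careful bookkeeping rather than anything deep: one has to fix the tensor integration-by-parts conventions (differential operators acting row-wise, tensors orthogonal to $\nF$ on the right so that only the tangential gradient enters) and then check that the image of $\TDIV$ on each Koszul-type summand sits precisely in the range of the matching projection, so that no information is lost when passing to the interpolated unknowns. The likeliest pitfalls are the degree count in the boundary term and the identification $\Golyb{c,k}(F) = \Goly{c,k}(F)$ needed to apply Lemma \ref{lemma:defGbc}.
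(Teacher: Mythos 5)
Your proposal is correct and follows essentially the same route as the paper: both reduce the claim to testing against $\bvec{V}_F \in \FRTb{k+1}$, use Lemma \ref{lemma:defGbc}, the definition of $\Rolyb{k+1}(F)$ and the fact that $\TDIV(\nF\otimes\bvec{p})=(\DIV\bvec{p})\nF$ to see that each divergence lands in the range of the corresponding $L^2$-projection of the interpolator, and then conclude by integration by parts together with the degree-$(k+1)$ bound on $\bvec{V}_F\nFE$ for the edge terms. The only difference is cosmetic (you run the computation from $\TGRAD\bvec{w}$ toward the definition of $\NaF$, the paper runs it the other way), and your explicit remarks on $\TDIV\rkvec[F]{V}=0$ and on the edge degree count are slightly more careful than the paper's.
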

\begin{proof}
For all $\bvec{w} \in \bvec{C}^1(\overline{F})$
and all $\bvec{V}_F \in \FRTb{k+1}$, 
\begin{equation*}
\begin{aligned}
\int_F \NaF ( \uIH[F] \bvec{w}) \tdot \bvec{V}_F =& - \int_F \Gcproj[F]{k}(\ttr{w}) \cdot \TDIV (\brcvec[F]{V}) 
- \int_F \Gproj[F]{k}({\ttr{w}}) \cdot \TDIV (\brvec[F]{V})\\
&\quad - \int_F \lproj{k}{F} (\bvec{w} \cdot \nF) \DIV (\FRTnvec{V}) 
+ \sum_{E \in \EF} \wFE \int_E \vlproj{k+1}{E} (\bvec{w}) \bvec{V}_F \nFE\\
=& - \int_F \ttr{w} \cdot \TDIV (\brcvec[F]{V} + \brvec[F]{V}) 
- \int_F (\bvec{w} \cdot \nF) \cdot \DIV (\FRTnvec{V}))\\
&\quad + \sum_{E \in \EF} \wFE \int_E \bvec{w} \bvec{V}_F \nFE\\
=& - \int_F \bvec{w} \cdot \TDIV (\bvec{V}_F) 
+ \sum_{E \in \EF} \wFE \int_E \bvec{w} \bvec{V}_F \nFE\\
=& \int_F \TGRAD \bvec{w} \tdot \bvec{V}_F .
\end{aligned}
\end{equation*}
We used Lemma \ref{lemma:defGbc} and the definition \eqref{eq:defRb} to remove the first two projections ($\Gcproj[F]{k}$ and $\Gproj[F]{k}$)
and integration by parts to conclude.
\end{proof}

We define the trace operator
$\trna : \uHvF \rightarrow (\Poly{k+2}(F))^3$ 
by the relation:
$\forall \bvec{V}_F \in (\Roly{c,k+3}(F)^\intercal)^3$,
$\forall \uvec{w}_F \in \uHvF$,
\begin{equation} \label{eq:deftrna}
\int_F \trna(\uvec{w}_F) \cdot \TDIV \bvec{V}_F = - \int_F \NaF \uvec{w}_F \tdot \bvec{V}_F + \sum_{E \in \EF} \wFE \int_E \bvec{w}_{E} \bvec{V}_F \nFE .
\end{equation}
The isomorphism \eqref{eq:isodiv} ensures the well-posedness.
\begin{remark} \label{rem:validitytrna}
The relation \eqref{eq:deftrna} also holds for all $\bvec{V}_F \in (\bPoly{k+1}(F)^\intercal)^3$.
Indeed if $\bvec{V}_F$ belongs to $(\Roly{k+1}(F)^\intercal)^3$ then $\TDIV \bvec{V}_F = 0$ and the left-hand side of \eqref{eq:deftrna} is null.
And since $(\Roly{k+1}(F)^\intercal)^3 \subset (\bPoly{k+1}(F)^\intercal)^3 \approx \FRTb{k+1}$ we can apply \eqref{eq:defNaF} to show that the right-hand side is also zero.
Hence, the relation holds for all $(\Roly{k+1}(F)^\intercal)^3 \oplus (\Roly{c,k+3}(F)^\intercal)^3 \supset (\bPoly{k+1}(F)^\intercal)^3$.
\end{remark}

\begin{lemma}[Consistency properties]
For all $F \in \Fh$ the following relations hold:
\begin{alignat}{3}
\trna(\uIH[F] \bvec{w}) = \bvec{w},&
&&\forall \bvec{w} \in \bPoly{k+2}(F),
\label{eq:trnaIH} \\
\begin{matrix}
  \lproj{k}{F}(\trna \uvec{w}_F \cdot \nF) = w_F,\\
  \Gcproj[F]{k}(\trna \uvec{w}_F) = \bgcvec[F]{w},\\
  \Gproj[F]{k}(\trna \uvec{w}_F) = \bgvec[F]{w},
\end{matrix}& \quad \quad
&&\forall \uvec{w}_F \in \uHvF. 
\label{eq:pitrna}
\end{alignat}
\end{lemma}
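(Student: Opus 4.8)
The plan is to prove both identities by testing the defining relation \eqref{eq:deftrna} of $\trna$ against carefully chosen matrix fields, splitting the argument into the polynomial‑consistency statement \eqref{eq:trnaIH} and the projection‑recovery statement \eqref{eq:pitrna}.

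\textbf{For \eqref{eq:trnaIH}}, I would argue that both $\trna(\uIH[F]\bvec{w})$ and $\bvec{w}$ lie in $(\Poly{k+2}(F))^3$, so it suffices to match their pairings with $\TDIV\bvec{V}_F$ as $\bvec{V}_F$ ranges over $(\Roly{c,k+3}(F)^\intercal)^3$: applied row‑wise, the isomorphism \eqref{eq:isodiv} makes these $\TDIV\bvec{V}_F$ span all of $(\Poly{k+2}(F))^3$. First I would use Lemma \ref{lemma:NaFuIH}; since for $\bvec{w}\in\bPoly{k+2}(F)$ the tangential Jacobian $\TGRAD\bvec{w}$ already belongs to $\FRTb{k+1}$, the projection $\FRTbproj{k+1}$ acts as the identity and $\NaF(\uIH[F]\bvec{w})=\TGRAD\bvec{w}$. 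The key observation is that the edge unknowns are \emph{exact}: $\bvec{w}_{|E}\in\bPoly{k+2}(E)\subset\bPoly{k+3}(E)$ satisfies $\vlproj{k+1}{E}(\bvec{w}_{|E})=\vlproj{k+1}{E}(\bvec{w})$ and $\bvec{w}_{|E}(\bvec{x}_V)=\bvec{w}(\bvec{x}_V)$ for $V\in\VE$, which are precisely the conditions defining the unique edge component $\bvec{w}_E$; hence $\bvec{w}_E=\bvec{w}_{|E}$. Substituting into \eqref{eq:deftrna} and integrating by parts on $F$ would then give
\[
\int_F\trna(\uIH[F]\bvec{w})\cdot\TDIV\bvec{V}_F = -\int_F\TGRAD\bvec{w}\tdot\bvec{V}_F + \sum_{E\in\EF}\wFE\int_E\bvec{w}\,\bvec{V}_F\nFE = \int_F\bvec{w}\cdot\TDIV\bvec{V}_F,
\]
the two boundary contributions cancelling exactly because $\bvec{w}_E=\bvec{w}_{|E}$, which yields \eqref{eq:trnaIH}.

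\textbf{For \eqref{eq:pitrna}}, I would instead restrict to test functions $\bvec{V}_F\in\FRTb{k+1}$, which is licit by Remark \ref{rem:validitytrna}. Writing $\bvec{V}_F=\FRTnvec{V}+\brcvec[F]{V}+\brvec[F]{V}+\rkvec[F]{V}$ and inserting the definition \eqref{eq:defNaF} of $\NaF\uvec{w}_F$ into \eqref{eq:deftrna}, the two edge terms $\sum_{E\in\EF}\wFE\int_E\bvec{w}_E\bvec{V}_F\nFE$ are identical and cancel, leaving
\[
\int_F\trna\uvec{w}_F\cdot\TDIV\bvec{V}_F = \int_F\bgcvec[F]{w}\cdot\TDIV\brcvec[F]{V} + \int_F\bgvec[F]{w}\cdot\TDIV\brvec[F]{V} + \int_F w_F\DIV\FRTnvec{V}.
\]
Each identity then follows by switching on a single block of $\bvec{V}_F$. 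Taking $\bvec{V}_F=\FRTnvec{V}=\nF\otimes\bvec{p}$ gives $\TDIV\bvec{V}_F=(\DIV\bvec{p})\nF$, and since $\DIV$ already maps the Koszul complement inside $\bPoly{k+1}(F)$ onto $\Poly{k}(F)$ by \eqref{eq:isodiv}, one recovers $\lproj{k}{F}(\trna\uvec{w}_F\cdot\nF)=w_F$. Taking $\bvec{V}_F=\brcvec[F]{V}\in\Rolyb{c,k+1}(F)$, Lemma \ref{lemma:defGbc} shows $\TDIV\brcvec[F]{V}$ runs over all of $\Goly{c,k}(F)$, giving $\Gcproj[F]{k}(\trna\uvec{w}_F)=\bgcvec[F]{w}$; and taking $\bvec{V}_F=\brvec[F]{V}\in\Rolyb{k+1}(F)$, the face analogue of Remark \ref{rem:divInvDivGrad} gives $\TDIV\Rolyb{k+1}(F)=\Goly{k}(F)$, whence $\Gproj[F]{k}(\trna\uvec{w}_F)=\bgvec[F]{w}$. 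The block $\rkvec[F]{V}\in(\Roly{k+1}(F)^\intercal)^2$ is divergence free, consistent with its carrying no trace information.

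The main obstacle is the first identity: everything hinges on recognising that the edge interpolant of a degree‑$(k+2)$ face polynomial is exact, i.e.\ $\bvec{w}_E=\bvec{w}_{|E}$, so that the boundary terms from integration by parts cancel the boundary term built into the trace; without this the degree‑$(k+2)$ edge data would fail to match. The remainder is bookkeeping: verifying $\TGRAD\bvec{w}\in\FRTb{k+1}$, that the matrix‑valued integration by parts produces the stated boundary term with the in‑plane normal $\wFE\nFE$, and that the three chosen blocks of $\bvec{V}_F$ genuinely span $\Poly{k}(F)$, $\Goly{c,k}(F)$ and $\Goly{k}(F)$.
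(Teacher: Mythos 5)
Your proposal is correct and follows essentially the same route as the paper: for \eqref{eq:trnaIH} it uses Lemma \ref{lemma:NaFuIH}, the exactness of the edge component $\bvec{w}_E=\bvec{w}_{\vert E}$ for $\bvec{w}\in\bPoly{k+2}(F)$ and an integration by parts against $(\Roly{c,k+3}(F)^\intercal)^3$, and for \eqref{eq:pitrna} it tests \eqref{eq:deftrna} (via Remark \ref{rem:validitytrna}) against the blocks $\nF\otimes\Roly{c,k+1}(F)$, $\Rolyb{c,k+1}(F)$ and $\Rolyb{k+1}(F)$, invoking \eqref{eq:isodiv}, Lemma \ref{lemma:defGbc} and the face analogue of Remark \ref{rem:divInvDivGrad} exactly as the paper does (the paper writes out only the first block and states the other two are analogous).
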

\begin{proof}
\underline{Proof of \eqref{eq:pitrna}.}
Let $\uvec{w}_F \in \uHvF$ and $\bvec{v}_F \in \Roly{c,k+1}(F)$.
Remark \ref{rem:validitytrna} allows to write:
\begin{equation*}
\begin{aligned}
\int_F \trna (\uvec{w}_F) \cdot \nF \, \DIV (\bvec{v}_F)
=&\, \int_F \trna (\uvec{w}_F) \cdot \nabla \cdot (\nF \otimes \bvec{v}_F)\\
=&\, - \int_F \NaF \uvec{w}_F \tdot (\nF \otimes \bvec{v}_F)
+ \sum_{E \in \EF} \wFE \int_E (\bvec{w}_E \cdot \nF) (\bvec{v}_F \cdot \nFE) \\
=&\, \int_F w_F \DIV (\bvec{v}_F).
\end{aligned}
\end{equation*}
Since this holds for all $\bvec{v}_F \in \Roly{c,k+1}(F)$, inferring isomorphism \eqref{eq:isodiv}
we have:
\begin{equation*}
\lproj{k}{F}(\trna \uvec{w}_F \cdot \nF) = 
\lproj{k}{F}(w_F) = w_F.
\end{equation*}
The other two equations are proven in the same fashion.\\
\underline{Proof of \eqref{eq:trnaIH}.}
Let $\bvec{w} \in \bPoly{k+2}(F)$ and $\bvec{V}_F \in (\Roly{c,k+3}(F)^\intercal)^3$, it holds
\begin{equation*}
\begin{aligned}
\int_F \trna(\uIH[F] \bvec{w}) \cdot \TDIV \bvec{V}_F 
=&\; - \int_F \NaF \uIH[F] \bvec{w} \tdot \bvec{V}_F + \sum_{E \in \EF} \wFE \int_E (\bvec{w}_E) \bvec{V}_F \nFE \\
=&\; - \int_F \TGRAD \bvec{w} \tdot \bvec{V}_F + \sum_{E \in \EF} \wFE \int_E \bvec{w} \bvec{V}_F \nFE \\
=&\; \int_F \bvec{w} \cdot \TDIV \bvec{V}_F .
\end{aligned}
\end{equation*}
We used Lemma \ref{lemma:NaFuIH} to write $\NaF \uIH[F] \bvec{w} = \FRTbproj{k+1}( \TGRAD \bvec{w}) = \TGRAD \bvec{w}$ 
since $\TGRAD \bvec{w} \in (\bPoly{k+1}(F)^\intercal)^3$,
and $\bvec{w}_E = \bvec{w}$ since $\bvec{w} \in \bPoly{k+2} \subset \bPoly{k+3}$ is continuous 
to remove the projections.
\end{proof}

For all $T \in \Th$ we define the operator
$\NaT : \uHv \rightarrow \RTb{k+1}(T)$ 
such that 
$\forall \uvec{w}_T = ((\uvec{w}_F)_{F \in \FT}, \bgvec{w}, \bgcvec{w}) \in \uHv$, 
$\forall \bvec{V}_T = \brcvec{V} + \brvec{V} + \rkvec{V} \in \RTb{k+1}(T)$,
\begin{equation} \label{eq:defNaT}
\begin{aligned}
\int_T \NaT (\uvec{w}_T) \tdot \bvec{V}_T =& - \int_T \bgcvec{w} \cdot \TDIV (\brcvec{V}) 
- \int_T \bgvec{w} \cdot \TDIV (\brvec{V})
 + \sum_{F \in \FT} \wTF \int_F \trna \uvec{w}_F \bvec{V}_T \nF.
\end{aligned}
\end{equation}
We also define the potential reconstruction operator
$\pna : \uHv \rightarrow \bPoly{k+1}(T)$ 
by the relation:
$\forall \bvec{V}_T \in (\Roly{c,k+2}(T)^\intercal)^3$,
$\forall \uvec{w}_T \in \uHv$,
\begin{equation} \label{eq:defpna}
\int_T \pna(\uvec{w}_T) \cdot \TDIV \bvec{V}_T = - \int_T \NaT \uvec{w}_T \tdot \bvec{V}_T + \sum_{F \in \FT} \wTF \int_F \trna \uvec{w}_F \bvec{V}_T \nF .
\end{equation}
The global operator $\uNaT : \uHv \rightarrow \uLt$ is defined for all $\uvec{w}_T \in \uHv$ by 
\begin{equation} \label{eq:defuNaT}
\uNaT \uvec{w}_T = ((\uNaF \uvec{w}_F)_{F \in \FT}, \NaT \uvec{w}_T) .
\end{equation}
\begin{remark} \label{rem:NaTqI}
Since $\TDIV (\Roly{k}(T)^\intercal)^3 = 0$ by Remark \ref{rem:qIinRT} and \eqref{eq:pitrna} we see that $\forall q \in \Poly{k}(T)$, 
\begin{equation*}
\int_T \NaT (\uvec{w}_T) \tdot (q \ID) =  \int_T \bgvec{w} \cdot \GRAD q + \sum_{F \in \FT} \wTF \int_F q \; w_F .
\end{equation*}
\end{remark}

\begin{remark} \label{rem:validitypna}
With the same argument as in Remark \ref{rem:validitytrna}, \eqref{eq:defpna} is valid for all $\bvec{V}_T \in (\bPoly{k+1}(T)^\intercal)^3$.
\end{remark}

\begin{lemma}
For all $T \in \Th$ it holds:
\begin{alignat}{3}
\NaT (\uIH \bvec{w}) = \TGRAD \bvec{w},& \quad \quad
&&\forall \bvec{w} \in \bPoly{k+1}(T), \label{eq:consNaT}\\
\pna(\uIH \bvec{w}) = \bvec{w},&
&&\forall \bvec{w} \in \bPoly{k+1}(T),
\label{eq:pnaIH} \\
\begin{matrix}
  \Gcproj{k}(\pna \uvec{w}_T) = \bgcvec{w},\\
  \Gproj{k-1}(\pna \uvec{w}_T) = \bgvec{w},
\end{matrix}& \quad \quad
&&\forall \uvec{w}_T \in \uHv. 
\label{eq:pipna}
\end{alignat}
\end{lemma}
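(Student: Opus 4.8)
The three identities are the cell-level analogues of the face-level relations \eqref{eq:consNaF}, \eqref{eq:trnaIH} and \eqref{eq:pitrna}, and I would prove them by transporting the arguments already used on faces almost verbatim, replacing the edge boundary terms by face boundary terms and the two-dimensional Koszul/trace results by their three-dimensional counterparts (Lemma \ref{lemma:defGbc} for $X=T$, Remark \ref{rem:divInvDivGrad}, and Lemma \ref{lemma:Rcdec}). I would treat them in the order \eqref{eq:pipna}, \eqref{eq:consNaT}, \eqref{eq:pnaIH}, since the last identity feeds on the commutation relation \eqref{eq:consNaT}.

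For \eqref{eq:pipna} I would mimic the proof of \eqref{eq:pitrna}. To obtain $\Gcproj{k}(\pna \uvec{w}_T) = \bgcvec{w}$, I would test the defining relation \eqref{eq:defpna} of $\pna$ against $\bvec{V}_T \in \Rolyb{c,k+1}(T)$, so that its $\brvec{V}$ and $\rkvec{V}$ components vanish. By Lemma \ref{lemma:defGbc}, $\TDIV \bvec{V}_T$ then spans all of $\Goly{c,k}(T)$, so by the defining property $\int_T f \cdot g = \int_T \Gcproj{k}(f)\cdot g$ of the $L^2$-projection (valid for $g \in \Goly{c,k}(T)$) the left-hand side equals $\int_T \Gcproj{k}(\pna \uvec{w}_T) \cdot \TDIV \bvec{V}_T$; inserting the definition \eqref{eq:defNaT} of $\NaT$ on the right-hand side makes the face boundary terms cancel and leaves exactly $\int_T \bgcvec{w} \cdot \TDIV \bvec{V}_T$. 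Since both members lie in $\Goly{c,k}(T)$ and $\TDIV \bvec{V}_T$ spans that space, the identity follows; the relation for $\bgvec{w}$ is identical, testing instead against $\bvec{V}_T \in \Rolyb{k}(T)$ and using $\TDIV \Rolyb{k}(T) = \Goly{k-1}(T)$ from Remark \ref{rem:divInvDivGrad}.

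For \eqref{eq:consNaT} I would follow Lemma \ref{lemma:NaFuIH}. Evaluating \eqref{eq:defNaT} at $\uIH \bvec{w}$ and a generic $\bvec{V}_T \in \RTb{k+1}(T)$, I would use the face consistency \eqref{eq:trnaIH} (applicable since $\bvec{w}_{\vert F} \in \bPoly{k+1}(F) \subset \bPoly{k+2}(F)$) to replace $\trna(\uIH[F]\bvec{w})$ by $\bvec{w}$ in the boundary sum, remove the two volume projections $\Gcproj{k}$ and $\Gproj{k-1}$ as in the proof of Lemma \ref{lemma:NaFuIH}, and integrate by parts over $T$ to reach $\int_T \TGRAD \bvec{w} \tdot \bvec{V}_T$. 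Closing the argument then requires that $\TGRAD \bvec{w}$ itself lie in $\RTb{k+1}(T)$: I would justify this by noting that each row lies in $\bPoly{k}(T) = \Roly{k}(T) \oplus \Roly{c,k}(T)$, splitting the $\Roly{c,k}(T)$ part through Lemma \ref{lemma:Rcdec} into its $\Rolyb{c,k}(T) \subset \Rolyb{c,k+1}(T)$ and $\Rolyb{k}(T)$ pieces (using Remark \ref{rem:sequential}), which places $\TGRAD \bvec{w}$ in $\Rolyb{c,k+1}(T) \oplus \Rolyb{k}(T) \oplus (\Roly{k}(T)^\intercal)^3 = \RTb{k+1}(T)$; testing the difference against all of $\RTb{k+1}(T)$ then gives equality.

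Finally, \eqref{eq:pnaIH} mirrors \eqref{eq:trnaIH}: I would test \eqref{eq:defpna} at $\uIH \bvec{w}$ against $\bvec{V}_T \in (\Roly{c,k+2}(T)^\intercal)^3$, substitute $\NaT(\uIH \bvec{w}) = \TGRAD \bvec{w}$ from the just-proved \eqref{eq:consNaT} and $\trna(\uIH[F] \bvec{w}) = \bvec{w}$ from \eqref{eq:trnaIH}, and integrate by parts to get $\int_T \bvec{w} \cdot \TDIV \bvec{V}_T$; since $\TDIV$ maps $(\Roly{c,k+2}(T)^\intercal)^3$ onto $\bPoly{k+1}(T)$ by \eqref{eq:isodiv} and $\bvec{w} \in \bPoly{k+1}(T)$, this forces $\pna(\uIH \bvec{w}) = \bvec{w}$. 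The only genuinely delicate point in the whole argument is the membership $\TGRAD \bvec{w} \in \RTb{k+1}(T)$ needed to close \eqref{eq:consNaT}; everything else is a routine transcription of the face-level computations.
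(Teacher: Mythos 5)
Your proof is correct and follows essentially the same route as the paper, whose own proof simply states that \eqref{eq:consNaT} is proved like Lemma \ref{lemma:NaFuIH} (using \eqref{eq:trnaIH} to remove the boundary projection) and that \eqref{eq:pnaIH}, \eqref{eq:pipna} repeat the arguments of \eqref{eq:trnaIH}, \eqref{eq:pitrna}. The one point you rightly flag as delicate — that $\TGRAD\bvec{w}\in\RTb{k+1}(T)$ for $\bvec{w}\in\bPoly{k+1}(T)$, via Lemma \ref{lemma:Rcdec} and Remark \ref{rem:sequential}, so that no projection remains in \eqref{eq:consNaT} — is left implicit in the paper, and your justification of it is sound.
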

\begin{proof}
The proof of \eqref{eq:consNaT} is similar to the proof of Lemma \ref{lemma:NaFuIH}, using \eqref{eq:trnaIH} to remove the projection on the boundary term.
The proofs of \eqref{eq:pnaIH} and \eqref{eq:pipna} are the same as the proofs of \eqref{eq:trnaIH} and \eqref{eq:pitrna}.
\end{proof}

\subsubsection{Divergence.}
Finally, we define the discrete divergence operator, for all $T \in \Th$ by:
\[\DT := \Tr \NaT \in \Poly{k}(T) .
\]
As in the continuous case the divergence is the trace of the gradient,
but we can also define it by a formula mimicking the integration by parts.
By Remark \ref{rem:NaTqI}, $\forall \uvec{w}_T \in \uHv$, $\DT$ is such that $\forall q_T \in \Poly{k}(T)$,
\begin{equation}
\begin{aligned}
\int_T \DT \uvec{w}_T q_T =& \int_T \Tr (\NaT \uvec{w}_T) q_T \\
=& \int_T \NaT \uvec{w}_T \tdot (q_T \ID)\\
=& - \int_T \bgvec{w} \cdot \GRAD q_T + \sum_{F \in \FT} \wTF \int_F (\trna \uvec{w}_{F} \cdot \nF) q_T\\
=& - \int_T \bgvec{w} \cdot \GRAD q_T + \sum_{F \in \FT} \wTF \int_F w_F q_T.
\end{aligned}
\end{equation}
We get the same definition as the one of the de Rham complex of \cite{ddr}.

\subsection{Discrete $L^2$-product.}
We build scalar products on the discrete spaces.
They are made of the sum of the $L^2$ scalar product on each cell and of a stabilization term taking the lower dimensional objects (edges, vertices and faces) into account.
Since we will not need their definitions on $\uHgrad$ and $\uHcurl$ to study the Jacobian operator,
we will not write them down explicitly.
They are quite similar to the $L^2$ product of $\uHv$ but require the introduction of potential reconstruction operators 
akin to those of \cite{ddr}.
First we define them locally for all $T \in \Th$:
For all $\uvec{v}_T, \uvec{w}_T \in \uHv$ we set
\begin{equation} \label{eq:defspNa}
\spNa{\uvec{v}_T}{\uvec{w}_T} = \int_T \pna \uvec{v}_T \cdot \pna \uvec{w}_T + \stNa{\uvec{v}_T}{\uvec{w}_T} ,
\end{equation}
\begin{equation} \label{eq:defstNa}
\begin{aligned}
\stNa{\uvec{v}_T}{\uvec{w}_T} =&\; \sum_{F \in \FT} h_F \int_F (\pna \uvec{v}_T - \trna \uvec{v}_F) \cdot (\pna \uvec{w}_T - \trna \uvec{v}_F) \\
&\; + \sum_{E \in \EF} h_E^2 \int_E (\pna \uvec{v}_T - \bvec{v}_E) \cdot (\pna \uvec{w}_T - \bvec{w}_E).
\end{aligned}
\end{equation}
For all $\uvec{V}_T, \uvec{W}_T \in \uLt$ we set
\begin{equation} \label{eq:defspLt}
\spLt{\uvec{V}_T}{\uvec{W}_T} = \int_T \bvec{V}_T \tdot \bvec{W}_T + \stLt{\uvec{V}_T}{\uvec{W}_T},
\end{equation}
\begin{equation} \label{eq:defstLt}
\begin{aligned}
\stLt{\uvec{V}_T}{\uvec{W}_T} =&\; \sum_{F \in \FT} h_F \int_F (\Tttr{V} - \bvec{V}_F) \tdot (\Tttr{W} - \bvec{W}_F) \\
&\; + \sum_{E \in \EF} h_E^2 \int_E (\bvec{V}_T \; \nE - \bvec{V}_E) \cdot (\bvec{W}_T \; \nE - \bvec{W}_E),
\end{aligned}
\end{equation}
where $\Tttr{V} := \bvec{V}_T - (\bvec{V}_T \; \nF) \otimes \nF$.
Global scalar products are then merely the sum of local scalar product over every face $T \in \Th$.
For all $\uvec{v}_T \in \uHv$ and $\uvec{W}_T \in \uLt$ the norm induced by this scalar product is denoted by:
\begin{equation*}
\normNa{\uvec{v}_T} = \spNa{\uvec{v}_T}{\uvec{v}_T}^{1/2}, \quad \normLt{\uvec{W}_T} = \spLt{\uvec{W}_T}{\uvec{W}_F}^{1/2}.
\end{equation*}

We also define norms built from the sum over the objects of every dimension.
For all $\ul{q}_T \in \uHgrad$ we define
\begin{equation} \label{eq:defopnG}
\begin{aligned}
\opnHgrad{ \ul{q}_T}^2 =&\; \norm[T]{q_F}^2 + \sum_{F \in \FT} h_F \left ( \norm[F]{q_f}^2 + h_F \norm[F]{\gsdofF{q}}^2 \right . \\
&\;\left . + \sum_{E \in EF} h_E \left [ \norm[E]{q_E}^2 + h_E \norm[E]{\gdofE{q}}^2 + \sum_{V \in \VE} h_E^2 \vert \gdofV{q} \vert^2 \right ] \right ).
\end{aligned}
\end{equation}
For all $\uvec{v}_T \in \uHcurl$ we define
\begin{equation}
\begin{aligned}
&\opnHcurl{ \uvec{v}_T}^2 =\; \norm[T]{\rkvec{v}}^2 + \norm[T]{\rkcvec{v}}^2 + \sum_{F \in \FT} h_F \left ( \norm[F]{\rkvec[F]{v}}^2 + \norm[F]{\rkcvec[F]{v}}^2 + \norm[F]{v_F}^2 \right .\\
&\quad \left . + h_F \norm[F]{\rdofFg{v}}^2 + h_F \norm[F]{\rdofFgc{v}}^2 
+ \sum_{E \in \EF} h_E \left [ \norm[E]{\bvec{v}_E}^2 + h_E \norm[E]{\rdofE{v}}^2 + \sum_{V \in \VE} h_E^2 \vert \rdofV{v} \vert^2 \right ] \right ) .
\end{aligned}
\end{equation}
For all $\uvec{w}_T \in \uHv$ we define
\begin{equation} \label{eq:defopnNa}
\begin{aligned}
\opnNa{ \uvec{w}_T}^2 =&\; \norm[T]{\bgvec{w}}^2 + \norm[T]{\bgcvec{w}}^2 + \sum_{F \in \FT} h_F \left ( \norm[F]{\bgvec[F]{w}}^2 +  \norm[F]{\bgcvec[F]{w}}^2 + \norm[F]{w_F}^2 \right . \\
&\; \left . + \sum_{E \in \EF} h_E \norm[E]{\bvec{w}_E}^2 \right ) .
\end{aligned}
\end{equation}
For all $\uvec{W}_T \in \uLt$ we define
\begin{equation} \label{eq:defopnLt}
\opnLt{ \uvec{W}_T}^2 = \norm[T]{\bvec{W}_T}^2 + \sum_{F \in \FT} h_F \left ( \norm[F]{\bvec{W}_F}^2 + \sum_{E \in \EF} h_E \norm[E]{\bvec{W}_E}^2 \right ) .
\end{equation}
And for all $q_T \in \uLs$ we define
\begin{equation} 
\normLs{ {q}_T}^2 = \norm[T]{q_T}^2.
\end{equation}

We show the equivalence between the norm induced by \eqref{eq:defspNa} and \eqref{eq:defopnNa} in Lemma \ref{lemma:normequiv}
and the equivalence between those induced by \eqref{eq:defspLt} and \eqref{eq:defopnLt} in Lemma \ref{lemma:normLtequiv}.

We define the global norms over $\Omega$ as the sum of the local norms over every cell $T \in \Th$,
i.e.\ $\opnNa[h]{\uvec{v}_h}^2 = \sum_{T \in \Th} \opnNa{\uvec{v}_T}^2$.

\subsection{Results on discrete $L^2$-products.}
We show some results to justify the choice of discrete norms.
\begin{lemma} \label{lemma:boundNaF}
For all $F \in \Fh$ and all $\uvec{w}_F \in \uHvF$ it holds:
\begin{equation*}
\norm[F]{\NaF \uvec{w}_F} \lesssim h_F^{-1} \opnNa[F]{\uvec{w}_F} .
\end{equation*}
\end{lemma}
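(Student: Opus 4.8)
The plan is to exploit that $\NaF \uvec{w}_F$ lives in the finite-dimensional space $\FRTb{k+1}$, so its $L^2(F)$-norm is realized by testing the defining relation \eqref{eq:defNaF} against $\bvec{V}_F := \NaF \uvec{w}_F$ itself. Writing $N := \norm[F]{\NaF \uvec{w}_F}$ and decomposing $\bvec{V}_F = \FRTnvec{V} + \brcvec[F]{V} + \brvec[F]{V} + \rkvec[F]{V}$ along the direct sum defining $\FRTb{k+1}$, we obtain $N^2 = \int_F \NaF \uvec{w}_F \tdot \bvec{V}_F$ as the sum of the four terms on the right-hand side of \eqref{eq:defNaF}. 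The task is then to bound each term by (a piece of) $h_F^{-1}\opnNa[F]{\uvec{w}_F}$ times $N$, after which we divide through by $N$ (the case $N=0$ being trivial).

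For the three volumetric terms I would apply Cauchy--Schwarz on $F$ and then the discrete inverse Poincaré inequality (Lemma \ref{lemma:discretepoincare}) to absorb the divergence, giving bounds such as $\int_F \bgcvec[F]{w} \cdot \TDIV \brcvec[F]{V} \le \norm[F]{\bgcvec[F]{w}} \norm[F]{\TDIV \brcvec[F]{V}} \lesssim h_F^{-1} \norm[F]{\bgcvec[F]{w}} \norm[F]{\brcvec[F]{V}}$, and analogously for the $\bgvec[F]{w}$ and $w_F$ contributions. For the edge term I would use Cauchy--Schwarz on each $E$, the unit length of $\nFE$ (so that pointwise $\lvert \bvec{w}_E \bvec{V}_F \nFE\rvert \le \lvert\bvec{w}_E\rvert\,\lvert\bvec{V}_F\rvert$), and the discrete trace inequality (Lemma \ref{lemma:discretetrace}) to pass from $\norm[E]{\bvec{V}_F}$ to $h_F^{-1/2}\norm[F]{\bvec{V}_F}$, yielding $\sum_{E \in \EF} \lvert \int_E \bvec{w}_E \bvec{V}_F \nFE \rvert \lesssim h_F^{-1/2}\big(\sum_{E \in \EF}\norm[E]{\bvec{w}_E}\big) N$.

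To assemble, each component norm $\norm[F]{\brcvec[F]{V}}$, $\norm[F]{\brvec[F]{V}}$, $\norm[F]{\FRTnvec{V}}$ must be controlled by $N = \norm[F]{\bvec{V}_F}$. The $\nF\otimes$-part is handled by the $L^2$-orthogonality noted after \eqref{eq:defFRTb}, but the remaining Koszul-type pieces require the equivalence of norms on the finite-dimensional space $\FRTb{k+1}$, made uniform in $h_F$ by a scaling argument on the ball $\Ball{\bvec{x}_F}{\rho h_F} \subset F$ supplied by mesh regularity. Collecting the volumetric contributions into $\norm[F]{\bgcvec[F]{w}}+\norm[F]{\bgvec[F]{w}}+\norm[F]{w_F}$, and using $h_E \approx h_F$ together with the bounded cardinality of $\EF$ to rewrite $h_F^{-1/2}\sum_{E \in \EF}\norm[E]{\bvec{w}_E} \lesssim h_F^{-1}\big(\sum_{E \in \EF} h_E \norm[E]{\bvec{w}_E}^2\big)^{1/2}$, all surviving factors recombine into $h_F^{-1}\opnNa[F]{\uvec{w}_F}$.

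The main obstacle is the third step: ensuring that the constants in the norm-equivalence for the non-$L^2$-orthogonal splitting of $\FRTb{k+1}$, as well as those hidden in the inverse Poincaré and trace inequalities applied to these particular subspaces, are independent of $h_F$ and of the shape of the polygon $F$, which is exactly what the regularity parameter $\rho$ controls via the scaling argument. Once that uniformity is secured, the remainder is routine Cauchy--Schwarz bookkeeping.
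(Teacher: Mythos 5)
Your proposal is correct and follows essentially the same route as the paper, which simply tests \eqref{eq:defNaF} with $\bvec{V}_F = \NaF \uvec{w}_F$ and invokes Lemmas \ref{lemma:discretepoincare} and \ref{lemma:discretetrace}. You merely make explicit the uniform norm equivalence for the non-orthogonal splitting of $\FRTb{k+1}$, a step the paper leaves implicit but which is standard under the mesh regularity assumption.
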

\begin{proof}
This is a direct consequence of Lemma \ref{lemma:discretepoincare} and \ref{lemma:discretetrace} to the definition \eqref{eq:defNaF} with $\bvec{V}_F = \NaF \uvec{w}_F$.
\end{proof}

\begin{lemma}[Boundedness of the local trace] \label{lemma:boundtrna}
For all $F \in \Fh$ and all $\uvec{w}_F \in \uHvF$ it holds:
\begin{equation*}
\norm[F]{\trna \uvec{w}_F} \lesssim \norm[F]{\bgvec[F]{w}} +  \norm[F]{\bgcvec[F]{w}} + \norm[F]{w_F} + \sum_{E \in \EF} h_E^\frac{1}{2} \norm[E]{\bvec{w}_E} .
\end{equation*}
\end{lemma}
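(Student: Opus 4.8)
The plan is to exploit the defining relation \eqref{eq:deftrna} by testing it against the very matrix field whose row-wise divergence is $\trna\uvec{w}_F$ itself. Because $\DIV$ restricts to an isomorphism $\Roly{c,k+3}(F)\to\Poly{k+2}(F)$ by \eqref{eq:isodiv}, the row-wise operator $\TDIV$ is an isomorphism from $(\Roly{c,k+3}(F)^\intercal)^3$ onto $(\Poly{k+2}(F))^3$, which is exactly the codomain of $\trna$. Hence there is a unique $\bvec{V}_F\in(\Roly{c,k+3}(F)^\intercal)^3$ with $\TDIV\bvec{V}_F=\trna\uvec{w}_F$, and the inverse bound $\vertiii{(\DIV)^{-1}}\lesssim h_F$ from \cite[Lemma~A.9]{ddr} yields $\norm[F]{\bvec{V}_F}\lesssim h_F\,\norm[F]{\trna\uvec{w}_F}$.

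Feeding this $\bvec{V}_F$ into \eqref{eq:deftrna} turns the left-hand side into $\norm[F]{\trna\uvec{w}_F}^2$, so it remains to bound the two resulting terms, namely $-\int_F\NaF\uvec{w}_F\tdot\bvec{V}_F$ and $\sum_{E\in\EF}\wFE\int_E\bvec{w}_E\bvec{V}_F\nFE$, each by $\opnNa[F]{\uvec{w}_F}\,\norm[F]{\trna\uvec{w}_F}$. Dividing through by $\norm[F]{\trna\uvec{w}_F}$ then gives the claim, once one observes that the right-hand side of the statement is, up to the mesh-regularity constant, exactly $\opnNa[F]{\uvec{w}_F}$ (the squared quantities $\norm[F]{\bgvec[F]{w}}^2$, $\norm[F]{\bgcvec[F]{w}}^2$, $\norm[F]{w_F}^2$ and $\sum_E h_E\norm[E]{\bvec{w}_E}^2$ recombining into the displayed sum).

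For the volume term I would use Cauchy--Schwarz together with Lemma \ref{lemma:boundNaF}: $|\int_F\NaF\uvec{w}_F\tdot\bvec{V}_F|\le\norm[F]{\NaF\uvec{w}_F}\,\norm[F]{\bvec{V}_F}\lesssim h_F^{-1}\opnNa[F]{\uvec{w}_F}\cdot h_F\norm[F]{\trna\uvec{w}_F}$, the two powers of $h_F$ cancelling. For the boundary term I would apply Cauchy--Schwarz edge by edge and then the discrete trace inequality of Lemma \ref{lemma:discretetrace} componentwise to $\bvec{V}_F\nFE$ --- which is a vector-valued polynomial on $F$ since $\nFE$ is constant --- to get $\norm[E]{\bvec{V}_F\nFE}\lesssim h_F^{-1/2}\norm[F]{\bvec{V}_F}\lesssim h_F^{1/2}\norm[F]{\trna\uvec{w}_F}$; mesh regularity ($h_F\approx h_E$) converts the prefactor into $h_E^{1/2}$, producing exactly $\sum_E h_E^{1/2}\norm[E]{\bvec{w}_E}\,\norm[F]{\trna\uvec{w}_F}$.

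The only delicate point is the bookkeeping of the powers of $h$ --- the cancellation $h_F^{-1}\cdot h_F$ in the volume estimate and $h_F^{-1/2}\cdot h_F=h_F^{1/2}\approx h_E^{1/2}$ in the boundary estimate --- and the componentwise use of the trace inequality on the matrix--vector contraction $\bvec{V}_F\nFE$. Neither is deep, so I expect no genuine obstacle; the crux of the argument is simply the choice of $\bvec{V}_F$ with prescribed divergence equal to $\trna\uvec{w}_F$, the standard device for estimating a trace defined through an integration-by-parts identity.
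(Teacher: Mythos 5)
Your proposal is correct and follows essentially the same route as the paper: pick $\bvec{V}_F\in(\Roly{c,k+3}(F)^\intercal)^3$ with $\TDIV\bvec{V}_F=\trna\uvec{w}_F$ via the isomorphism \eqref{eq:isodiv} and its inverse bound, test \eqref{eq:deftrna} against it, and estimate the volume term with Lemma \ref{lemma:boundNaF} and the boundary term with the discrete trace inequality. The power-of-$h$ bookkeeping you describe matches the paper's computation exactly.
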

\begin{proof}
For any $\uvec{w}_F \in \uHvF$, let $\bvec{V}_F \in (\Roly{c,k+3}(F)^\intercal)^3$ be such that $\TDIV \bvec{V}_F = \trna \uvec{w}_F$.
From the estimate on the isomorphism \eqref{eq:isodiv} it holds $\norm[F]{\bvec{V}_F} \lesssim h_F \norm[F]{\trna \uvec{w}_F}$.
Then from \eqref{eq:deftrna} and \eqref{eq:defNaF} we write:
\begin{equation*}
\begin{aligned}
\int_F \trna \uvec{w}_F \cdot \TDIV \bvec{V}_F 
=&\; - \int_F \NaF \uvec{w}_F \tdot \bvec{V}_F + \sum_{E \in \EF} \wFE \int_E \bvec{w}_E \bvec{V}_F \nFE \\
\lesssim&\; \norm[F]{\NaF \uvec{w}_F} \norm[F]{\bvec{V}_F} + \sum_{E \in \EF} h_E^\frac{1}{2} \norm[E]{\bvec{w}_E} h_E^{-1} \norm[F]{\bvec{V}_F}.
\end{aligned}
\end{equation*}
We used the discrete trace inequality Lemma \ref{lemma:discretetrace} on the boundary term, 
and we conclude with Lemma \ref{lemma:boundNaF}.
\end{proof}

\begin{lemma}[Inverse Poincaré inequality] \label{lemma:InvPoincare}
For all $T \in \Th$ and all $\uvec{w}_T \in \uHv$ it holds:
\[ \norm{\NaT \uvec{w}_T} \lesssim h^{-1} \opnNa{\uvec{w}_T}. 
\]
\end{lemma}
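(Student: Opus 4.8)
The plan is to mimic the face-level estimate of Lemma \ref{lemma:boundNaF}, testing the definition \eqref{eq:defNaT} of $\NaT$ against the element $\bvec{V}_T = \NaT \uvec{w}_T$ itself, which is admissible since $\NaT \uvec{w}_T \in \RTb{k+1}(T)$. Writing $\NaT \uvec{w}_T = \brcvec{V} + \brvec{V} + \rkvec{V}$ along the direct sum \eqref{eq:defRTb}, the Frobenius product $\tdot$ makes the left-hand side equal to $\norm[T]{\NaT \uvec{w}_T}^2$, while the right-hand side splits into two volume contributions and one boundary contribution. The goal is then to bound each of these by $\norm[T]{\NaT \uvec{w}_T}$ times a quantity controlled by $h_T^{-1}\opnNa{\uvec{w}_T}$, so that dividing through by $\norm[T]{\NaT \uvec{w}_T}$ yields $\norm[T]{\NaT \uvec{w}_T} \lesssim h_T^{-1}\opnNa{\uvec{w}_T}$, i.e.\ the asserted estimate (with $h$ read as the local cell size $h_T$).

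For the two volume terms $\int_T \bgcvec{w} \cdot \TDIV \brcvec{V}$ and $\int_T \bgvec{w} \cdot \TDIV \brvec{V}$, I would apply the Cauchy--Schwarz inequality and the discrete inverse Poincaré inequality (Lemma \ref{lemma:discretepoincare}) to get $\norm[T]{\TDIV \brcvec{V}} \lesssim h_T^{-1} \norm[T]{\brcvec{V}}$, and likewise for $\brvec{V}$. This reduces the two terms to $h_T^{-1}\big(\norm[T]{\bgcvec{w}}\,\norm[T]{\brcvec{V}} + \norm[T]{\bgvec{w}}\,\norm[T]{\brvec{V}}\big)$, in which the cell parts $\norm[T]{\bgcvec{w}}$ and $\norm[T]{\bgvec{w}}$ are exactly the first two summands of $\opnNa{\uvec{w}_T}^2$. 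Note that the divergence-free part $\rkvec{V} \in (\Roly{k}(T)^\intercal)^3$ does not enter these terms, in agreement with Remark \ref{rem:NaTqI}.

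For the boundary term $\sum_{F \in \FT} \wTF \int_F \trna \uvec{w}_F \, \bvec{V}_T \nF$, I would use Cauchy--Schwarz face by face together with $\norm[F]{\bvec{V}_T \nF} \le \norm[F]{\bvec{V}_T}$ and the discrete trace inequality (Lemma \ref{lemma:discretetrace}), giving $\norm[F]{\bvec{V}_T \nF} \lesssim h_T^{-1/2} \norm[T]{\bvec{V}_T}$. The face traces $\norm[F]{\trna \uvec{w}_F}$ are then controlled by the already-established Lemma \ref{lemma:boundtrna}, which produces precisely the face components $\norm[F]{\bgvec[F]{w}}$, $\norm[F]{\bgcvec[F]{w}}$, $\norm[F]{w_F}$ and the edge contributions $h_E^{1/2}\norm[E]{\bvec{w}_E}$. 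Matching the resulting powers of $h_F$ and $h_E$ against the weights $h_F$ and $h_F h_E$ appearing in \eqref{eq:defopnNa}, and using $h_F \approx h_E \approx h_T$ from the mesh regularity, shows that $h_T^{-1/2}\norm[F]{\trna \uvec{w}_F}$ is dominated by $h_T^{-1}\opnNa{\uvec{w}_T}$.

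The only genuinely delicate point is the step where $\norm[T]{\brcvec{V}}$ and $\norm[T]{\brvec{V}}$ are replaced by $\norm[T]{\bvec{V}_T} = \norm[T]{\NaT \uvec{w}_T}$: since the decomposition \eqref{eq:defRTb} is a direct but not $L^2$-orthogonal sum, this needs the $h$-uniform equivalence of the component norms, i.e.\ the boundedness (independently of $h$) of the projections onto $\Rolyb{c,k+1}(T)$, $\Rolyb{k}(T)$ and $(\Roly{k}(T)^\intercal)^3$. This follows from a scaling argument on the homogeneous polynomial spaces, with constants depending only on $k$ and $\rho$, and is the analogue of what already underlies Lemma \ref{lemma:boundNaF}. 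Once this is in hand, collecting the three estimates and dividing by $\norm[T]{\NaT \uvec{w}_T}$ completes the argument.
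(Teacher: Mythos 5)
Your proposal is correct and follows essentially the same route as the paper: test \eqref{eq:defNaT} with $\bvec{V}_T = \NaT \uvec{w}_T$, bound the volume terms via the inverse inequality $\norm[T]{\TDIV \cdot} \lesssim h_T^{-1}\norm[T]{\cdot}$ and the boundary term via the discrete trace inequality, then invoke Lemma \ref{lemma:boundtrna}. Your explicit remark that the non-orthogonal direct sum \eqref{eq:defRTb} requires $h$-uniform boundedness of the component projections is a point the paper passes over silently, and your scaling justification for it is the right one.
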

\begin{proof}
Let $T \in \Th$ and $\uvec{w}_T \in \uHv$. 
The proof hinge on the bound of isomorphism \eqref{eq:isodiv} $\norm{\TDIV \NaT \uvec{w}_T} \lesssim h^{-1} \norm{\NaT \uvec{w}_T}$ to show that:
\begin{equation*}
\begin{aligned}
\int_T \NaF \uvec{w}_T \tdot \NaT \uvec{w}_T
\lesssim&\; \norm{\bgcvec{w}} h^{-1} \norm{\NaT \uvec{w}_T} + \norm{\bgvec{w}} h^{-1} \norm{\NaT \uvec{w}_T} \\
&\quad + \norm[F]{\trna \uvec{w}_F} \; h^{- \frac{1}{2}} \norm[T]{\NaT \uvec{w}_T} \\
\lesssim&\; h^{-1} \norm{\NaT \uvec{w}_T} ( \norm{\bgcvec{v}} + \norm{\bgvec{v}} + \sum_{F \in \FT} h^{\frac{1}{2}} \norm{\trna \uvec{v}_F}).
\end{aligned}
\end{equation*}
We conclude with Lemma \ref{lemma:boundtrna}.
\end{proof}

\begin{lemma}[Boundedness of the local potential] \label{lemma:boundpna}
For all $T \in \Th$ and all $\uvec{w}_T \in \uHv$ it holds: 
\begin{equation} \label{eq:boundpna}
\norm{\pna \uvec{w}_T} \lesssim \opnNa{\uvec{w}_T}
\end{equation}
\end{lemma}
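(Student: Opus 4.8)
The plan is to test the defining relation \eqref{eq:defpna} against a matrix field whose row-wise divergence equals $\pna \uvec{w}_T$ itself, turning $\norm{\pna \uvec{w}_T}^2$ into the right-hand side of \eqref{eq:defpna}, and then to bound each of the two resulting terms by $\opnNa{\uvec{w}_T}\,\norm{\pna \uvec{w}_T}$.

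First I would observe that $\pna \uvec{w}_T \in \bPoly{k+1}(T)$ and that, by the isomorphism \eqref{eq:isodiv} applied row by row, $\TDIV$ is a bijection from $(\Roly{c,k+2}(T)^\intercal)^3$ onto $\bPoly{k+1}(T)$. Hence there is a $\bvec{V}_T \in (\Roly{c,k+2}(T)^\intercal)^3$ with $\TDIV \bvec{V}_T = \pna \uvec{w}_T$, and the bound on the inverse of \eqref{eq:isodiv}, namely $\vertiii{(\DIV)^{-1}} \lesssim h$, gives $\norm{\bvec{V}_T} \lesssim h\,\norm{\pna \uvec{w}_T}$. Inserting this $\bvec{V}_T$ into \eqref{eq:defpna} yields
\[ \norm{\pna \uvec{w}_T}^2 = - \int_T \NaT \uvec{w}_T \tdot \bvec{V}_T + \sum_{F \in \FT} \wTF \int_F \trna \uvec{w}_F \, \bvec{V}_T \, \nF . \]

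For the volume term, Cauchy--Schwarz together with the inverse Poincaré inequality (Lemma \ref{lemma:InvPoincare}), $\norm{\NaT \uvec{w}_T} \lesssim h^{-1}\opnNa{\uvec{w}_T}$, and the estimate $\norm{\bvec{V}_T} \lesssim h\,\norm{\pna \uvec{w}_T}$ produce a contribution $\lesssim \opnNa{\uvec{w}_T}\,\norm{\pna \uvec{w}_T}$, the powers of $h$ cancelling. For the boundary term I would bound $\norm[F]{\bvec{V}_T\,\nF} \le \norm[F]{\bvec{V}_T}$ and apply the discrete trace inequality (Lemma \ref{lemma:discretetrace}) to obtain $\norm[F]{\bvec{V}_T} \lesssim h^{-1/2}\norm{\bvec{V}_T} \lesssim h^{1/2}\norm{\pna \uvec{w}_T}$; after factoring out $\norm{\pna \uvec{w}_T}$, a Cauchy--Schwarz over the uniformly bounded number of faces reduces the task to controlling $\sum_{F \in \FT} h\,\norm[F]{\trna \uvec{w}_F}^2$. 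This is handled by the boundedness of the local trace (Lemma \ref{lemma:boundtrna}) together with mesh regularity ($h_F \approx h_E \approx h$), since the resulting terms $h_F\norm[F]{\cdot}^2$ and $h_F h_E\norm[E]{\bvec{w}_E}^2$ are exactly those appearing in the definition \eqref{eq:defopnNa} of $\opnNa{\uvec{w}_T}$; thus the boundary term is again $\lesssim \opnNa{\uvec{w}_T}\,\norm{\pna \uvec{w}_T}$.

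Combining the two estimates gives $\norm{\pna \uvec{w}_T}^2 \lesssim \opnNa{\uvec{w}_T}\,\norm{\pna \uvec{w}_T}$, and dividing by $\norm{\pna \uvec{w}_T}$ (the inequality being trivial when it vanishes) yields the claim. The main obstacle is the $h$-bookkeeping in the boundary term: one must match $h^{1/2}\norm[F]{\trna \uvec{w}_F}$ against the weights $h_F^{1/2}$ and $h_F^{1/2}h_E^{1/2}$ of \eqref{eq:defopnNa} and invoke Lemma \ref{lemma:boundtrna} at the correct scaling so that no spurious negative power of $h$ survives; the rest is a routine application of the isomorphism estimate, the inverse Poincaré inequality and the discrete trace inequality.
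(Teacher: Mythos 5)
Your proposal is correct and follows essentially the same route as the paper: the same choice of $\bvec{V}_T \in (\Roly{c,k+2}(T)^\intercal)^3$ with $\TDIV \bvec{V}_T = \pna \uvec{w}_T$, the same use of the inverse of \eqref{eq:isodiv}, the discrete trace inequality, and the conclusion via Lemmas \ref{lemma:boundtrna} and \ref{lemma:InvPoincare}. Your write-up is merely more explicit about the $h$-bookkeeping, which the paper leaves implicit.
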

\begin{proof}
Let $T \in \Th$, $\uvec{w}_T \in \uHv$
and let $\bvec{V}_T \in (\Roly{c,k+2}(T)^\intercal)^3$ such that $\TDIV \bvec{V}_T = \pna \uvec{w}_T$.
From the definition \eqref{eq:defpna} we have
\begin{equation*}
\begin{aligned}
\norm{\pna \uvec{w}_T}^2 =&\; \int_T \pna \uvec{w}_T \cdot \TDIV \bvec{V}_T \\ 
\leq&\; \norm{\NaT \uvec{w}_T} \norm{\bvec{V}_T} + \sum_{F \in \FT} \norm[F]{\trna \uvec{w}_F} \norm[F]{\bvec{V}_T} \\
\lesssim&\; \left ( h_T \norm{\NaT \uvec{w}_T} + \sum_{F \in \FT} h_F^\frac{1}{2} \norm[F]{\trna \uvec{w}_F} \right ) \norm[T]{\pna \uvec{w}_T} .
\end{aligned}
\end{equation*}
We conclude with Lemma \ref{lemma:boundtrna} and \ref{lemma:InvPoincare}.
\end{proof}

\begin{lemma} \label{lemma:normequiv}
It holds, for all $T \in \Th$
\[ \normNa{ \uvec{w}_T} \approx \opnNa{\uvec{w}_T}, \forall \uvec{w}_T \in \uHv .
\]
\end{lemma}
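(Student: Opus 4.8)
The plan is to establish the two inequalities $\normNa{\uvec{w}_T} \lesssim \opnNa{\uvec{w}_T}$ and $\opnNa{\uvec{w}_T} \lesssim \normNa{\uvec{w}_T}$ separately, each by a term‑by‑term comparison. Both norms split into a cell contribution, face contributions and edge contributions, and since the mesh sequence is regular one has $h_E \approx h_F \approx h_T$, so the various powers of $h$ may be freely interchanged at the price of constants depending only on $\rho$. Throughout I would use the uniform $L^2$-boundedness of the projectors appearing in \eqref{eq:pipna} and \eqref{eq:pitrna} (a standard scaling argument, as in \cite{ddr}) together with the discrete trace inequality (Lemma \ref{lemma:discretetrace}), the boundedness of the potential (Lemma \ref{lemma:boundpna}) and the bound on the trace (Lemma \ref{lemma:boundtrna}).

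For $\normNa{\uvec{w}_T} \lesssim \opnNa{\uvec{w}_T}$, the cell term $\norm[T]{\pna \uvec{w}_T}$ is controlled directly by Lemma \ref{lemma:boundpna}. For each face stabilization term I would split, via the triangle inequality,
\[
h_F^{1/2} \norm[F]{\pna \uvec{w}_T - \trna \uvec{w}_F} \leq h_F^{1/2} \norm[F]{\pna \uvec{w}_T} + h_F^{1/2} \norm[F]{\trna \uvec{w}_F},
\]
then bound the first summand by $\norm[T]{\pna \uvec{w}_T} \lesssim \opnNa{\uvec{w}_T}$ using Lemma \ref{lemma:discretetrace} and Lemma \ref{lemma:boundpna}, and the second by $\opnNa{\uvec{w}_T}$ using Lemma \ref{lemma:boundtrna}. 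The edge stabilization terms are handled identically, splitting $h_E \norm[E]{\pna \uvec{w}_T - \bvec{w}_E}$ and absorbing the $\pna$ part with a double application of Lemma \ref{lemma:discretetrace} (from $T$ to $F$ to $E$), while $\norm[E]{\bvec{w}_E}$ appears verbatim in \eqref{eq:defopnNa}.

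For the converse $\opnNa{\uvec{w}_T} \lesssim \normNa{\uvec{w}_T}$, the idea is that every component of $\uvec{w}_T$ is recovered from $\pna \uvec{w}_T$ and $\trna \uvec{w}_F$ through projection identities. The cell components are the easiest: by \eqref{eq:pipna}, $\Gproj{k-1}(\pna \uvec{w}_T) = \bgvec{w}$ and $\Gcproj{k}(\pna \uvec{w}_T) = \bgcvec{w}$, so boundedness of the projectors gives $\norm[T]{\bgvec{w}} + \norm[T]{\bgcvec{w}} \lesssim \norm[T]{\pna \uvec{w}_T} \leq \normNa{\uvec{w}_T}$. For the face components $w_F$, $\bgvec[F]{w}$, $\bgcvec[F]{w}$, identity \eqref{eq:pitrna} expresses each as a projection of $\trna \uvec{w}_F$, so it suffices to bound $h_F^{1/2} \norm[F]{\trna \uvec{w}_F}$; I would obtain this from the triangle split $h_F^{1/2} \norm[F]{\trna \uvec{w}_F} \leq h_F^{1/2} \norm[F]{\pna \uvec{w}_T - \trna \uvec{w}_F} + h_F^{1/2} \norm[F]{\pna \uvec{w}_T}$, the first term being a stabilization contribution bounded by $\normNa{\uvec{w}_T}$ and the second controlled by $\norm[T]{\pna \uvec{w}_T}$ via Lemma \ref{lemma:discretetrace}. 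The edge polynomials $\bvec{w}_E$ are treated the same way, using the edge stabilization term for $\norm[E]{\pna \uvec{w}_T - \bvec{w}_E}$.

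The main obstacle is precisely this converse direction: unlike the cell and some face quantities, the traces $\trna \uvec{w}_F$ and the edge polynomials $\bvec{w}_E$ do not appear as standalone terms in $\normNa{\uvec{w}_T}$ and cannot be read off directly. The crux is to route through $\pna \uvec{w}_T$: the stabilization terms measure exactly the differences $\pna \uvec{w}_T - \trna \uvec{w}_F$ and $\pna \uvec{w}_T - \bvec{w}_E$, and the boundedness of $\pna$ together with the discrete trace inequality allows one to add back the $\pna$ restriction at no cost, up to the scaling constants furnished by mesh regularity.
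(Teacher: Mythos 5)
Your proposal is correct and follows essentially the same route as the paper's proof: both directions are obtained by the same triangle-inequality splits of the stabilization terms, absorbed via the discrete trace inequality, Lemma \ref{lemma:boundpna} and Lemma \ref{lemma:boundtrna}, and the converse direction recovers the components of $\uvec{w}_T$ exactly as the paper does through the projection identities \eqref{eq:pipna} and \eqref{eq:pitrna}.
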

\begin{proof}
We apply the trace inequality to the definitions \eqref{eq:defspNa} and \eqref{eq:defstNa} to write:
\begin{equation*}
\begin{aligned}
\normNa{\uvec{w}_T}^2 
\leq&\; \norm[T]{\pna \uvec{w}_T}^2 
+ \sum_{F \in \FT} h_F \left ( \norm[F]{\trna \uvec{w}_F}^2 + \norm[F]{\pna \uvec{w}_T}^2 \right ) 
+ \sum_{E \in \ET} h_E^2 \left ( \norm[E]{\bvec{w}_E}^2 + \norm[E]{\pna \uvec{w}_T}^2 \right ) \\
\lesssim&\; \norm[T]{\pna \uvec{w}_T}^2 
+ \sum_{F \in \FT} h_F \norm[F]{\trna \uvec{w}_F}^2 
+ \sum_{E \in \ET} h_E^2  \norm[E]{\bvec{w}_E}^2 \\
\lesssim&\; \opnNa{\uvec{w}_T}^2 .
\end{aligned}
\end{equation*}
We conclude with Lemma \ref{lemma:boundpna} and \ref{lemma:boundtrna}.

Conversely, to prove $\opnNa{\uvec{w}_T} \lesssim \normNa{\uvec{w}_T}$ we begin to write
\begin{equation*}
\begin{aligned}
\norm[E]{\bvec{w}_E}^2 \lesssim&\; \norm[E]{\bvec{w}_E - \pna \uvec{w}_T}^2 + h_E^{-2} \norm[T]{\pna \uvec{w}_T}^2,\\
\norm[F]{\trna \uvec{w}_F}^2 \lesssim&\; \norm[F]{\trna \uvec{w}_F - \pna \uvec{w}_T}^2 + h_F^{-1} \norm[T]{\pna \uvec{w}_T}^2.
\end{aligned}
\end{equation*}
Then we conclude with consistency properties \eqref{eq:pipna} and \eqref{eq:pitrna} which allow us to bound 
$\norm[T]{\bgvec{w}}$, $\norm[T]{\bgcvec{w}}$, $\norm[F]{\bgvec[F]{w}}$, $\norm[F]{\bgcvec[F]{w}}$ and $\norm[F]{w_F}$.
For example $\norm[F]{\bgcvec[F]{w}} = \norm[F]{\Gcproj[F]{k} \trna \uvec{w}_F} \leq \norm[F]{\trna \uvec{w}_F}$.
\end{proof}

\begin{lemma} \label{lemma:normLtequiv}
It holds, for all $T \in \Th$
\begin{equation*}
\normLt{\uvec{W}_T} \approx \opnLt{\uvec{W}_T}, \forall \uvec{W}_T \in \uLt .
\end{equation*}
\end{lemma}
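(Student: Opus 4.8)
The plan is to follow the pattern of Lemma \ref{lemma:normequiv}, but the argument is markedly simpler: the full tensor $\bvec{W}_T$ is a genuine component of $\uvec{W}_T$ and plays here the role that the reconstruction $\pna$ played there, so no potential reconstruction and no consistency property is needed. Written out, the two squared norms are $\normLt{\uvec{W}_T}^2 = \norm[T]{\bvec{W}_T}^2 + \sum_{F \in \FT} h_F \norm[F]{\Tttr{W} - \bvec{W}_F}^2 + \sum_{E \in \ET} h_E^2 \norm[E]{\bvec{W}_T\,\nE - \bvec{W}_E}^2$ and $\opnLt{\uvec{W}_T}^2 = \norm[T]{\bvec{W}_T}^2 + \sum_{F \in \FT} h_F \norm[F]{\bvec{W}_F}^2 + \sum_{F \in \FT}\sum_{E \in \EF} h_F h_E \norm[E]{\bvec{W}_E}^2$, where, by mesh regularity, for a fixed edge $E$ of $T$ the weight $\sum h_F h_E$ over the two faces of $T$ sharing $E$ is $\approx h_E^2$. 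Both inequalities will then follow from the discrete trace inequality (Lemma \ref{lemma:discretetrace}) together with the regularity comparisons $h_E \lesssim h_F \lesssim h_T$ (hence $h_E^2 \lesssim h_F h_T$), all hidden constants depending only on $\rho$ and $k$.

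For the bound $\normLt{\uvec{W}_T} \lesssim \opnLt{\uvec{W}_T}$ the cell term $\norm[T]{\bvec{W}_T}^2$ is common to both norms. For each face term I use $\norm[F]{\Tttr{W} - \bvec{W}_F} \le \norm[F]{\Tttr{W}} + \norm[F]{\bvec{W}_F}$; since $\Tttr{W} = \bvec{W}_T - (\bvec{W}_T\,\nF)\otimes\nF$ is a bounded linear expression in the trace of $\bvec{W}_T$ on $F$, Lemma \ref{lemma:discretetrace} gives $\norm[F]{\Tttr{W}} \lesssim \norm[F]{\bvec{W}_T} \lesssim h_T^{-1/2}\norm[T]{\bvec{W}_T}$, whence $h_F \norm[F]{\Tttr{W}}^2 \lesssim (h_F/h_T)\,\norm[T]{\bvec{W}_T}^2 \lesssim \norm[T]{\bvec{W}_T}^2$, while $h_F \norm[F]{\bvec{W}_F}^2$ already appears in $\opnLt$. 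For each edge term I split $\norm[E]{\bvec{W}_T\,\nE - \bvec{W}_E} \le \norm[E]{\bvec{W}_T\,\nE} + \norm[E]{\bvec{W}_E}$ and apply Lemma \ref{lemma:discretetrace} twice ($T$ to a face $F$, then $F$ to $E$) to obtain $\norm[E]{\bvec{W}_T\,\nE} \le \norm[E]{\bvec{W}_T} \lesssim h_F^{-1/2}h_T^{-1/2}\norm[T]{\bvec{W}_T}$, so that $h_E^2 \norm[E]{\bvec{W}_T\,\nE}^2 \lesssim (h_E^2/(h_F h_T))\,\norm[T]{\bvec{W}_T}^2 \lesssim \norm[T]{\bvec{W}_T}^2$, the remaining $h_E^2 \norm[E]{\bvec{W}_E}^2$ being controlled by the edge part of $\opnLt$.

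For the converse $\opnLt{\uvec{W}_T} \lesssim \normLt{\uvec{W}_T}$ I use the triangle inequality the other way. Writing $\norm[F]{\bvec{W}_F} \le \norm[F]{\bvec{W}_F - \Tttr{W}} + \norm[F]{\Tttr{W}}$ yields $h_F \norm[F]{\bvec{W}_F}^2 \lesssim h_F\norm[F]{\Tttr{W} - \bvec{W}_F}^2 + h_F\norm[F]{\Tttr{W}}^2$, where the first term is exactly the face stabilization occurring in $\normLt{\uvec{W}_T}^2$ and the second is again $\lesssim \norm[T]{\bvec{W}_T}^2 \le \normLt{\uvec{W}_T}^2$ by the trace estimate above. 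Similarly $h_E^2 \norm[E]{\bvec{W}_E}^2 \lesssim h_E^2\norm[E]{\bvec{W}_T\,\nE - \bvec{W}_E}^2 + h_E^2\norm[E]{\bvec{W}_T\,\nE}^2$, the first being the edge stabilization and the second $\lesssim \norm[T]{\bvec{W}_T}^2$. Summing over $\FT$ and $\ET$ and invoking the weight comparison $\sum h_F h_E \approx h_E^2$ closes the estimate.

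The only delicate point is the bookkeeping of the powers of $h_F$, $h_E$ and $h_T$ and the systematic replacement, via mesh regularity, of these quantities by comparable ones up to a constant depending on $\rho$; in particular one relies on $h_E^2 \lesssim h_F h_T$ to absorb the doubly traced cell contributions. One should also note that Lemma \ref{lemma:discretetrace} is applied entrywise to the tensor- and vector-valued polynomials $\bvec{W}_T$, $\Tttr{W}$ and $\bvec{W}_T\,\nE$. No idea beyond those already used for Lemma \ref{lemma:normequiv} is needed.
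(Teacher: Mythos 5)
Your proof is correct and follows exactly the route the paper intends: the paper's proof of this lemma simply states that the argument of Lemma \ref{lemma:normequiv} carries over, and your write-up is a faithful (and correctly simplified) instantiation of that argument, using the triangle inequality together with the discrete trace inequality of Lemma \ref{lemma:discretetrace} in both directions, with the observation that no potential reconstruction or consistency property is needed since $\bvec{W}_T$ is itself a component of $\uvec{W}_T$. The bookkeeping of the weights $h_F$, $h_E$, $h_T$ via mesh regularity is also handled correctly.
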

\begin{proof}
The same proof as Lemma \ref{lemma:normequiv} works.
\end{proof}

\section{Complex property.} \label{Complexproperty}
In this section we study the following sequence:
\begin{equation} \label{eq:complexseq}
\begin{tikzcd}
\uHgrad \arrow[r,"\uGh"] &
\uHcurlh \arrow[r,"\uCh"] &
\uHvh \arrow[r,"\Dh"] &
\uLsh .
\end{tikzcd}
\end{equation}
We will show in Theorem \ref{th:complexe} that \eqref{eq:complexseq} is indeed a complex, 
but first we show that the interpolators form a cochain morphism from a continuous de Rham complex into the sequence \eqref{eq:complexseq}.
\begin{lemma}[Local commutation properties] \label{lemma:localcommprop}
It holds for all $T \in \Th$, 
\begin{subequations} \label{eq:comm}
\begin{align}
\uGT (\uIgrad q) =&\; \uIcurl (\GRAD q), \quad \, \forall q \in C^2(\overline{T}), \label{eq:comm0}\\
\uCT (\uIcurl \bvec{v}) =&\; \uIH (\CURL \bvec{v}), \quad \, \forall \bvec{v} \in \bvec{C}^1(\overline{T}), \label{eq:comm1}\\
\uNaT (\uIH \bvec{w}) =&\; \uIL (\TGRAD \bvec{w}), \quad \forall \bvec{w} \in \bvec{C}^1(\overline{T}), \label{eq:comm2}\\
\DT (\uIH \bvec{w}) =&\; \lproj{k}{T} (\DIV \bvec{w}), \quad \, \forall \bvec{w} \in \bvec{C}^0(\overline{T}) \cap \bvec{H}^1(T) \label{eq:comm3}.
\end{align}
\end{subequations}
\end{lemma}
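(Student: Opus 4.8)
The plan is to prove each of the four identities entrywise. Every discrete space appearing in \eqref{cd:Discretecomplex} is a product of polynomial components attached to the vertices, edges, faces and interior of $T$; each interpolator $\uIgrad$, $\uIcurl$, $\uIH$, $\uIL$ acts componentwise; and each full operator $\uGT$, $\uCT$, $\uNaT$ is assembled by gathering the lower-dimensional reconstructions and $L^2$-projecting the cell-level reconstruction. Matching the two sides of \eqref{eq:comm0}--\eqref{eq:comm3} therefore reduces to a finite list of componentwise equalities. The guiding principle is that almost every such equality is either a consistency property already established above, or follows because, by the trace identities \eqref{eq:trNRT}, the polynomial test functions seen by the defining integration-by-parts formulas have low enough degree for the intervening $L^2$-projections to be dropped, after which one integrates by parts and recognizes the continuous operator.

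For \eqref{eq:comm0} and \eqref{eq:comm1}, which generalize the de Rham commutations of \cite{ddr}, I would proceed vertex-edge-face-cell. The vertex components match trivially ($\CURL \GRAD q = \bvec{0}$ on one side, the zero vertex value built into $\uGE$ on the other; evaluations of $\CURL \bvec{v}$ for the curl). The edge components are exactly \eqref{eq:consuGE} for the gradient and an entirely analogous edge identity for the curl, both obtained by testing against $\bPoly{}(E)$ and using continuity at the endpoints to erase the projection. The genuinely new pieces are the full-gradient face unknowns $\rdofFg{v},\rdofFgc{v}$ produced through $\GFp$ and $\bCF$: for these I plug the interpolate into \eqref{eq:defGFp} (resp. \eqref{eq:defbCF}), drop the $L^2$-projections against the $\Goly{}(F)$/$\Goly{c}{}(F)$ test functions, and integrate by parts on $F$ exactly as in the proof of \eqref{eq:consGFp}. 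The cell components come from the formula \eqref{eq:defGT} (resp. \eqref{eq:defCT}), the isomorphisms \eqref{eq:trNRT} to drop the face and volume projections, and the trace consistencies \eqref{eq:conspitrgrad} and \eqref{eq:trcurlGF}.

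The Jacobian commutation \eqref{eq:comm2} is the main new content. The edge component $\NaE(\uIH[E]\bvec{w}) = \dotp{\bvec{w}_E}$ equals the projected tangential derivative $\uIL[E](\TGRAD\bvec{w})$ by the same integration-by-parts-with-vertex-matching argument as \eqref{eq:consuGE}. The face component is immediate from Lemma \ref{lemma:NaFuIH}: $\NaF(\uIH[F]\bvec{w}) = \FRTbproj{k+1}(\TGRAD\bvec{w})$ is precisely the face unknown of $\uIL(\TGRAD\bvec{w})$. For the cell component I would test the defining relation \eqref{eq:defNaT} evaluated at $\uIH\bvec{w}$ against an arbitrary $\bvec{V}_T = \brcvec{V}+\brvec{V}+\rkvec{V}\in\RTb{k+1}(T)$, use Lemma \ref{lemma:defGbc} together with Remark \ref{rem:divInvDivGrad} to remove the projections $\Gbcproj{k}$ and $\Gproj{k-1}$ against $\TDIV\brcvec{V}\in\Goly{c,k}(T)$ and $\TDIV\brvec{V}\in\Goly{k-1}(T)$, handle the boundary integrals with the face-trace consistency \eqref{eq:pitrna}, and finally integrate by parts to recognize $\int_T \TGRAD\bvec{w}\tdot\bvec{V}_T$. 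Since $\NaT(\uIH\bvec{w})\in\RTb{k+1}(T)$ and the test space is all of $\RTb{k+1}(T)$, this yields $\NaT(\uIH\bvec{w}) = \RTbproj[T]{k+1}(\TGRAD\bvec{w})$, the cell unknown of $\uIL(\TGRAD\bvec{w})$. Identity \eqref{eq:comm3} then follows directly: plugging $\uIH\bvec{w}$ into the divergence formula of Remark \ref{rem:NaTqI}, dropping the projections against $q_T\in\Poly{k}(T)$ (legitimate since $\GRAD q_T\in\Goly{k-1}(T)$ and $q_{T|F}\in\Poly{k}(F)$), and integrating by parts gives $\int_T \DT(\uIH\bvec{w})\,q_T = \int_T \DIV\bvec{w}\,q_T$; the $\bvec{H}^1$ regularity is exactly what makes $\DIV\bvec{w}\in L^2(T)$ and the integration by parts valid.

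The hard part will be the cell-level step of \eqref{eq:comm2}: for a general, non-polynomial $\bvec{w}$ one must match the boundary contribution $\sum_{F\in\FT}\wTF\int_F \trna(\uIH[F]\bvec{w})\,\bvec{V}_T\,\nF$ against $\sum_{F\in\FT}\wTF\int_F \bvec{w}\,\bvec{V}_T\,\nF$. This requires the normal trace $\bvec{V}_T\,\nF$ of an element of $\RTb{k+1}(T)$ to decompose, via \eqref{eq:trNRT} and the bar-space construction of $\RTb{}$ and $\FRTb{}$, exactly into the normal- and tangential-polynomial spaces that the face unknowns $w_F$, $\bgvec[F]{w}$, $\bgcvec[F]{w}$ reproduce through \eqref{eq:pitrna}. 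Getting this projection-and-decomposition bookkeeping to line up is the delicate point; by contrast the vertex, edge and face identities are comparatively routine consequences of the consistency lemmas already proven.
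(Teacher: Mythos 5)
Your proposal is correct and follows essentially the same route as the paper: a componentwise verification in which the edge identities are \eqref{eq:consuGE} and its analogues, the face and cell identities are obtained by dropping the $L^2$-projections against test functions of the right degree (via \eqref{eq:trNRT}, Lemma \ref{lemma:defGbc} and Remark \ref{rem:divInvDivGrad}) and integrating by parts, and \eqref{eq:comm2} rests on \eqref{eq:defNaE}, Lemma \ref{lemma:NaFuIH} and the cell-level analogue. The only difference is one of detail: the paper dispatches the cell component of \eqref{eq:comm2} by citing \eqref{eq:consNaT}, whereas you carry out the general $\bvec{C}^1$ argument explicitly, including the boundary-term bookkeeping through \eqref{eq:pitrna} that you rightly flag as the delicate point.
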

\begin{proof}
\underline{Proof of \eqref{eq:comm0}.}
We already have proved the relation on edges \eqref{eq:consuGE}. 
Let $q \in C^2(\overline{T})$, we will show that the relation holds for $\GFp$.
The proofs for $\GF$ and $\GT$ are almost the same.
For all $\bvec{w}_F \in \bPoly{k}(F)$,
\begin{equation} \label{eq:complexseq.proof1}
\begin{aligned}
\int_F \GFp (\uIgrad[F] q ) \cdot \bvec{w}_F 
=&\; - \int_F \lproj{k-1}{F} (\partial_{\nF} q) \ROT \bvec{w}_F - \sum_{E \in \EF} \wFE \int_E \lproj{k}{E}(\partial_{\nF} q) \; \bvec{w}_F \cdot \nE \\
=&\; - \int_F \partial_{\nF} q\; \ROT \bvec{w}_F - \sum_{E \in \EF} \wFE \int_E \partial_{\nF} q\; \bvec{w}_F \cdot \nE \\
=&\; - \int_F \VROT (\partial_{\nF} q) \cdot \bvec{w}_F .
\end{aligned}
\end{equation}
We conclude applying \eqref{eq:complexseq.proof1} for $\bvec{w}_F \in \Goly{k}(F)$ and $\bvec{w}_F \in \Goly{c,k}(F)$ 
since $\nF \times (\TGRAD (\GRAD q) \cdot \nF) = - \VROT (\partial_{\nF} q)$.\\
\underline{Proof of \eqref{eq:comm1}.}
Let $\bvec{v} \in C^1(\overline{T})$ and $\uvec{v}_T = \uIcurl \bvec{v}$.
We will show the property for $\CE$ and $\Gcproj[F]{k} \CF + \rdofFgc{v}$, the other components are easier and similar.
The same proof as \eqref{eq:consuGE} shows that $\dotp{\bvec{v}_E} = \vlproj{k+1}{E}(\partial_{\nE} \bvec{v})$.
Let us choose an arbitrary basis $(\bvec{x}_0, \bvec{x}_1, \bvec{x}_2)$ such that $\nE = (1,0,0)$ and write $\bvec{v} = (v_0, v_1, v_2)$.
In this basis we have
\begin{equation*}
\begin{aligned}
\vlproj{k+1}{E}(\CE \uvec{v}_E) =&\; \rdofE{v} - \dotp{\bvec{v}_E} \times \nE \\
=&\; \vlproj{k+1}{E} \left [ (\CURL \bvec{v} \cdot \nE) \nE + \GRAD (\bvec{v} \cdot \nE) \times \nE \right ] + \vlproj{k+1}{E}(\partial_0 \bvec{v}) \times \nE \\
=&\; \vlproj{k+1}{E} \left [ \begin{pmatrix} \partial_1 v_2 - \partial_2 v_1 \\ 0 \\ 0 \end{pmatrix} 
+ \begin{pmatrix} 0 \\ \partial_2 v_0 - \partial_0 v_2 \\ \partial_0 v_1 - \partial_1 v_0 \end{pmatrix} \right ] \\
=&\; \vlproj{k+1}{E} (\CURL \bvec{v}) .
\end{aligned}
\end{equation*}
Now let us take another basis such that $\nF = (1,0,0)$.
By the same argument we have $\CF (\uIcurl[F] \bvec{v}) = \vlproj{k}{F} (\VROT v_0)$ so
\begin{equation*}
\begin{aligned}
\Gcproj[F]{k} (\CF \uvec{v}_F) + \rdofFgc{v} 
=&\; \Gcproj[F]{k} \left [ \begin{pmatrix} 0 \\ \partial_2 v_0 \\ -\partial_1 v_0 \end{pmatrix} \right ]
+ \Gcproj[F]{k}\left [ \begin{pmatrix} 0 \\ - \partial_0 v_2 \\ \partial_0 v_1 \end{pmatrix} \right ] \\
=&\; \Gcproj[F]{k} (\ttr{v}).
\end{aligned}
\end{equation*}
\underline{Proof of \eqref{eq:comm2}.}
This is an immediate consequence of \eqref{eq:defNaE}, \eqref{eq:consNaF} and \eqref{eq:consNaT}.\\
\underline{Proof of \eqref{eq:comm3}.}
Let $\bvec{w} \in \bvec{C}^0(\overline{T}) \cap \bvec{H}^1(T)$.
For all $q_T \in \Poly{k}(T)$, since $\GRAD q_T \in \Goly{k-1}(T)$, we have:
\begin{equation*}
\begin{aligned}
\int_T \DT (\uIH \bvec{w})
=&\; - \int_T \Gproj{k-1} (\bvec{w}) \cdot \GRAD q_T + \sum_{F \in \FT} \wTF \int_F \lproj{k}{F} (\bvec{w} \cdot \nF)\; q_T \\
=&\; - \int_T \bvec{w} \cdot \GRAD q_T + \sum_{F \in \FT} \wTF \int_F (\bvec{w} \cdot \nF)\; q_T \\
=& \int_T \DIV \bvec{w}\; q_T.
\end{aligned}
\end{equation*}
\end{proof}

\begin{theorem}[Complex property] \label{th:complexe}
It holds: 
\begin{subequations} \label{eq:compl}
\begin{align}
\uIgradh \Real = \Ker \uGh, \label{eq:complexe0}\\
\Image \uGh \subset \Ker \uCh, \label{eq:complexe1}\\
\Image \uCh \subset \Ker \Dh, \label{eq:complexe2}\\
\Image \Dh = \Poly{k}(\Th). \label{eq:complexe3}
\end{align}
\end{subequations}
\end{theorem}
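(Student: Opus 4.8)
The plan is to treat the four inclusions \eqref{eq:complexe0}--\eqref{eq:complexe3} in turn, leaning on the integration-by-parts definitions of the operators together with the local complex and trace identities already available, and to reserve a genuinely global argument only for the surjectivity \eqref{eq:complexe3}. The computations in \eqref{eq:complexe1}--\eqref{eq:complexe3} parallel those for the de Rham complex of \cite{ddr}, so I expect them to be routine up to the extra bookkeeping forced by the enriched edge and vertex unknowns.

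For \eqref{eq:complexe0}, the inclusion $\uIgradh \Real \subset \Ker \uGh$ is immediate from the commutation property \eqref{eq:comm0}: for a constant $c$ one has $\uGT(\uIgrad c) = \uIcurl(\GRAD c) = \uIcurl(\bvec{0}) = 0$ on every $T$. For the converse I would peel the condition $\uGh \ul{q}_h = 0$ off dimension by dimension. On each edge, $\uGE \ul{q}_E = 0$ forces the lifted polynomial $\bvec{v}_E$ to vanish; reading off its $\nE$-component and its vertex values, and using that $\gdofE{q} \in \EPoly{k}$ is orthogonal to $\nE$, gives $\dotp{q_E} = 0$, $\gdofE{q} = \bvec{0}$ and $\gdofV{q} = \bvec{0}$, so $q_E$ is constant on each edge and, by continuity of $q_E \in \Poly[c]{k+1}(\Eh)$ across the connected skeleton, equal to a single constant $c$. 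Feeding $q_E \equiv c$ into \eqref{eq:defGF} and using $\sum_{E \in \EF}\wFE \int_E \bvec{w}_F \cdot \nFE = \int_F \DIV \bvec{w}_F$ turns $\GF \ul{q}_F = 0$ into $\int_F (c - q_F)\DIV \bvec{w}_F = 0$; testing against $\Roly{c,k}(F)$ and invoking the isomorphism \eqref{eq:isodiv} yields $q_F = c$. The remaining face and cell components ($\gsdofF{q}=0$ and $q_T=c$, the latter obtained from \eqref{eq:defGT} by the same token) then coincide with those of $\uIgradh c$, so $\ul{q}_h = \uIgradh c$.

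For \eqref{eq:complexe1} and \eqref{eq:complexe2} the strategy is to verify $\uCh \uGh = 0$ and $\Dh \uCh = 0$ componentwise. The edge and face parts of $\uCh \uGh$ vanish by $\CE \uGE = 0$ (immediate from \eqref{eq:defCE} and \eqref{eq:defuGE}) and by the local complex property of Lemma \ref{lemma:complexCFGF}; for the cell part I would expand $\int_T \CT(\uGT \ul{q}_T)\cdot \bvec{r}_T$ with \eqref{eq:defCT}, replace $\rkvec{v} = \Rproj{k-1}(\GT \ul{q}_T)$ (transparent when tested against $\CURL \bvec{r}_T \in \Roly{k-1}(T)$), use $\DIV \CURL = 0$ in \eqref{eq:defGT} together with the trace identity \eqref{eq:trcurlGF}, and check that the surviving face integrals cancel after a surface integration by parts. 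The relation $\Dh \uCh = 0$ is handled the same way: since $\DT = \Tr \NaT$, Remark \ref{rem:NaTqI} gives $\int_T \DT(\uCT \uvec{v}_T)\, q_T = -\int_T \Gproj{k-1}(\CT \uvec{v}_T)\cdot \GRAD q_T + \sum_{F}\wTF \int_F \CF \uvec{v}_F \, q_T$; inserting $\GRAD q_T \in \Goly{k-1}(T)$ as a test function in \eqref{eq:defCT}, using $\CURL \GRAD q_T = 0$, and matching the boundary terms against the definition \eqref{eq:defCF} of $\CF$ (again via a surface integration by parts, the edge contributions telescoping around each closed face boundary) produces zero.

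Finally, for \eqref{eq:complexe3} the inclusion $\Image \Dh \subset \Poly{k}(\Th)$ holds by definition, since $\DT \uvec{w}_T \in \Poly{k}(T)$. For surjectivity I would use \eqref{eq:comm3}: given $q_h \in \Poly{k}(\Th) \subset L^2(\Omega)$, it suffices to exhibit a field $\bvec{w} \in \bvec{C}^0(\overline{\Omega})\cap \bvec{H}^1(\Omega)$ with $\DIV \bvec{w} = q_h$, for then $\DT(\uIHh \bvec{w}) = \lproj{k}{T}(\DIV \bvec{w}) = \lproj{k}{T}(q_h) = q_h$ on each $T$. Such a lift is produced by a regularised right inverse of the divergence: after correcting the mean of $q_h$ by a fixed smooth field carrying the appropriate total flux, the Bogovskii operator supplies a solution in $\bvec{W}^{1,p}(\Omega)$ for every finite $p$, and the embedding $\bvec{W}^{1,p}(\Omega)\hookrightarrow \bvec{C}^0(\overline{\Omega})$ for $p>3$ delivers the required continuity. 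This continuous-lift step is the main obstacle, being the only place where a global (rather than element-by-element) construction and genuine analysis enter. One may isolate but not remove the difficulty by splitting $q_h$ into its elementwise mean-zero part, attainable purely from the interior unknowns $\bgvec{w}$ through the isomorphism $\GRAD : \Poly{0,k}(T) \to \Goly{k-1}(T)$, and a piecewise-constant remainder; the latter still forces the shared face unknowns $w_F$ to satisfy the compatibility encoded in the continuous divergence.
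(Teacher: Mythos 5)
Your treatment of \eqref{eq:complexe0}, \eqref{eq:complexe1} and \eqref{eq:complexe2} follows essentially the same route as the paper: the dimension-by-dimension peeling for the kernel of $\uGh$, the reduction of the cell part of $\uCh\uGh=0$ to test functions for which \eqref{eq:trcurlGF} applies together with the identity $\CURL(\bvec{r}_T)\cdot\nF = \DIV(\bvec{r}_T\times\nF)$ and the telescoping edge sums, and the insertion of $\GRAD q_T\in\Goly{k-1}(T)$ into \eqref{eq:defCT} for $\Dh\uCh=0$ (which the paper simply delegates to the corresponding de Rham argument). All of this is correct.

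The genuine divergence is in \eqref{eq:complexe3}. The paper proves surjectivity as a by-product of Lemma \ref{lemma:rightinvdiv}: it first lifts $\ul{p}_h$ to a globally $C^1$, piecewise-polynomial $\tilde p\in H^2_0(\Omega)$ with matching moments (Lemma \ref{lemma:normpczH2}), then inverts the divergence in $\bvec{H}^3(\Omega)$ via a Laplacian on a smooth extension domain (Theorem \ref{th:divlift}), and finally interpolates, adjusting the volumetric unknown $\bgvec{w}$ so that $\DT\uvec{w}_T=p_T$ exactly. You instead apply the Bogovskii operator directly to the discontinuous datum $q_h$ (after a mean correction), obtaining a lift in $\bvec{W}^{1,p}_0(\Omega)$ for every finite $p$, and use the Morrey embedding $\bvec{W}^{1,p}(\Omega)\hookrightarrow\bvec{C}^0(\overline{\Omega})$ for $p>3$ to meet the regularity required by $\uIHh$ and by the commutation \eqref{eq:comm3}. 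For the bare surjectivity claimed in \eqref{eq:complexe3} your argument is valid and considerably lighter. What it does not buy is the quantitative estimate $\opnNa[h]{\uvec{w}_h}+\opnLt[h]{\uNah\uvec{w}_h}\lesssim\normLs[h]{\ul{p}_h}$ of Lemma \ref{lemma:rightinvdiv}: controlling the edge and face components of the interpolate in the discrete norms requires trace estimates that the paper extracts from the $\bvec{H}^2$ and $\bvec{H}^3$ bounds on the lift, and a $\bvec{W}^{1,p}$ bound in terms of $\norm[L^p]{q_h}$ does not deliver these uniformly in $h$. Since that bound is what drives the inf-sup condition in Lemma \ref{lemma:wellposedness}, the paper's heavier construction is not avoidable for the later stability analysis, even though your route settles the algebraic statement of the theorem.
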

\begin{proof}
\underline{Proof of \eqref{eq:complexe0}.}
The inclusion $\uIgradh \Real \subset \Ker \uGh$ follows directly from \eqref{eq:comm0}.
Conversely if $\ul{q}_h \in \uHgradh$ is such that $\uGh \ul{q}_h = 0$ then
since $\uGE \ul{q}_E = 0$, \eqref{eq:defuGE} implies $\gdofV{q} = 0$, $\forall V \in \Vh$, $\gdofE{q} = 0$ and $\dotp{q_E} = 0$, $\forall E \in \Eh$.
So $q_E$ is constant on every edge, however $q_E$ is also continuous and $\Omega$ has a single connected component.
Thus, there is $C \in \Real$ such that $\forall E \in \Eh$, $q_E \equiv C$.
From \eqref{eq:defGF} and \eqref{eq:defuGF} we have $\forall \bvec{w}_F \in \Roly{c,k}(F)$, 
\begin{equation*}
0 = - \int_F q_F \DIV \bvec{w}_F + \sum_{E \in \EF} \wFE \int_E C \bvec{w}_F \cdot \nFE = \int_F (C - q_F) \DIV \bvec{w}_F.
\end{equation*}
Since $\DIV : \Roly{c,k}(F) \rightarrow \Poly{k-1}(F)$ is onto we must have $q_F \equiv C$, $\forall F \in \Fh$.
Likewise, since $\gdofE{q} = 0$, \eqref{eq:defGFp} and \eqref{eq:defuGF} give 
\begin{equation*} 
-\int_F \gsdofF{q} \; \ROT \bvec{w}_F = 0,\quad \forall \bvec{w} \in \bPoly{k}(F) .
\end{equation*}
Once again we must have $\gsdofF{q} = 0$, $\forall F \in \Fh$.
The same argument gives $\trna \ul{q}_F \equiv C$ and $q_T \equiv C$, $\forall T \in \Th$.
Thus $\Ker \uGh \subset \uIgradh \Real$. \\
\underline{Proof of \eqref{eq:complexe1}.}
We already have $\Image \uGF \subset \Ker \uCF$ by Lemma \ref{lemma:complexCFGF}. 
Let $\ul{q} \in \uHgradh$.
For any $T \in \Th$, since we project on $\Goly{k-1}(T) \oplus \Goly{c,k}(T)$ in \eqref{eq:defuCT} 
it is enough to show $\int_T \CT (\uGT \ul{q}_T) \cdot \bvec{r}_T = 0$, $\forall \bvec{r}_T \in \NE{k}(T)$.
Starting from \eqref{eq:defCT} we write
\begin{equation*}
\begin{aligned}
\int_T \CT (\uGT \ul{q}_T) \cdot \bvec{r}_T 
=&\; \int_T \Rproj{k-1}( \GT \ul{q}_T) \cdot \CURL \bvec{r}_T + \sum_{F \in \FT} \wTF \int_F \trcurl (\uGF \ul{q}_F) \cdot (\bvec{r}_T \times \nF) \\
=&\; \int_T (\GT \ul{q}_T) \cdot \CURL \bvec{r}_T + \sum_{F \in \FT} \wTF \int_F (\GF \ul{q}_F) \cdot (\bvec{r}_T \times \nF) \\
=&\; - \int_T {q}_T \; \DIV (\CURL \bvec{r}_T) + \sum_{F \in \FT} \wTF \int_F \trgrad (\ul{q}_F) (\CURL(\bvec{r}_T) \cdot \nF) \\
&\; + \sum_{F \in \FT} \wTF \left [ -\int_F q_F \; \DIV(\bvec{r}_T \times \nF)  + \sum_{E \in \EF} \wFE \int_E q_E (\bvec{r}_T \times \nF) \cdot \nFE \right ] \\
=&\; \sum_{F \in \FT} \wTF \int_F {q}_F \left [\CURL(\bvec{r}_T) \cdot \nF - \DIV (\bvec{r}_T \times \nF) \right ] \\
&\; - \sum_{F \in \FT} \wTF \sum_{E \in \EF} \wFE \int_E q_E\; \bvec{r}_T \cdot \nE.
\end{aligned}
\end{equation*}
We used $\CURL \bvec{r}_T \in \Roly{k-1}$ \eqref{eq:defkoszul} and $\bvec{r}_T \times \nF \subset \RT{k}(F)$ \eqref{eq:trNRT} along with \eqref{eq:trcurlGF} on the first line. 
Then we used \eqref{eq:defGT} and \eqref{eq:defGF} on the second line and \eqref{eq:conspitrgrad} on the last.
We conclude inferring $\CURL(\bvec{r}_T) \cdot \nF = \DIV (\bvec{r}_T \times \nF)$ 
and $\sum_{F \in \FT} \wTF \sum_{E \in \EF} \wFE \int_E q_E\; \bvec{r}_T \cdot \nE = 0$.
The last sum is zero since each edge shares exactly two faces on $\partial \overline{T}$ with opposite orientation. 
Hence we are counting each term twice, with a different sign each time.\\
\underline{Proof of \eqref{eq:complexe2}.} 
The same proof as \cite[Theorem~20]{ddr} works.\\
\underline{Proof of \eqref{eq:complexe3}.} See Lemma \ref{lemma:rightinvdiv}.
\end{proof}

The complex is exact if and only if the inclusions \eqref{eq:complexe1} and \eqref{eq:complexe2} are in fact equalities.
We can show that this is the same as asking for $\Omega$ to be contractible.
The proof is a slight adaptation of \cite[Section~4.3]{2020FullyDiscrete} 
and will not be duplicated here.
\begin{theorem}[Exactness]
If $\Omega$ is contractible then
\begin{subequations}
\begin{align}
\Image \uGh = \Ker \uCh, \label{eq:exactcomplexe1}\\
\Image \uCh = \Ker \Dh, \label{eq:exactcomplexe2}.
\end{align}
\end{subequations}
\end{theorem}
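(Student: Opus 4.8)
The plan is to prove the two reverse inclusions, since Theorem~\ref{th:complexe} already gives $\Image\uGh \subset \Ker\uCh$ and $\Image\uCh \subset \Ker\Dh$, together with exactness at the two ends of the sequence ($\Ker\uGh = \uIgradh\Real$ from \eqref{eq:complexe0} and $\Image\Dh = \Poly{k}(\Th) = \uLsh$ from \eqref{eq:complexe3}). Equivalently, introducing the discrete cohomology spaces $H^1 := \Ker\uCh/\Image\uGh$ and $H^2 := \Ker\Dh/\Image\uCh$, the goal is to show $H^1 = H^2 = \{\bvec{0}\}$ whenever $\Omega$ is contractible.

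I would follow the hierarchical reconstruction of \cite[Section~4.3]{2020FullyDiscrete}, combining one topological ingredient with one algebraic ingredient. The topological input is that a contractible polyhedral domain has mesh Euler characteristic one, $\card\Vh - \card\Eh + \card\Fh - \card\Th = 1$, and more generally that the first and second cohomology of $\Omega$ vanish. The algebraic input is the local exactness of the polynomial sequences attached to each entity, which is precisely what the Koszul decomposition \eqref{eq:compkoszul}, the isomorphisms \eqref{eq:isovrot}--\eqref{eq:isocurl}, Lemma~\ref{lemma:defGbc} and Remark~\ref{rem:divInvDivGrad} encode.

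Concretely, given a cocycle (an element of $\Ker\uCh$, resp.\ $\Ker\Dh$) I would build a potential by sweeping through the mesh skeleton in increasing dimension: first fix the vertex and edge components by integrating along edges, where the continuity of the edge polynomials $\Poly[c]{k+1}(\Eh)$, $\bPoly[c]{k+2}(\Eh)$, $\bPoly[c]{k+3}(\Eh)$ and the connectedness of $\Omega$ play the role they already played in the proof of \eqref{eq:complexe0}; then solve face-local and cell-local problems whose solvability follows from the surjectivity statements above and the integration-by-parts definitions of the discrete operators. Contractibility enters to rule out the global obstructions that would otherwise populate $H^1$ and $H^2$: closed cocycles admit global, not merely local, primitives precisely because the relevant cohomology of $\Omega$ vanishes, a fact reflected at the discrete level by the Euler identity and the accompanying matching of $\dim\Image$ against $\dim\Ker$ at each interior node.

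The adaptation of \cite{2020FullyDiscrete} required here is to carry the smoothness-enhancing and Jacobian-related components through this argument: the vertex data, the continuous edge polynomials of one higher degree, and the new matrix-valued face and cell spaces $\FRTb{k+1}$, $\RTb{k+1}(T)$ with their $\Golyb{c,k}$ components. I expect the main obstacle to be exactly this bookkeeping — verifying that each new space slots into a local exact subsequence so that the hierarchical reconstruction still closes and the new dimension contributions cancel in the global count. The homological skeleton is unchanged; what must be checked is that the matrix-valued Raviart--Thomas and Nédélec analogues behave under $\TDIV$ and the traces $\trna$, $\trgrad$, $\trcurl$ exactly as their vector-valued counterparts do in the de Rham case, which is the content of the lemmas already established.
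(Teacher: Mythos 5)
Your overall strategy --- prove the two reverse inclusions by hierarchically reconstructing a potential, sweeping from vertices and edges up to faces and cells, with contractibility ruling out global obstructions --- is the same skeleton the paper uses. But note that the Euler-characteristic and dimension-counting apparatus you import from \cite[Section~4.3]{2020FullyDiscrete} is not what the paper's proof actually does: that reference is invoked only for the converse equivalence (exactness forces contractibility), which is not proved in the text. For the stated direction the paper gives a short, fully constructive argument: it pulls the de Rham potentials wholesale from \cite[Theorems~3.1 and~3.2]{ddr} for every degree of freedom already present in the DDR complex, and then writes down explicit values for the new Stokes components. No cohomological dimension count is performed, and performing one with all the new matrix-valued spaces would be considerably more work than the construction itself.

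The genuine gap is that the part you defer as ``bookkeeping'' is the entire new content of the proof, and your proposal does not supply the structural observations that make it close. For \eqref{eq:exactcomplexe1} the key point is that the cocycle condition pins the new components down completely: $\CE\uvec{v}_E=0$ forces $\rdofE{v}=\dotp{\bvec{v}_E}\times\nE$ and $\rdofV{v}=\bvec{0}$, which is exactly the form produced by $\uGE$ in \eqref{eq:defuGE}, after which one reads off $\gdofE{q}:=\vlproj{k}{E}(\bvec{v}_E)$, $\gdofV{q}:=\bvec{v}_E(\bvec{x}_V)$ and $\gsdofF{q}:=v_F$ by matching \eqref{eq:defbCF} against \eqref{eq:defGFp}. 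For \eqref{eq:exactcomplexe2} the mechanism is the opposite: the new components $\rdofFg{v}$, $\rdofFgc{v}$ enter $\uCF$ only additively in \eqref{eq:defuCF} and $\bCF$ does not depend on them, so they can absorb any prescribed target (e.g.\ $\rdofFg{v}:=\bgvec[F]{w}-\Gproj[F]{k}(\bCF\uvec{v}_F)$), and likewise $\rdofE{v}:=\dotp{\bvec{v}_E}\times\nE+\bvec{w}_E$ hits any edge target. Without identifying this dichotomy --- each new degree of freedom is either determined by the kernel condition or freely adjustable because it acts additively on the next operator --- the reconstruction you describe does not visibly terminate, and the appeal to ``local exact subsequences'' for the spaces $\FRTb{k+1}$ and $\RTb{k+1}(T)$ remains a hope rather than an argument.
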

\begin{proof}
\underline{Proof of \eqref{eq:exactcomplexe1}.} Let $\uvec{v}_h \in \uHcurlh$ be such that $\uCh \uvec{v}_h = 0$.
We want to find $\ul{q}_h \in \uHgradh$ such that $\uGh \ul{q} = \uvec{v}$.
Starting from the proof of \cite[Theorem~3.1]{ddr}, if $\Omega$ is contractible and $\uCh \uvec{v}_h = 0$ then we can find $q \in \Poly[c]{k+1}(\Ech)$ such that 
$\forall E \in \Eh$, $\dotp{q} = \lproj{k}{E}(\bvec{v}_E)$.
Let $\gdofE{q} := \vlproj{k}{E}(\bvec{v}_E)$ and $\gdofV{q} = \bvec{v}_E(\bvec{x}_V)$. 
Since $\CE \uvec{v}_E = 0 \implies \rdofE{v} = \dotp{\bvec{v}_E} \times \nE$ and $\rdofV{v} = 0$ we have $\uvec{v}_E = \uGE (\ul{q}_E)$.
We must also have $\rdofFg{v} = - \Gproj[F]{k}(\bCF \uvec{v}_E)$ hence $\forall \bvec{r}_F \in \Goly{k}(F)$
\begin{equation*}
\begin{aligned}
\int_F \uCF \uvec{v} \cdot \bvec{r}_F =&\; \int_F v_F \ROT \bvec{r}_F + \sum_{E \in \EF} \wFE \int_F (\bvec{v}_E \cdot \nF) (\bvec{r}_F \cdot \nE),\\
-\int_F \GFp (\ul{q}) \cdot \bvec{r}_F =&\; \int_F \gsdofF{q} \ROT \bvec{r}_F + \sum_{E \in \EF} \wFE \int_F (\gdofE{q} \cdot \nF) (\bvec{r}_F \cdot \nE).\\
\end{aligned}
\end{equation*}
Thus we can take $\gsdofF{q} := v_F$.
We construct $q_F$ and $q_T$ exactly as in \cite[Theorem~3.2]{ddr}.\\
\underline{Proof of \eqref{eq:exactcomplexe2}.}
Let $\uvec{w}_h \in \uHvh$ be such that $\Dh \uvec{w}_h = 0$.
We want to find $\uvec{v}_h \in \uHcurlh$ such that $\uCh \uvec{v}_h = \uvec{w}_h$.
If $\Omega$ is contractible then \cite[Theorem~3.2]{ddr} 
provides $\rkvec{v} \in \Roly{k-1}(T)$, $\rkcvec{v} \in \Roly{c,k}(T)$, $\rkvec[F]{v} \in \Roly{k-1}(F)$, $\rkcvec[F]{v} \in \Roly{c,k}(F)$ 
and a normal component (along $\nE$) of $\bvec{v}_E$ such that $\CF \uvec{v}_F = w_F$ and 
$\Gproj{k-1}(\CT \uvec{v}_T) = \bgvec{w}$, $\Gcproj{k}(\CT \uvec{v}_T) = \bgcvec{w}$. 
It remains to show that $\CE$ and $\bCF$ are onto without using the above-mentioned degrees of freedom.
Let $\rdofE{v} := \dotp{\bvec{v}_E} \times \nE + \bvec{w}_E$, 
$v_F := 0$, 
$\rdofFg{v} := \bgvec[F]{w} - \Gproj[F]{k}(\bCF \uvec{v}_F)$ and 
$\rdofFgc{v} := \bgcvec[F]{w} - \Gcproj[F]{k}(\bCF \uvec{v}_F)$ (this makes sense since $\bCF$ does not depend on $\rdofFg{v}$ nor on $\rdofFgc{v}$).
It is easily checked that this choice gives $\uCh \uvec{v}_h = \uvec{w}_h$.
\end{proof}

\section{Consistency results.} \label{Consistencyresults}
The last things we need to show in order to efficiently use this complex are consistency results.
First we show primal consistency results, controlling the error made when we use the interpolators.
Then we show some Poincare type results useful to show stability, including a discrete counterpart to the right inverse for the divergence Lemma \ref{lemma:rightinvdiv}.
Finally we show adjoint consistency results, which control the error made when we perform a discrete integration by parts.


\begin{lemma}[Primal consistency] \label{lemma:primalconsistency}
For all $T \in \Th$ it holds:
\begin{equation}
\norm{\pna ( \uIH \bvec{w}) - \bvec{w}} \lesssim h^{k+2} \seminorm[\bvec{H}^{k+2}]{\bvec{w}},\quad \forall \bvec{w}\in \bvec{H}^{k+2}(T) .
\end{equation}
\end{lemma}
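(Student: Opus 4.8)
The plan is to combine the polynomial consistency of $\pna$ provided by \eqref{eq:pnaIH} with the boundedness of the potential from Lemma~\ref{lemma:boundpna}, following the standard strategy for reconstruction consistency. I would first introduce the $\bvec{L}^2$-orthogonal projection $\bvec{w}_\pi := \vlproj{k+1}{T}\bvec{w} \in \bPoly{k+1}(T)$ and split by the triangle inequality
\begin{equation*}
\norm{\pna(\uIH\bvec{w}) - \bvec{w}} \le \norm{\bvec{w} - \bvec{w}_\pi} + \norm{\pna(\uIH\bvec{w}) - \bvec{w}_\pi}.
\end{equation*}
The first term is controlled by the standard approximation properties of the $\bvec{L}^2$-projection on a shape-regular cell, giving $\norm{\bvec{w} - \bvec{w}_\pi} \lesssim h^{k+2}\seminorm[\bvec{H}^{k+2}(T)]{\bvec{w}}$. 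For the second term, since $\bvec{w}_\pi \in \bPoly{k+1}(T)$, property \eqref{eq:pnaIH} gives $\pna(\uIH\bvec{w}_\pi) = \bvec{w}_\pi$; hence by linearity of the interpolator $\pna(\uIH\bvec{w}) - \bvec{w}_\pi = \pna(\uIH(\bvec{w} - \bvec{w}_\pi))$, and Lemma~\ref{lemma:boundpna} reduces the estimate to bounding $\opnNa{\uIH(\bvec{w} - \bvec{w}_\pi)}$.

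Setting $\bvec{e} := \bvec{w} - \bvec{w}_\pi$, the remaining work is to estimate each term of the definition \eqref{eq:defopnNa} of $\opnNa{\uIH\bvec{e}}$ using the explicit form \eqref{eq:defIH} of the interpolator. The two cell contributions $\norm[T]{\Gproj{k-1}\bvec{e}}$ and $\norm[T]{\Gbcproj{k}\bvec{e}}$ are bounded directly by $\norm[T]{\bvec{e}}$ thanks to the $\bvec{L}^2$-stability of the orthogonal projections, hence by $h^{k+2}\seminorm[\bvec{H}^{k+2}(T)]{\bvec{w}}$. The face contributions $h_F(\norm[F]{\Gproj[F]{k}\ttr{e}}^2 + \norm[F]{\Gbcproj[F]{k}\ttr{e}}^2 + \norm[F]{\lproj{k}{F}(\bvec{e}\cdot\nF)}^2)$ reduce to $h_F\norm[F]{\bvec{e}}^2$ by projection stability, and then a scaled multiplicative trace inequality $h_F\norm[F]{\bvec{e}}^2 \lesssim \norm[T]{\bvec{e}}^2 + h_T^2\seminorm[\bvec{H}^1(T)]{\bvec{e}}^2$ together with the approximation estimates $\seminorm[\bvec{H}^m(T)]{\bvec{e}} \lesssim h^{k+2-m}\seminorm[\bvec{H}^{k+2}(T)]{\bvec{w}}$ delivers the required $h^{2(k+2)}$ bound.

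The delicate term is the edge contribution $\sum_{F\in\FT}\sum_{E\in\EF} h_F h_E\norm[E]{\bvec{e}_E}^2$, where $\bvec{e}_E \in \bPoly{k+3}(E)$ is the edge reconstruction determined by $\vlproj{k+1}{E}\bvec{e}_E = \vlproj{k+1}{E}\bvec{e}$ and by the vertex matching $\bvec{e}_E(\bvec{x}_V) = \bvec{e}(\bvec{x}_V)$. I would use the norm equivalence for continuous edge polynomials (the isomorphism of Remark~\ref{rem:continuouspoly}, quantified by Lemma~\ref{lemma:normpcz}) to write $\norm[E]{\bvec{e}_E}^2 \approx \norm[E]{\vlproj{k+1}{E}\bvec{e}}^2 + h_E\sum_{V\in\VE}|\bvec{e}(\bvec{x}_V)|^2$. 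The projected part is treated by a codimension-two scaled trace inequality from $T$ down to $E$, namely $\norm[E]{\bvec{e}}^2 \lesssim h^{-2}\norm[T]{\bvec{e}}^2 + \seminorm[\bvec{H}^1(T)]{\bvec{e}}^2 + h^2\seminorm[\bvec{H}^2(T)]{\bvec{e}}^2$, while the vertex values require a scaled point-evaluation estimate $|\bvec{e}(\bvec{x}_V)|^2 \lesssim h^{-3}\norm[T]{\bvec{e}}^2 + h^{-1}\seminorm[\bvec{H}^1(T)]{\bvec{e}}^2 + h\seminorm[\bvec{H}^2(T)]{\bvec{e}}^2$, legitimate since the embedding $\bvec{H}^{k+2}(T)\hookrightarrow\bvec{C}^0(\overline{T})$ holds for $k+2\ge 2$ in three dimensions. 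Inserting the approximation estimates for $\bvec{e}$ makes every resulting term scale like $h^{2(k+2)}\seminorm[\bvec{H}^{k+2}(T)]{\bvec{w}}^2$.

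I expect the main obstacle to be precisely this edge/vertex term: unlike the cell and face contributions, it mixes a codimension-two trace with genuine point evaluations, so one must invoke both the norm equivalence for the continuous edge reconstruction and Sobolev-type embeddings with the correct $h$-scaling. Carefully balancing the weights $h_F h_E$ against the several Sobolev seminorms of $\bvec{e}$ is where the argument is least routine, whereas the cell and face estimates follow mechanically from projection stability and the standard scaled trace inequalities.
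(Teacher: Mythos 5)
Your proposal is correct and follows essentially the same route as the paper: the paper likewise observes via \eqref{eq:pnaIH} that $\pna\uIH$ is invariant on $\bPoly{k+1}(T)$, establishes the boundedness $\norm{\pna(\uIH\bvec{w})}\lesssim\normLt[T]{\bvec{w}}+h\seminorm[\bvec{H}^1(T)]{\bvec{w}}+h^2\seminorm[\bvec{H}^2(T)]{\bvec{w}}$ through Lemma~\ref{lemma:boundpna} and term-by-term trace/projection estimates of $\opnNa{\uIH\bvec{w}}$ (including the edge and vertex contributions you single out), and then concludes by citing the abstract approximation lemma for bounded projectors, which is exactly the triangle-inequality argument with $\bvec{w}_\pi=\vlproj{k+1}{T}\bvec{w}$ that you write out explicitly.
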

\begin{proof}
For all $T \in \Th$, \eqref{eq:pnaIH} shows that $\pna \uIH$ is a projection on $\bPoly{k+1}(T)$. 
Thus we just have to show that 
$\norm{\pna (\uIH \bvec{w})} \lesssim \norm{\bvec{w}} + h \norm[\bvec{H}^1]{\bvec{w}} + h^2 \norm[\bvec{H}^2]{\bvec{w}}$
to conclude with the lemma on approximation properties of bounded projector \cite[Lemma~1.43]{hho}.
Starting from \eqref{eq:boundpna} we have
\begin{equation*}
\begin{aligned}
\norm{\pna (\uIH \bvec{w})} \lesssim&\; \opnNa{\uIH \bvec{w}}\\
\lesssim&\; \norm[T]{\Gproj{k-1} \bvec{w}} + \norm[T]{\Gcproj{k} \bvec{w}}
+ \sum_{E \in \ET} h_E \norm[E]{\vlproj{k+1}{E} \bvec{w}} + \sum_{V \in \VE} h_E^\frac{3}{2} \vert \bvec{w}(\bvec{x}_V) \vert\\
&\; + \sum_{F \in \FT} h_F^\frac{1}{2} \left ( \norm[F]{\lproj{k}{F}(\bvec{w} \cdot \nF)} + \norm[F]{\Gproj[F]{k} \bvec{\ttr{w}}} + \norm[F]{\Gcproj[F]{k} \bvec{\ttr{w}}} \right ) \\
\lesssim&\; \norm[T]{\bvec{w}} + h_T \seminorm[\bvec{H}^1(T)]{\bvec{w}} + h_T^2 \seminorm[\bvec{H}^2(T)]{\bvec{w}},
\end{aligned}
\end{equation*}
where we used the continuous trace inequality \cite[Lemma~1.31]{hho} and the boundedness of $L^2$ projectors.
\end{proof}

\begin{lemma}[Stabilization forms consistency]
For all $T \in \Th$ it holds:
\begin{equation} \label{eq:consistencystNa}
\stNa{\uIH{ \bvec{w}}}{\uIH{ \bvec{w}}}^{1/2} \lesssim h^{k+2} \seminorm[\bvec{H}^{k+2}(T)]{\bvec{w}}, \forall \bvec{w} \in \bvec{H}^{k+2}(T) .
\end{equation}
\begin{equation} \label{eq:consistencystLt}
\stLt{\uIL{ \bvec{W}}}{\uIL{ \bvec{W}}}^{1/2} \lesssim h^{k+1} \seminorm[\bvec{H}^{k+1}(T)]{\bvec{W}}, \forall \bvec{W} \in \bvec{H}^{k+1}(T)  \cap \bvec{C}^0(\overline{T}).
\end{equation}
\end{lemma}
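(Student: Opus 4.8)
The plan is to follow the standard two-step pattern for stabilisation consistency: first establish that the stabilisation seminorm vanishes on interpolates of polynomials of the relevant degree, then reduce the general estimate to a projection error via the seminorm property and bound the remaining terms by trace inequalities and approximation estimates. I would treat \eqref{eq:consistencystNa} in detail; \eqref{eq:consistencystLt} is entirely analogous and in fact simpler. Throughout, set $\bvec{e} := \bvec{w} - \vlproj{k+1}{T}\bvec{w}$.

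First I would prove the polynomial consistency $\stNa{\uIH \bvec{p}}{\uIH \bvec{p}} = 0$ for all $\bvec{p} \in \bPoly{k+1}(T)$. Indeed, \eqref{eq:pnaIH} gives $\pna(\uIH \bvec{p}) = \bvec{p}$; since $\bvec{p}$ restricted to a face lies in $\bPoly{k+1}(F) \subset \bPoly{k+2}(F)$, \eqref{eq:trnaIH} gives $\trna(\uIH[F] \bvec{p}) = \bvec{p}$ on $F$; and the edge component of $\uIH[F]\bvec p$ equals $\bvec{p}$ on $E$, because $\bvec{p}$ restricted to an edge lies in $\bPoly{k+1}(E) \subset \bPoly{k+3}(E)$ and therefore already satisfies the projection and vertex conditions that define that component uniquely. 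Hence every integrand in \eqref{eq:defstNa} vanishes. As $\stNa{\cdot}{\cdot}$ is a symmetric positive semidefinite bilinear form and $\uIH$ is linear, the induced seminorm obeys the triangle inequality, so $\stNa{\uIH \bvec{w}}{\uIH \bvec{w}}^{1/2} = \stNa{\uIH \bvec{e}}{\uIH \bvec{e}}^{1/2}$ since $\bvec w - \bvec e = \vlproj{k+1}{T}\bvec w \in \bPoly{k+1}(T)$.

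Then I would bound the face and edge contributions of $\stNa{\uIH \bvec{e}}{\uIH \bvec{e}}$ separately, splitting each into a potential part and a boundary part. For the potential part, $\pna \uIH \bvec{e} = \pna \uIH \bvec{w} - \vlproj{k+1}{T}\bvec{w}$ is a polynomial with $\norm[T]{\pna \uIH \bvec{e}} \le \norm[T]{\pna \uIH \bvec{w} - \bvec{w}} + \norm[T]{\bvec{e}} \lesssim h^{k+2} \seminorm[\bvec{H}^{k+2}(T)]{\bvec{w}}$ by Lemma \ref{lemma:primalconsistency} and the approximation properties of $\vlproj{k+1}{T}$; being polynomial, the discrete trace inequality (Lemma \ref{lemma:discretetrace}, applied once to reach $F$ and twice to reach $E$) yields $h_F^{1/2}\norm[F]{\pna \uIH \bvec{e}} + h_E\norm[E]{\pna \uIH \bvec{e}} \lesssim \norm[T]{\pna \uIH \bvec{e}} \lesssim h^{k+2}\seminorm[\bvec{H}^{k+2}(T)]{\bvec{w}}$, which avoids any $\bvec{H}^1$ correction term. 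For the boundary part I would use Lemma \ref{lemma:boundtrna} to control $\norm[F]{\trna \uIH[F] \bvec{e}}$ by the $L^2(F)$-norms of the face data $\Gproj[F]{k}(\ttr{e})$, $\Gbcproj[F]{k}(\ttr{e})$, $\lproj{k}{F}(\bvec{e}\cdot\nF)$ and the weighted edge data; since all interpolator components are $L^2$-bounded projections of traces of $\bvec{e}$, these reduce to $\norm[F]{\bvec{e}}$ and to $\norm[E]{\bvec{e}}$ for the edge interpolate. Continuous trace inequalities together with $\norm[T]{\bvec{e}} \lesssim h^{k+2}\seminorm[\bvec{H}^{k+2}(T)]{\bvec{w}}$ and $\seminorm[\bvec{H}^1(T)]{\bvec{e}} \lesssim h^{k+1}\seminorm[\bvec{H}^{k+2}(T)]{\bvec{w}}$ then give $h_F^{1/2}\norm[F]{\bvec{e}} \lesssim h^{k+2}\seminorm[\bvec{H}^{k+2}(T)]{\bvec{w}}$ and, iterating the trace inequality through $F$ to the edges (which is legitimate since $\bvec{H}^{k+2}(T)\subset\bvec{H}^2(T)$ and $\bvec{H}^{k+2}(T)\hookrightarrow C^0(\overline{T})$ for all $k\ge 0$, so both the edge integrals and the vertex values are meaningful), $h_F^{1/2}h_E^{1/2}\norm[E]{\bvec{e}} \lesssim h^{k+2}\seminorm[\bvec{H}^{k+2}(T)]{\bvec{w}}$. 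Summing over the faces and edges of $T$ and taking the square root yields \eqref{eq:consistencystNa}.

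Finally, \eqref{eq:consistencystLt} follows the same scheme with $\bvec{P} := \vlproj{k}{T}\bvec{W}$, the polynomial consistency now holding on $\bPoly{k}$-valued tensors: these lie in $\RTb{k+1}(T)$, and their right-$\nF$-orthogonal part and their $\nE$-contraction are reproduced exactly by $\FRTbproj{k+1}$ and $\vlproj{k+2}{E}(\,\cdot\,\nE)$ respectively, so every integrand in \eqref{eq:defstLt} vanishes on $\uIL\bvec P$ and $\stLt{\uIL \bvec{W}}{\uIL \bvec{W}}^{1/2} = \stLt{\uIL (\bvec{W}-\bvec{P})}{\uIL (\bvec{W}-\bvec{P})}^{1/2}$. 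Here the cell component $\RTbproj[T]{k+1}(\bvec{W}-\bvec{P})$ is directly a bounded polynomial projection error (no potential reconstruction is involved), so its traces are handled by the discrete trace inequality, while the face and edge components are $L^2$-bounded projections of traces of $\bvec W-\bvec P$; combining these with the projection estimates $\norm[T]{\bvec W-\bvec P}\lesssim h^{k+1}\seminorm[\bvec H^{k+1}(T)]{\bvec W}$ and $\seminorm[\bvec H^1(T)]{\bvec W-\bvec P}\lesssim h^{k}\seminorm[\bvec H^{k+1}(T)]{\bvec W}$ gives the rate $h^{k+1}$. The main obstacle in both parts is not the algebra but the careful trace bookkeeping, namely transferring the $L^2(T)$ projection-error bounds to the lower-dimensional faces and edges at exactly the right power of $h$; this relies on the continuous (iterated) trace inequalities, and at the lowest regularity $k=0$ in \eqref{eq:consistencystLt}, where only $\bvec{W}\in\bvec{H}^1(T)$ is available, it is precisely the $\bvec{C}^0(\overline T)$ hypothesis that makes the edge interpolation well-defined and allows the edge contributions to be estimated.
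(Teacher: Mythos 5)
Your proof is correct and follows essentially the same route as the paper: polynomial invariance of the stabilisation form on interpolates of $\bPoly{k+1}(T)$ (resp.\ of $\bPoly{k}$-valued tensors, which indeed lie in $\RTb{k+1}(T)$ by Lemma \ref{lemma:Rcdec} and Remark \ref{rem:sequential}), reduction to the projection error, and trace/approximation estimates. The only difference is organisational: for \eqref{eq:consistencystNa} the paper shortcuts the component-wise bookkeeping by bounding $\stNa{\cdot}{\cdot}$ by $\normNa{\cdot}^2$ and invoking the norm equivalence of Lemma \ref{lemma:normequiv} together with the $L^2$-boundedness of the interpolator components, while for \eqref{eq:consistencystLt} it replaces your polynomial-invariance reduction by a direct best-approximation argument for $\FRTbproj{k+1}$; you carry out the equivalent estimates by hand.
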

\begin{proof}
\underline{Proof of \eqref{eq:consistencystNa}.}
For all $\bvec{z}_T \in \bPoly{k+1}(T)$ we have $\pna (\uIH \bvec{z}_T) = \bvec{z}_T$
by \eqref{eq:pnaIH} and $\trna (\uIH \bvec{z}_T) = \bvec{z}_T$ by \eqref{eq:trnaIH} so for all $\uvec{v}_T \in \uHv$, 
\begin{equation*}
\begin{aligned}
\stNa{\uIH \bvec{z}_T}{\uvec{v}_T} 
=&\; \sum_{F \in \FT} h_F \int_F (\pna \uIH \bvec{z}_T - \trna \uIH \bvec{z}_T) \cdot (\pna \uvec{v}_T - \trna \uvec{v}_F) \\
=&\; \sum_{E \in \EF} h_E^2 \int_E (\pna \uIH \bvec{z}_T - \bvec{z}_T) \cdot (\pna \uvec{v}_T - \bvec{v}_E) = 0.
\end{aligned}
\end{equation*}
Hence 
\begin{equation*}
\stNa{\uIH \bvec{w}_T}{\uIH \bvec{w}_T} = \stNa{\uIH (\bvec{w}_T - \vlproj{k+1}{T})}{ \uIH (\bvec{w}_T - \vlproj{k+1}{T})} \lesssim \normNa{\uIH (\bvec{w}_T - \vlproj{k+1}{T})}^2.
\end{equation*}
We conclude by the norm equivalence Lemma \ref{lemma:normequiv} and \cite[Theorem~1.45]{hho}.\\
\underline{Proof of \eqref{eq:consistencystLt}.}
Let $\bvec{W} \in \bvec{H}^{k+1}(T) \cap \bvec{C}^0(\overline{T})$, we have:
\begin{equation*}
\begin{aligned}
\sum_{F \in \FT} h_F \norm[F]{\Tttrnb{\RTbproj{k+1}(\bvec{W})} - \FRTbproj{k+1}(\bvec{W})}^2 
\leq&\; \sum_{F \in \FT} h_F \norm[F]{(I - \RTbproj{k+1}) \bvec{W}}^2 \\
\leq&\; \sum_{F \in \FT} h_F (\norm[F]{\bvec{W} - \vlproj{k}{T} \bvec{W}}^2 + \norm[F]{\vlproj{k}{T} \bvec{W} - \RTbproj{k+1} \bvec{W}}^2) \\
\lesssim&\; h^{2(k+1)} \seminorm[\bvec{H}^{k+1}]{\bvec{W}} + \norm[F]{\bvec{W} - \vlproj{k}{T} \bvec{W}}^2 .
\end{aligned}
\end{equation*}
Here the second equality comes from $\Tttrnb{\RTb{k+1}(T)} \subset \FRTb{k+1}$
and we used the approximation properties on traces \cite[Theorem~1.45 and Equation~1.75]{hho} on the first term 
and the discrete trace inequality \cite[Lemma~1.32]{hho} on the second term to get the last equality.
Likewise, we have:
\begin{equation*}
\sum_{F \in \FT} h_F \sum_{E \in \EF} h_E \norm[E]{\RTbproj{k+1}(\bvec{W}) \; \nE - \vlproj{k+2}{E}(\bvec{W} \; \nE)}^2 
\lesssim \sum_{F \in \FT} \sum_{E \in \EF} h_F \norm[F]{(I - \RTbproj{k+1}) \bvec{W}}^2 .
\end{equation*}
We conclude with \cite[Theorem~1.45 and Equation~1.74]{hho}.
\end{proof}

\subsection{Poincaré inequality.}
We begin by stating two lemmas which will be useful to prove the Poincare inequality.
\begin{lemma} \label{lemma:normgrad}
For all $T \in \Th$, $F \in \FT$ and all $\uvec{w}_T \in \uHv$ it holds that
\begin{equation}
\begin{aligned}
\norm[F]{\TGRAD \trna \;\uvec{w}_F}^2 + \sum_{E \in \EF} h_E^{-1} \norm[E]{\trna \;\uvec{w}_F - \bvec{w}_{E}}^2 \lesssim&\; \opnLt[F]{\uNaF \uvec{v}_F}^2,\\
\norm[T]{\TGRAD \pna \;\uvec{w}_T}^2 + \sum_{F \in \FT} h_F^{-1} \norm[F]{\pna \;\uvec{w}_T - \trna \;\uvec{w}_F}^2 \lesssim&\; \opnLt[T]{\uNaT \uvec{v}_T}^2.
\end{aligned}
\end{equation}
\end{lemma}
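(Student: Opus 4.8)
The plan is to derive both inequalities from a single integration-by-parts identity relating the discrete Jacobian to the genuine (tangential) Jacobian of the reconstructed potential, and then to close the estimate by a Poincaré argument on the boundary of the element. I would prove the face inequality first and bootstrap the cell inequality from it.

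\textbf{Face case.} Fix $F\in\Fh$ and $\uvec w_F\in\uHvF$ and set $A_F:=\NaF\uvec w_F-\TGRAD\trna\uvec w_F\in\FRTb{k+1}$, the gap between the discrete Jacobian and the genuine tangential Jacobian of $\trna\uvec w_F$. Subtracting the exact integration-by-parts formula for the polynomial $\trna\uvec w_F$ from the defining relation \eqref{eq:deftrna} (which holds on all of $(\bPoly{k+1}(F)^\intercal)^3\approx\FRTb{k+1}$ by Remark \ref{rem:validitytrna}) yields the key identity
\[ \int_F A_F\tdot\bvec V_F=\sum_{E\in\EF}\wFE\int_E(\bvec w_E-\trna\uvec w_F)\cdot(\bvec V_F\,\nFE),\qquad\forall\,\bvec V_F\in\FRTb{k+1}. \]
Testing it with $\bvec V_F=A_F$, then using Cauchy–Schwarz, the discrete trace inequality of Lemma \ref{lemma:discretetrace} to turn $\norm[E]{A_F\,\nFE}$ into $h_F^{-1/2}\norm[F]{A_F}$, together with $h_E\approx h_F$ and the bounded cardinality of $\EF$, gives $\norm[F]{A_F}^2\lesssim\sum_{E\in\EF}h_E^{-1}\norm[E]{\trna\uvec w_F-\bvec w_E}^2=:J_F$. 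Since $\norm[F]{\TGRAD\trna\uvec w_F}\le\norm[F]{\NaF\uvec w_F}+\norm[F]{A_F}$, the whole left-hand side is controlled as soon as $J_F\lesssim\opnLt[F]{\uNaF\uvec w_F}^2$.

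\textbf{Bounding the jump (the main obstacle).} The crux is $J_F\lesssim\norm[F]{\NaF\uvec w_F}^2+\sum_{E\in\EF}h_E\norm[E]{\dotp{\bvec w_E}}^2$. Put $g_E:=\trna\uvec w_F-\bvec w_E$. Since each $\bvec w_E\in\bPoly[c]{k+3}(E)$ is continuous at the vertices and $\trna\uvec w_F$ is single-valued on $F$, the family $(g_E)_{E\in\EF}$ glues into a continuous function $g$ on the closed loop $\partial F$; its arclength derivative on $E$ is $\dotp{g_E}=(\NaF\uvec w_F)\nE-A_F\,\nE-\dotp{\bvec w_E}$, controlled in $L^2(\partial F)$ by $h_F^{-1/2}\norm[F]{\NaF\uvec w_F}$, $h_F^{-1/2}\norm[F]{A_F}$ and $\norm[E]{\dotp{\bvec w_E}}$. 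A one-dimensional Poincaré–Wirtinger inequality on $\partial F$ then bounds $\sum_E\norm[E]{g_E-\overline g}^2$ (with $\overline g$ the mean of $g$) by $h_F^2\norm[\partial F]{\dotp{g}}^2$, so after division by $h_E$ one obtains $\sum_Eh_E^{-1}\norm[E]{g_E-\overline g}^2\lesssim\norm[F]{\NaF\uvec w_F}^2+\norm[F]{A_F}^2+\sum_Eh_E\norm[E]{\dotp{\bvec w_E}}^2$. The constant part $\overline g$ is the genuine difficulty: it must be pinned down through the consistency relations \eqref{eq:pitrna}, which tie the projections of $\trna\uvec w_F$ to $w_F,\bgvec[F]{w},\bgcvec[F]{w}$ and, via \eqref{eq:defNaF}, to $\NaF\uvec w_F$ and the edge data. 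I expect to extract $|\overline g|^2\lesssim\norm[F]{\NaF\uvec w_F}^2+\sum_Eh_E\norm[E]{\dotp{\bvec w_E}}^2$ by testing the identity above against a constant tensor and invoking those relations. Combining everything with $\norm[F]{A_F}^2\lesssim J_F$ produces an estimate of the form $J_F\lesssim\norm[F]{\NaF\uvec w_F}^2+\sum_Eh_E\norm[E]{\dotp{\bvec w_E}}^2+c\,J_F$; the real work is to track the scalings $h_E\approx h_F$ and the finite number of edges precisely enough to make the absorption of $c\,J_F$ legitimate.

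\textbf{Cell case.} The second inequality follows the same template with $(T,F)$ replacing $(F,E)$, $\pna$ replacing $\trna$, and the identity coming from \eqref{eq:defpna} and Remark \ref{rem:validitypna}: setting $A_T:=\NaT\uvec w_T-\TGRAD\pna\uvec w_T$, one gets $\int_TA_T\tdot\bvec V_T=\sum_{F\in\FT}\wTF\int_F(\trna\uvec w_F-\pna\uvec w_T)\cdot(\bvec V_T\,\nF)$ for all $\bvec V_T\in(\bPoly{k+1}(T)^\intercal)^3$. Testing with $A_T$ controls $\norm[T]{A_T}$, hence $\norm[T]{\TGRAD\pna\uvec w_T}$, by the face-jump term $\sum_Fh_F^{-1}\norm[F]{\pna\uvec w_T-\trna\uvec w_F}^2$. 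To bound the latter I would run a two-dimensional Poincaré–Wirtinger argument on the closed polygonal surface $\partial T$: the face traces $(\trna\uvec w_F)_F$ are compatible along each shared edge up to the jump $\trna\uvec w_F-\bvec w_E$ already estimated in the face case, so they glue up to a controlled defect into a function on $\partial T$ whose surface gradient is dominated by $\NaT\uvec w_T$, $A_T$ and the quantities $\TGRAD\trna\uvec w_F$ — all bounded by $\opnLt[T]{\uNaT\uvec w_T}^2$, since this norm contains $\sum_Fh_F\,\opnLt[F]{\uNaF\uvec w_F}^2$. The mean over $\partial T$ is handled as $\overline g$ above, now through \eqref{eq:pipna} and Remark \ref{rem:NaTqI}, and the Koszul components are kept in their spaces via Lemma \ref{lemma:defGbc}; the same absorption then closes the estimate.
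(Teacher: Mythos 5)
The paper's own proof is only a pointer to \cite[Lemma~5.7]{ddr}, but the mechanism there is important for assessing your plan: one builds a \emph{single} test function $\bvec V_F$ in a Raviart--Thomas-type space whose edge normal traces are prescribed to be the scaled jumps $\pm h_E^{-1}\omega_{FE}(\trna\uvec w_F-\bvec w_E)$ and whose interior moments reproduce $\TGRAD\trna\uvec w_F$, with $\norm[F]{\bvec V_F}$ bounded by the corresponding scaled norms of this data; inserting this one $\bvec V_F$ into the identity $\int_F\TGRAD\trna\uvec w_F\tdot\bvec V_F=\int_F\NaF\uvec w_F\tdot\bvec V_F+\sum_{E\in\EF}\wFE\int_E(\trna\uvec w_F-\bvec w_E)\cdot(\bvec V_F\nFE)$ makes the gradient term and all the jump terms appear together on the left-hand side, and a single Cauchy--Schwarz plus Young's inequality closes the estimate with no absorption and no decomposition of the jump. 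Your route (isolate $A_F$, then Poincar\'e--Wirtinger on the loop $\partial F$) is genuinely different, and it has two gaps that I do not believe are repairable within that route.

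First, the mean value $\overline g$. You propose to pin it down by ``testing the identity against a constant tensor'', but for constant $\bvec V$ the contribution of the constant part of the jump to the boundary term is $\overline g\cdot\bigl(\bvec V\sum_{E\in\EF}\wFE|E|\,\nFE\bigr)=\overline g\cdot\bigl(\bvec V\int_{\partial F}\bvec n\bigr)=0$, since the outward normal of a closed polygon integrates to zero: the identity is exactly blind to $\overline g$, so this mechanism cannot work. The relations \eqref{eq:pitrna} do not rescue it either, because they express moments of $\trna\uvec w_F$ through $w_F$, $\bgvec[F]{w}$, $\bgcvec[F]{w}$, whose norms live in $\opnNa[F]{\uvec w_F}$ and not in the claimed right-hand side $\opnLt[F]{\uNaF\uvec w_F}$. (The mean \emph{is} controlled by $\norm[F]{\NaF\uvec w_F}$ --- take $\bvec w_E\equiv c$ with vanishing interior data to see that $\trna\uvec w_F=0$ while $\norm[F]{\NaF\uvec w_F}\approx|c|$ --- but extracting this requires testing with $\bvec V$ having constant, nonzero $\TDIV\bvec V$ and then relating the boundary mean of $\trna\uvec w_F$ to its interior mean, an argument you neither identify nor carry out.) Second, the absorption is structurally circular: you prove $\norm[F]{A_F}^2\lesssim J_F$ and then $J_F\lesssim\opnLt[F]{\uNaF\uvec w_F}^2+C\norm[F]{A_F}^2+|\overline g|^2$, so substitution yields $J_F\lesssim\opnLt[F]{\uNaF\uvec w_F}^2+C'J_F$ with $C'$ a product of generic trace and Poincar\'e--Wirtinger constants that there is no reason to be smaller than one; nothing can be absorbed. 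The cell case inherits both defects, aggravated by the unproven gluing of the face traces on $\partial T$. Both difficulties vanish in the single-test-function argument precisely because the jump is never split into mean plus fluctuation and both left-hand-side terms are generated by one application of Cauchy--Schwarz.
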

\begin{proof}
The proof is a simple adaptation of \cite[Lemma~5.7]{ddr}.
\end{proof}

\begin{lemma}[Poincaré inequality] \label{lemma:poincareH1}
For all $\uvec{w}_h \in \uHvh$ such that $\sum_{T \in \Th} \int_{T} \pna \uvec{w}_T = 0$ it holds that
\begin{equation}
\opnNa[h]{\uvec{w}_h} \lesssim \opnLt[h]{\uNah \uvec{w}_h}.
\end{equation}
\end{lemma}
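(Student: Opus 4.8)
The plan is to reduce the claim to a broken Poincaré--Wirtinger inequality for the reconstructed potentials $\pna \uvec{w}_T$. First, by the norm equivalence of Lemma \ref{lemma:normequiv} and the definitions \eqref{eq:defspNa} and \eqref{eq:defstNa},
\begin{equation*}
\opnNa[h]{\uvec{w}_h}^2 \approx \sum_{T \in \Th} \normNa{\uvec{w}_T}^2 = \sum_{T \in \Th} \left( \norm[T]{\pna \uvec{w}_T}^2 + \stNa{\uvec{w}_T}{\uvec{w}_T} \right),
\end{equation*}
so it suffices to control the two contributions on the right. For the stabilization term I would invoke Lemma \ref{lemma:normgrad}: writing $h_F = h_F^2 \cdot h_F^{-1}$ and $h_E^2 = h_E^3 \cdot h_E^{-1}$, and using the discrete trace inequality Lemma \ref{lemma:discretetrace} to transfer the face quantity $\pna \uvec{w}_T - \trna \uvec{w}_F$ to the edges, the extra powers of $h$ in \eqref{eq:defstNa} absorb a factor $h^2$, giving $\sum_{T \in \Th} \stNa{\uvec{w}_T}{\uvec{w}_T} \lesssim h^2 \opnLt[h]{\uNah \uvec{w}_h}^2 \lesssim \opnLt[h]{\uNah \uvec{w}_h}^2$.

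It then remains to bound $\sum_{T \in \Th} \norm[T]{\pna \uvec{w}_T}^2$. I would introduce the broken polynomial field $\bvec{v}_h$ defined cellwise by $\bvec{v}_h|_T := \pna \uvec{w}_T$, whose elementwise Jacobian is $\TGRAD \pna \uvec{w}_T$ and which, by hypothesis, has vanishing mean $\int_\Omega \bvec{v}_h = \sum_{T \in \Th} \int_T \pna \uvec{w}_T = 0$. A broken Poincaré--Wirtinger inequality for mean-zero broken polynomial functions (of the type available in \cite{hho}) then yields
\begin{equation*}
\sum_{T \in \Th} \norm[T]{\pna \uvec{w}_T}^2 \lesssim \sum_{T \in \Th} \norm[T]{\TGRAD \pna \uvec{w}_T}^2 + \sum_{F} h_F^{-1} \norm[F]{[\bvec{v}_h]_F}^2,
\end{equation*}
where $[\bvec{v}_h]_F = \pna \uvec{w}_{T_1} - \pna \uvec{w}_{T_2}$ is the jump across an interior face $F$ shared by cells $T_1, T_2$; crucially only interior faces enter, thanks to the zero-mean normalization.

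The final step exploits that the single-valued face reconstruction $\trna \uvec{w}_F$ depends only on data ($w_F$, $\bgvec[F]{w}$, $\bgcvec[F]{w}$, and the edge unknowns) shared by the two adjacent cells, so inserting it and using the triangle inequality gives $\norm[F]{[\bvec{v}_h]_F} \le \norm[F]{\pna \uvec{w}_{T_1} - \trna \uvec{w}_F} + \norm[F]{\pna \uvec{w}_{T_2} - \trna \uvec{w}_F}$. Summing over cells and applying the second estimate of Lemma \ref{lemma:normgrad} bounds both the broken-gradient term and the jump term by $\sum_{T \in \Th} \opnLt[T]{\uNaT \uvec{w}_T}^2 = \opnLt[h]{\uNah \uvec{w}_h}^2$, and combining this with the stabilization estimate above closes the proof. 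I expect the main obstacle to be the broken Poincaré--Wirtinger step itself, namely establishing that the full $L^2$ norm of the reconstruction is controlled by its broken gradient and interior jumps alone: this is precisely where the global zero-mean hypothesis is needed, and it is the single-valuedness of $\trna \uvec{w}_F$ that converts the purely cell-by-cell bounds of Lemma \ref{lemma:normgrad} into genuine control of the inter-element jumps.
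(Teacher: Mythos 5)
Your proposal is correct and follows essentially the same route as the paper, which simply defers to an adaptation of \cite[Theorem~5.3]{ddr}: norm equivalence via Lemma \ref{lemma:normequiv}, absorption of the stabilization through Lemma \ref{lemma:normgrad}, and a broken Poincar\'e--Wirtinger inequality (the paper's intended ingredient is \cite[Theorem~6.5]{hho}, as confirmed by the remark in the proof of Theorem \ref{th:StokeserrDirichlet}) applied to the cellwise field $\pna \uvec{w}_T$, with the jumps controlled by inserting the single-valued trace $\trna \uvec{w}_F$. Your write-up is in effect the detailed version of the adaptation the paper leaves implicit.
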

\begin{proof}
The proof is a simple adaptation of \cite[Theorem~5.3]{ddr}.
\end{proof}
\begin{remark}
When $k \geq 1$ the assumption $\sum_{T \in \Th} \int_{T} \pna \uvec{w}_T = 0$ translates to 
$\sum_{T \in \Th} \int_{T} \bvec{w}_T = 0$ by \eqref{eq:pipna}.
However this does not hold when $k = 0$.
\end{remark}

Lastly we show that the fully discrete divergence is onto $\Dh : \uHvh \rightarrow \uLsh$. 
The main difficulty is to show the boundedness of the inverse with the discrete norms.
\begin{lemma}[Right-inverse for the divergence] \label{lemma:rightinvdiv}
For all $\ul{p}_h \in \uLsh$ there is $\uvec{w}_h \in \uHvh$ such that $\Dh \uvec{w}_h = \ul{p}_h$ and 
$\opnNa[h]{\uvec{w}_h} + \opnLt[h]{\uNah \uvec{w}_h} \lesssim \normLs[h]{\ul{p}_h}$.
\end{lemma}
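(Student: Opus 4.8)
The plan is to obtain $\uvec{w}_h$ as a commuting interpolate of a continuous field produced by the classical right inverse of the divergence, and to transfer the estimate through the commutation properties of the complex. First I would identify $\ul{p}_h$ with the piecewise polynomial $p_h \in \Poly{k}(\Th) \subset L^2(\Omega)$ and invoke the mesh-independent surjectivity of the continuous divergence $\DIV : \bvec{H}^1(\Omega) \to L^2(\Omega)$: there is $\bvec{v} \in \bvec{H}^1(\Omega)$ with $\DIV \bvec{v} = p_h$ and $\norm[\bvec{H}^1(\Omega)]{\bvec{v}} \lesssim \norm[L^2(\Omega)]{p_h}$. No mean-value constraint is needed here, because we impose no boundary condition: the mean of $p_h$ is carried by a radial field $\bvec{x}\mapsto \tfrac{c}{3}(\bvec{x}-\bvec{x}_0)$ and the mean-zero part by a Bogov\-skii-type operator. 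The commutation property \eqref{eq:comm3} then gives, cell by cell, $\DT(\uIHh \bvec{v}) = \lproj{k}{T}(\DIV \bvec{v}) = \lproj{k}{T}(p_h) = p_T$, so that $\Dh(\uIHh \bvec{v}) = \ul{p}_h$ holds \emph{exactly}.

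For the norm bound I would proceed in the graph norm. Using \eqref{eq:comm2} to write $\uNah(\uIHh \bvec{v}) = \uILh(\TGRAD \bvec{v})$, the boundedness of the $L^2$-projections entering $\uILh$ together with the norm equivalence of Lemma \ref{lemma:normLtequiv} yields $\opnLt[h]{\uNah(\uIHh\bvec{v})} \lesssim \norm{\TGRAD \bvec{v}} \lesssim \norm[\bvec{H}^1(\Omega)]{\bvec{v}}$. Shifting $\bvec{v}$ by a constant vector (which leaves $\Dh(\uIHh\bvec{v})$ unchanged since $\DT$ of a constant interpolate vanishes, while moving the mean of $\pna$ by \eqref{eq:pnaIH}) we may enforce $\sum_{T\in\Th}\int_T \pna(\uIHh\bvec{v}) = 0$; the Poincaré inequality of Lemma \ref{lemma:poincareH1} then controls the remaining term $\opnNa[h]{\uIHh \bvec{v}}$ by the same quantity, and boundedness of the reconstruction (Lemma \ref{lemma:boundpna}) together with the estimates in the proof of Lemma \ref{lemma:primalconsistency} keeps everything consistent.

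The main obstacle is that $\uIHh$ is defined only on $\bvec{C}^0(\overline{\Omega})$ and, through its edge and vertex degrees of freedom, is not bounded on $\bvec{H}^1(\Omega)$ in three dimensions, so the raw lift $\bvec{v}$ cannot be interpolated as written. The remedy is to replace $\uIHh$ by a divergence-commuting Fortin operator $\mathcal{I}_h : \bvec{H}^1(\Omega) \to \uHvh$ that is bounded for the graph norm, $\opnNa[h]{\mathcal{I}_h \bvec{v}} + \opnLt[h]{\uNah \mathcal{I}_h \bvec{v}} \lesssim \norm[\bvec{H}^1(\Omega)]{\bvec{v}}$, while still satisfying $\DT(\mathcal{I}_h \bvec{v}) = \lproj{k}{T}(\DIV \bvec{v})$. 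I would construct $\mathcal{I}_h$ by precomposing $\uIHh$ with a smoothing at scale $h$ (in the spirit of the bounded commuting interpolants of \cite{ddr}), taking the edge and vertex data from stable local averages rather than point values, and leaving the face and cell divergence moments $w_F$ and $\bgvec{w}$ untouched so that the commutation with $\DIV$ is preserved; the weights $h_E$, $h_F$ in \eqref{eq:defopnNa}–\eqref{eq:defopnLt} absorb the inverse estimates incurred by the smoothing. Setting $\uvec{w}_h := \mathcal{I}_h \bvec{v}$ then gives $\Dh\uvec{w}_h=\ul{p}_h$ with $\opnNa[h]{\uvec{w}_h} + \opnLt[h]{\uNah \uvec{w}_h} \lesssim \norm[\bvec{H}^1(\Omega)]{\bvec{v}} \lesssim \normLs[h]{\ul{p}_h}$. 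The delicate point is the construction and estimation of this smoothed commuting interpolator; once it is available, the commutation relations \eqref{eq:comm2}–\eqref{eq:comm3}, the boundedness Lemmas \ref{lemma:boundpna}–\ref{lemma:boundtrna}, and the Poincaré inequality of Lemma \ref{lemma:poincareH1} reduce the statement to the continuous right inverse.
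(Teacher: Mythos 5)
You have correctly located the crux --- the canonical interpolator $\uIHh$ is not bounded on $\bvec{H}^1(\Omega)$ because of its edge and vertex degrees of freedom --- but the proposal then delegates the entire difficulty to an unconstructed ``divergence-commuting Fortin operator $\mathcal{I}_h$ bounded for the graph norm'', and that object is precisely what the lemma has to produce. As written the gap is genuine. Two concrete reasons why the sketch does not close it: (i) an $L^2$-stable local averaging of the edge data is \emph{not} sufficient, because the graph norm contains $\NaE\bvec{w}_E=\dotp{\bvec{w}_E}$, which costs an inverse factor $h_E^{-1}$; with the weights of \eqref{eq:defopnLt} a merely stable average yields $h^{-2}\norm{\bvec{v}}^2$ rather than $\seminorm[\bvec{H}^1]{\bvec{v}}^2$, so one needs at least first-order accuracy of the averaging (a Scott--Zhang/Cl\'ement-type construction) together with a verification that the smoothed edge/vertex data interact correctly with the \emph{unsmoothed} face and cell moments inside $\NaF$ and $\NaT$; (ii) none of this is set up, and asserting that ``the weights absorb the inverse estimates'' is exactly the estimate that has to be proved.

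The paper circumvents the problem by regularizing the \emph{datum} rather than the interpolator, and you may find it instructive to compare. Lemma \ref{lemma:normpczH2} produces a piecewise polynomial $\tilde p\in H^2_0(\Omega)$ of bounded degree with $\lproj{k}{T}\tilde p=p_T$ and $\norm[L^2]{\tilde p}\approx\normLs[h]{\ul{p}_h}$; Theorem \ref{th:divlift} (an elliptic lift on a smooth extension domain, not Bogovskii) then gives $\bvec{u}\in\bvec{H}^3(\Omega)$ with $\DIV\bvec{u}=\tilde p$ and $\norm[\bvec{H}^{j+1}]{\bvec{u}}\lesssim\norm[H^{j}]{\tilde p}$ for $j=0,1,2$. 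Since $\bvec{u}\in\bvec{H}^3\subset\bvec{C}^1$, the \emph{canonical} interpolator applies, and the commutation \eqref{eq:comm3} gives $\DT\uvec{w}_T=\lproj{k}{T}\tilde p=p_T$ (the paper additionally redefines $\bgvec{w}$ by duality to make this explicit). The inverse powers of $h$ hidden in $\norm[H^{j}]{\tilde p}\lesssim h^{-j}\norm[L^2]{\tilde p}$ (discrete inverse inequalities on the piecewise polynomial $\tilde p$) are exactly cancelled by the positive powers of $h$ weighting the face, edge and vertex contributions in \eqref{eq:defopnNa}--\eqref{eq:defopnLt}. In short: where you propose to weaken the interpolator to accept $\bvec{H}^1$ data, the paper strengthens the data to $\bvec{H}^3$ at a quantified $h$-dependent cost that the discrete norms are designed to absorb. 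Your route could in principle be made to work, but it requires building and analysing a commuting quasi-interpolant, which is a substantial piece of work not contained in the proposal.
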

\begin{proof}
\underline{Existence.}
Let $\ul{p}_h = (p_T)_{T \in \Th} \in \uLsh$.
Lemma \ref{lemma:normpczH2} provides $\tilde{p} \in C^1(\overline{\Omega})$ 
such that $\forall T \in \Th$, $\tilde{p}_{\vert T} \in \Poly{k + \max_{h, T \in \Th}(2 \vert \FT \vert)}(T)$,
$\lproj{k}{T} \tilde{p} = p_T$
and $\norm[L^2(\Omega)]{\tilde{p}} \approx \normLs[h]{\ul{p}_h}$. 
Under the assumption on the regularity of the mesh sequence we have $\max_{h, T \in \Th}(\vert \FT \vert) \lesssim 1$ (\cite[Lemma~1.12]{hho}) 
so that the maximum degree is bounded independently of $h$.
Since $\tilde{p}$ is a piecewise polynomial, continuous, of continuous derivative and of trace zero on the boundary, $\tilde{p} \in H^2_0(\Omega)$.
We apply Theorem \ref{th:divlift} to find $\bvec{u} \in \bvec{H}^3(\Omega)$ such that $\DIV \bvec{u} = \tilde{p}$, 
$\norm[\bvec{H}^3]{\bvec{u}} \lesssim \norm[H^2]{\tilde{p}}$, 
$\norm[\bvec{H}^2]{\bvec{u}} \lesssim \norm[H^1]{\tilde{p}}$ and 
$\norm[\bvec{H}^1]{\bvec{u}} \lesssim \norm[L^2]{\tilde{p}}$.
We build $\uvec{w}_h \in \uHvh$ by $\uvec{w}_F = \uIH[F](\bvec{u})$, $\forall F \in \Fh$,
$\bgcvec{w} = \Gcproj{k}(\bvec{u})$, $\forall T \in \Th$, 
and $\forall q_T \in \Poly{0,k}(T)$:
\begin{equation*}
\int_T \bgvec{w} \GRAD q_T := - \int_T p_T q_T + \sum_{F \in \FT} \wTF \int_F \trna ( \uIH[F] \bvec{u}) \cdot \nF q_T.
\end{equation*}
Hence by construction, $\forall T \in \Th$, $\forall q_T \in \Poly{0,k}(T)$,
$\int_T \DT \uvec{w}_T  q_T = \int_T p_T q_T$.
It remains to show the equality for $q_T \in \Poly{0}(T)$,
and using \eqref{eq:pitrna} we have
\begin{equation*}
\begin{aligned}
\int_T \DT \uvec{w}_T =&\; \sum_{F \in \FT} \wTF \int_F \trna ( \uIH[F] \bvec{u}) \cdot \nF \\
=&\; \sum_{F \in \FT} \wTF \int_F \lproj{k}{F}(\bvec{u} \cdot \nF) \\
=&\; \int_T \DIV \bvec{u} = \int_T p_T .
\end{aligned}
\end{equation*}
Thus we have $\Dh \uvec{w}_h = \ul{p}_h$.\\
\underline{Boundedness.}
Many bounds follow directly from the $L^2$-boundedness of the interpolator.
Indeed, we make use of the continuous trace inequality \cite[Lemma~1.31]{hho} to get
\begin{equation*}
\begin{aligned}
\norm[\bvec{H}^k(F)]{\bvec{u}} \lesssim&\; h^{-\frac{1}{2}} \norm[\bvec{H}^k(T)]{\bvec{u}} + h^\frac{1}{2} \norm[\bvec{H}^{k+1}(T)]{\bvec{u}},\\
\norm[\bvec{H}^k(E)]{\bvec{u}} \lesssim&\; h^{-1} \norm[\bvec{H}^k(T)]{\bvec{u}} + \norm[\bvec{H}^{k+1}(T)]{\bvec{u}} + h\norm[\bvec{H}^{k+2}(T)]{\bvec{u}}.
\end{aligned}
\end{equation*}
Inferring the estimate on $\norm[\bvec{H}^k(T)]{\bvec{u}}$ and Lemma \ref{lemma:discretepoincare} we get
\begin{equation} \label{eq:estimatetrcontinuous}
\begin{aligned}
\norm[\bvec{H}^k(F)]{\bvec{u}} \lesssim&\; h^{\min \lbrace 0,\frac{1}{2} - k \rbrace} \normLs[T]{\ul{p}_h},\\
\norm[\bvec{H}^k(E)]{\bvec{u}} \lesssim&\; h^{-k} \normLs[T]{\ul{p}_h}.
\end{aligned}
\end{equation}
This allows to bound all terms of $\opnNa[h]{\uvec{w}_h}$ but $\bgvec{w}$. 
This one is also easily bounded, for all $T \in \Th$, let $q_T \in \Poly{0,k}(T)$ be such that $\GRAD q_T = \bgvec{w}$.
We have $\norm{q_T} \lesssim h_T \norm{\bgvec{w}}$ and 
\begin{equation*}
\begin{aligned}
\int_T \bgvec{w} \GRAD q_T =&\; - \int_T p_T q_T + \sum_{F \in \FT} \wTF \int_F \trna ( \uIH[F] \bvec{u}) \cdot \nF q_T \\
\lesssim&\; \norm{p_T} \norm{q_T} + \sum_{F \in \FT} h^{\frac{1}{2}} \norm[F]{\trna ( \uIH[F] \bvec{u})} h^{-1}\norm[T]{q_T}\\
\lesssim&\; \norm{p_T} \norm{\bgvec{w}}.
\end{aligned}
\end{equation*}

It remains to estimate $\opnLt{\uNah \uvec{w}_h} \approx \sum_{T \in \Th} \norm[T]{\NaT \uvec{w}_T} + \sum_{F \in \Fh} h_F \norm[F]{\NaF \uvec{w}_F} + \sum_{E \in \Eh} h_E^2 \norm[E]{\NaE \uvec{w}_E}$.
Lemma \ref{lemma:NaFuIH} and an easily proved variation of \eqref{eq:consuGE} and \eqref{eq:estimatetrcontinuous} give
\begin{equation*}
\begin{aligned}
h^2 \norm[E]{\NaE \uIH[E] \bvec{u}} \lesssim&\; h^2 \norm[\bvec{H}^1(E)]{\bvec{u}}^2 \lesssim h^2 (h^{-1})^2 \normLs[T]{\ul{p}_h},\\
h \norm[F]{\NaF \uIH[F] \bvec{u}} \lesssim&\; h \norm[\bvec{H}^1(F)]{\bvec{u}}^2 \lesssim h (h^{-\frac{1}{2}})^2 \normLs[T]{\ul{p}_h}.
\end{aligned}
\end{equation*}
To estimate $\norm[T]{\NaT \uvec{w}_T}$ let $\bvec{V}_T := \NaT \uvec{w}_T$ and take $q_T \in \Poly{0,k}(T)$ such that 
$\GRAD q_T = \TDIV \brvec{V}$ with $\norm{q_T} \approx \norm{\brvec{V}}$.
Starting from its definition \eqref{eq:defNaT} we write
\begin{equation*}
\begin{aligned}
\int_T \NaT (\uvec{w}_T) \tdot \bvec{V}_T 
=&\; - \int_T \bgcvec{w} \cdot \TDIV (\brcvec{V}) - \int_T \bgvec{w} \cdot \TDIV (\brvec{V}) + \sum_{F \in \FT} \wTF \int_F \trna \uvec{w}_F \bvec{V}_T \nF \\
=&\; - \int_T \Gcproj{k}(\bvec{u}) \cdot \TDIV (\brcvec{V}) + \int_T p_T q_T + \sum_{F \in \FT} \wTF \int_F \trna \uvec{w}_F (\bvec{V}_T - q_T \ID) \nF \\
=&\; - \int_T \bvec{u} \cdot \TDIV (\brcvec{V}) + \int_T p_T q_T + \sum_{F \in \FT} \wTF \int_F \bvec{u} (\bvec{V}_T - q_T \ID) \nF \\
=&\; \cancel{- \int_T \bvec{u} \cdot \TDIV (\brcvec{V})} + \int_T p_T q_T + \int_T \TGRAD \bvec{u} \tdot (\bvec{V}_T - q_T \ID) + \cancel{\int_T \bvec{u} \cdot \TDIV (\bvec{V}_T - q_T \ID)} \\
\lesssim&\; \norm{p_T} \norm{q_T} + \norm{\TGRAD \bvec{u}} \norm{\bvec{V}_T - q_T \ID}\\
\lesssim&\; \norm{p_T} \norm{\NaT \uvec{w}_T}.
\end{aligned}
\end{equation*}
We used that $\RTb{k+1} \nF \subset \bPoly{k}(F)$ (from \eqref{eq:trNRT}) with \eqref{eq:pitrna} and Lemma \ref{lemma:defGbc} on the second line,
an integration by parts on the third and that $\TDIV (\bvec{V}_T - q_T \ID) = \TDIV \bvec{V}_T - \TDIV \brvec{V} = \TDIV (\brcvec{V})$ to cancel the terms in the fourth line.
The result follows from the Cauchy-Swartchz inequality and the estimates on $\bvec{u}$.
\end{proof}
\begin{remark} \label{rem:rightinvdiv}
We can easily adapt Lemma \ref{lemma:rightinvdiv} to require $\sum_{T \in \Th} \int_T \pna \uvec{w}_T = 0$.
Defining $\uvec{w'}_h = \uvec{w}_h - \uIHh \left (\frac{1}{\int_\Omega 1} \sum_{T \in \Th} \int_T \pna \uvec{w}_T \right )$,
it is clear from \eqref{eq:pnaIH} that $\sum_{T \in \Th} \int_T \pna \uvec{w'}_T = 0$,
from Lemma \ref{lemma:localcommprop} that $\Dh \uvec{w'}_h = \ul{p}_h$ and from \eqref{eq:boundpna} that the estimate of Lemma \ref{lemma:rightinvdiv} on the norm of $\uvec{w'}_h$ still holds.
\end{remark}

\subsection{Adjoint consistency.}
We define the adjoint consistency error for all $\bvec{V} \in \bvec{C}^0(\overline{\Omega}) \cap \bvec{H}^1_0(\Omega)$
and all $\uvec{w}_h \in \uHvh$ by:
\begin{equation} \label{eq:defAdjG}
\AdjG(\bvec{V},\uvec{w}_h) = \sum_{T \in \Th} \left (\spLt{\uIL \bvec{V}}{\uNaT \uvec{w}_T} + \int_T \TDIV \bvec{V} \cdot \pna \uvec{w}_T \right ) .
\end{equation}

\begin{theorem}[Adjoint consistency for the gradient] \label{th:AdjConsG}
For all $\bvec{V} \in \bvec{C}^0(\overline{\Omega}) \cap \bvec{H}^1_0(\Omega)$ such that $\bvec{V} \in \bvec{H}^{k+2}(\Th)$
and all $\uvec{w}_h \in \uHvh$, it holds:
\begin{equation}
\seminorm{\AdjG(\bvec{V},\uvec{w}_h)} \lesssim h^{k+1} \left ( \seminorm[\bvec{H}^{k+1}]{\bvec{V}} + \seminorm[\bvec{H}^{k+2}]{\bvec{V}} \right )
\left ( \opnNa[h]{\uvec{w}_h} + \opnLt[h]{\uNah \uvec{w}_h} \right ).
\end{equation}
\end{theorem}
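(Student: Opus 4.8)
The plan is to follow the strategy used for the adjoint-consistency estimates of the discrete de Rham complex: rewrite $\AdjG$ by a discrete integration by parts, insert the continuous integration by parts for the smooth field $\bvec{V}$, and recognise the leftover terms as products of two consistency errors which are then controlled by polynomial approximation estimates together with Lemma \ref{lemma:normgrad}. Throughout I abbreviate the components of $\uIL\bvec{V}$ by $\bvec{V}_T^I := \RTbproj[T]{k+1}\bvec{V}$, $\bvec{V}_F^I := \FRTbproj{k+1}\bvec{V}$ and $\bvec{V}_E^I := \vlproj{k+2}{E}(\bvec{V}\,\nE)$.

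First I would work cell by cell and split the discrete product using \eqref{eq:defspLt} into its consistent and stabilization parts,
\[
\spLt{\uIL\bvec{V}}{\uNaT\uvec{w}_T} = \int_T \bvec{V}_T^I \tdot \NaT\uvec{w}_T + \stLt{\uIL\bvec{V}}{\uNaT\uvec{w}_T}.
\]
Since $\bvec{V}_T^I \in \RTb{k+1}(T) \subset (\bPoly{k+1}(T)^\intercal)^3$, Remark \ref{rem:validitypna} lets me apply \eqref{eq:defpna} with the test tensor $\bvec{V}_T^I$, turning the consistent part into $-\int_T \pna\uvec{w}_T \cdot \TDIV \bvec{V}_T^I + \sum_{F\in\FT}\wTF\int_F \trna\uvec{w}_F\,\bvec{V}_T^I\,\nF$. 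Adding the term $\int_T \TDIV\bvec{V}\cdot\pna\uvec{w}_T$ produces the volume contribution $\int_T \pna\uvec{w}_T\cdot\TDIV(\bvec{V}-\bvec{V}_T^I)$, which I would then process with the continuous (tensor) integration by parts — legitimate because $\pna\uvec{w}_T$ is polynomial and $\bvec{V}\in\bvec{H}^1$ — to obtain $-\int_T\TGRAD(\pna\uvec{w}_T)\tdot(\bvec{V}-\bvec{V}_T^I) + \sum_{F\in\FT}\wTF\int_F\pna\uvec{w}_T\,(\bvec{V}-\bvec{V}_T^I)\,\nF$.

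The decisive step is the treatment of the boundary terms after summation over $T$. Inserting the single-valued face trace $\trna\uvec{w}_F$, I would write the face part of $\sum_T\sum_{F\in\FT}\wTF\int_F\pna\uvec{w}_T\,\bvec{V}\,\nF$ as $\sum_T\sum_{F\in\FT}\wTF\int_F(\pna\uvec{w}_T-\trna\uvec{w}_F)\bvec{V}\,\nF$, since the remaining piece $\sum_T\sum_{F\in\FT}\wTF\int_F\trna\uvec{w}_F\,\bvec{V}\,\nF$ vanishes: $\trna\uvec{w}_F$ and $\bvec{V}$ are single-valued on each interior face while the orientations of the two adjacent cells are opposite, and on boundary faces $\bvec{V}=0$ because $\bvec{V}\in\bvec{H}^1_0(\Omega)\cap\bvec{C}^0(\overline{\Omega})$. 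Combining this with the $\trna\uvec{w}_F\,\bvec{V}_T^I$ contributions, the face terms collapse into a product of two consistency errors and $\AdjG$ takes the form
\[
\begin{aligned}
\AdjG(\bvec{V},\uvec{w}_h) = \sum_{T\in\Th}\Bigl[&-\int_T\TGRAD(\pna\uvec{w}_T)\tdot(\bvec{V}-\bvec{V}_T^I) \\
&+ \sum_{F\in\FT}\wTF\int_F(\pna\uvec{w}_T-\trna\uvec{w}_F)(\bvec{V}-\bvec{V}_T^I)\nF + \stLt{\uIL\bvec{V}}{\uNaT\uvec{w}_T}\Bigr].
\end{aligned}
\]

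It then remains to bound the three families of terms. For the volume term, Cauchy--Schwarz together with $\norm[T]{\TGRAD\pna\uvec{w}_T}\lesssim\opnLt[T]{\uNaT\uvec{w}_T}$ (Lemma \ref{lemma:normgrad}) and the optimal $L^2$-approximation $\norm[T]{\bvec{V}-\bvec{V}_T^I}\lesssim h^{k+1}\seminorm[\bvec{H}^{k+1}(T)]{\bvec{V}}$ — valid since $\RTb{k+1}(T)$ contains $(\bPoly{k}(T)^\intercal)^3$ — yield the order $h^{k+1}$. For the face term I would use $\sum_{F\in\FT}h_F^{-1}\norm[F]{\pna\uvec{w}_T-\trna\uvec{w}_F}^2\lesssim\opnLt[T]{\uNaT\uvec{w}_T}^2$ (again Lemma \ref{lemma:normgrad}) and estimate $h_F^{1/2}\norm[F]{\bvec{V}-\bvec{V}_T^I}$ by the continuous trace inequality \cite[Lemma~1.31]{hho}; the $\bvec{H}^1$-approximation of the projection entering that inequality is exactly what introduces $\seminorm[\bvec{H}^{k+2}(T)]{\bvec{V}}$ and gives the bound $h^{k+1}(\seminorm[\bvec{H}^{k+1}(T)]{\bvec{V}}+\seminorm[\bvec{H}^{k+2}(T)]{\bvec{V}})$, which is where the extra regularity is needed. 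Finally, the stabilization term is handled by its own Cauchy--Schwarz inequality, the stabilization consistency \eqref{eq:consistencystLt}, and the norm equivalence of Lemma \ref{lemma:normLtequiv}. Summing over $T$ and applying a discrete Cauchy--Schwarz inequality factors out $\opnLt[h]{\uNah\uvec{w}_h}$, which is in turn $\le\opnNa[h]{\uvec{w}_h}+\opnLt[h]{\uNah\uvec{w}_h}$, giving the claim. The main obstacle is precisely the boundary-term telescoping: producing the representation as a product of the two consistency errors $(\pna\uvec{w}_T-\trna\uvec{w}_F)$ and $(\bvec{V}-\bvec{V}_T^I)$ is what cancels the leading-order contributions and yields the correct $h^{k+1}$ rate; the subsequent approximation bookkeeping is routine.
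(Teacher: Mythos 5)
Your proof is correct, but it reaches the estimate by a genuinely different route from the paper at the one technically delicate point, namely the treatment of the face terms. The paper keeps the divergence contribution in volume form, writes the residual boundary term as $\sum_{F \in \FT} \wTF \int_F \trnac \uvec{w}_F (\bvec{V}_T - \bvec{V}) \nF$ after replacing $\trna \uvec{w}_F$ by the edge-vanishing continuous polynomial $\trnac \uvec{w}_F$ of Lemma \ref{lemma:normpczH2} (needed so that the glued boundary datum lies in $H^{1/2}(\partial T)$), and then converts that boundary term back into volume integrals via the Sobolev trace lifting of Theorem \ref{th:tracelift}, whose boundedness rests on the Besov estimate of Lemma \ref{lemma:besovestimate}. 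You instead integrate the volume term $\int_T \pna \uvec{w}_T \cdot \TDIV(\bvec{V} - \bvec{V}_T^I)$ by parts, which makes $\pna \uvec{w}_T$ appear on the faces; after subtracting the globally telescoping quantity $\sum_T \sum_{F \in \FT} \wTF \int_F \trna \uvec{w}_F\, \bvec{V}\, \nF$ (which vanishes by single-valuedness of $\trna \uvec{w}_F$ and $\bvec{V}$ and by $\bvec{V}\,\nOmega = 0$ on $\partial\Omega$ — no continuous lift $\trnac$ is needed since you never take a Sobolev lift), the face terms collapse into $\sum_{F}\wTF\int_F (\pna \uvec{w}_T - \trna \uvec{w}_F)(\bvec{V} - \bvec{V}_T^I)\nF$, which Lemma \ref{lemma:normgrad} controls directly. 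This is exactly the mechanism the paper itself uses in the proof of Theorem \ref{th:AdjConsL}, transplanted to the gradient; it buys you a proof that bypasses Appendix \ref{Tracelifting} entirely, at the price of trading the divergence-approximation property of $\RT{k+1}(T)$ for an $H^1$/trace approximation bound on the $L^2$-orthogonal projection error $\bvec{V}-\RTbproj{k+1}\bvec{V}$ (legitimate since $(\bPoly{k}(T)^\intercal)^3 \subset \RTb{k+1}(T)$ by Lemma \ref{lemma:Rcdec} and Remark \ref{rem:sequential}); both costs are covered by the factor $\seminorm[\bvec{H}^{k+1}]{\bvec{V}} + \seminorm[\bvec{H}^{k+2}]{\bvec{V}}$ in the statement, so the claimed $\bigO(h^{k+1})$ rate follows either way.
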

\begin{proof}
Remarks \ref{rem:validitypna} and \ref{rem:sequential} show that $\forall \bvec{V}_h \in (\RT{k+1}(\Th)^\intercal)^3$, $\forall T \in \Th$,  
\begin{equation*}
\int_T \pna \uvec{w}_T \cdot \TDIV \bvec{V}_T + \int_T \NaT \uvec{w}_T \tdot \bvec{V}_T - \sum_{F \in \FT} \wTF \int_F \trna \uvec{w}_F \bvec{V}_T \nF = 0 .
\end{equation*}
Let $\trnac \uvec{w}_F$ be the continuous polynomial given by Lemma \ref{lemma:normpczH2} such that $\vlproj{k+2}{F} (\trnac \uvec{w}_F) = \trna \uvec{w}_F$,
$(\trnac \uvec{w}_F)_{\vert E} = \bvec{0}$, $(\TGRAD \trnac \uvec{w}_F)_{\vert E} = \bvec{0}$ and $\norm[F]{\trnac \uvec{w}_F} \approx \norm[F]{\trna \uvec{w}_F}$.
It holds that $\sum_{F \in \FT} \wTF \int_F \trna \uvec{w}_F \bvec{V}_T \nF = \sum_{F \in \FT} \wTF \int_F \trnac \uvec{w}_F \bvec{V}_T \nF$.
Moreover, since $\bvec{V} \cdot \nOmega = 0$ on $\partial \Omega$ and since the $\trnac \bvec{w}_F$ are single valued we have
\begin{equation} \label{eq:adjgradzerosum}
\sum_{T \in \Th} \sum_{F \in \FT} \wTF \int_F \trnac \uvec{w}_F \bvec{V} \nF = 0 .
\end{equation}
Hence we can write:
\begin{equation*}
\begin{aligned}
\AdjG(\bvec{V},\uvec{w}_h) =& \sum_{T \in \Th} \left (\int_T (\bvec{V} - \bvec{V}_T) \tdot \NaT \uvec{w}_T + \int_T \TDIV (\bvec{V} - \bvec{V}_T) \cdot \pna \uvec{w}_T \right. \\
& \quad \left. + \sum_{F \in \FT} \wTF \int_E \trnac \uvec{w}_F (\bvec{V}_T - \bvec{V}) \nF 
+ \stLt{\uIL \bvec{V}}{\NaT \uvec{w}_T} \right ) \\
\lesssim&\; \sum_{T \in \Th} \left ( \norm{\bvec{V} - \bvec{V}_T} + \norm{\TDIV ( \bvec{V} - \bvec{V}_T)} \right )
\left ( \norm{\NaT \uvec{w}_T} + \norm{\pna \uvec{w}_T} \right )\\
& \quad + \stLt{\uIL \bvec{V}}{\uIL \bvec{V}}^{1/2} \stLt{\NaT \uvec{w}_T}{\NaT \uvec{w}_T}^{1/2} \\
& \quad + \sum_{F \in \FT} \wTF \int_F \trnac \uvec{w}_F (\bvec{V}_T - \bvec{V}) \nF .
\end{aligned}
\end{equation*}
Applying \eqref{eq:consistencystLt} and Lemma \ref{lemma:normLtequiv} gives:
\begin{equation*}
\stLt{\uIL \bvec{V}}{\uIL \bvec{V}}^{1/2} \stLt{\NaT \uvec{w}_T}{\NaT \uvec{w}_T}^{1/2} \lesssim
h^{k+1} \seminorm[\bvec{H}^{k+1}(T)]{\bvec{V}} \opnLt{\uNaT \uvec{w}_T} .
\end{equation*}
Using the approximation properties of the spaces $\RT{k+1}(T)$ given by a slight adaptation of \cite[Lemma~6.8]{ddr}
we can find $\bvec{V}_T \in (\RT{k+1}(T)^\intercal)^3$ such that
\begin{equation*}
\norm{\bvec{V} - \bvec{V}_T} + \norm{\TDIV ( \bvec{V} - \bvec{V}_T)} \lesssim h^{k+1} \left ( \seminorm[\bvec{H}^{k+1}(T)]{\bvec{V}} + \seminorm[\bvec{H}^{k+2}(T)]{\bvec{V}} \right )
\end{equation*}
By \eqref{eq:boundpna} we see that
\begin{equation*}
\norm{\NaT \uvec{w}_T} + \norm{\pna \uvec{w}_T} \lesssim \opnLt{\uNaT \uvec{w}_T} + \opnNa{\uvec{w}_T} .
\end{equation*}
Lastly we use Theorem \ref{th:tracelift} to find $\LiftNaF{\uvec{w}_T} \in \Hv{T}$ such that
\begin{equation*}
\begin{aligned}
\sum_{F \in \FT} \wTF \int_F \trnac \uvec{w}_F (\bvec{V}_T - \bvec{V}) \nF =& 
\sum_{F \in \FT} \wTF \int_F \LiftNaF{\uvec{w}_T} (\bvec{V}_T - \bvec{V}) \nF \\
=& \int_T \TGRAD \LiftNaF{\uvec{w}_T} \tdot (\bvec{V}_T - \bvec{V}) 
+ \int_T \LiftNaF{\uvec{w}_T} \cdot \TDIV (\bvec{V}_T - \bvec{V}) .
\end{aligned}
\end{equation*}
Hence
\begin{equation*}
\seminorm{\sum_{F \in \FT} \wTF \int_F \trna \uvec{w}_F (\bvec{V}_T - \bvec{V}) \nF} \lesssim
\left ( \norm{\bvec{V} - \bvec{V}_T} + \norm{\TDIV ( \bvec{V}_T - \bvec{V})} \right )
\left ( \norm{\TGRAD \LiftNaF{\uvec{w}_T}} + \norm{\LiftNaF{\uvec{w}_T}} \right ),
\end{equation*}
and we conclude with Theorem \ref{th:tracelift} which gives the boundedness of $\LiftNaF{\uvec{w}_T}$.
\end{proof}

We can sharpen the estimate \eqref{eq:defAdjG} when $\bvec{V}$ is the gradient of some field.
Indeed, if were to take $\bvec{V} = \TGRAD v$ in Theorem \ref{th:AdjConsG}, 
we would see that a norm over $\bvec{H}^{k+3}$ appears in the estimate, 
which is not optimal.

We define the adjoint consistency error for all $\bvec{v} \in \bvec{H}^2(\Omega)$ such that $\TGRAD \bvec{v} \cdot \nOmega = 0$ 
and all $\uvec{w}_h \in \uHvh$ by:
\begin{equation} \label{eq:defAdjL}
\AdjL(\bvec{v},\uvec{w}_h) = \sum_{T \in \Th} \left ( \int_T \TLAPLACIAN \bvec{v} \cdot \pna \uvec{w}_T + \spLt{\uNaT \uIH\bvec{v}}{\uNaT \uvec{w}_T} \right ) .
\end{equation}
\begin{theorem}[Adjoint consistency for the Laplacian] \label{th:AdjConsL}
For all $\bvec{v} \in \bvec{H}^2(\Omega) \cap \bvec{C}^1(\overline{\Omega})$ such that $\TGRAD \bvec{v} \cdot \nOmega = 0$ 
and $\bvec{v} \in \bvec{H}^{k+2}(\Th)$ 
and for all $\uvec{w}_h \in \uHvh$, it holds: 
\begin{equation}
\seminorm{\AdjL(\bvec{v},\uvec{w}_h)} \lesssim h^{k+1} \seminorm[\bvec{H}^{k+2}]{\bvec{v}} 
\opnLt[h]{\uNah \uvec{w}_h}.
\end{equation}
\end{theorem}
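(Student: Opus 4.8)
The plan is to work cell by cell and to integrate by parts the Laplacian term \emph{at the continuous level before} introducing any polynomial approximation; this is what keeps the regularity at $\bvec{H}^{k+2}$ and makes only the gradient $\TGRAD\pna\uvec{w}_T$ (not $\pna\uvec{w}_T$ itself) appear, which is exactly the quantity Lemma \ref{lemma:normgrad} controls by $\opnLt[h]{\uNah\uvec{w}_h}$ alone. Note first that, by \eqref{eq:comm2}, $\uNaT(\uIH\bvec{v})=\uIL(\TGRAD\bvec{v})$, so the integrand of $\AdjL$ coincides formally with that of $\AdjG(\TGRAD\bvec{v},\uvec{w}_h)$; Theorem \ref{th:AdjConsG} cannot be invoked, however, since $\TGRAD\bvec{v}$ only has vanishing \emph{normal} trace ($\TGRAD\bvec{v}\cdot\nOmega=0$) rather than full trace, and moreover that route would cost one derivative. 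Writing $\bvec{p}_T:=\pna\uvec{w}_T$ and $\bvec{G}_T:=\NaT\uvec{w}_T\in\RTb{k+1}(T)$, a cellwise integration by parts gives
\[
\int_T \TLAPLACIAN\bvec{v}\cdot\bvec{p}_T = -\int_T \TGRAD\bvec{v}\tdot\TGRAD\bvec{p}_T + \sum_{F\in\FT}\wTF\int_F \bvec{p}_T\,(\TGRAD\bvec{v})\,\nF,
\]
while the $L^2$-part of the stabilised product satisfies $\int_T\NaT(\uIH\bvec{v})\tdot\bvec{G}_T=\int_T\TGRAD\bvec{v}\tdot\bvec{G}_T$, because $\NaT(\uIH\bvec{v})=\RTbproj[T]{k+1}(\TGRAD\bvec{v})$ is the $L^2$-orthogonal projection of $\TGRAD\bvec{v}$ and $\bvec{G}_T\in\RTb{k+1}(T)$.

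Next I would record the algebraic link between $\bvec{G}_T$ and $\TGRAD\bvec{p}_T$: integrating by parts the left-hand side of \eqref{eq:defpna} (valid on $(\bPoly{k+1}(T)^\intercal)^3$ by Remark \ref{rem:validitypna}) yields
\[
\int_T (\bvec{G}_T-\TGRAD\bvec{p}_T)\tdot\bvec{V}_T = \sum_{F\in\FT}\wTF\int_F (\trna\uvec{w}_F-\bvec{p}_T)\,\bvec{V}_T\,\nF \qquad \forall\,\bvec{V}_T\in(\bPoly{k+1}(T)^\intercal)^3.
\]
I then choose $\bvec{V}_T:=\vlproj{k+1}{T}(\TGRAD\bvec{v})$, split $\TGRAD\bvec{v}=(\TGRAD\bvec{v}-\bvec{V}_T)+\bvec{V}_T$ in $\int_T\TGRAD\bvec{v}\tdot(\bvec{G}_T-\TGRAD\bvec{p}_T)$, apply the identity above, and recombine with the boundary term from the first integration by parts. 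After this bookkeeping the non-stabilisation contribution of cell $T$ reads
\[
\int_T(\TGRAD\bvec{v}-\bvec{V}_T)\tdot(\bvec{G}_T-\TGRAD\bvec{p}_T) + \sum_{F\in\FT}\wTF\int_F(\trna\uvec{w}_F-\bvec{p}_T)(\bvec{V}_T-\TGRAD\bvec{v})\,\nF + \sum_{F\in\FT}\wTF\int_F\trna\uvec{w}_F\,(\TGRAD\bvec{v})\,\nF.
\]

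The step requiring the most care is showing that the last face term vanishes after summation over $\Th$. Since $\trna\uvec{w}_F$ is a single-valued face quantity and $\TGRAD\bvec{v}$ is single-valued (as $\bvec{v}\in\bvec{C}^1(\overline{\Omega})$), the two contributions of each interior face cancel through the opposite signs $\wTF$; on a boundary face $\wTF\nF=\nOmega$, so the integrand becomes $\trna\uvec{w}_F\cdot(\TGRAD\bvec{v})\nOmega=0$ by the hypothesis $\TGRAD\bvec{v}\cdot\nOmega=0$. Hence $\sum_{T\in\Th}\sum_{F\in\FT}\wTF\int_F\trna\uvec{w}_F\,(\TGRAD\bvec{v})\,\nF=0$. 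This mirrors the global cancellation \eqref{eq:adjgradzerosum} but, crucially, uses only the normal-trace condition and needs no continuous lift $\trnac$, since the clean factor $\TGRAD\bvec{v}$ is already single-valued.

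It then remains to bound the two surviving terms and the stabilisation, each against $\opnLt{\uNaT\uvec{w}_T}$. For the volume term I combine $\norm{\TGRAD\bvec{v}-\bvec{V}_T}\lesssim h^{k+1}\seminorm[\bvec{H}^{k+2}(T)]{\bvec{v}}$ with $\norm{\bvec{G}_T-\TGRAD\bvec{p}_T}\le\norm{\NaT\uvec{w}_T}+\norm{\TGRAD\pna\uvec{w}_T}\lesssim\opnLt{\uNaT\uvec{w}_T}$, the last estimate following from \eqref{eq:defopnLt} and Lemma \ref{lemma:normgrad}. For the face term I pair $\sum_{F\in\FT}h_F^{-1}\norm[F]{\trna\uvec{w}_F-\bvec{p}_T}^2\lesssim\opnLt{\uNaT\uvec{w}_T}^2$ (again Lemma \ref{lemma:normgrad}) against the trace approximation $\norm[F]{\TGRAD\bvec{v}-\bvec{V}_T}\lesssim h^{k+1/2}\seminorm[\bvec{H}^{k+2}(T)]{\bvec{v}}$ of \cite[Theorem~1.45]{hho}. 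For the stabilisation I use Cauchy--Schwarz, bounding the first factor by \eqref{eq:consistencystLt} (via $\uNaT\uIH\bvec{v}=\uIL(\TGRAD\bvec{v})$ and $\TGRAD\bvec{v}\in\bvec{H}^{k+1}(T)\cap\bvec{C}^0$) and the second by $\normLt{\uNaT\uvec{w}_T}\approx\opnLt{\uNaT\uvec{w}_T}$ from Lemma \ref{lemma:normLtequiv}. Each piece is $\lesssim h^{k+1}\seminorm[\bvec{H}^{k+2}(T)]{\bvec{v}}\,\opnLt{\uNaT\uvec{w}_T}$, and a final Cauchy--Schwarz over $T\in\Th$ delivers the claim. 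The sharp feature—that only $\opnLt[h]{\uNah\uvec{w}_h}$ enters—is precisely the payoff of transferring the Laplacian onto $\TGRAD\pna\uvec{w}_T$.
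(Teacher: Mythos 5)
Your proof is correct and follows essentially the same route as the paper's: the same cellwise integration by parts of the Laplacian, the same identity derived from Remark \ref{rem:validitypna} linking $\NaT\uvec{w}_T-\TGRAD\pna\uvec{w}_T$ to the face differences $\trna\uvec{w}_F-\pna\uvec{w}_T$, the same global cancellation \eqref{eq:adjlapzerosum} using only $\TGRAD\bvec{v}\cdot\nOmega=0$, and the same bounds via Lemma \ref{lemma:normgrad}, \eqref{eq:consistencystLt} and Lemma \ref{lemma:normLtequiv}. The only (harmless) deviation is the choice of polynomial approximant: you take the unconstrained $L^2$ projection $\vlproj{k+1}{T}(\TGRAD\bvec{v})$, whereas the paper takes $\TGRAD\bvec{v}_T$ with $\bvec{v}_T=\vlproj{1,k+1}{T}\bvec{v}$ the elliptic projection; both are admissible test functions by Remark \ref{rem:validitypna} and both yield the $h^{k+1}$ rates.
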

\begin{proof}
For any $T \in \Th$, \eqref{eq:comm2} gives:
\begin{equation*}
\spLt{\uNaT \uIH \bvec{v}}{\uNaT \uvec{w}_T} = \int_T \RTbproj{k+1} \TGRAD \bvec{v} \tdot \NaT \uvec{w}_T + \stLt{\uIL \TGRAD \bvec{v}}{\uNaT \uvec{w}} .
\end{equation*}
With an integration by parts and since $\int_T \RTbproj{k+1} \TGRAD \bvec{v} \tdot \NaT \uvec{w}_T = \int_T \TGRAD \bvec{v} \tdot \NaT \uvec{w}_T$
we have:
\begin{equation*}
\begin{aligned}
\AdjL(\bvec{v},\uvec{w}_h) =& \sum_{T \in \Th} \left ( \int_T \TGRAD \bvec{v} \tdot (\NaT \uvec{w}_T - \TGRAD \pna \uvec{w}_T ) + 
\stLt{\uIL \TGRAD \bvec{v}}{\uNaT \uvec{w}} \right . \\
&\quad + \left . \sum_{F \in \FT} \wTF \int_F \pna \uvec{w}_T \TGRAD \bvec{v} \; \nF \right ) .
\end{aligned}
\end{equation*}
Since we assume $\TGRAD \bvec{v}  \cdot \nOmega = 0$ we have 
\begin{equation} \label{eq:adjlapzerosum}
\sum_{T \in \Th} \sum_{F \in \FT} \wTF \int_F \trna \uvec{w}_F \TGRAD \bvec{v}\; \nF = 0.
\end{equation}
By Remark \ref{rem:validitypna} it holds 
 $\forall \bvec{v}_T \in \bPoly{k+1}(T)$,
\begin{equation*}
\int_T \TLAPLACIAN \bvec{v}_T \cdot \pna \uvec{w}_T + \int_T \NaT \uvec{w}_T \tdot \TGRAD \bvec{v}_T - \sum_{F \in \FT} \wTF \int_F \trna \uvec{w}_F \TGRAD \bvec{v}_T \; \nF = 0, 
\end{equation*}
so
\begin{equation*}
\int_T \TGRAD \bvec{v}_T \tdot (\NaT \uvec{w}_T - \TGRAD \pna \uvec{w}_T) + \sum_{F \in \FT} \wTF \int_F (\pna \uvec{w}_T - \trna \uvec{w}_F) \TGRAD \bvec{v}_T \; \nF = 0 .
\end{equation*}
This allows us to write for any $\uvec{v}_h = (\bvec{v}_T)_{T \in \Th} \in \bPoly{k+1}(T)$,
\begin{equation*}
\begin{aligned}
\AdjL(\bvec{v},\uvec{w}_h) =& \sum_{T \in \Th} \left ( \int_T \TGRAD (\bvec{v} - \bvec{v}_T) \tdot (\NaT \uvec{w}_T - \TGRAD \pna \uvec{w}_T ) + 
\stLt{\uIL \TGRAD \bvec{v}}{\uNaT \uvec{w}} \right . \\
&\quad + \left . \sum_{F \in \FT} \wFE \int_F (\pna \uvec{w}_T - \trna \uvec{w}_F) \TGRAD (\bvec{v} - \bvec{v}_T) \nF \right ),\\
\seminorm{\AdjL(\bvec{v},\uvec{w}_h)} \lesssim&\; \sum_{T \in \Th} \left ( \norm[T]{\TGRAD (\bvec{v} - \bvec{v}_T)} \norm[T]{\NaT \uvec{w}_T - \TGRAD \pna \uvec{w}_T} \right .\\
& \quad \left .
+ \sum_{F \in FT} \norm[F]{\pna \uvec{w}_T - \trna \uvec{w}_F} \norm[F]{\TGRAD (\bvec{v} - \bvec{v}_T)} 
+ \seminorm{\stLt{\uIL \TGRAD \bvec{v}}{\uNaT \uvec{w}}} \right ).
\end{aligned}
\end{equation*}
Applying Lemma \ref{lemma:normgrad} we get
\begin{equation*}
\norm[F]{\pna \uvec{w}_T - \trna \uvec{w}_F} \norm[F]{\TGRAD (\bvec{v} - \bvec{v}_T)} \lesssim \opnLt{\uNaT \uvec{w}_T} h^{\frac{1}{2}} \norm[F]{\TGRAD (\bvec{v} - \bvec{v}_T)} .
\end{equation*}
Furthermore \eqref{eq:consistencystLt} and Lemma \ref{lemma:normLtequiv} give
\begin{equation*}
\seminorm{\stLt{\uIL \TGRAD \bvec{v}}{\uNaT \uvec{w}}} \lesssim h^{k+1} \seminorm[\bvec{H}^{k+1}]{\TGRAD \bvec{v}} \opnLt{\uNaT \uvec{w}_T} .
\end{equation*}
Hence, applying Lemma \ref{lemma:normgrad} we write:
\begin{equation*}
\begin{aligned}
\seminorm{\AdjL(\bvec{v},\uvec{w}_h)} \lesssim&\; \sum_{T \in \Th} \left ( 
\opnLt{\uNaT \uvec{w}_T} ( \norm[T]{\TGRAD (\bvec{v} - \bvec{v}_T)} 
+ \sum_{F \in \FT} h^{\frac{1}{2}} \norm[F]{\TGRAD (\bvec{v} - \bvec{v}_T)}) \right ) \\
&+ h^{k+1} \seminorm[\bvec{H}^{k+2}]{\bvec{v}} \opnLt{\uNaT \uvec{w}_h} .
\end{aligned}
\end{equation*}
We conclude by taking $\bvec{v}_T = \vlproj{1,k+1}{T} \bvec{v}$ the elliptic projection on $T$ (see \cite[Definition~1.39]{hho}), 
then \cite[Theorem~1.48]{hho} gives: 
\begin{equation*}
\norm[\bvec{H}^1(T)]{\bvec{v} - \vlproj{1,k+1}{T} \bvec{v}} \lesssim h^{k+1} \seminorm[\bvec{H}^{k+2}]{\bvec{v}} ,
\end{equation*} 
\begin{equation*}
h^{\frac{1}{2}} \norm[\bvec{H}^1(F)]{\bvec{v} - \vlproj{1,k+1}{T} \bvec{v}} \lesssim h^{k+1} \seminorm[\bvec{H}^{k+2}]{\bvec{v}} .
\end{equation*}
\end{proof}

\section{Stokes equations.} \label{Stokes}
Finally, we illustrate this complex with the design of a scheme for the Stokes equations.
For the sake of simplicity we use Neumann boundary conditions over the whole boundary, 
that it to say with a free outlet condition. More general conditions are not difficult to enforce and are 
discussed in Section \ref{Alternateboundaryconditions}.
The solution is determined only up to a constant vector field.
This leads to the introduction of a new space:
\begin{equation} \label{eq:defuHvhstar}
\uHvhstar := \lbrace \uvec{v}_h \in \uHvh \st \sum_{T \in \Th} \int_T \pna \uvec{v}_T = 0 \rbrace.
\end{equation}
This is the discrete counterpart of $\bvec{H}^1(\Omega) \cap L^2_0(\Omega)$.

Let $\mu$ be a constant viscosity, 
we define the symmetric bilinear form $\aSk(\uvec{v}_h, \uvec{w}_h) \in \uHvh \times \uHvh \rightarrow \Real$ on all
$\uvec{v}_h, \uvec{w}_h \in \uHvh$ by
\begin{equation} \label{eq:defStokesa}
\aSk(\uvec{v}_h, \uvec{w}_h) := \mu \spLt[h]{\uNah \uvec{v}_h}{\uNah \uvec{w}_h} .
\end{equation} 
We also define the bilinear form $\bSk(\uvec{v}_h, \ul{q}_h) \in \uHvh \times \uLsh \rightarrow \Real$ on all
$\uvec{v}_h \in \uHvh, \ul{q}_h \in \uLsh$ by
\begin{equation} \label{eq:defStokesb}
\bSk(\uvec{v}_h, \ul{q}_h) := \sum_{T \in \Th} \int_T \DT \uvec{v}_T q_T .
\end{equation}
Then we define the bilinear form
$\ASk((\uvec{v}_F,\ul{p}_h),(\uvec{w}_h,\ul{q}_h)) \in (\uHvhstar \times \uLsh) \times (\uHvhstar \times \uLsh) \rightarrow \Real$ by
\begin{equation} \label{eq:defStokes}
\ASk((\uvec{v}_h,\ul{p}_h),(\uvec{w}_h,\ul{q}_h)) = \aSk(\uvec{v}_h,\uvec{w}_h) - \bSk(\uvec{w}_h,\ul{p}_h) + \bSk(\uvec{v}_h,\ul{q}_h) .
\end{equation}

We define a suitable Sobolev-like norm on our discrete spaces such that
$\forall \uvec{v}_h \in \uHvh$, 
\begin{equation} \label{eq:defH1}
\normHNa{\uvec{v}_h} := \left ( \normNa[h]{\uvec{v}_h}^2 + \aSk(\uvec{v}_h, \uvec{v}_h) \right )^{1/2} .
\end{equation}
And for $f \in \bvec{L}^2(\Omega)$ we set
$\LSk \st \uHvhstar \rightarrow \Real$ such that $\forall \uvec{v}_h \in \uHvh$,
\begin{equation} \label{eq:defStokesL}
\LSk(\uvec{v}_h) := \sum_{T \in \Th} \int_T \pna \uvec{v}_T \cdot f .
\end{equation}

We define the discrete problem:\\
Find $(\uvec{v}_h, \ul{p}_h) \in \uHvhstar \times \uLsh$ such that for all $(\uvec{w}_h, \ul{q}_h) \in \uHvhstar \times \uLsh$,
\begin{equation} \label{eq:defStokespbdisc}
\ASk((\uvec{v}_h,\ul{p}_h),(\uvec{w}_h,\ul{q}_h)) = \LSk(\uvec{v}_h) .
\end{equation}
We show well-posedness in Lemma \ref{lemma:wellposedness}.

We consider the following Stokes problem:\\
Find $\bvec{u} \in \bvec{H}^2(\Omega) \cap \bvec{L}^2_0(\Omega)$, $p \in H^1_0(\Omega)$ such that
\begin{equation} \label{eq:defStokespbcon}
\begin{aligned}
- \mu \TLAPLACIAN \bvec{u} + \GRAD p =&\, f, \text{ on } \Omega,\\
\DIV \bvec{u} =&\, 0, \text{ on } \Omega,\\
\frac{\partial \bvec{u}}{\partial \nOmega} =&\, 0, \text{ on } \partial \Omega.
\end{aligned}
\end{equation}
Let $(\bvec{u},p)$ solves \eqref{eq:defStokespbcon} 
and let $(\uvec{v}_h,\ul{p}_h)$ solves \eqref{eq:defStokespbdisc}.
We assume that the continuous solutions $\bvec{u}$ and $p$ have the additional smoothness $\bvec{u} \in \bvec{H}^{k+2}(\Th)$ 
and $p \in H^{k+2}(\Th)$.
We deduce the following error estimate.
\begin{theorem}[Error estimate for Stokes] \label{th:Stokeserr}
Under the smoothness assumption on $\bvec{u}$ and $p$ it holds that 
\begin{equation}
\normHNa{\uvec{v}_h - \uIH[h] \bvec{u}} + \normLs{\ul{p}_h - \uILh p} \lesssim 
h^{k+1} \left ( \seminorm[\bvec{H}^{k+2}(\Th)]{\bvec{u}} + \seminorm[H^{k+1}(\Th)]{p} + \seminorm[H^{k+2}(\Th)]{p} \right ).
\end{equation}
\end{theorem}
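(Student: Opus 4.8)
The plan is to follow the classical third-lemma / Strang strategy for saddle-point problems, combining the well-posedness (inf-sup stability) of $\ASk$ from Lemma~\ref{lemma:wellposedness} with the adjoint-consistency estimates of Section~\ref{Consistencyresults}. First I would introduce the reference pair $(\hat{\uvec w}_h,\uILsh p)$, where $\hat{\uvec w}_h$ is $\uIH[h]\bvec u$ corrected by the interpolant of a constant field so that $\hat{\uvec w}_h\in\uHvhstar$. Since this correction is a constant and $\uNah$ annihilates constants (by \eqref{eq:comm2}), it alters neither $\uNah\hat{\uvec w}_h$ nor any seminorm-based bound, and the primal consistency of Lemma~\ref{lemma:primalconsistency} shows the correction has size $\bigO(h^{k+2}\seminorm[\bvec{H}^{k+2}]{\bvec u})$, so it is removed at the end by a triangle inequality. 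Setting $(\uvec e_h,\ul\epsilon_h):=(\uvec v_h-\hat{\uvec w}_h,\ul p_h-\uILsh p)\in\uHvhstar\times\uLsh$, the discrete equation \eqref{eq:defStokespbdisc} gives the error identity $\ASk((\uvec e_h,\ul\epsilon_h),(\uvec w_h,\ul q_h))=\mathcal E_h(\uvec w_h,\ul q_h)$ for all test pairs, where $\mathcal E_h(\uvec w_h,\ul q_h):=\LSk(\uvec w_h)-\ASk((\uIH[h]\bvec u,\uILsh p),(\uvec w_h,\ul q_h))$ is the consistency error.

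The core of the proof is bounding $\mathcal E_h$. Substituting the momentum balance $f=-\mu\TLAPLACIAN\bvec u+\GRAD p$ into $\LSk$ and expanding $\ASk$, the error splits into three pieces. The viscous and Laplacian contributions assemble exactly into $-\mu\AdjL(\bvec u,\uvec w_h)$ by definition \eqref{eq:defAdjL}, which Theorem~\ref{th:AdjConsL} controls by $h^{k+1}\seminorm[\bvec{H}^{k+2}]{\bvec u}\,\opnLt[h]{\uNah\uvec w_h}$ (its hypotheses hold since $\bvec u$ satisfies the Neumann condition $\TGRAD\bvec u\cdot\nOmega=0$). The term $\bSk(\uIH[h]\bvec u,\ul q_h)$ vanishes identically, because \eqref{eq:comm3} gives $\DT(\uIH\bvec u)=\lproj{k}{T}(\DIV\bvec u)=0$ from the incompressibility $\DIV\bvec u=0$.

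The remaining and most delicate piece is the pressure coupling $\mathcal T_1:=\sum_{T\in\Th}\int_T\pna\uvec w_T\cdot\GRAD p+\bSk(\uvec w_h,\uILsh p)$, which I would treat by testing the gradient adjoint consistency with the tensor $p\ID$. Since $\TDIV(p\ID)=\GRAD p$ and $\NaT\uvec w_T\in\RTb{k+1}(T)$, the consistent part of $\spLt{\uIL(p\ID)}{\uNaT\uvec w_T}$ equals $\int_T p\,\Tr(\NaT\uvec w_T)=\int_T p\,\DT\uvec w_T$, so that (using $\bSk(\uvec w_h,\uILsh p)=\sum_T\int_T\DT\uvec w_T\,p$) one gets $\AdjG(p\ID,\uvec w_h)=\mathcal T_1+\sum_{T\in\Th}\stLt{\uIL(p\ID)}{\uNaT\uvec w_T}$. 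Theorem~\ref{th:AdjConsG} bounds $\AdjG(p\ID,\uvec w_h)$ by $h^{k+1}(\seminorm[H^{k+1}]{p}+\seminorm[H^{k+2}]{p})(\opnNa[h]{\uvec w_h}+\opnLt[h]{\uNah\uvec w_h})$, using $\seminorm[\bvec{H}^m]{p\ID}\approx\seminorm[H^m]{p}$, while the stabilization remainder is estimated by Cauchy--Schwarz with \eqref{eq:consistencystLt} and Lemma~\ref{lemma:normLtequiv}, giving $h^{k+1}\seminorm[H^{k+1}]{p}\,\opnLt[h]{\uNah\uvec w_h}$. Collecting the three pieces and using $\opnNa[h]{\uvec w_h}+\opnLt[h]{\uNah\uvec w_h}\lesssim\normHNa{\uvec w_h}$ yields $\seminorm{\mathcal E_h(\uvec w_h,\ul q_h)}\lesssim h^{k+1}(\seminorm[\bvec{H}^{k+2}]{\bvec u}+\seminorm[H^{k+1}]{p}+\seminorm[H^{k+2}]{p})(\normHNa{\uvec w_h}+\normLs{\ul q_h})$.

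Finally, I would invoke the inf-sup stability of $\ASk$ on $\uHvhstar\times\uLsh$ (Lemma~\ref{lemma:wellposedness}) applied to $(\uvec e_h,\ul\epsilon_h)$: taking the supremum of the error identity over normalized test pairs bounds $\normHNa{\uvec e_h}+\normLs{\ul\epsilon_h}$ by the same right-hand side, and the triangle inequality (absorbing the $\bigO(h^{k+2})$ constant correction of the velocity interpolant) delivers the claim. The main obstacle I anticipate is the bookkeeping of $\mathcal T_1$ — specifically justifying the reduction to $\AdjG$ via $p\ID$ and verifying the regularity hypotheses ($\bvec C^0$ regularity and single-valued normal traces, i.e.\ $p\in H^1_0(\Omega)$ with enough piecewise smoothness) needed to apply Theorems~\ref{th:AdjConsG} and~\ref{th:AdjConsL}; the mean-value adjustment of the velocity interpolant is a minor but necessary technicality.
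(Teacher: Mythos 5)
Your proposal is correct and follows essentially the same route as the paper: the third Strang lemma combining the inf-sup stability of Lemma~\ref{lemma:wellposedness} with the consistency-error bound of Lemma~\ref{lemma:consistencyerr}, where the consistency error is decomposed exactly as in the paper into $\AdjG(p\ID,\cdot)$, $-\mu\AdjL(\bvec{u},\cdot)$ and the stabilization remainder, estimated via Theorems~\ref{th:AdjConsG} and~\ref{th:AdjConsL} and \eqref{eq:consistencystLt}. Your explicit mean-value correction placing the velocity interpolant in $\uHvhstar$ is a technicality the paper leaves implicit, and you handle it appropriately.
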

\begin{proof}
The proof is a direct application of the third Strang lemma (see \cite{ThirdStrang}) to the estimates given by Lemma \ref{lemma:wellposedness} and 
\ref{lemma:consistencyerr}.
\end{proof}

\begin{lemma}[Well-posedness] \label{lemma:wellposedness}
For any $(\uvec{v}_h, \ul{p}_h) \in \uHvhstar \times \uLsh$ there is $(\uvec{w}_h, \ul{q}_h) \in \uHvhstar \times \uLsh$ 
such that $\normHNa{\uvec{w}_h} + \normLs{\ul{q}_h} \lesssim \normHNa{\uvec{v}_h} + \normLs{\ul{p}_h}$ and 
\begin{equation*}
\ASk((\uvec{v}_h,\ul{p}_h),(\uvec{w}_h,\ul{q}_h)) \gtrsim \normHNa{\uvec{v}_h}^2 + \normLs{\ul{p}_h}^2 .
\end{equation*}
\end{lemma}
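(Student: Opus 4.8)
The plan is to establish this as the standard discrete inf-sup (Banach--Ne\v{c}as--Babu\v{s}ka) stability for the saddle-point form $\ASk$, constructing the test pair $(\uvec{w}_h,\ul{q}_h)$ from $(\uvec{v}_h,\ul{p}_h)$ by the classical augmented-test-function device. First I would take the naive diagonal choice $(\uvec{w}_h,\ul{q}_h)=(\uvec{v}_h,\ul{p}_h)$: the two $\bSk$ contributions cancel, leaving $\ASk((\uvec{v}_h,\ul{p}_h),(\uvec{v}_h,\ul{p}_h))=\aSk(\uvec{v}_h,\uvec{v}_h)$. This controls only the Jacobian energy and says nothing about the pressure, so it must be corrected by a velocity field tailored to $\ul{p}_h$.

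To recover the pressure, I would invoke the right inverse of the divergence. By Lemma \ref{lemma:rightinvdiv} together with Remark \ref{rem:rightinvdiv}, there is $\uvec{w}_h^p\in\uHvhstar$ with $\Dh\uvec{w}_h^p=\ul{p}_h$ and $\opnNa[h]{\uvec{w}_h^p}+\opnLt[h]{\uNah\uvec{w}_h^p}\lesssim\normLs{\ul{p}_h}$, where crucially $\uvec{w}_h^p$ already lies in the mean-zero space $\uHvhstar$. Then $\bSk(\uvec{w}_h^p,\ul{p}_h)=\sum_{T\in\Th}\int_T\DT\uvec{w}_h^p\,p_T=\normLs{\ul{p}_h}^2$. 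I would test with $(\uvec{w}_h,\ul{q}_h)=(\uvec{v}_h-\xi\uvec{w}_h^p,\ul{p}_h)$ for a small parameter $\xi>0$ to be fixed independently of $h$. The pressure--velocity couplings produce a favourable $+\xi\normLs{\ul{p}_h}^2$, while the leftover off-diagonal term $-\xi\aSk(\uvec{v}_h,\uvec{w}_h^p)$ is absorbed by Cauchy--Schwarz and Young's inequality, using $\aSk(\uvec{w}_h^p,\uvec{w}_h^p)=\mu\normLt[h]{\uNah\uvec{w}_h^p}^2\lesssim\normLs{\ul{p}_h}^2$ (from the bound above and Lemma \ref{lemma:normLtequiv}). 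Choosing $\xi$ small enough then yields $\ASk((\uvec{v}_h,\ul{p}_h),(\uvec{w}_h,\ul{q}_h))\gtrsim\aSk(\uvec{v}_h,\uvec{v}_h)+\normLs{\ul{p}_h}^2$.

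It remains to upgrade the $\aSk$-energy to the full norm $\normHNa{\uvec{v}_h}^2=\normNa[h]{\uvec{v}_h}^2+\aSk(\uvec{v}_h,\uvec{v}_h)$. Here I would use that $\uvec{v}_h\in\uHvhstar$ satisfies exactly the mean-zero hypothesis of the Poincar\'e inequality Lemma \ref{lemma:poincareH1}, which gives $\opnNa[h]{\uvec{v}_h}\lesssim\opnLt[h]{\uNah\uvec{v}_h}$; combining this with the norm equivalences (Lemmas \ref{lemma:normequiv} and \ref{lemma:normLtequiv}) bounds $\normNa[h]{\uvec{v}_h}^2\lesssim\aSk(\uvec{v}_h,\uvec{v}_h)$, whence $\normHNa{\uvec{v}_h}^2\lesssim\aSk(\uvec{v}_h,\uvec{v}_h)$ and therefore $\ASk\gtrsim\normHNa{\uvec{v}_h}^2+\normLs{\ul{p}_h}^2$. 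Finally, the continuity bound $\normHNa{\uvec{w}_h}+\normLs{\ul{q}_h}\lesssim\normHNa{\uvec{v}_h}+\normLs{\ul{p}_h}$ follows from the triangle inequality and $\normHNa{\uvec{w}_h^p}\lesssim\normLs{\ul{p}_h}$, the latter being a restatement of the right-inverse estimate through the norm equivalences.

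The proof is essentially an assembly of earlier results, so the only genuine subtlety is bookkeeping with the spaces: one must take the corrector $\uvec{w}_h^p$ inside $\uHvhstar$ so that $\uvec{w}_h$ remains in the correct space, and it is precisely the membership $\uvec{v}_h\in\uHvhstar$ that licenses the Poincar\'e step. The hard part will simply be checking that the parameter $\xi$ and the hidden constants are independent of $h$, with the dependence on the fixed viscosity $\mu$ entering only through the explicit factors carried above.
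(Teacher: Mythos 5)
Your proposal is correct and follows essentially the same route as the paper: diagonal testing for the $\aSk$-energy, the mean-zero right inverse of the divergence (Remark \ref{rem:rightinvdiv}) to build a pressure corrector, Young's inequality to absorb the cross term, and the Poincaré inequality (Lemma \ref{lemma:poincareH1}) to upgrade to the full norm $\normHNa{\cdot}$. The only cosmetic difference is that you fold the corrector into a single test pair $(\uvec{v}_h-\xi\uvec{w}_h^p,\ul{p}_h)$ with an explicit small parameter $\xi$, whereas the paper tests separately against $(\uvec{v}_h,\ul{p}_h)$ and $(\uvec{w'}_h,0)$ and sums; by bilinearity these are the same argument, and your explicit tracking of $\xi$ is if anything slightly more careful.
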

\begin{proof}
Let $(\uvec{v}_h, \ul{p}_h) \in \uHvhstar \times \uLsh$, 
we have  
\begin{equation} \label{eq:wp1}
 \ASk((\uvec{v}_h,\ul{p}_h),(\uvec{v}_h,\ul{p}_h)) = \aSk(\uvec{v}_h, \uvec{v}_h) \gtrsim \normHNa{\uvec{v}_h}^2 ,
\end{equation}
where the last inequality comes from Lemma \ref{lemma:poincareH1}.
Moreover by Remark \ref{rem:rightinvdiv} there is $\uvec{w'}_h \in \uHvhstar$
such that $\Dh \uvec{w'}_h = -\ul{p}_h$ and
$\normHNa{\uvec{w'}_h} \lesssim \normLs{\ul{p}_h}$.
Hence
\begin{equation} \label{eq:wp2}
\begin{aligned}
\ASk((\uvec{v}_h,\ul{p}_h),(\uvec{w'}_h,0)) =&\, \aSk(\uvec{v}_h, \uvec{w'}_h) + \normLs{\ul{p}_h}^2 \\
\geq& -\frac{1}{2} \normHNa{\uvec{v}_h}^2 - \frac{1}{2} \normHNa{\uvec{w'}_h}^2 + \normLs{\ul{p}_h}^2 \\
\gtrsim& -\frac{1}{2} \normHNa{\uvec{v}_h}^2 + \frac{1}{2} \normLs{\ul{p}_h}^2.
\end{aligned}
\end{equation}
We conclude summing \eqref{eq:wp1} and \eqref{eq:wp2}.
\end{proof}

We define the consistency error 
$\Aerr \st \uHvhstar \times \uLsh \rightarrow \Real$ by
\begin{equation} \label{eq:defStokesAerr}
\Aerr((\uvec{w}_h,\ul{q}_h)) = \LSk(\uvec{w}_h) - \ASk((\uIH[h] \bvec{u}, \uILh p),(\uvec{w}_h,\ul{q}_h)) .
\end{equation}

\begin{lemma} \label{lemma:consistencyerr}
For all $\uvec{w}_h \in \uHvhstar$, $\ul{q}_h \in \uLsh$, it holds
\begin{equation*}
\begin{aligned}
\Aerr((\uvec{w}_h,\ul{q}_h)) \lesssim &
\, h^{k+1} \left ( \seminorm[\bvec{H}^{k+2}(\Th)]{\bvec{u}} + \seminorm[H^{k+1}(\Th)]{p} + \seminorm[H^{k+2}(\Th)]{p} \right ) \\
&\quad \times \left ( \normHNa{\uvec{v}_h} + \normLs{\ul{p}_h} \right ) .
\end{aligned}
\end{equation*}
\end{lemma}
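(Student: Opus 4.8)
The plan is to substitute the strong form \eqref{eq:defStokespbcon} of the momentum equation into $\LSk$ and then recognise the resulting terms as the two adjoint-consistency functionals already controlled in Theorems~\ref{th:AdjConsL} and~\ref{th:AdjConsG}. Expanding the definitions of $\ASk$, $\aSk$, $\bSk$ and $\LSk$ I would first write
\[
\Aerr((\uvec{w}_h,\ul{q}_h)) = \LSk(\uvec{w}_h) - \aSk(\uIH[h]\bvec{u},\uvec{w}_h) + \bSk(\uvec{w}_h,\uILh p) - \bSk(\uIH[h]\bvec{u},\ul{q}_h),
\]
insert $f = -\mu\TLAPLACIAN\bvec{u} + \GRAD p$ into $\LSk(\uvec{w}_h)=\sum_T\int_T f\cdot\pna\uvec{w}_T$, and then split the outcome into a velocity part and a pressure part.

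For the velocity part, the terms carrying $\bvec{u}$ are $-\mu\big(\sum_T\int_T\TLAPLACIAN\bvec{u}\cdot\pna\uvec{w}_T + \sum_T\spLt{\uNaT\uIH\bvec{u}}{\uNaT\uvec{w}_T}\big)$, which is exactly $-\mu\AdjL(\bvec{u},\uvec{w}_h)$ by \eqref{eq:defAdjL}. Since the continuous solution satisfies $\TGRAD\bvec{u}\cdot\nOmega=0$, Theorem~\ref{th:AdjConsL} bounds this by $h^{k+1}\seminorm[\bvec{H}^{k+2}]{\bvec{u}}\,\opnLt[h]{\uNah\uvec{w}_h}$. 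The last term vanishes: by \eqref{eq:comm3} one has $\DT(\uIH\bvec{u})=\lproj{k}{T}(\DIV\bvec{u})=0$ because $\DIV\bvec{u}=0$, so $\bSk(\uIH[h]\bvec{u},\ul{q}_h)=0$ and $\Aerr$ carries no genuine $\ul{q}_h$-dependence.

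The delicate part is the pressure coupling $S_p := \sum_T\int_T\GRAD p\cdot\pna\uvec{w}_T + \bSk(\uvec{w}_h,\uILh p)$, for which a naive face-by-face estimate loses one power of $h$ (the degree-$k$ pressure only yields order $h^{k}$). The trick is to pass to the matrix formulation: since $\TDIV(p\ID)=\GRAD p$, the first sum is $\sum_T\int_T\TDIV(p\ID)\cdot\pna\uvec{w}_T$, while $\bSk(\uvec{w}_h,\uILh p)=\sum_T\int_T\DT\uvec{w}_T\,\lproj{k}{T}p$ by \eqref{eq:defILs}. Because $\RTbproj{k+1}$ is the $\bvec{L}^2$-orthogonal projection onto $\RTb{k+1}(T)$, $\NaT\uvec{w}_T\in\RTb{k+1}(T)$, and $(p\ID)\tdot\NaT\uvec{w}_T = p\,\Tr(\NaT\uvec{w}_T)=p\,\DT\uvec{w}_T$, this equals $\sum_T\int_T\RTbproj{k+1}(p\ID)\tdot\NaT\uvec{w}_T = \sum_T\big(\spLt{\uIL(p\ID)}{\uNaT\uvec{w}_T}-\stLt{\uIL(p\ID)}{\uNaT\uvec{w}_T}\big)$. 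Comparing with \eqref{eq:defAdjG} gives the clean identity
\[
S_p = \AdjG(p\ID,\uvec{w}_h) - \sum_{T\in\Th}\stLt{\uIL(p\ID)}{\uNaT\uvec{w}_T}.
\]

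It then remains to estimate both pieces. Applying Theorem~\ref{th:AdjConsG} to $\bvec{V}=p\ID$, which lies in $\bvec{C}^0(\overline\Omega)\cap\bvec{H}^1_0(\Omega)$ thanks to $p\in H^1_0(\Omega)\cap H^{k+2}(\Th)$ with $p\ID\cdot\nOmega=p\,\nOmega=0$, produces $\seminorm{\AdjG(p\ID,\uvec{w}_h)}\lesssim h^{k+1}\big(\seminorm[H^{k+1}]{p}+\seminorm[H^{k+2}]{p}\big)\big(\opnNa[h]{\uvec{w}_h}+\opnLt[h]{\uNah\uvec{w}_h}\big)$, which already yields both pressure seminorms of the statement. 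For the stabilisation remainder, Cauchy--Schwarz together with \eqref{eq:consistencystLt} and the norm equivalence of Lemma~\ref{lemma:normLtequiv} gives $\seminorm{\sum_T\stLt{\uIL(p\ID)}{\uNaT\uvec{w}_T}}\lesssim h^{k+1}\seminorm[H^{k+1}]{p}\,\opnLt[h]{\uNah\uvec{w}_h}$. Collecting the velocity and pressure contributions and using $\opnNa[h]{\uvec{w}_h}+\opnLt[h]{\uNah\uvec{w}_h}\lesssim\normHNa{\uvec{w}_h}$ concludes. The main obstacle, and the crux of the argument, is precisely the reformulation of $S_p$ through $p\ID$: without routing the pressure coupling through the tensor-valued gradient consistency one cannot recover the optimal order, and one must check carefully that $\RTbproj{k+1}$ acts as the $\bvec{L}^2$-projection so that the volumetric term collapses exactly to $p\,\DT\uvec{w}_T$.
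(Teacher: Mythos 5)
Your proposal is correct and follows essentially the same route as the paper's proof: the same splitting of $\Aerr$ after inserting $f=-\mu\TLAPLACIAN\bvec{u}+\GRAD p$, the same key identity $\int_T \DT\uvec{w}_T\,\lproj{k}{T}p=\int_T \NaT\uvec{w}_T\tdot\RTbproj{k+1}(p\ID)$ to route the pressure coupling through $\AdjG(p\ID,\cdot)$, the same use of \eqref{eq:comm3} to kill the $\ul{q}_h$ term, and the same final estimates via Theorems~\ref{th:AdjConsG}, \ref{th:AdjConsL} and \eqref{eq:consistencystLt}. No gaps.
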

\begin{proof}
Let $\uvec{w}_h \in \uHvhstar$, $\ul{q}_h \in \uLsh$, then
\begin{equation*}
\begin{aligned}
\Aerr((\uvec{w}_h,\ul{q}_h)) =& \sum_{T \in \Th} \int_T \pna \uvec{w}_T \cdot f - \mu \spNa{\uNaT \uIH \bvec{u}}{\uNaT \uvec{w}_T}\\ 
& \quad + \int_T \DT \uvec{w}_T \, \uILh p - \int_T \DT (\uIH \bvec{u})\, q_T\\
=& \sum_{T \in \Th} \int_T \pna \uvec{w}_T \cdot \GRAD p + \int_T \DT \uvec{w}_T \, \uILh p \\
& \quad - \mu \left ( \int_T \pna \uvec{w}_T \cdot \TLAPLACIAN \bvec{u} + \spNa{\uNaT \uIH \bvec{u}}{\uNaT \uvec{w}_T} \right ) \\
=&\, \AdjG(p \ID, \uvec{w}_h) - \stLt{\uNaT \uvec{w}_T}{\uIL (p \ID)} 
- \mu \AdjL(\bvec{u},\uvec{w}_h) \\
\leq&\; \seminorm{\AdjG(p \ID, \uvec{w}_h)} + \seminorm{\stLt{\uNaT \uvec{w}_T}{\uNaT \uvec{w}_T}}^{1/2} \\
& \quad + \seminorm{\stLt{\uIL (p \ID)}{\uIL (p \ID)}}^{1/2} + \mu \seminorm{\AdjL(\bvec{u}, \uvec{w}_h)} .
\end{aligned}
\end{equation*}
The second equality comes from $f = - \mu \TLAPLACIAN \bvec{u} + \GRAD p$, 
\eqref{eq:comm2} and $\DIV \bvec{u} = 0$,
and the third equality comes from \eqref{eq:defAdjG}, \eqref{eq:defAdjL} as well as:
\begin{equation*}
\int_T \DT \uvec{w}_T\, \lproj{k}{T} p = \int_T \Tr (\NaT \uvec{w}_T) \, p 
= \int_T \NaT \uvec{w}_T \tdot (p \ID) = \int_T \NaT \uvec{w}_T \tdot \RTbproj{k+1} (p \ID).
\end{equation*}
We conclude inferring the estimates of Theorem \ref{th:AdjConsG}, \ref{th:AdjConsL} 
and the consistency \eqref{eq:consistencystLt}.
\end{proof}

\section{Alternative boundary conditions.} \label{Alternateboundaryconditions}
In this section we show how to extend the results of Section \ref{Stokes} to Dirichlet boundary conditions on $\uHvh$.
This is useful for common condition such as the no slip condition or forced inlet condition
and does not require much change.
\subsection{Dirichlet boundary conditions.}
We introduce the space 
\begin{equation}
\uHvhzero := \lbrace \uvec{v}_h \in \uHvh \st \uvec{v}_F \equiv \uvec{0}, \, \forall F \in \Fh, F \subset \partial \Omega \rbrace. 
\end{equation}
Since the pressure is only defined up to a constant value, we introduce the natural space:
\begin{equation}
\uLshstar := \lbrace \ul{q}_h \in \uLsh \st \sum_{T \in \Th} \int_T q_T = 0 \rbrace .
\end{equation}
Then we define the bilinear form:
$\ASk((\uvec{v}_h,\ul{p}_h),(\uvec{w}_h,\ul{q}_h)) \in (\uHvhzero \times \uLshstar) \times (\uHvhzero \times \uLshstar) \rightarrow \Real$ by
\begin{equation}
\ASk((\uvec{v}_h,\ul{p}_h),(\uvec{w}_h,\ul{q}_h)) := \aSk(\uvec{v}_h,\uvec{w}_h) - \bSk(\uvec{w}_h,\ul{p}_h) + \bSk(\uvec{v}_h,\ul{q}_h) .
\end{equation}
With $\aSk$ and $\bSk$ defined by \eqref{eq:defStokesa}, \eqref{eq:defStokesb}, 
we also keep the same definition \eqref{eq:defStokesL} of the source term $\LSk$.
So the discrete problem is:\\
Find $(\uvec{v}_h, \ul{p}_h) \in \uHvhzero \times \uLshstar$ such that for all $(\uvec{w}_h, \ul{q}_h) \in \uHvhzero \times \uLshstar$,
\begin{equation} \label{eq:defStokesDirichlet}
\ASk((\uvec{v}_h,\ul{p}_h),(\uvec{w}_h,\ul{q}_h)) = \LSk(\uvec{v}_h) .
\end{equation}
The continuous Stokes problem becomes:\\
Find $\bvec{u} \in \bvec{H}^1_0(\Omega) \cap \bvec{H}^2(\Omega)$, $p \in H^1(\Omega) \cap L^2_0(\Omega)$ such that
\begin{equation} \label{eq:defStokescontdirichlet}
\begin{aligned}
- \mu \TLAPLACIAN \bvec{u} + \GRAD p =& f, \text{ on } \Omega,\\
\DIV \bvec{u} =& 0, \text{ on } \Omega.
\end{aligned}
\end{equation}

\begin{theorem} \label{th:StokeserrDirichlet}
Under the same assumptions as those of Theorem \ref{th:Stokeserr},
the problem \eqref{eq:defStokesDirichlet} is well-posed and converges toward the continuous solution 
of problem \eqref{eq:defStokescontdirichlet} with the same error estimate as in Theorem \ref{th:Stokeserr}. 
\end{theorem}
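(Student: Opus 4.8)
The plan is to mirror the Neumann analysis of Section~\ref{Stokes} step by step, replacing each place where a boundary condition on the continuous fields was invoked by the vanishing of the discrete test functions on the boundary faces. The two substantial ingredients are an analogue of the well-posedness Lemma~\ref{lemma:wellposedness} and an analogue of the consistency Lemma~\ref{lemma:consistencyerr}; once both are in hand, Theorem~\ref{th:StokeserrDirichlet} follows from the third Strang lemma exactly as Theorem~\ref{th:Stokeserr} did.

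For well-posedness I would first record a Poincaré inequality on $\uHvhzero$. The homogeneous values $\uvec{v}_F \equiv \uvec{0}$ on the boundary faces now play the role of the zero-average constraint built into $\uHvhstar$, so that $\aSk(\uvec{v}_h,\uvec{v}_h) \gtrsim \normHNa{\uvec{v}_h}^2$ holds for every $\uvec{v}_h \in \uHvhzero$ without any mean-value restriction; the argument is the same adaptation of the reference behind Lemma~\ref{lemma:poincareH1}. The inf-sup control of $\bSk$ requires a right-inverse of the divergence with values in $\uHvhzero$. Given $\ul{p}_h \in \uLshstar$, the continuous reconstruction used in Lemma~\ref{lemma:rightinvdiv} satisfies $\int_\Omega \tilde{p} = \sum_{T \in \Th} \int_T p_T = 0$, so the divergence lift can be chosen in $\bvec{H}^1_0(\Omega)$; its interpolant then has vanishing components on every boundary face and hence lies in $\uHvhzero$. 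With these two facts, the coercivity and inf-sup combination of Lemma~\ref{lemma:wellposedness} transfers verbatim.

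For consistency I would reprove Lemma~\ref{lemma:consistencyerr} through the adjoint consistency Theorems~\ref{th:AdjConsG} and~\ref{th:AdjConsL}, noting that now $p \in L^2_0(\Omega)$ rather than $H^1_0(\Omega)$ and that $\bvec{u}$ no longer satisfies $\TGRAD \bvec{u} \cdot \nOmega = 0$. The decisive observation is that the boundary sums \eqref{eq:adjgradzerosum} and \eqref{eq:adjlapzerosum} still vanish: on interior faces the traces $\trnac \uvec{w}_F$ and $\trna \uvec{w}_F$ are single-valued and cancel against the opposite signs $\wTF$, while on boundary faces one has $\trna \uvec{w}_F = \uvec{0}$ because $\uvec{w}_h \in \uHvhzero$. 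Thus the hypotheses $\bvec{V} \in \bvec{H}^1_0$ and $\TGRAD \bvec{v} \cdot \nOmega = 0$ may be dropped, the boundary contributions being killed by the test function rather than by the exact solution, and the interior estimates are unchanged. This delivers the same bound on $\Aerr$ as in Lemma~\ref{lemma:consistencyerr}, in terms of $\seminorm[\bvec{H}^{k+2}(\Th)]{\bvec{u}}$, $\seminorm[H^{k+1}(\Th)]{p}$ and $\seminorm[H^{k+2}(\Th)]{p}$.

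The main obstacle is exactly this bookkeeping of the boundary terms in the adjoint consistency: I must verify that the proofs of Theorems~\ref{th:AdjConsG} and~\ref{th:AdjConsL} go through once the sole source of cancellation on boundary faces is shifted from the continuous data to the discrete unknown, checking in particular that nothing in the approximation of $p \, \ID$ or $\TGRAD \bvec{u}$ covertly relied on $p \in H^1_0$ or on the Neumann condition. Granting this, the adapted well-posedness and consistency estimates combine through the third Strang lemma to yield precisely the error estimate of Theorem~\ref{th:Stokeserr}.
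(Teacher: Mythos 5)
Your overall route is the one the paper takes: adapt the discrete Poincar\'e inequality to $\uHvhzero$, observe that the boundary sums \eqref{eq:adjgradzerosum} and \eqref{eq:adjlapzerosum} now vanish because the discrete test function is zero on the boundary faces rather than because of the boundary conditions on the continuous data, and conclude with the third Strang lemma. Those parts of your argument are correct and coincide with the paper's proof of Theorem \ref{th:StokeserrDirichlet}.

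The gap is in the right inverse of the divergence. You assert that, since $\tilde p$ has zero mean, ``the divergence lift can be chosen in $\bvec{H}^1_0(\Omega)$'' and then interpolated. But the construction of Lemma \ref{lemma:rightinvdiv} needs far more than an $\bvec{H}^1_0$ lift: the interpolator $\uIHh$ requires pointwise traces on edges and vertices, and the boundedness of $\opnNa[h]{\uvec{w}_h}+\opnLt[h]{\uNah \uvec{w}_h}$ rests on the full cascade $\norm[\bvec{H}^{j+1}]{\bvec{u}}\lesssim\norm[H^{j}]{\tilde p}$ for $j=0,1,2$. In the Neumann case this regularity is obtained by extending $\tilde p$ by zero to a smooth enlargement $B\supset\Omega$ and solving a Laplace problem there (Theorem \ref{th:divlift}); no boundary condition is imposed on $\bvec{u}$, so restricting back to $\Omega$ costs nothing. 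Once you require $\bvec{u}\in\bvec{H}^1_0(\Omega)$ this device is unavailable, and $\bvec{H}^3$ elliptic regularity with homogeneous Dirichlet data fails in general on a polyhedral domain. This is precisely why the paper replaces Theorem \ref{th:divlift} by Theorem \ref{th:divlift0}, which is only stated on a $C^{2,1}$ subdomain $B\subset\Omega$, and then invokes the nontrivial machinery of Remark \ref{rem:adaptinvdiv} (a smooth subdomain capturing the interior cells, an auxiliary mesh cut along $\partial B$, and modified moment-preserving projectors) to carry the lift back to the original mesh while keeping the divergence identity and the norm bounds. Your proposal is silent on this step, and without it the inf-sup half of the well-posedness argument does not go through.
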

\begin{proof}
The proof is similar to the proof of Theorem \ref{th:Stokeserr}.
We need a suitable version of Lemma \ref{lemma:rightinvdiv}, 
and we can expect $\uvec{v}_h \in \uHvhzero$ if $\ul{p}_h \in \uLshstar$ 
by substituting the use of Theorem \ref{th:divlift} by Theorem \ref{th:divlift0} in the proof of Lemma \ref{lemma:rightinvdiv} (see Remark \ref{rem:adaptinvdiv}).
The consistency errors \ref{th:AdjConsG} and \ref{th:AdjConsL} require respectively $\bvec{W} \in \bvec{H}^1_0(\Omega)$ 
and $\TGRAD \bvec{w} \cdot \nOmega = 0$.
However we can check that this is only used to get 
\eqref{eq:adjgradzerosum} and \eqref{eq:adjlapzerosum} 
both of which also hold if $\uvec{v}_h \in \uHvhzero$ instead,
so that $\uvec{v}_F \equiv \uvec{0}$, $\forall F \subset \partial \Omega$.
Finally, we relied on Lemma \ref{lemma:poincareH1} to show that $\ASk$ is weakly coercive.
This too can be readily adapted if we use \cite[Lemma~2.15]{hho} instead of \cite[Theorem~6.5]{hho} in the proof of Lemma \ref{lemma:poincareH1}.
With these three results we can proceed exactly in the same manner as we did for Theorem \ref{th:Stokeserr}.
\end{proof}

\subsection{Mixed boundary conditions.}
We can also use Dirichlet conditions on a subset of the boundary and Neumann conditions elsewhere.
Let $\Gamma_D$ be a relatively closed subset of $\partial \Omega$ with a non-zero measure and 
$\Gamma_N = \partial \Omega \setminus \Gamma_D$.
Furthermore assume that each boundary face 
$\partial \Omega \supset F \in \Fh$
is either contained in $\Gamma_N$ or in $\Gamma_D$ but not in both 
(i.e.\ either $\overline{F} \subset \Gamma_D = \emptyset$ or $F \cap \Gamma_D = \emptyset$).
We also require that $\Gamma_D$ and $\Gamma_N$ contain at least one face 
(else we degenerate to pure Neumann or pure Dirichlet).
The continuous Stokes problem is:\\
Find $\bvec{u} \in \bvec{H}^2(\Omega)$, $p \in H^1(\Omega)$ such that
\begin{equation}
\begin{aligned}
- \mu \TLAPLACIAN \bvec{u} + \GRAD p =&\, f, \text{ on } \Omega,\\
\DIV \bvec{u} =&\, 0, \text{ on } \Omega,\\
\bvec{u} =&\, 0, \text{ on } \Gamma_D,\\
\frac{\partial \bvec{u}}{\partial \nOmega} =&\, 0, \text{ on } \Gamma_N,\\
p =&\, 0, \text{ on } \Gamma_N.
\end{aligned}
\end{equation}
We introduce the discrete space $\uHvhzeroD := \lbrace \bvec{v}_h \in \uHvh \st \uvec{v}_F \equiv \uvec{0}, \, \forall F \in \Gamma_D \rbrace$
and define: 
$\ASk((\uvec{v}_h,\ul{p}_h),(\uvec{w}_h,\ul{q}_h)) \in (\uHvhzeroD \times \uLsh) \times (\uHvhzeroD \times \uLsh) \rightarrow \Real$ by
\begin{equation}
\ASk((\uvec{v}_h,\ul{p}_h),(\uvec{w}_h,\ul{q}_h)) = \aSk(\uvec{v}_h,\uvec{w}_h) - \bSk(\uvec{w}_h,\ul{p}_h) + \bSk(\uvec{v}_h,\ul{q}_h) .
\end{equation}
The discrete problem becomes:\\
Find $(\uvec{v}_h, \ul{p}_h) \in \uHvhzeroD \times \uLsh$ such that for all $(\uvec{w}_h, \ul{q}_h) \in \uHvhzeroD \times \uLsh$,
\begin{equation} \label{eq:defStokesMixed}
\ASk((\uvec{v}_h,\ul{p}_h),(\uvec{w}_h,\ul{q}_h)) = \LSk(\uvec{v}_h).
\end{equation}

\section{Numerical validation.} \label{Numericaltests}

The Stokes complex was implemented within the HArDCore C++ framework (see \url{https://github.com/jdroniou/HArDCore}),
using the linear algebra facilities from the Eigen3 library (see \url{https://eigen.tuxfamily.org})
and the linear solver from the Portable, Extensible Toolkit for Scientific Computation PETSc (see \url{https://petsc.org}).
An implementation of the spaces and operators defined in this paper as well as a Stokes solver can be found at \url{https://github.com/mlhanot/HArDCore3D-Stokes}.
The numerical validation is done with a constant viscosity $\mu = 1$.
We measure the rate of convergence toward an exact solution for various polynomial degrees $k \in \lbrace 0, 1, 2, 3 \rbrace$.
The error is computed as
\begin{equation*}
\frac{\normHNa{\uvec{u}_h - \uIHh \bvec{u}} + \normLs[h]{\ul{p}_h - \uILh p}}
{\normHNa{\uIHh \bvec{u}} + \normLs[h]{\uILh p}} .
\end{equation*}
We expect the error to decrease at a rate $\bigO(h^{k+1})$ thanks to Theorem \ref{th:Stokeserr} and \ref{th:StokeserrDirichlet}.
We also validated the $2$-dimensional variation detailed in appendix \ref{2DComplex}.
These tests are done on various mesh sequences.
One mesh of each sequence is shown in Figure \ref{fig:meshes2D} for the $2$-dimensional cases,
and a cross section of the $3$-dimensional meshes is shown in Figure \ref{fig:meshes3D}.
The results are given in Figure \ref{fig:convrate2D} in $2$ dimensions and in Figure \ref{fig:convrate3D} in $3$ dimensions.
We always obtain results consistent with the theory. 
In $2$ dimensions, we can see that the method is robust and the convergence is barely impacted by the various features of the meshes.
In $3$ dimensions, arbitrary polyhedra can be much wilder than arbitrary polygons. 
We can see that, while the expected convergence rate is asymptotically obtained,
some combinations of degree and mesh exhibit better properties.
On the coarsest meshes with the lowest polynomial degree we notice a drop in the convergence rate.
These meshes are too coarse for the solution sought and the problem disapears when refining or increasing the polynomial degree.

\begin{figure}
\begin{center}
\begin{minipage}[b]{0.3\columnwidth}
\includegraphics[width=0.9\columnwidth,height=0.2\textheight,keepaspectratio]{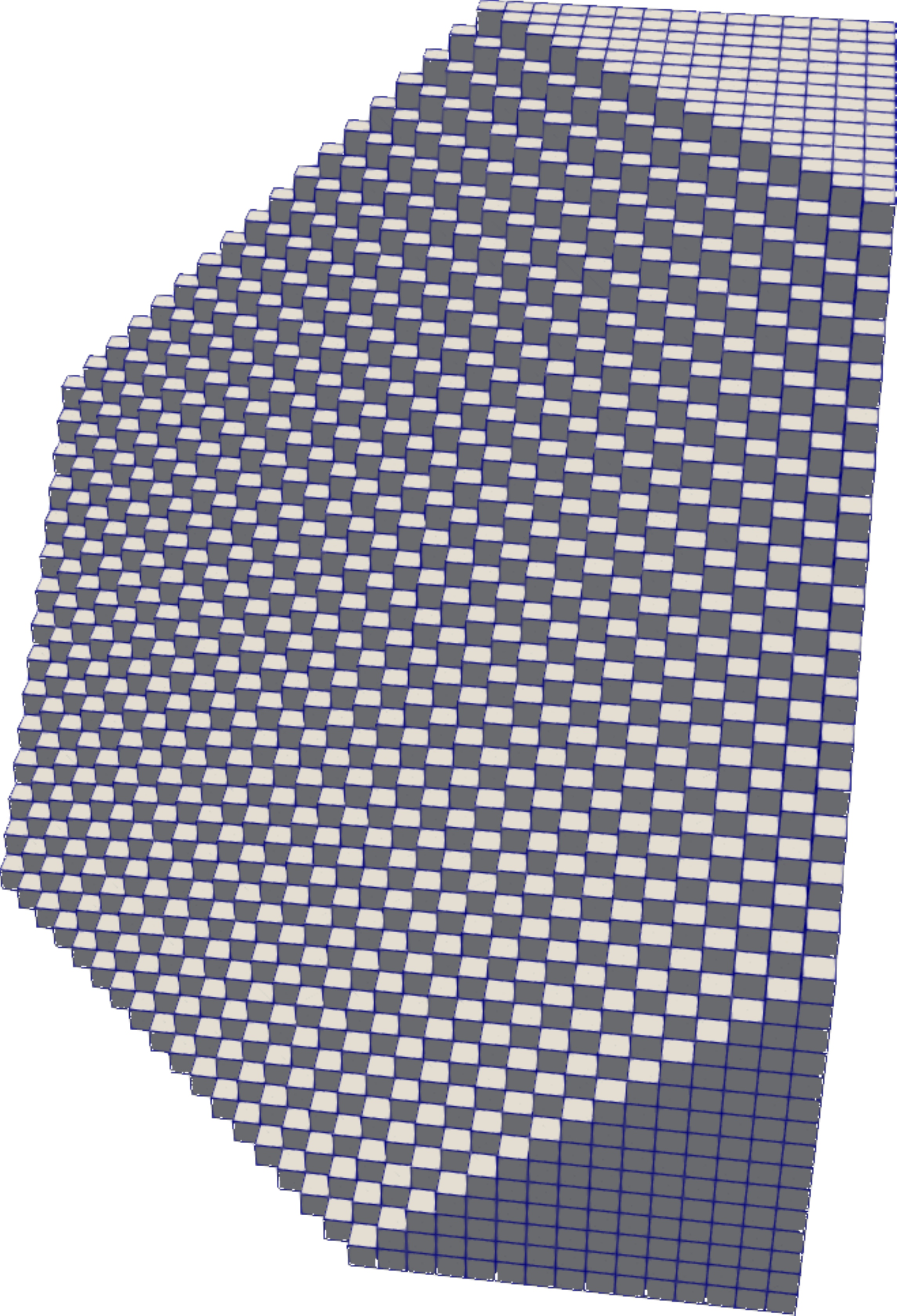}
\subcaption{"Cube" mesh.}
\end{minipage}
\hfill
\begin{minipage}[b]{0.3\columnwidth}
\includegraphics[width=0.9\columnwidth,height=0.2\textheight,keepaspectratio]{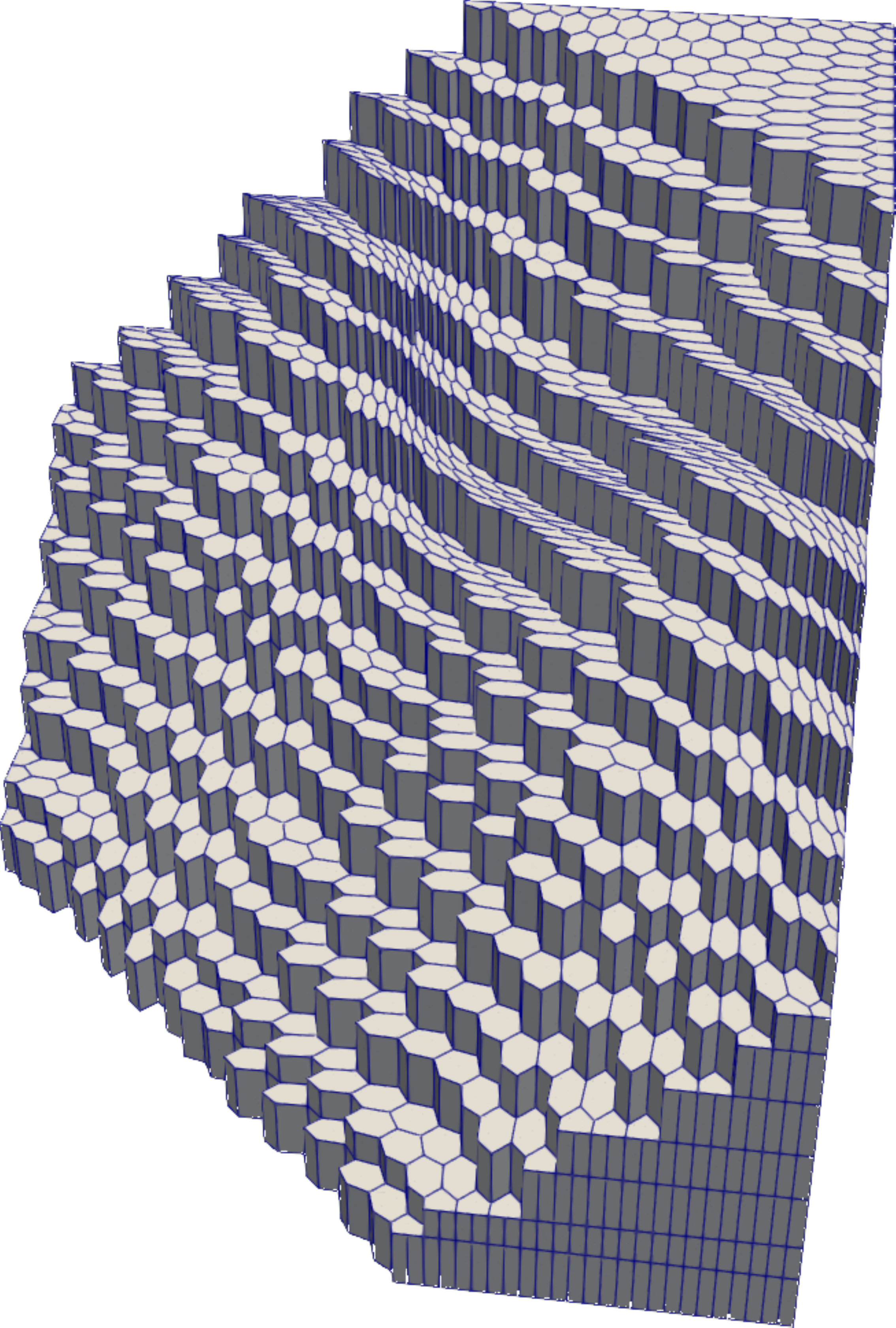}
\subcaption{"Prysmatic" mesh.}
\end{minipage}
\hfill
\begin{minipage}[b]{0.3\columnwidth}
\includegraphics[width=0.9\columnwidth,height=0.2\textheight,keepaspectratio]{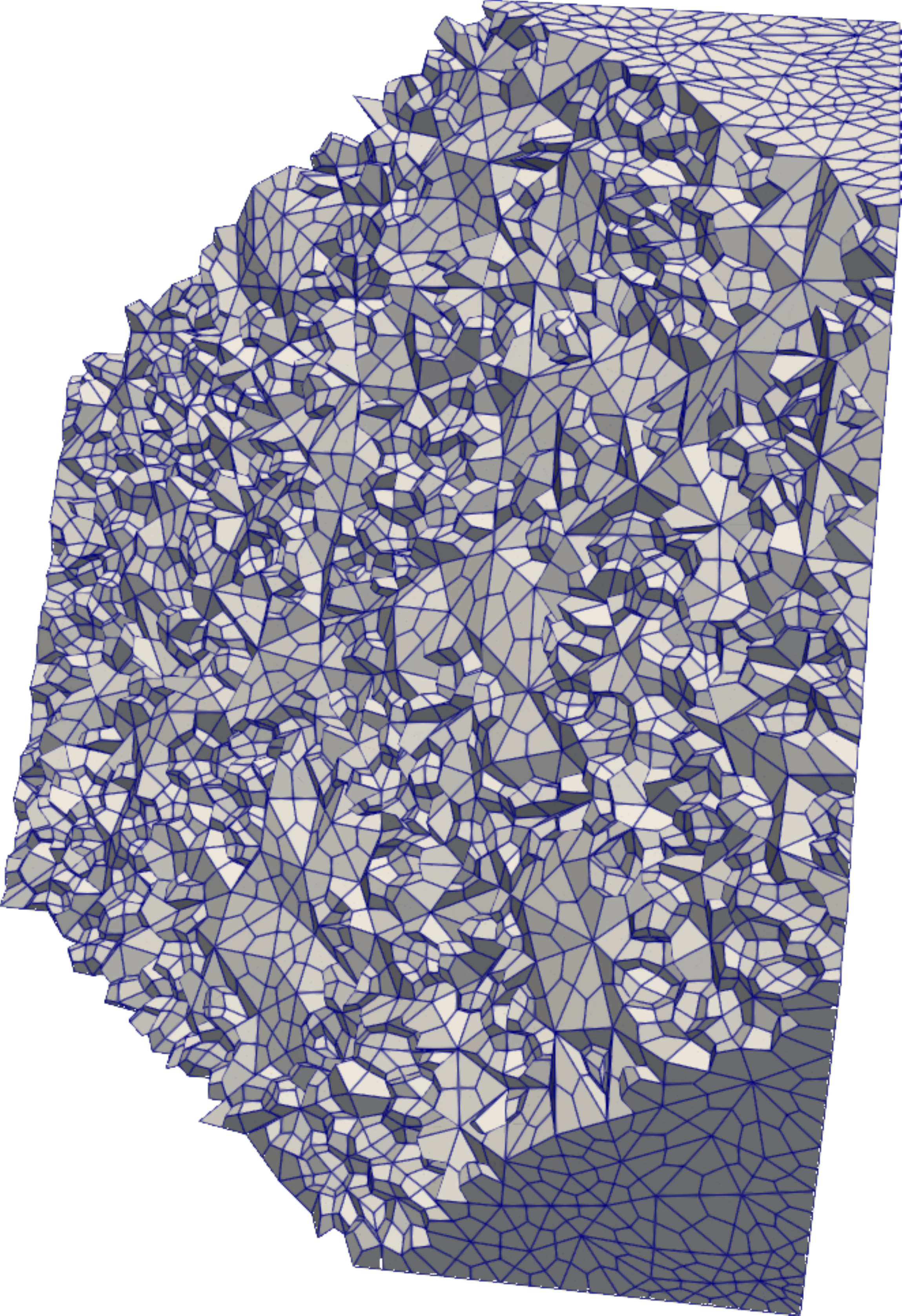}
\subcaption{"Hexahedra" mesh.}
\end{minipage}
\\
\medskip
\begin{minipage}[b]{0.3\columnwidth}
\includegraphics[width=0.9\columnwidth,height=0.2\textheight,keepaspectratio]{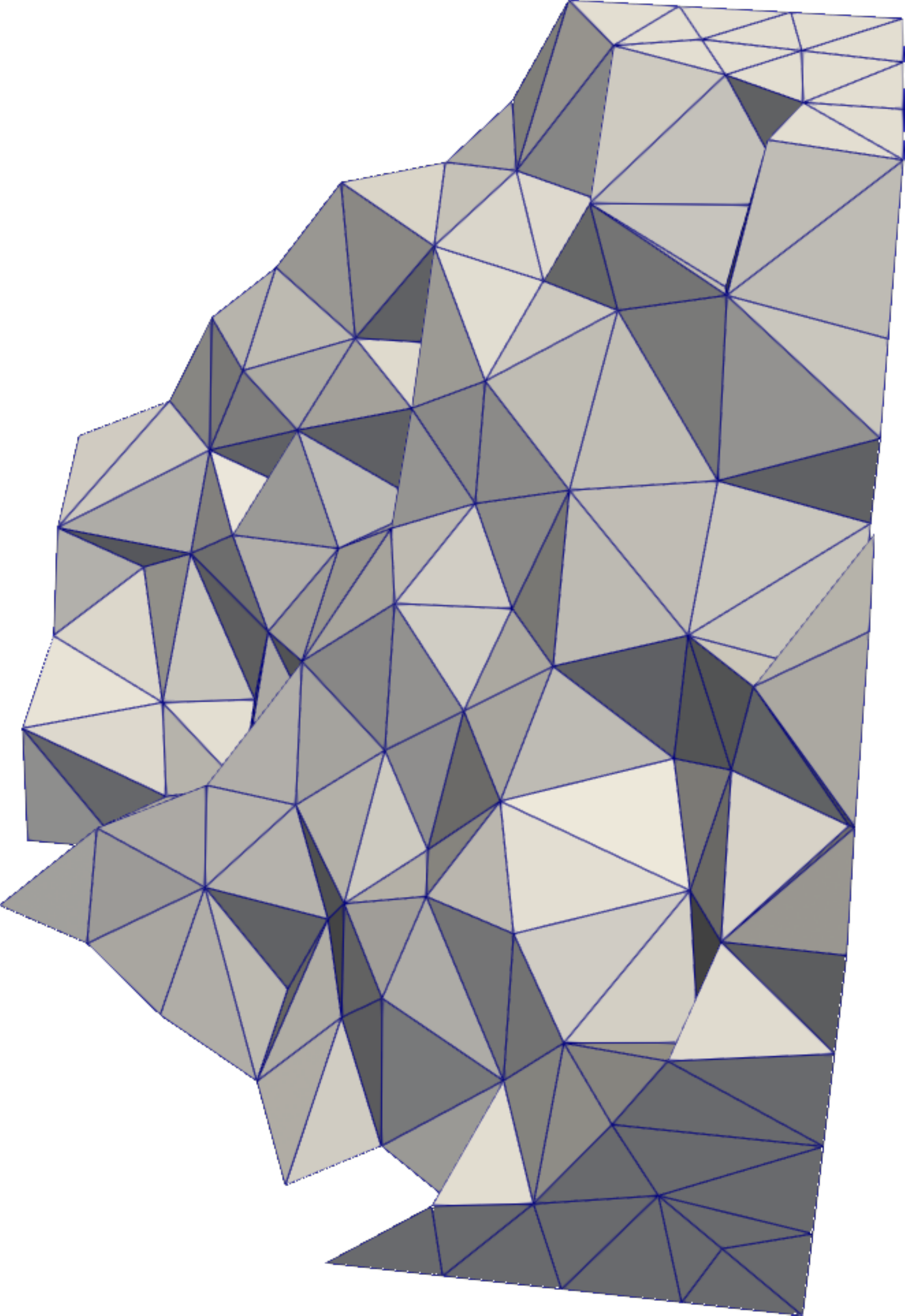}
\subcaption{"Tetgen cube" mesh.}
\end{minipage}
\hfill
\begin{minipage}[b]{0.3\columnwidth}
\includegraphics[width=0.9\columnwidth,height=0.2\textheight,keepaspectratio]{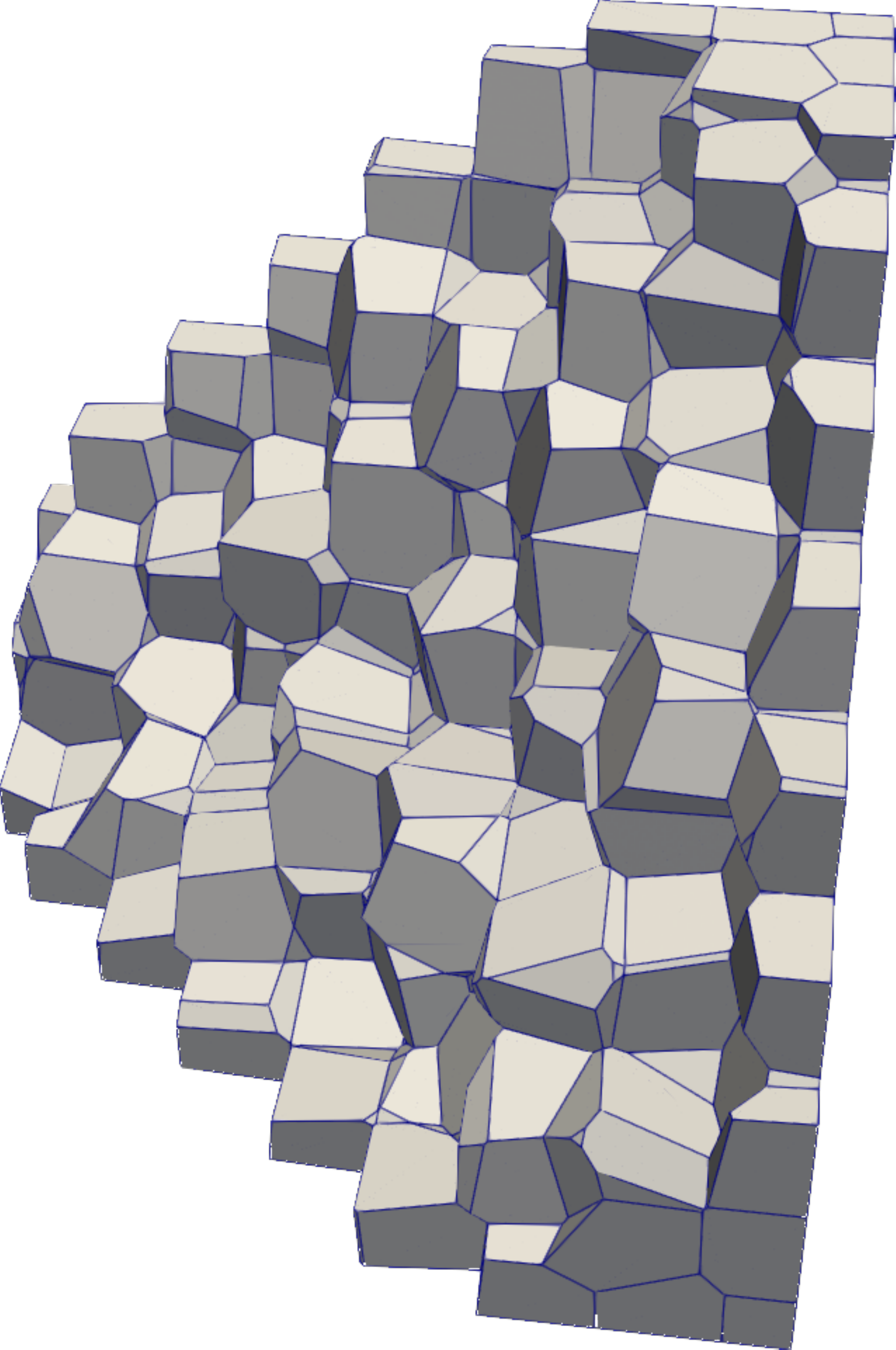}
\subcaption{"Voro" mesh.}
\end{minipage}
\hfill
\begin{minipage}[b]{0.3\columnwidth}
\includegraphics[width=0.9\columnwidth,height=0.2\textheight,keepaspectratio]{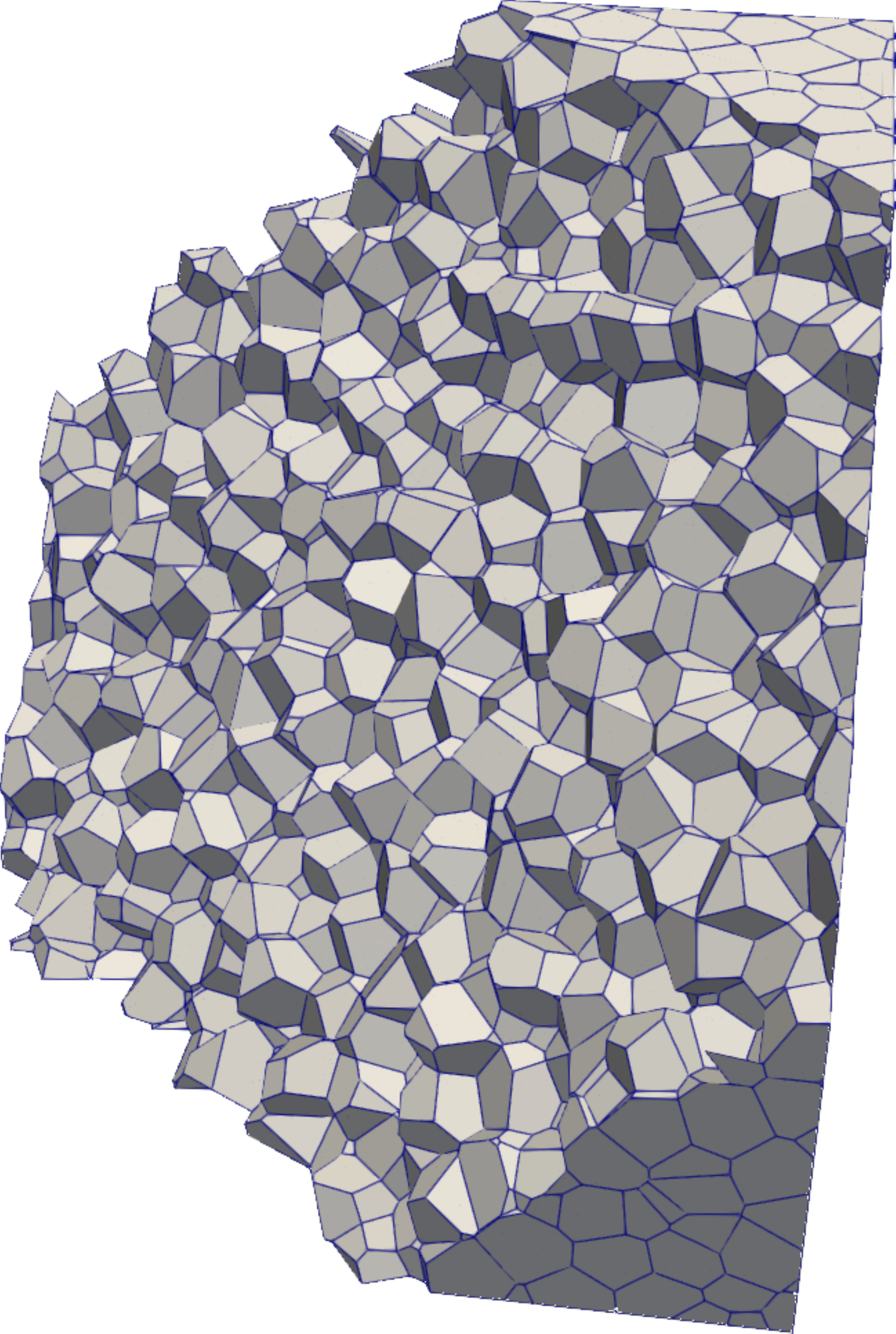}
\subcaption{"Voro tets" mesh.}
\end{minipage}
\end{center}
\caption{Families of mesh used in $3$ dimensions, sliced for visualization.}
\label{fig:meshes3D}
\end{figure}

\begin{figure}
\begin{center}
\begin{minipage}[b]{0.3\columnwidth}
\includegraphics[width=0.9\columnwidth,height=0.15\textheight,keepaspectratio]{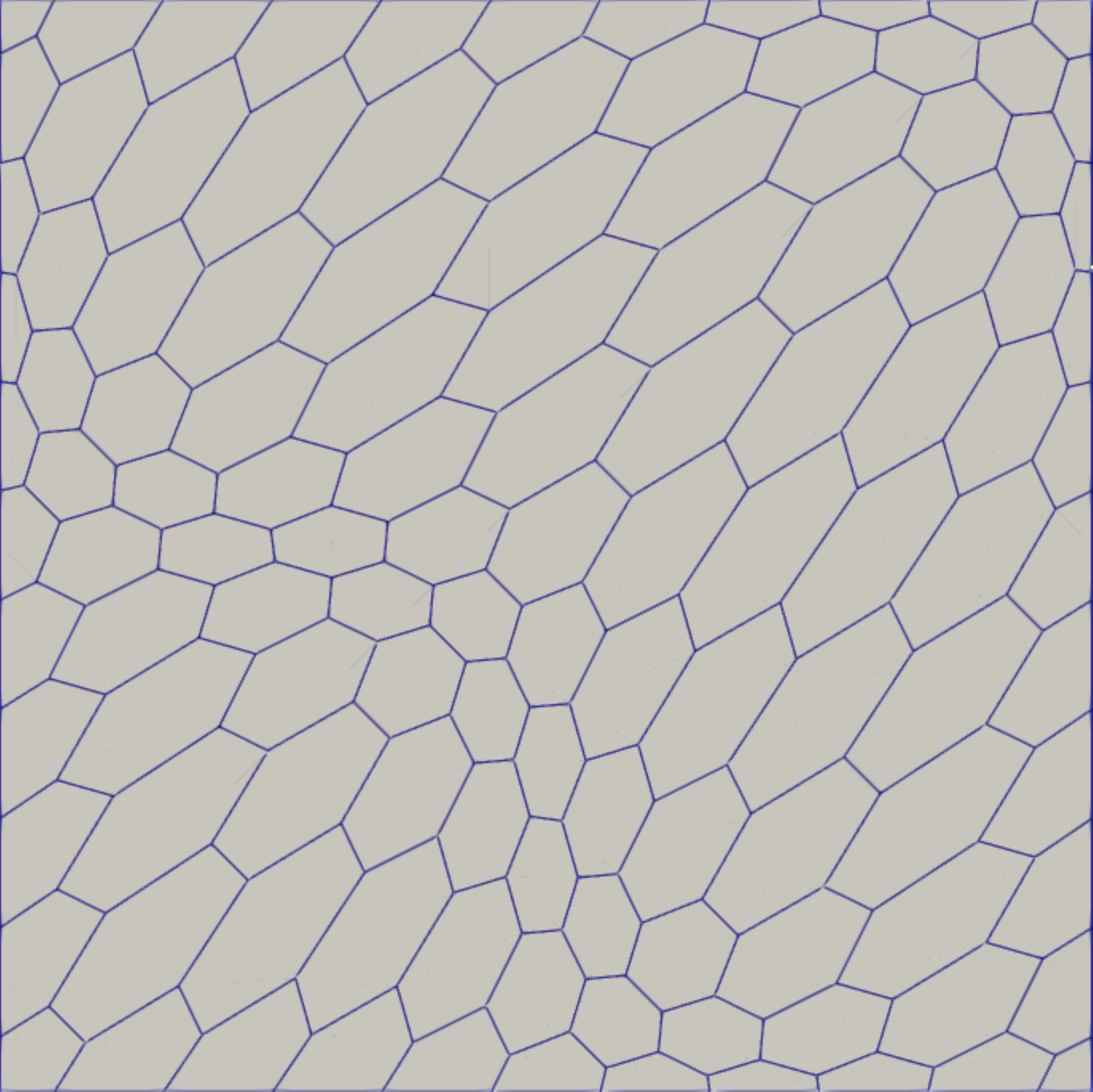}
\subcaption{"Hexa" mesh.}
\end{minipage}
\hfill
\begin{minipage}[b]{0.3\columnwidth}
\includegraphics[width=0.9\columnwidth,height=0.15\textheight,keepaspectratio]{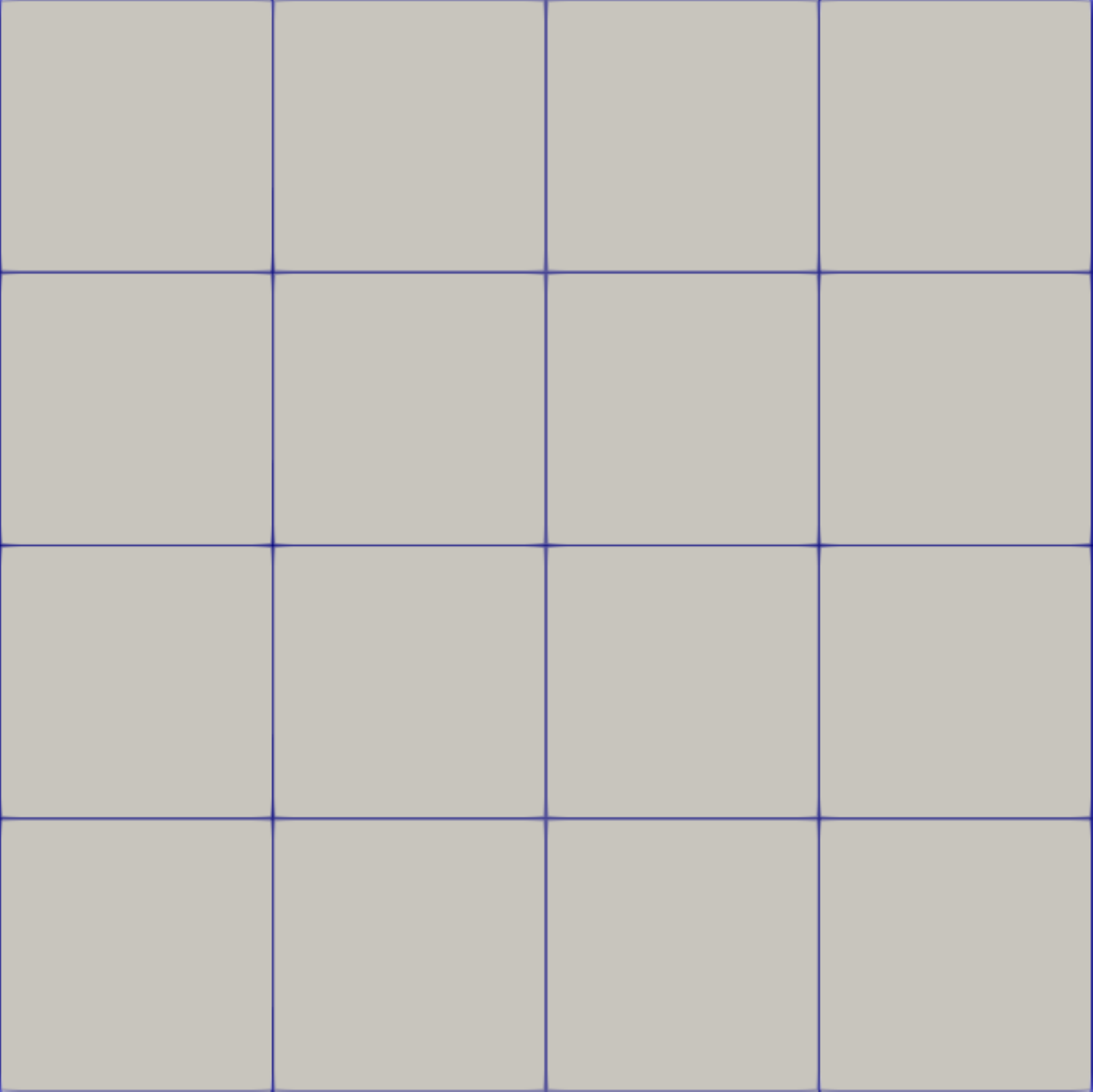}
\subcaption{"Square" mesh.}
\end{minipage}
\hfill
\begin{minipage}[b]{0.3\columnwidth}
\includegraphics[width=0.9\columnwidth,height=0.15\textheight,keepaspectratio]{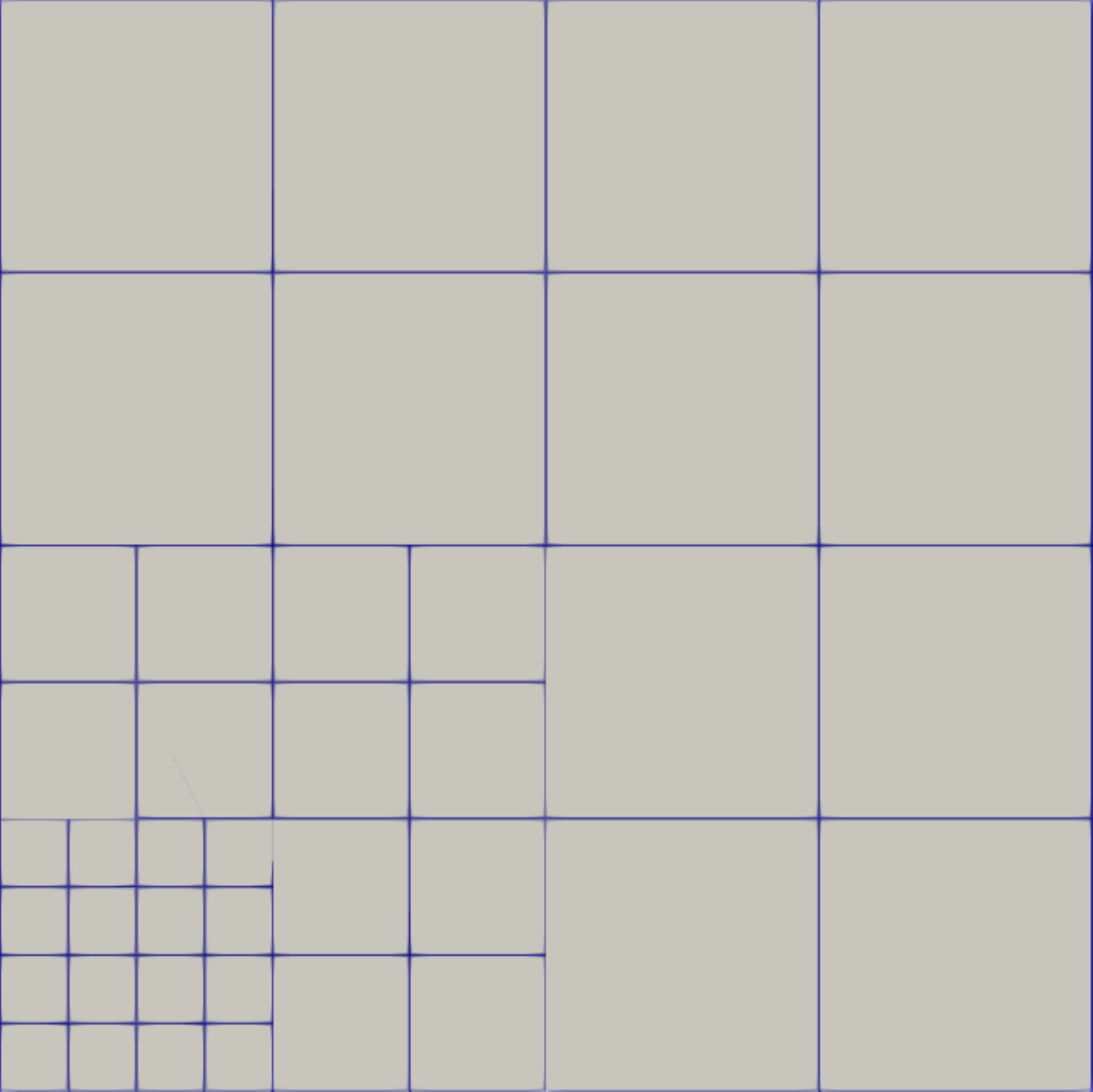}
\subcaption{"Square 2" mesh.}
\end{minipage}
\\
\medskip
\begin{minipage}[b]{0.3\columnwidth}
\includegraphics[width=0.9\columnwidth,height=0.15\textheight,keepaspectratio]{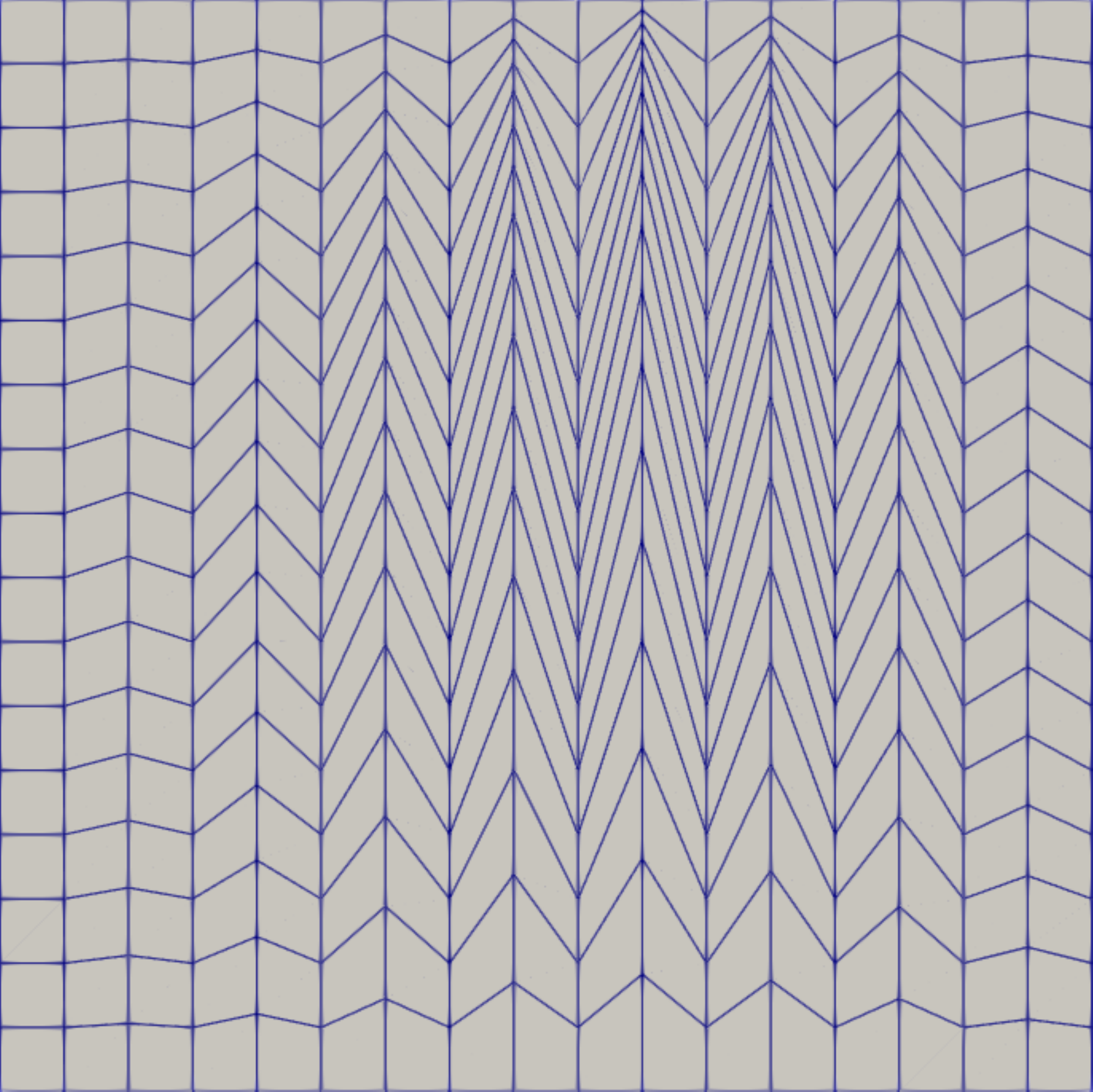}
\subcaption{"Tilted" mesh.}
\end{minipage}
\hfill
\begin{minipage}[b]{0.3\columnwidth}
\includegraphics[width=0.9\columnwidth,height=0.15\textheight,keepaspectratio]{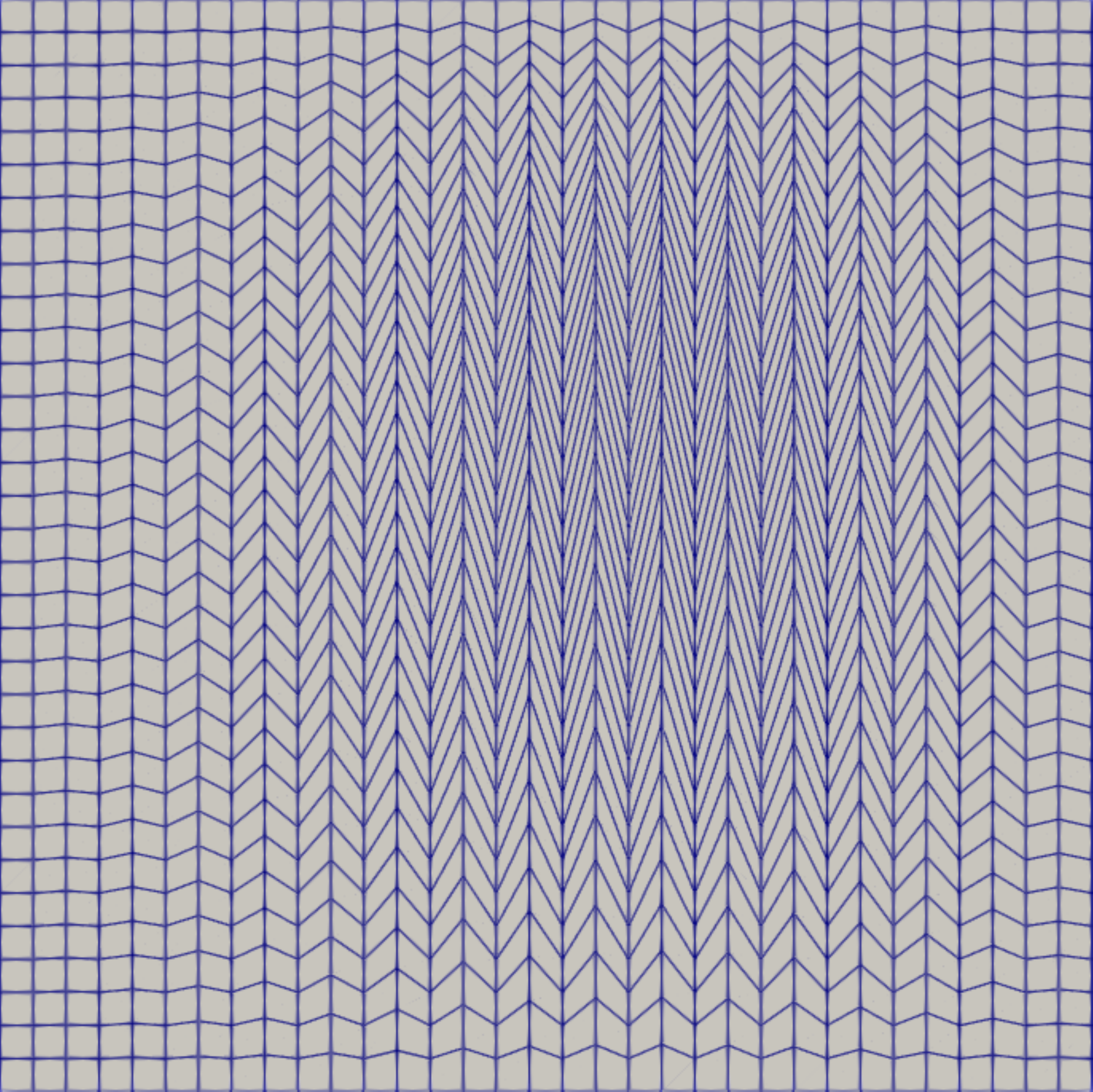}
\subcaption{"Tilted 2" mesh.}
\end{minipage}
\hfill
\begin{minipage}[b]{0.3\columnwidth}
\includegraphics[width=0.9\columnwidth,height=0.15\textheight,keepaspectratio]{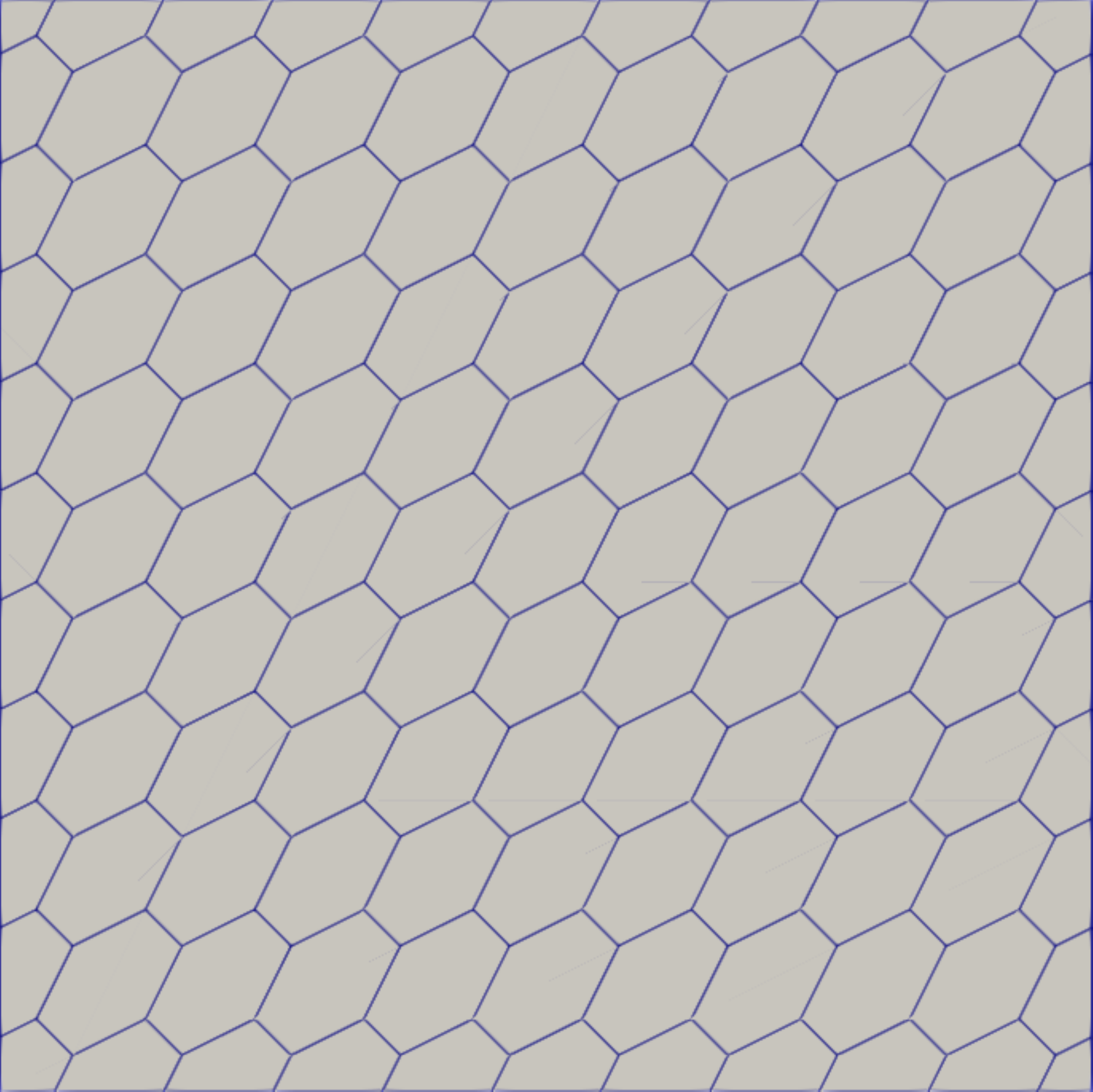}
\subcaption{"Hexa anisotropic" mesh.}
\end{minipage}
\end{center}
\caption{Families of mesh used in $2$ dimensions.}
\label{fig:meshes2D}
\end{figure}

\begin{figure}
\begin{center}
\begin{tikzpicture} 
    \begin{axis}[
    hide axis,
    xmin=0,
    xmax=1,
    ymin=0,
    ymax=1,
    xticklabels={,,},
    yticklabels={,,},
    height=50,
    legend style={legend columns={4}}
    ]
    \addlegendimage{mark=+, style=solid, color=blue}
    \addlegendentry{$k = 0$};
    \addlegendimage{mark=triangle, style=solid, color=red} 
    \addlegendentry{$k = 1$};
    \addlegendimage{mark=square, style=solid, color=brown} 
    \addlegendentry{$k = 2$};
    \addlegendimage{mark=pentagon, style=solid, color=darkgray} 
    \addlegendentry{$k = 3$};
    \end{axis}
\end{tikzpicture}
\\
\medskip
\begin{minipage}[b]{0.45\columnwidth}
\begin{tikzpicture}[scale=0.9]
\begin{loglogaxis}
\addplot +[mark=+, style=solid, color=blue] table[x=meshsize,y=err] {data/2D/0_k0.dat};
\addplot +[mark=triangle, style=solid, color=red] table[x=meshsize,y=err] {data/2D/0_k1.dat};
\addplot +[mark=square, style=solid, color=brown] table[x=meshsize,y=err] {data/2D/0_k2.dat};
\addplot +[mark=pentagon, style=solid, color=darkgray] table[x=meshsize,y=err] {data/2D/0_k3.dat};
\logLogSlopeTriangle{0.90}{0.4}{0.1}{1}{blue};
\logLogSlopeTriangle{0.90}{0.4}{0.1}{2}{red};
\logLogSlopeTriangle{0.90}{0.4}{0.1}{3}{brown};
\logLogSlopeTriangle{0.90}{0.4}{0.1}{4}{darkgray};
\end{loglogaxis}
\end{tikzpicture}
\subcaption{"Hexa" mesh.}
\end{minipage}
\hfill
\begin{minipage}[b]{0.45\columnwidth}
\begin{tikzpicture}[scale=0.9]
\begin{loglogaxis}
\addplot +[mark=+, style=solid, color=blue] table[x=meshsize,y=err] {data/2D/1_k0.dat};
\addplot +[mark=triangle, style=solid, color=red] table[x=meshsize,y=err] {data/2D/1_k1.dat};
\addplot +[mark=square, style=solid, color=brown] table[x=meshsize,y=err] {data/2D/1_k2.dat};
\addplot +[mark=pentagon, style=solid, color=darkgray] table[x=meshsize,y=err] {data/2D/1_k3.dat};
\logLogSlopeTriangle{0.90}{0.4}{0.1}{1}{blue};
\logLogSlopeTriangle{0.90}{0.4}{0.1}{2}{red};
\logLogSlopeTriangle{0.90}{0.4}{0.1}{3}{brown};
\logLogSlopeTriangle{0.90}{0.4}{0.1}{4}{darkgray};
\end{loglogaxis}
\end{tikzpicture}
\subcaption{"Square" mesh.}
\end{minipage}
\hfill
\begin{minipage}[b]{0.1\columnwidth}
\end{minipage}
\\
\medskip
\begin{minipage}[b]{0.45\columnwidth}
\begin{tikzpicture}[scale=0.9]
\begin{loglogaxis}
\addplot +[mark=+, style=solid, color=blue] table[x=meshsize,y=err] {data/2D/2_k0.dat};
\addplot +[mark=triangle, style=solid, color=red] table[x=meshsize,y=err] {data/2D/2_k1.dat};
\addplot +[mark=square, style=solid, color=brown] table[x=meshsize,y=err] {data/2D/2_k2.dat};
\addplot +[mark=pentagon, style=solid, color=darkgray] table[x=meshsize,y=err] {data/2D/2_k3.dat};
\logLogSlopeTriangle{0.90}{0.4}{0.1}{1}{blue};
\logLogSlopeTriangle{0.90}{0.4}{0.1}{2}{red};
\logLogSlopeTriangle{0.90}{0.4}{0.1}{3}{brown};
\logLogSlopeTriangle{0.90}{0.4}{0.1}{4}{darkgray};
\end{loglogaxis}
\end{tikzpicture}
\subcaption{"Square 2" mesh.}
\end{minipage}
\hfill
\begin{minipage}[b]{0.45\columnwidth}
\begin{tikzpicture}[scale=0.9]
\begin{loglogaxis}
\addplot +[mark=+, style=solid, color=blue] table[x=meshsize,y=err] {data/2D/3_k0.dat};
\addplot +[mark=triangle, style=solid, color=red] table[x=meshsize,y=err] {data/2D/3_k1.dat};
\addplot +[mark=square, style=solid, color=brown] table[x=meshsize,y=err] {data/2D/3_k2.dat};
\addplot +[mark=pentagon, style=solid, color=darkgray] table[x=meshsize,y=err] {data/2D/3_k3.dat};
\logLogSlopeTriangle{0.90}{0.4}{0.1}{1}{blue};
\logLogSlopeTriangle{0.90}{0.4}{0.1}{2}{red};
\logLogSlopeTriangle{0.90}{0.4}{0.1}{3}{brown};
\logLogSlopeTriangle{0.90}{0.4}{0.1}{4}{darkgray};
\end{loglogaxis}
\end{tikzpicture}
\subcaption{"Tilted" mesh.}
\end{minipage}
\hfill
\begin{minipage}[b]{0.1\columnwidth}
\end{minipage}
\\
\medskip
\begin{minipage}[b]{0.45\columnwidth}
\begin{tikzpicture}[scale=0.9]
\begin{loglogaxis}
\addplot +[mark=+, style=solid, color=blue] table[x=meshsize,y=err] {data/2D/4_k0.dat};
\addplot +[mark=triangle, style=solid, color=red] table[x=meshsize,y=err] {data/2D/4_k1.dat};
\addplot +[mark=square, style=solid, color=brown] table[x=meshsize,y=err] {data/2D/4_k2.dat};
\addplot +[mark=pentagon, style=solid, color=darkgray] table[x=meshsize,y=err] {data/2D/4_k3.dat};
\logLogSlopeTriangle{0.90}{0.4}{0.1}{1}{blue};
\logLogSlopeTriangle{0.90}{0.4}{0.1}{2}{red};
\logLogSlopeTriangle{0.90}{0.4}{0.1}{3}{brown};
\logLogSlopeTriangle{0.90}{0.4}{0.1}{4}{darkgray};
\end{loglogaxis}
\end{tikzpicture}
\subcaption{"Tilted 2" mesh.}
\end{minipage}
\hfill
\begin{minipage}[b]{0.45\columnwidth}
\begin{tikzpicture}[scale=0.9]
\begin{loglogaxis}
\addplot +[mark=+, style=solid, color=blue] table[x=meshsize,y=err] {data/2D/5_k0.dat};
\addplot +[mark=triangle, style=solid, color=red] table[x=meshsize,y=err] {data/2D/5_k1.dat};
\addplot +[mark=square, style=solid, color=brown] table[x=meshsize,y=err] {data/2D/5_k2.dat};
\addplot +[mark=pentagon, style=solid, color=darkgray] table[x=meshsize,y=err] {data/2D/5_k3.dat};
\logLogSlopeTriangle{0.90}{0.4}{0.1}{1}{blue};
\logLogSlopeTriangle{0.90}{0.4}{0.1}{2}{red};
\logLogSlopeTriangle{0.90}{0.4}{0.1}{3}{brown};
\logLogSlopeTriangle{0.90}{0.4}{0.1}{4}{darkgray};
\end{loglogaxis}
\end{tikzpicture}
\subcaption{"Hexa anisotropic" mesh.}
\end{minipage}
\hfill
\begin{minipage}[b]{0.1\columnwidth}
\end{minipage}
\end{center}
\caption{Absolute error estimate in discrete norm $\normHNa{\cdot} + \normLs[h]{\cdot}$ vs. mesh size $h$ in $2$ dimensions.}
\label{fig:convrate2D}
\end{figure}

\begin{figure}
\begin{center}
\begin{tikzpicture} 
    \begin{axis}[
    hide axis,
    xmin=0,
    xmax=1,
    ymin=0,
    ymax=1,
    xticklabels={,,},
    yticklabels={,,},
    height=50,
    legend style={legend columns={4}}
    ]
    \addlegendimage{mark=+, style=solid, color=blue}
    \addlegendentry{$k = 0$};
    \addlegendimage{mark=triangle, style=solid, color=red} 
    \addlegendentry{$k = 1$};
    \addlegendimage{mark=square, style=solid, color=brown} 
    \addlegendentry{$k = 2$};
    \addlegendimage{mark=pentagon, style=solid, color=darkgray} 
    \addlegendentry{$k = 3$};
    \end{axis}
\end{tikzpicture}
\\
\medskip
\begin{minipage}[b]{0.45\columnwidth}
\begin{tikzpicture}[scale=0.9]
\begin{loglogaxis}
\addplot +[mark=+, style=solid, color=blue] table[x=meshsize,y=err] {data/3D/1/sol_2_k0.dat};
\addplot +[mark=triangle, style=solid, color=red] table[x=meshsize,y=err] {data/3D/1/sol_2_k1.dat};
\addplot +[mark=square, style=solid, color=brown] table[x=meshsize,y=err] {data/3D/1/sol_2_k2.dat};
\addplot +[mark=pentagon, style=solid, color=darkgray] table[x=meshsize,y=err] {data/3D/1/sol_2_k3.dat};
\logLogSlopeTriangle{0.90}{0.4}{0.1}{1}{blue};
\logLogSlopeTriangle{0.90}{0.4}{0.1}{2}{red};
\logLogSlopeTriangle{0.90}{0.4}{0.1}{3}{brown};
\logLogSlopeTriangle{0.90}{0.4}{0.1}{4}{darkgray};
\end{loglogaxis}
\end{tikzpicture}
\subcaption{"Cube" mesh.}
\end{minipage}
\hfill
\begin{minipage}[b]{0.45\columnwidth}
\begin{tikzpicture}[scale=0.9]
\begin{loglogaxis}
\addplot +[mark=+, style=solid, color=blue] table[x=meshsize,y=err] {data/3D/2/sol_2_k0.dat};
\addplot +[mark=triangle, style=solid, color=red] table[x=meshsize,y=err] {data/3D/2/sol_2_k1.dat};
\addplot +[mark=square, style=solid, color=brown] table[x=meshsize,y=err] {data/3D/2/sol_2_k2.dat};
\logLogSlopeTriangle{0.90}{0.4}{0.1}{1}{blue};
\logLogSlopeTriangle{0.90}{0.4}{0.1}{2}{red};
\logLogSlopeTriangle{0.90}{0.4}{0.1}{3}{brown};
\end{loglogaxis}
\end{tikzpicture}
\subcaption{"Prysmatic" mesh.}
\end{minipage}
\hfill
\begin{minipage}[b]{0.1\columnwidth}
\end{minipage}
\\
\medskip
\begin{minipage}[b]{0.45\columnwidth}
\begin{tikzpicture}[scale=0.9]
\begin{loglogaxis}
\addplot +[mark=+, style=solid, color=blue] table[x=meshsize,y=err] {data/3D/3/sol_2_k0.dat};
\addplot +[mark=triangle, style=solid, color=red] table[x=meshsize,y=err] {data/3D/3/sol_2_k1.dat};
\addplot +[mark=square, style=solid, color=brown] table[x=meshsize,y=err] {data/3D/3/sol_2_k2.dat};
\logLogSlopeTriangle{0.90}{0.4}{0.1}{1}{blue};
\logLogSlopeTriangle{0.90}{0.4}{0.1}{2}{red};
\logLogSlopeTriangle{0.90}{0.4}{0.1}{3}{brown};
\end{loglogaxis}
\end{tikzpicture}
\subcaption{"Hexahedra" mesh.}
\end{minipage}
\hfill
\begin{minipage}[b]{0.45\columnwidth}
\begin{tikzpicture}[scale=0.9]
\begin{loglogaxis}
\addplot +[mark=triangle, style=solid, color=red] table[x=meshsize,y=err] {data/3D/4/sol_2_k1.dat};
\addplot +[mark=square, style=solid, color=brown] table[x=meshsize,y=err] {data/3D/4/sol_2_k2.dat};
\logLogSlopeTriangle{0.90}{0.4}{0.1}{1}{blue};
\logLogSlopeTriangle{0.90}{0.4}{0.1}{2}{red};
\logLogSlopeTriangle{0.90}{0.4}{0.1}{3}{brown};
\end{loglogaxis}
\end{tikzpicture}
\subcaption{"Tetgen cube" mesh.}
\end{minipage}
\hfill
\begin{minipage}[b]{0.1\columnwidth}
\end{minipage}
\\
\medskip
\begin{minipage}[b]{0.45\columnwidth}
\begin{tikzpicture}[scale=0.9]
\begin{loglogaxis}
\addplot +[mark=+, style=solid, color=blue] table[x=meshsize,y=err] {data/3D/5/sol_2_k0.dat};
\addplot +[mark=triangle, style=solid, color=red] table[x=meshsize,y=err] {data/3D/5/sol_2_k1.dat};
\addplot +[mark=square, style=solid, color=brown] table[x=meshsize,y=err] {data/3D/5/sol_2_k2.dat};
\logLogSlopeTriangle{0.90}{0.4}{0.1}{1}{blue};
\logLogSlopeTriangle{0.90}{0.4}{0.1}{2}{red};
\logLogSlopeTriangle{0.90}{0.4}{0.1}{3}{brown};
\end{loglogaxis}
\end{tikzpicture}
\subcaption{"Voro" mesh.}
\end{minipage}
\hfill
\begin{minipage}[b]{0.45\columnwidth}
\begin{tikzpicture}[scale=0.9]
\begin{loglogaxis}
\addplot +[mark=+, style=solid, color=blue] table[x=meshsize,y=err] {data/3D/6/sol_2_k0.dat};
\addplot +[mark=triangle, style=solid, color=red] table[x=meshsize,y=err] {data/3D/6/sol_2_k1.dat};
\addplot +[mark=square, style=solid, color=brown] table[x=meshsize,y=err] {data/3D/6/sol_2_k2.dat};
\logLogSlopeTriangle{0.90}{0.4}{0.1}{1}{blue};
\logLogSlopeTriangle{0.90}{0.4}{0.1}{2}{red};
\logLogSlopeTriangle{0.90}{0.4}{0.1}{3}{brown};
\end{loglogaxis}
\end{tikzpicture}
\subcaption{"Voro tets" mesh.}
\end{minipage}
\hfill
\begin{minipage}[b]{0.1\columnwidth}
\end{minipage}
\end{center}
\caption{Absolute error estimate in discrete norm $\normHNa{\cdot} + \normLs[h]{\cdot}$ vs. mesh size $h$ in $3$ dimensions.}
\label{fig:convrate3D}
\end{figure}

\appendix
\section{Results on polynomial spaces.} \label{Resultsonpolynomialspaces}
We begin by showing a few results to complete the introduction of spaces \eqref{eq:defRbc} and \eqref{eq:defRb}.
Let $T \in \Th$ be any cell.
We assume without loss of generality that $\bvec{x}_T = 0$, where $\bvec{x}_T \in T$ is the point given by \eqref{eq:defkoszul}.
We identify the coordinate with three variables $x$, $y$ and $z$, and we define $\Poly{k}[X,Y](T) = \lbrace P \in \Poly{k}(T),\; \text{deg}_Z(P) = 0 \rbrace$.
By \eqref{eq:defkoszul} we can write:
\begin{equation} \label{eq:rc3exp}
(\Roly{c,k}(T)^\intercal)^3 = \begin{pmatrix} 
x P_1 & y P_1 & z P_1\\
x P_2 & y P_2 & z P_2\\
x P_3 & y P_3 & z P_3
\end{pmatrix},\;
P_{i \in \lbrace 1,2,3 \rbrace} \in \Poly{k-1}(T).
\end{equation}
\begin{lemma} \label{lemma:explicitRbCompl}
Keeping the notations of \eqref{eq:rc3exp}, 
the subset $\Rolyb{c,k}(T) = \lbrace W \in (\Roly{c,k}(T)^\intercal)^3 \st \Tr{W} = 0 \rbrace$ 
is characterized by:
\begin{equation*}
\begin{matrix}
P_1 = yz  C^1 + y \gamma - z \beta \\
P_2 = xz  C^2 - x \gamma + z \lambda \\
P_3 = xy  C^3 + x \beta - y \lambda
\end{matrix},\quad
\begin{matrix}
\lambda \in \Poly{k-2}[Y,Z], \beta \in \Poly{k-2}[X,Z], \gamma \in \Poly{k-2}[X,Y], \\
C^i \in \Poly{k-3}[X,Y,Z], C^1 + C^2 + C^3 = 0.
\end{matrix}
\end{equation*}
\end{lemma}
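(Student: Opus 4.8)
The trace of a matrix $W\in(\Roly{c,k}(T)^\intercal)^3$ written as in \eqref{eq:rc3exp} is, reading off the diagonal entries, $\Tr W = xP_1+yP_2+zP_3$. Hence the statement amounts to characterizing the kernel of the linear map $\Phi\colon(\Poly{k-1}(T))^3\to\Poly{k}(T)$, $(P_1,P_2,P_3)\mapsto xP_1+yP_2+zP_3$, and I would prove the two inclusions separately. The forward direction (every triple of the stated form lies in $\Ker\Phi$) is a direct substitution: expanding $xP_1+yP_2+zP_3$ with the proposed expressions, the terms $\pm xy\gamma$, $\pm yz\lambda$, $\pm xz\beta$ cancel pairwise and the only surviving contribution is $xyz(C^1+C^2+C^3)$, which vanishes by the constraint $C^1+C^2+C^3=0$. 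Here one also checks the degree bookkeeping, namely that each $P_i$ so built lies in $\Poly{k-1}(T)$: e.g.\ $yzC^1$ has degree $\le 2+(k-3)$, while $y\gamma$ and $z\beta$ have degree $\le 1+(k-2)$.

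For the converse I would argue constructively by monomial decomposition. From $xP_1=-(yP_2+zP_3)$ we get $xP_1\in(y,z)$, and since $x$ is a nonzerodivisor modulo $(y,z)$ (the quotient $\mathbb{R}[x,y,z]/(y,z)\cong\mathbb{R}[x]$ is a domain) this forces $P_1\in(y,z)$; so $P_1$ has no monomial that is a pure power of $x$. Splitting the monomials of $P_1$ according to divisibility by $y$ and by $z$, one writes \emph{uniquely} $P_1=yzC^1+y\gamma-z\beta$, where $yzC^1$ collects the monomials divisible by $yz$, $y\gamma$ those divisible by $y$ but not $z$ (so $\gamma\in\Poly{k-2}[X,Y]$ is $z$-free), and $-z\beta$ those divisible by $z$ but not $y$ (so $\beta\in\Poly{k-2}[X,Z]$ is $y$-free). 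The same reasoning applied to $P_2\in(x,z)$ and $P_3\in(x,y)$ yields analogous decompositions. The key step is to identify the parameters: collecting in the identity $xP_1+yP_2+zP_3=0$ the monomials divisible by $xy$ but not $z$ forces the $z$-free parts of $P_1$ and $P_2$ to share the same $\gamma$; collecting those divisible by $xz$ but not $y$ (resp.\ $yz$ but not $x$) identifies $\beta$ (resp.\ $\lambda$); and collecting the monomials divisible by $xyz$ gives exactly $C^1+C^2+C^3=0$. This simultaneously proves existence and uniqueness of the representation.

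The main obstacle is precisely this systematic monomial bookkeeping in the converse: one must keep careful track of which terms land in which divisibility class so that the cross-terms genuinely identify the shared parameters and produce the constraint on the $C^i$. As an independent cross-check I would also match dimensions. Since $\Image\Phi$ is the space of degree-$\le k$ polynomials vanishing at $\bvec{x}_T=0$, rank–nullity gives $\dim\Ker\Phi=3\binom{k+2}{3}-\binom{k+3}{3}+1=\tfrac{1}{6}k(k-1)(2k+5)$, which coincides with $3\dim\Poly{k-2}[X,Y]+2\dim\Poly{k-3}[X,Y,Z]=3\binom{k}{2}+2\binom{k}{3}$, the dimension of the parameter space; combined with injectivity of the parametrization (obtained by successively setting $z=0$, $y=0$, $x=0$ in the defining relations) this yields surjectivity through a less computational route. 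Conceptually all of this reflects the exactness of the Koszul complex of the regular sequence $(x,y,z)$: $\Ker\Phi$ is its first syzygy module, generated by $(y,-x,0)$, $(z,0,-x)$, $(0,z,-y)$, and the relation $\sum_i C^i=0$ encodes the single second syzygy.
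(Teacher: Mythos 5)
Your proof is correct and follows essentially the same route as the paper's: the paper's iterated Euclidean divisions are exactly your decomposition of each $P_i$ into monomial divisibility classes, and matching those classes in $xP_1+yP_2+zP_3=0$ reproduces the paper's identifications of $\lambda$, $\beta$, $\gamma$ and the constraint $C^1+C^2+C^3=0$. The dimension count and the Koszul-complex interpretation are sound additional cross-checks not present in the paper, but they do not change the underlying argument.
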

\begin{proof}
The proof relies on repeated use of Euclidean division. 
When we append $x$ (resp. $y$, $z$) as subscript we mean that the polynomial does not depend on $x$ (resp. $y$, $z$).
We write:
\begin{equation*}
\begin{aligned}
P_2 =&\; x Q_2 + R^2_x,\\
P_3 =&\; x Q_3 + R^3_x.
\end{aligned}
\end{equation*}
Then the condition $\Tr{W} = 0$ becomes $x P_1 + y P_2 + z P_3 = x (P1 + y Q_2 + z Q_3) + y R^2_x + z R^3_x = 0$.
Looking at the degree in $X$, we must have 
$P_1 + y Q_2 + z Q_3 = 0$ and $y R^2_x + z R^3_x = 0$.
Hence there exists $\lambda \in \Poly{k-2}[Y,Z]$ such that 
\begin{equation*}
R^2_x = z \lambda,\; R^3_x = - y \lambda .
\end{equation*}
On the other hand, writing $P_1 = y A^1 + z A^1_y + A^1_{y,z}$, we have 
$y A^1 + z A^1_y + A^1_{y,z} + y Q_2 + z Q_3  = 0$.
Looking at the degree in $Y$ and $Z$ we must have $A^1_{y,z} = 0$.
Let us write $Q_3 = y B^3 + B^3_y$, we have 
$y( A^1 + Q_2 + z B^3) + z (A^1_y + B^3_y) = 0$.
The degree in $Y$ shows that $A^1_y + B^3_y = 0$.
Thus there exists $\beta \in \Poly{k-2}[X,Z]$ such that 
\begin{equation*}
A^1_y = -\beta,\; B^3_y = \beta .
\end{equation*}
Let $A^1 = z C^1 + C^1_z$ and $Q_2 = z C^2 + C^2_z$. 
We are left with $A^1 + Q_2 + z B^3 = z( C^1 + C^2 + B^3) + C^1_z + C^2_z = 0$.
Once again the degree in $Z$ shows that $C^1_z + C^2_z = 0$ and $C^1 + C^2 + B^3 = 0$, 
so there exists $\gamma \in \Poly{k-2}[X,Y]$ such that 
\begin{equation*}
C^1_z = \gamma,\; C^2_z = - \gamma .
\end{equation*}
We conclude by writing $B^3 := C^3$ and substituting back each term in the development of $P_i$.
\end{proof}
\begin{remark}
In $2$ dimensions the expression becomes (see Appendix \ref{2DComplex}):
\begin{equation*}
\Rolyb{c,k+1}(F) = \begin{pmatrix} 
- (x - x_F)(y - y_F) Q & - (y - y_F)^2 Q \\
(x - x_F)^2 Q & (x - x_F)(y - y_F) Q
\end{pmatrix},\, Q \in \Poly{k-1}(F).
\end{equation*}
\end{remark}

\begin{lemma} \label{lemma:isoRbG}
For any $T \in \Th$, $\TDIV$ is an isomorphism from $\Rolyb{c,k+1}(T)$ to $\Goly{c,k}(T)$.
\end{lemma}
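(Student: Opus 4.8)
The plan is to reduce this matrix-valued statement to the scalar isomorphism $\DIV \st \Roly{c,k+1}(T) \rightarrow \Poly{k}(T)$ of \eqref{eq:isodiv}, applied row by row, the only genuine subtlety being the interaction with the trace-free constraint defining $\Rolyb{c,k+1}(T)$. Following the appendix I would assume $\bvec{x}_T = \bvec{0}$ and set $\bvec{r}(\bvec{x}) := \bvec{x}$. Using the description \eqref{eq:rc3exp} with $k$ replaced by $k+1$, every $W \in (\Roly{c,k+1}(T)^\intercal)^3$ has rows of the form $\bvec{r}\,P_i$ with $P_i \in \Poly{k}(T)$, its trace is $\Tr W = \sum_i x_i P_i$, and $\TDIV W$ is the vector with components $\DIV(\bvec{r}\,P_i) \in \Poly{k}(T)$, so $\TDIV W \in \bPoly{k}(T)$. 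The key step I would record first is the algebraic identity, obtained from $\DIV(\bvec{r}\,P_i) = 3 P_i + (\bvec{r}\cdot\nabla)P_i$ and $(\bvec{r}\cdot\nabla)x_i = x_i$,
\[ \bvec{r}\cdot(\TDIV W) = (\bvec{r}\cdot\nabla)(\Tr W) + 2\,\Tr W, \]
which expresses the radial part of $\TDIV W$ entirely through the trace of $W$ and drives both halves of the argument.

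Next I would settle well-definedness and injectivity. If $W \in \Rolyb{c,k+1}(T)$ then $\Tr W = 0$, so the identity gives $\bvec{r}\cdot(\TDIV W) = 0$; since $\Goly{c,k}(T) = \lbrace \bvec{v}\in\bPoly{k}(T) \st \bvec{r}\cdot\bvec{v} = 0 \rbrace$ (this being the kernel of $\bvec{v}\mapsto\bvec{r}\cdot\bvec{v}$ on $\bPoly{k}(T)$ by the exactness of the Koszul complex, cf. \eqref{eq:defkoszul} and \cite[Section~2.4]{ddr}), I conclude $\TDIV W \in \Goly{c,k}(T)$. Hence $\TDIV$ does map $\Rolyb{c,k+1}(T)$ into $\Goly{c,k}(T)$. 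Injectivity is then immediate: $\TDIV W = 0$ forces $\DIV(\bvec{r}\,P_i) = 0$ for each row, and the isomorphism \eqref{eq:isodiv} gives $\bvec{r}\,P_i = 0$, so $W = 0$.

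Finally I would obtain surjectivity constructively. Given $\bvec{g} = (g_1,g_2,g_3) \in \Goly{c,k}(T)$, I use \eqref{eq:isodiv} to select, for each $i$, the unique $\bvec{w}_i = \bvec{r}\,P_i \in \Roly{c,k+1}(T)$ with $\DIV \bvec{w}_i = g_i$, and let $W$ be the matrix with these rows, so that $\TDIV W = \bvec{g}$ by construction. The identity together with $\bvec{r}\cdot\bvec{g} = 0$ then yields $(\bvec{r}\cdot\nabla)Q + 2Q = 0$ for $Q := \Tr W \in \Poly{k+1}(T)$; splitting $Q$ into homogeneous parts $Q = \sum_d Q_d$ and using $(\bvec{r}\cdot\nabla)Q_d = d\,Q_d$ turns this into $\sum_d (d+2) Q_d = 0$, which forces $Q = 0$, i.e. $W \in \Rolyb{c,k+1}(T)$. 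Combined with injectivity this shows $\TDIV$ is a bijection onto $\Goly{c,k}(T)$. The main obstacle throughout is precisely the compatibility of the row-wise inversion with the trace-free constraint, which the Euler-operator identity resolves cleanly; should one prefer to avoid invoking the Koszul characterization of $\Goly{c,k}(T)$, the same conclusion follows from injectivity, the constructive surjectivity, and the dimension equality $\dim \Rolyb{c,k+1}(T) = 3\dim\Poly{k}(T) - \dim\Poly{k+1}(T) + 1 = \dim\Goly{c,k}(T)$.
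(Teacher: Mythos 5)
Your proof is correct, and it takes a genuinely different route from the paper's. The paper first establishes the explicit parametrization of $\Rolyb{c,k}(T)$ in Lemma \ref{lemma:explicitRbCompl} via repeated Euclidean divisions (the $\lambda,\beta,\gamma,C^i$ description), then computes $\TDIV W$ in that parametrization and matches it coefficient-by-coefficient against the analogous description \eqref{eq:proofRbG1} of $\Goly{c,k}(T)$, using that the Euler-type operators $(c + \bvec{x}\cdot\nabla)$ appearing in the computation are automorphisms. You bypass the explicit parametrization entirely: your Euler-operator identity $\bvec{x}\cdot(\TDIV W) = (\bvec{x}\cdot\nabla)(\Tr W) + 2\,\Tr W$ (which I checked; the constants $3$ from $\DIV(\bvec{x}P_i)$ and $1$ from $(\bvec{x}\cdot\nabla)x_i$ do combine to give $+2$) reduces everything to the scalar isomorphism \eqref{eq:isodiv} applied row-wise, with the trace-free constraint handled by decomposing $\Tr W$ into homogeneous components and noting $(d+2)\neq 0$. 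The one extra ingredient you invoke, namely $\Goly{c,k}(T)=\lbrace\bvec{v}\in\bPoly{k}(T)\st\bvec{x}\cdot\bvec{v}=0\rbrace$, is not stated in the paper but follows immediately from \eqref{eq:compkoszul} together with the injectivity of $\GRAD q\mapsto\bvec{x}\cdot\GRAD q$ on $\Goly{k}(T)$ (again by homogeneous decomposition), so it is legitimate to cite it as Koszul-complex folklore, and your closing dimension count gives a belt-and-braces alternative anyway. What the paper's approach buys is the explicit description of Lemma \ref{lemma:explicitRbCompl}, which it advertises independently ("an explicit description of this space is given by..."); what yours buys is a shorter, basis-free argument that would generalize more readily (e.g.\ to the $2$-dimensional case mentioned after Lemma \ref{lemma:defGbc}, or to other space dimensions) without redoing the division bookkeeping.
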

\begin{proof}
By the definition \eqref{eq:defkoszul} the space $\Goly{c,k}$ is characterized by:
\begin{equation} \label{eq:proofRbG1}
\Goly{c,k}(T) = \begin{pmatrix}
y Q_1 - z Q_2 \\
z Q_3 - x Q_1 \\
x Q_2 - y Q_3 
\end{pmatrix},\quad
Q_1,Q_2,Q_3 \in \Poly{k-1}(T).
\end{equation}
Let $W = \begin{pmatrix} 
x P_1 & y P_1 & z P_1\\
x P_2 & y P_2 & z P_2\\
x P_3 & y P_3 & z P_3
\end{pmatrix}
\in \Rolyb{c,k}(T)$, with $P_i$ given by Lemma \ref{lemma:explicitRbCompl}.
A direct computation gives 
\begin{equation} \label{eq:proofRbG2}
\TDIV W = 
\begin{pmatrix}
yz  {C^1}' + y \gamma' - z \beta' \\
xz  {C^2}' - x \gamma' + z \lambda' \\
xy  {C^3}' + x \beta' - y \lambda'
\end{pmatrix},
\end{equation}
with ${C^i}' = (5 + x\partial_x + y\partial_y + z\partial_z) C^i$, 
$\gamma' = (4 + x\partial_x + y\partial_y) \gamma$,
$\beta' = (4 + x\partial_x + z\partial_z) \beta$,
$\lambda' = (4 + y\partial_y + z\partial_z) \lambda$.
Each of these transformations is a linear automorphism, 
hence we can drop the $'$ in \eqref{eq:proofRbG2} without loss of generality.
We must show that there is a one to one correspondence between the $Q_i$ of \eqref{eq:proofRbG1}
(more precisely their sum, since different $Q_i$ give the same expression)
and the $C^i$, $\gamma$, $\beta$, $\lambda$ of \eqref{eq:proofRbG2}.

Let us write $Q_1 = \gamma + R^1$, $Q_2 = \beta + R^2$ and $Q_3 = \lambda + R^3$ for some $R^1$, $R^2$, $R^3$.
The system becomes:
\begin{equation*}
\begin{matrix}
yz  C^1 + y \gamma - z \beta = y Q_1 - z Q_2\\
xz  C^2 - x \gamma + z \lambda = z Q_3 - x Q_1\\
xy  C^3 + x \beta - y \lambda = x Q_2 - y Q_3
\end{matrix} \iff
\begin{matrix}
yz  C^1 = y R^1 - z R^2\\
xz  C^2 = z R^3 - x R^1\\
xy  C^3 = x R^2 - y R^3
\end{matrix}.
\end{equation*}
When can check that $xyz( C^1 + C^2 + C^3) = xyR^1 - xzR^2 + yzR^3 - yxR^1 + zxR^2 - zyR^3 = 0$.
Hence given $Q_1$, $Q_2$, $Q_3$, 
the Euclidean divisions
$Q_1 := \gamma + z S^1$, 
$Q_2 := \beta + y S^2$ and
$Q_3 := \lambda + x S^3$ 
give suitable $C^1$, $C^2$, $C^3$, $\gamma$, $\beta$, $\lambda$ for 
$C^1 = S^1 - S^2$, $C^2 = S^3 - S^1$ and $C^3 = S^2 - S^3$.
Likewise, given 
$C^1$, $C^2$, $C^3$, $\gamma$, $\beta$, $\lambda$, the system
\begin{equation*}
\begin{aligned}
C^1 =&\; S^1 - S^2\\
C^2 =&\; S^3 - S^1\\
C^3 =&\; S^2 - S^3
\end{aligned}
\end{equation*}
is solvable since $C^1 + C^2 + C^3 = 0$ and
$Q_1 := \gamma + z S^1$, 
$Q_2 := \beta + y S^2$,
$Q_3 := \lambda + x S^3$ give suitable $Q_i$.
\end{proof}

\begin{lemma} \label{lemma:intRbcRb}
For any $T \in \Th$, it holds $\Rolyb{c,k}(T) \cap \Rolyb{k}(T) = \lbrace 0 \rbrace$.
\end{lemma}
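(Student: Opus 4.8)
The plan is to apply the divergence operator $\TDIV$ to an arbitrary element of the intersection and to exploit the fact that $\TDIV$ sends $\Rolyb{k}(T)$ and $\Rolyb{c,k}(T)$ into the two \emph{complementary} Koszul components of $\bPoly{k-1}(T)$, so that an element lying in both spaces must have vanishing divergence; injectivity of $\TDIV$ on $\Rolyb{c,k}(T)$ then finishes the argument.

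In detail, I would take $W \in \Rolyb{c,k}(T) \cap \Rolyb{k}(T)$. Since $W \in \Rolyb{k}(T)$, Remark \ref{rem:divInvDivGrad} gives $\TDIV W \in \Goly{k-1}(T)$. Since $W \in \Rolyb{c,k}(T)$, Lemma \ref{lemma:isoRbG} (applied with $k-1$ in place of $k$, so that it reads ``$\TDIV$ is an isomorphism from $\Rolyb{c,k}(T)$ to $\Goly{c,k-1}(T)$'') yields $\TDIV W \in \Goly{c,k-1}(T)$. Because \eqref{eq:compkoszul}, read at degree $k-1$, asserts the direct decomposition $\bPoly{k-1}(T) = \Goly{k-1}(T) \oplus \Goly{c,k-1}(T)$, the two summands intersect only at $0$, whence $\TDIV W = 0$. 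Finally, the same Lemma \ref{lemma:isoRbG} shows that the restriction of $\TDIV$ to $\Rolyb{c,k}(T)$ is an isomorphism onto $\Goly{c,k-1}(T)$, hence injective, so $\TDIV W = 0$ forces $W = 0$, which is the claim.

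I do not expect a genuine obstacle here: the argument is essentially bookkeeping. The only points requiring care are to track the degree shifts so that Lemma \ref{lemma:isoRbG} and the decomposition \eqref{eq:compkoszul} are invoked at the correct polynomial degree $k-1$, and to recognize that it is precisely $\TDIV$ that disentangles the two defining conditions, namely the trace-free Koszul condition built into $\Rolyb{c,k}(T)$ on the one hand, and the gradient-image structure encoded by $\InvDivGrad^k$ in $\Rolyb{k}(T)$ on the other.
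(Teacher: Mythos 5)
Your proposal is correct and is exactly the argument the paper intends: its proof simply cites Lemma \ref{lemma:isoRbG} and Remark \ref{rem:divInvDivGrad}, and your write-up fills in precisely those steps — applying $\TDIV$, locating the image in $\Goly{k-1}(T)$ and $\Goly{c,k-1}(T)$ respectively, invoking the Koszul decomposition \eqref{eq:compkoszul} at degree $k-1$, and concluding by injectivity of $\TDIV$ on $\Rolyb{c,k}(T)$. The degree bookkeeping is handled correctly.
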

\begin{proof}
This is a direct consequence of Lemma \ref{lemma:isoRbG} and Remark \ref{rem:divInvDivGrad}.
\end{proof}

\begin{lemma} \label{lemma:Rcdec}
For any $T \in \Th$, it holds
$(\Roly{c,k}(T)^\intercal)^3 = \Rolyb{c,k}(T) \oplus \Rolyb{k}(T)$.
\end{lemma}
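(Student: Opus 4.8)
The plan is to prove the identity by a dimension count combined with the already-established triviality of the intersection. Since Lemma~\ref{lemma:intRbcRb} gives $\Rolyb{c,k}(T) \cap \Rolyb{k}(T) = \lbrace 0 \rbrace$, the sum on the right-hand side is automatically direct, and both summands are contained in $(\Roly{c,k}(T)^\intercal)^3$ by their very definitions \eqref{eq:defRbc} and \eqref{eq:defRb} (recall $\InvDivGrad^k$ maps into $(\Roly{c,k}(T)^\intercal)^3$). Thus it suffices to check that $\dim \Rolyb{c,k}(T) + \dim \Rolyb{k}(T) = \dim (\Roly{c,k}(T)^\intercal)^3$, after which equality follows from the inclusion $\Rolyb{c,k}(T) \oplus \Rolyb{k}(T) \subseteq (\Roly{c,k}(T)^\intercal)^3$ together with the matching dimensions.

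For the first summand I would use that, as recorded just before \eqref{eq:defInvDivGrad}, the trace map $\Tr \colon (\Roly{c,k}(T)^\intercal)^3 \to \Poly{0,k}(T)$ is surjective. Since $\Rolyb{c,k}(T) = \Ker \Tr$ by \eqref{eq:defRbc}, the rank--nullity theorem yields $\dim \Rolyb{c,k}(T) = \dim (\Roly{c,k}(T)^\intercal)^3 - \dim \Poly{0,k}(T)$. For the second summand, $\Rolyb{k}(T) = \InvDivGrad^k \Poly{0,k}(T)$ is by definition the image of $\InvDivGrad^k$, so I only need that $\InvDivGrad^k$ is injective on $\Poly{0,k}(T)$. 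This follows from Remark~\ref{rem:divInvDivGrad}: the composition $\TDIV \circ \InvDivGrad^k$ equals $\GRAD$ on $\Poly{0,k}(T)$, and $\GRAD$ is injective there because its kernel consists of the constants, of which only $0$ has zero mean. Hence $\dim \Rolyb{k}(T) = \dim \Poly{0,k}(T)$.

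Adding the two dimensions makes $\dim \Poly{0,k}(T)$ cancel, leaving exactly $\dim (\Roly{c,k}(T)^\intercal)^3$; combined with the trivial intersection and the inclusion above, this forces the direct sum to fill the whole space. The argument is essentially bookkeeping, so I do not expect a serious obstacle: the only points needing care are the surjectivity of $\Tr$ and the injectivity of $\InvDivGrad^k$, both of which are already supplied by the surrounding text.

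Alternatively, one can give a fully constructive decomposition, which I would mention as a remark. The map $\Tr \circ \InvDivGrad^k$ sends $\Poly{0,k}(T)$ into $\Tr\big((\Roly{c,k}(T)^\intercal)^3\big) = \Poly{0,k}(T)$, and it is injective: if $\Tr(\InvDivGrad^k q) = 0$ then $\InvDivGrad^k q \in \Ker\Tr = \Rolyb{c,k}(T)$, while also $\InvDivGrad^k q \in \Rolyb{k}(T)$, so Lemma~\ref{lemma:intRbcRb} forces $\InvDivGrad^k q = 0$ and hence $q = 0$. Being an injective endomorphism of a finite-dimensional space, $\Tr \circ \InvDivGrad^k$ is an automorphism of $\Poly{0,k}(T)$; then for arbitrary $W \in (\Roly{c,k}(T)^\intercal)^3$ one sets $W_b := \InvDivGrad^k\big((\Tr \circ \InvDivGrad^k)^{-1}(\Tr W)\big) \in \Rolyb{k}(T)$, so that $\Tr(W - W_b) = 0$ and $W_c := W - W_b \in \Rolyb{c,k}(T)$, exhibiting the splitting explicitly.
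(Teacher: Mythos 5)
Your proof is correct, and it shares the paper's overall skeleton (trivial intersection from Lemma \ref{lemma:intRbcRb} plus a dimension count), but the way you obtain the dimensions is genuinely different and arguably cleaner. The paper computes $\dim \Rolyb{c,k}(T)$ by counting the free parameters $\lambda,\beta,\gamma,C^i$ in the explicit characterization of Lemma \ref{lemma:explicitRbCompl} and then verifies by hand that $3\tfrac{k!}{2!(k-2)!}+2\tfrac{k!}{3!(k-3)!}$ plus $\dim\Poly{0,k}(T)$ equals $3\dim\Poly{k-1}(T)$. You instead observe that $\Rolyb{c,k}(T)$ is by \eqref{eq:defRbc} the kernel of the surjection $\Tr\colon(\Roly{c,k}(T)^\intercal)^3\to\Poly{0,k}(T)$ and that $\InvDivGrad^k$ is injective (via Remark \ref{rem:divInvDivGrad}), so rank--nullity makes the two occurrences of $\dim\Poly{0,k}(T)$ cancel with no arithmetic at all. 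What your route buys is independence from Lemma \ref{lemma:explicitRbCompl} and from the binomial bookkeeping; what it costs is reliance on the surjectivity of $\Tr$ onto $\Poly{0,k}(T)$, which the paper asserts just before \eqref{eq:defInvDivGrad} but does not prove (the paper's count avoids needing that assertion here). Your closing constructive remark --- that $\Tr\circ\InvDivGrad^k$ is an automorphism of $\Poly{0,k}(T)$, yielding the explicit splitting $W = W_c + W_b$ --- is also correct and is a nice addition not present in the paper.
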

\begin{proof}
Lemma \ref{lemma:intRbcRb} already shows that $\Rolyb{c,k}(T) \cap \Rolyb{k}(T) = \lbrace 0 \rbrace$.
It is enough to compare the dimension of these spaces:
\begin{equation*}
\begin{aligned}
\dim{\Rolyb{c,k}(T)} =&\; 3 \dim{\Poly{k-2}[X,Y]} + 2 \dim{\Poly{k-3}[X,Y,Z]} \\
=&\ 3 \frac{k!}{2! (k- 2)!} + 2 \frac{k!}{3! (k-3)!} = \frac{2 k^3 + 3 k^2 - 5k}{6},
\end{aligned}
\end{equation*}
\begin{equation*}
\dim{\Rolyb{k}(T)} = \dim{\Poly{0,k}(T)} = \frac{(k+3)!}{3! k!} - 1 =  \frac{k^3 + 6 k^2 + 11 k}{6} .
\end{equation*}
The sum of both is 
\begin{equation*}
3 \frac{(k+2)!}{3! (k-1)!} = 3 \dim{\Poly{k-1}(T)} = \dim{(\Roly{c,k}(T)^\intercal)^3}.
\end{equation*}
\end{proof}

Next we show some lemmas on convex polytopes. 
\begin{lemma} \label{lemma:normpcz1}
Let $T \in \Th$, $\bvec{x}_T$ defined as in \eqref{eq:defkoszul}, if $B = \Ball{\bvec{x}_T}{h_B} \subset T$ with $h_B \gtrsim h_T$ and $Q \in \Poly{k}(T)$ then $\norm[L^\infty(B)]{Q} \approx \norm[\infty]{Q}$.
\end{lemma}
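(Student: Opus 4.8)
The plan is to prove the two bounds in $\norm[L^\infty(B)]{Q} \approx \norm[L^\infty(T)]{Q}$ separately, understanding $\norm[\infty]{Q}$ as the supremum norm $\norm[L^\infty(T)]{Q}$. One inequality is immediate: since $B \subset T$, a supremum over $B$ is bounded by the supremum over $T$, giving $\norm[L^\infty(B)]{Q} \le \norm[L^\infty(T)]{Q}$. All the content lies in the reverse bound $\norm[L^\infty(T)]{Q} \lesssim \norm[L^\infty(B)]{Q}$, which I would establish by a scaling argument reducing to a fixed reference configuration, followed by equivalence of norms on the finite-dimensional space $\Poly{k}$.

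First I would rescale around $\bvec{x}_T$: set $\widehat{T} = h_T^{-1}(T - \bvec{x}_T)$, $\widehat{B} = h_T^{-1}(B - \bvec{x}_T)$ and $\widehat{Q}(\widehat{\bvec{x}}) = Q(\bvec{x}_T + h_T \widehat{\bvec{x}})$. Composition with an affine map preserves the polynomial degree, so $\widehat{Q} \in \Poly{k}$, and since the $L^\infty$ norm carries no Jacobian factor we have $\norm[L^\infty(T)]{Q} = \norm[L^\infty(\widehat{T})]{\widehat{Q}}$ and $\norm[L^\infty(B)]{Q} = \norm[L^\infty(\widehat{B})]{\widehat{Q}}$. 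The point of the rescaling is that the geometry is now uniformly controlled: $\widehat{T}$ has diameter $1$ and contains $0$, hence $\widehat{T} \subset \overline{\Ball{0}{1}}$; and $\widehat{B} = \Ball{0}{h_B/h_T}$ with $h_B/h_T \ge c$ for a constant $c = c(\rho) > 0$ coming from the hypothesis $h_B \gtrsim h_T$, hence $\Ball{0}{c} \subset \widehat{B}$.

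These two inclusions let me sandwich the relevant domains between fixed balls,
\[ \norm[L^\infty(\widehat{T})]{\widehat{Q}} \le \norm[L^\infty(\overline{\Ball{0}{1}})]{\widehat{Q}}, \qquad \norm[L^\infty(\Ball{0}{c})]{\widehat{Q}} \le \norm[L^\infty(\widehat{B})]{\widehat{Q}}, \]
so that it suffices to prove $\norm[L^\infty(\overline{\Ball{0}{1}})]{\widehat{Q}} \lesssim \norm[L^\infty(\Ball{0}{c})]{\widehat{Q}}$ for all $\widehat{Q} \in \Poly{k}$. This is now a statement on the single finite-dimensional space $\Poly{k}(\Real^3)$ involving two fixed balls. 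Both quantities are norms on $\Poly{k}(\Real^3)$: the restriction of a polynomial to a ball of positive radius determines it entirely (a polynomial vanishing on an open set is identically zero), so $\norm[L^\infty(\Ball{0}{c})]{\cdot}$ is nondegenerate. By equivalence of all norms on a finite-dimensional space there is a constant $C = C(k,c)$ with $\norm[L^\infty(\overline{\Ball{0}{1}})]{\widehat{Q}} \le C \norm[L^\infty(\Ball{0}{c})]{\widehat{Q}}$, and since $c$ depends only on $\rho$ this constant depends only on $k$ and $\rho$. Undoing the rescaling yields $\norm[L^\infty(T)]{Q} \lesssim \norm[L^\infty(B)]{Q}$.

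The argument has no real obstacle; the only point requiring care is ensuring that the comparison constant is uniform over the mesh sequence. This is precisely why I reduce to the fixed balls $\Ball{0}{c}$ and $\overline{\Ball{0}{1}}$ rather than comparing directly on the variable sets $\widehat{T}$ and $\widehat{B}$: the lower bound $h_B/h_T \ge c(\rho)$ absorbs all dependence on $h$, leaving a constant depending only on $k$ and $\rho$, as required.
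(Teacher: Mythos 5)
Your proof is correct, but it takes a different route from the paper's. The paper argues directly on $T$: it writes the exact Taylor expansion $Q(\bvec{x}_T+\alpha\bvec{v})=\sum_{i=0}^{k}\frac{\partial_{\bvec{v}}^{i}Q(\bvec{x}_T)}{i!}\alpha^{i}$, bounds each directional derivative on $B$ via the discrete inverse inequality of Lemma \ref{lemma:discretepoincare} ($\norm[L^\infty(B)]{\partial_{\bvec{v}}^{i}Q}\lesssim h_B^{-i}\norm[L^\infty(B)]{Q}$), and uses $T\subset\Ball{\bvec{x}_T}{h_o}$ with $h_o\lesssim h_T\lesssim h_B$ to absorb the factors $\alpha^{i}h_B^{-i}$. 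You instead rescale to a reference configuration sandwiched between the two fixed balls $\Ball{0}{c}$ and $\overline{\Ball{0}{1}}$ and invoke equivalence of norms on the finite-dimensional space $\Poly{k}$; the key observations (affine maps preserve degree, $L^\infty$ carries no Jacobian, a polynomial vanishing on an open set is zero, and $c$ depends only on the hidden constant in $h_B\gtrsim h_T$) are all in place, so the comparison constant is uniform as required. The trade-off is that your argument is shorter and independent of Lemma \ref{lemma:discretepoincare}, at the price of a non-constructive compactness step, whereas the paper's Taylor-plus-inverse-inequality computation is quantitative and stays within the toolbox it has already set up; both yield a constant depending only on $k$ and $\rho$.
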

\begin{proof}
Let $h_o \in \Real^*_+$ such that $T \subset \Ball{\bvec{x}_T}{h_o}$ and $h_o \lesssim h_T$ ($h_o$ exists by the regularity assumption on the mesh sequence $\Mh$).
Let $\bvec{v}$ by any vector such that $\norm{\bvec{v}} = 1$, then
$\forall \alpha > 0 $ such that $\bvec{x}_T + \alpha \bvec{v} \in T$, 
\begin{equation*}
Q(\bvec{x}_T + \alpha \bvec{v}) = \sum_{i=0}^{k} \frac{\partial_{\bvec{v}}^i Q(\bvec{x}_T)}{i! } \alpha^i ,
\end{equation*}
so
\begin{equation*}
\seminorm{Q(\bvec{x}_T + \alpha \bvec{v})} \leq \sum_{i=0}^{k} \frac{\norm[L^\infty(B)]{\partial_{\bvec{v}}^i Q}}{i! } \alpha^i .
\end{equation*}
By the discrete Poincare inequality Lemma \ref{lemma:discretepoincare} we have $\forall i$, $\norm[L^\infty(B)]{\partial_{\bvec{v}}^i Q} \lesssim h_B^{-i} \norm[L^\infty(B)]{Q}$.
Moreover $\bvec{x}_T + \alpha \bvec{v} \in T$ so $\alpha < h_o$ and
$\seminorm{Q(\bvec{x}_T + \alpha \bvec{v})} \lesssim \norm[L^\infty(B)]{Q} h_o^i h_B^{-i}$. 
Since $h_o \lesssim h_B$ we can conclude that
$\norm[\infty]{Q} \lesssim \norm[L^\infty(B)]{Q}$.
\end{proof}

Any $T \in \Th$ is a convex, open polyhedron. Let $(F_i)_{i \leq \seminorm{\FT}}$ be the set of its faces, each of normal vector $\nF$. 
For all $F_i$ there exists $P_i \in \Poly{1}(\Real^3)$ such that $F_i \subset \Ker(P_i)$, moreover we can normalize $P_i$ such that 
$\bvec{x} \in T \implies P_i(\bvec{x}) > 0$ (since $T$ is convex) and $\seminorm{\partial_{\nF} P_i} = 1$.
\begin{lemma} \label{lemma:normpcz2}
Let $P = \prod_{i \leq \seminorm{\FT}} P_i$, $\bvec{x}_0 \in T$ such that $B_0 = \Ball{\bvec{x}_0}{h_T/2} \subset T$ and
$B = \Ball{\bvec{x}_0}{h_T/4}$
then $\inf_{\bvec{x} \in B} P(\bvec{x}) > \left ( \frac{h_T}{4}\right )^{\seminorm{\FT}}$ and $\norm[L^\infty(T)]{P} \lesssim h_T^{\seminorm{\FT}}$.
\end{lemma}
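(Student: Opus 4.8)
The plan is to read off that, under the stated normalisation, each factor $P_i$ is exactly the Euclidean distance to the plane carrying the face $F_i$, and then to run two elementary $1$-Lipschitz estimates: one from the inscribed ball $B_0$ for the lower bound, and one from the faces themselves for the upper bound.

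First I would record the geometric meaning of the $P_i$. Since $P_i \in \Poly{1}(\Real^3)$ is affine and vanishes on the hyperplane $H_i \supset F_i$, its gradient $\nabla P_i$ is orthogonal to $H_i$, hence parallel to the unit face normal $\nF$; the normalisation $\seminorm{\partial_{\nF} P_i} = 1$ then forces $|\nabla P_i| = 1$. Consequently each $P_i$ is $1$-Lipschitz, and for $\bvec{x} \in T$ the value $P_i(\bvec{x}) > 0$ is precisely the distance from $\bvec{x}$ to $H_i$.

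For the lower bound I would exploit $B_0 = \Ball{\bvec{x}_0}{h_T/2} \subset T \subset \{P_i > 0\}$: by continuity $P_i \geq 0$ on $\overline{B_0}$, so evaluating at the point $\bvec{x}_0 - \tfrac{h_T}{2}\nabla P_i \in \overline{B_0}$ gives $P_i(\bvec{x}_0) \geq h_T/2$. For any $\bvec{x} \in B = \Ball{\bvec{x}_0}{h_T/4}$ the Lipschitz bound then yields $P_i(\bvec{x}) \geq P_i(\bvec{x}_0) - |\bvec{x} - \bvec{x}_0| > h_T/2 - h_T/4 = h_T/4$, so that $P(\bvec{x}) = \prod_i P_i(\bvec{x}) > (h_T/4)^{\seminorm{\FT}}$ for every $\bvec{x} \in B$. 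The strictness of $\inf_B P > (h_T/4)^{\seminorm{\FT}}$ is not a genuine obstacle: on the compact $\overline{B}$ the minimum is attained, and equality in the product would force $\bvec{x}^\star - \bvec{x}_0 = -\tfrac{h_T}{4}\nabla P_i$ simultaneously for every $i$, i.e.\ all the $\nabla P_i$ equal, which is impossible for a polyhedron with at least two faces.

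For the upper bound I would fix $\bvec{x} \in T$ and any $\bvec{z}_i \in F_i \subset \overline{T}$, so that $P_i(\bvec{z}_i) = 0$; the $1$-Lipschitz property together with $h_T = \operatorname{diam}(T)$ gives $P_i(\bvec{x}) = P_i(\bvec{x}) - P_i(\bvec{z}_i) \leq |\bvec{x} - \bvec{z}_i| \leq h_T$. Multiplying over the faces yields $0 \le P(\bvec{x}) \le h_T^{\seminorm{\FT}}$, whence $\norm[L^\infty(T)]{P} \le h_T^{\seminorm{\FT}} \lesssim h_T^{\seminorm{\FT}}$. The only step requiring any care is the passage from $B_0 \subset T$ to $P_i(\bvec{x}_0) \geq h_T/2$, which is exactly where the convexity of $T$ — already encoded in the sign convention $T \subset \{P_i > 0\}$ — is used.
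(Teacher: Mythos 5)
Your proof is correct and follows essentially the same route as the paper: interpret each normalised $P_i$ as the distance to the plane carrying $F_i$, deduce $P_i \geq h_T/4$ on $B$ from $B_0 \subset T$, and bound $P_i$ above by the diameter of $T$. The only differences are cosmetic improvements — you bound $P_i \leq h_T$ directly from the definition of $h_T$ instead of invoking a circumscribed ball from mesh regularity, and you actually justify the strict inequality in $\inf_B P > (h_T/4)^{\seminorm{\FT}}$, which the paper's proof glosses over (and which is not needed for its later use).
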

\begin{proof}
For any $i \leq \seminorm{\FT}$, the value $P_i(\bvec{x})$ at any point $\bvec{x}$ is the distance between $\bvec{x}$ and the plane tangent to $F_i$
and is positive on $T$.
For any $\bvec{x} \in B$, $\bvec{x}$ is a least at a distance $h_T/4$ of any face since $B_0 \subset T$. 
We obtain the lower bound taking the product over all faces. 
Conversely, using the mesh regularity we can find $h_o > 0$, $h_o \lesssim h_T$ such that $T$ is inscribed in a ball of diameter $h_o$.
Then $\forall i \leq \seminorm{\FT}$, $\forall \bvec{x} \in T$, it holds $0 < P_i(\bvec{x}) < h_o \lesssim h_T$.
Again we conclude by taking the product over all faces.
\end{proof}

\begin{remark} \label{remark:normpcz2sq}
Lemma \ref{lemma:normpcz2} also holds for $P = \prod_{i \leq \seminorm{\FT}} P_i^2$, substituting $\seminorm{\FT}$ by $2 \seminorm{\FT}$.
\end{remark}

\begin{lemma} \label{lemma:normpcz}
For any $T \in \Th$ and $q \in \Poly{k}(T)$ there is $P \in \Poly{k + \seminorm{\FT}}(\overline{T})$ such that  
$P_{\vert \partial T} = 0$, $\lproj{k}{T} P = q$ and $\norm{P} \approx \norm{q}$.
\end{lemma}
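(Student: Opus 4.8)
The plan is to seek $P$ in the form $P = b\,r$, where $b := \prod_{i\le\seminorm{\FT}} P_i$ is the boundary bubble of Lemma \ref{lemma:normpcz2} and $r \in \Poly{k}(T)$ is to be determined. Any such product lies in $\Poly{k+\seminorm{\FT}}(\overline T)$ and vanishes on $\partial T$, since each $P_i$ vanishes on the face $F_i$ and $\partial T$ is the union of the closed faces. Hence the only two things left to arrange are $\lproj{k}{T}P = q$ and the norm equivalence. To solve the first, I would introduce the symmetric bilinear form $a(r,s) := \int_T b\,r\,s$ on $\Poly{k}(T)$ together with the associated endomorphism $L : \Poly{k}(T)\to\Poly{k}(T)$, $Lr := \lproj{k}{T}(b\,r)$, which satisfies $\int_T (Lr)\,s = a(r,s)$ for every $s\in\Poly{k}(T)$. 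Because $b>0$ on the interior of $T$, the form $a$ is positive definite, so $L$ is symmetric positive definite on the finite-dimensional space $\Poly{k}(T)$; in particular $L$ is invertible and there is a unique $r$ with $Lr=q$. Setting $P:=b\,r$ then yields a polynomial with the first two required properties.

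The heart of the proof is the uniform two-sided bound $a(r,r)\approx h_T^{\seminorm{\FT}}\norm{r}^2$. The upper estimate is immediate from $\norm[L^\infty(T)]{b}\lesssim h_T^{\seminorm{\FT}}$ (Lemma \ref{lemma:normpcz2}). For the lower estimate I would restrict the integral to the interior ball $B=\Ball{\bvec{x}_0}{h_T/4}$ of Lemma \ref{lemma:normpcz2}, on which $b\gtrsim h_T^{\seminorm{\FT}}$, giving $a(r,r)\gtrsim h_T^{\seminorm{\FT}}\int_B r^2$, and then invoke the uniform equivalence $\int_B r^2\approx\norm{r}^2$ for polynomials of degree at most $k$.

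This ball-versus-cell $L^2$ equivalence is the main obstacle, and it is where mesh regularity enters. I would chain $\norm{r}^2\le\seminorm{T}\,\norm[L^\infty(T)]{r}^2\lesssim h_T^3\,\norm[L^\infty(T)]{r}^2$, the interior comparison $\norm[L^\infty(T)]{r}\lesssim\norm[L^\infty(B)]{r}$ (the center-independent reading of Lemma \ref{lemma:normpcz1}, whose Taylor-expansion-plus-discrete-inverse-Poincaré proof only uses that $T$ lies in a ball of radius $\lesssim h_T$ about $\bvec{x}_0$ and that $B$ has radius $\gtrsim h_T$), and the scaled equivalence on the ball $\norm[L^\infty(B)]{r}^2\lesssim\seminorm{B}^{-1}\int_B r^2\approx h_T^{-3}\int_B r^2$, to obtain $\norm{r}^2\lesssim\int_B r^2$; the reverse bound is trivial since $B\subset T$. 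Everything else is bookkeeping of the powers of $h_T$.

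With $a(r,r)\approx h_T^{\seminorm{\FT}}\norm{r}^2$ established, symmetry of $L$ forces all its eigenvalues to be $\approx h_T^{\seminorm{\FT}}$, so $\norm{q}=\norm{Lr}\approx h_T^{\seminorm{\FT}}\norm{r}$, i.e.\ $\norm{r}\approx h_T^{-\seminorm{\FT}}\norm{q}$. Then $\norm{P}^2=\int_T b^2 r^2\le\norm[L^\infty(T)]{b}^2\norm{r}^2\lesssim h_T^{2\seminorm{\FT}}\norm{r}^2\lesssim\norm{q}^2$, which gives $\norm{P}\lesssim\norm{q}$; note that the matched powers of $h_T$ in the upper and lower bounds on $b$ are exactly what makes this estimate $h$-uniform. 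The opposite inequality $\norm{q}=\norm{\lproj{k}{T}P}\le\norm{P}$ is free, since $L^2$-projection is a contraction, and the two together give $\norm{P}\approx\norm{q}$.
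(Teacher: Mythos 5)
Your proposal is correct and follows essentially the same route as the paper: the ansatz $P = b\,r$ with the face-product bubble of Lemma \ref{lemma:normpcz2}, invertibility of the moment map by positivity of $b$ on $T$, and the two-sided bounds on $b$ (global upper bound $\lesssim h_T^{\seminorm{\FT}}$ and lower bound $\gtrsim h_T^{\seminorm{\FT}}$ on the interior ball) combined with Lemma \ref{lemma:normpcz1} and polynomial inverse inequalities. The only difference is in the bookkeeping: you extract $\norm{r}\approx h_T^{-\seminorm{\FT}}\norm{q}$ from the spectral pinching of the self-adjoint operator $L$, whereas the paper runs the equivalent chain of $L^\infty$ estimates on $\sqrt{\Pi}\,Q$ starting from the identity $\int_T \Pi Q^2=\int_T qQ$.
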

\begin{proof}
Let $P = \Pi Q$ with $\Pi$ given by Lemma \ref{lemma:normpcz2} and $Q \in \Poly{k}(T)$.
The application 
\begin{equation} \label{eq:normpcz1}
\Poly{k}(T) \ni Q \rightarrow \left ( \lambda \rightarrow \int_T P \lambda \right ) \in \Poly{k}(T)'
\end{equation}
is linear and between two spaces of same dimension, thus it is enough to check that it is injective.
Let $Q \in \Poly{k}(T)$ such that $\forall \lambda \in \Poly{k}(T)$, $\int_T \Pi Q \lambda = 0$;
in particular $\int_T \Pi Q^2 = 0$.
However since $\Pi > 0$ on $T$, we can define the function $\sqrt \Pi \in L^2(T)$ and have $\int_T \left ( \sqrt \Pi Q \right )^2 = \norm{\sqrt \Pi Q}^2 = 0$.
This implies $\sqrt \Pi Q \equiv 0$, so $Q \equiv 0$
and \eqref{eq:normpcz1} is injective.
Thus there is a polynomial $P\in \Poly{k + \seminorm{\FT}}(\overline{T})$ 
such that $P_{\vert \partial T} = 0$ and $\lproj{k}{T} P = q$.
Let us show that $P$ also satisfies the norm equivalence:
In particular we must have 
\begin{equation*}
\begin{gathered}
\int_T (P - q) Q = 0 ,\\
\int_T \Pi Q^2 = \int_T q Q ,\\
\norm{\sqrt \Pi Q}^2 \leq \norm{q}\norm{Q} \lesssim \norm{q} \seminorm{T}^\frac{1}{2} \norm[\infty]{Q} .
\end{gathered}
\end{equation*}
Therefore the discrete Sobolev inequality \cite[Lemma~1.25]{hho} gives
\begin{equation*}
\norm[\infty]{\sqrt \Pi Q}^2 \approx \seminorm{T}^{-1} \norm{\sqrt \Pi Q}^2 \lesssim \seminorm{T}^{-\frac{1}{2}} \norm[\infty]{Q} \norm{q} .
\end{equation*}
On the other consider $B = \Ball{\bvec{x}_0}{h_T/4}$ given in Lemma \ref{lemma:normpcz2}, 
it holds $\inf_{\bvec{x} \in B} \sqrt \Pi (\bvec{x}) \gtrsim h_T^{\seminorm{\FT}/2}$. 
Thus by Lemma \ref{lemma:normpcz1}:
\begin{equation*}
\norm[\infty]{\sqrt \Pi Q} \geq \norm[L^{\infty}(B)]{\sqrt \Pi Q} \gtrsim h_T^{\seminorm{\FT}/2} \norm[L^\infty(B)]{Q} 
\approx h_T^{\seminorm{\FT}/2} \norm[\infty]{Q} .
\end{equation*}
Hence
$h_T^{\seminorm{\FT}} \norm[\infty]{Q}^2 \lesssim \seminorm{T}^{-\frac{1}{2}} \norm[\infty]{Q}\norm{q}$,
$\norm[\infty]{Q} \lesssim \seminorm{T}^{-\frac{1}{2}} h_T^{- \seminorm{\FT}} \norm{q}$ and
\begin{equation*}
\begin{aligned}
\norm{\Pi Q} \approx \seminorm{T}^{\frac{1}{2}} \norm[\infty]{\Pi Q} \leq \seminorm{T}^{\frac{1}{2}} \norm[\infty]{\Pi} \norm[\infty]{Q} 
\lesssim \seminorm{T}^{\frac{1}{2}} h_T^{\seminorm{\FT}} \norm[\infty]{Q} \lesssim \norm{q} .
\end{aligned}
\end{equation*}
\end{proof}

We can use the same proof with Remark \ref{remark:normpcz2sq} to enforce the continuity of derivatives.
\begin{lemma} \label{lemma:normpczH2}
For any $T \in \Th$ and $q \in \Poly{k}(T)$ there is $P \in \Poly{k + 2 \seminorm{\FT}}(\overline{T})$ such that  
$P_{\vert \partial T} = 0$, $(\GRAD P)_{\vert \partial T} = \bvec{0}$, $\lproj{k}{T} P = q$ and $\norm{P} \approx \norm{q}$.
\end{lemma}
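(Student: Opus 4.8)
The plan is to repeat almost verbatim the proof of Lemma \ref{lemma:normpcz}, the only change being the choice of the boundary-vanishing factor. Instead of $\Pi = \prod_{i \leq \seminorm{\FT}} P_i$ I would take
\[
\Pi := \prod_{i \leq \seminorm{\FT}} P_i^2,
\]
where the $P_i \in \Poly{1}(\Real^3)$ are the normalized affine functions cutting out the faces $F_i$ of $T$ introduced before Lemma \ref{lemma:normpcz2}. By Remark \ref{remark:normpcz2sq}, Lemma \ref{lemma:normpcz2} still applies to this $\Pi$ with $\seminorm{\FT}$ replaced by $2\seminorm{\FT}$: thus $\Pi$ is strictly positive on $T$, bounded below on the inner ball $B = \Ball{\bvec{x}_0}{h_T/4}$ by $\gtrsim h_T^{2\seminorm{\FT}}$ and above by $\norm[L^\infty(T)]{\Pi} \lesssim h_T^{2\seminorm{\FT}}$.

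First I would set $P := \Pi Q$ for $Q \in \Poly{k}(T)$, which gives $P \in \Poly{k + 2\seminorm{\FT}}(\overline{T})$. The genuinely new point, compared with Lemma \ref{lemma:normpcz}, is the vanishing of the gradient on $\partial T$: since each factor $P_i^2$ has a double zero on the face $F_i$, writing $\GRAD P = (\GRAD \Pi)\, Q + \Pi\, \GRAD Q$ and $\GRAD \Pi = \sum_m 2 P_m (\GRAD P_m) \prod_{j \ne m} P_j^2$, every summand still vanishes on $F_i$ because it carries a factor $P_i$ (for $m=i$) or $P_i^2$ (for $m \ne i$). Hence both $P_{\vert \partial T} = 0$ and $(\GRAD P)_{\vert \partial T} = \bvec{0}$. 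This is the step I expect to require the most care, although it reduces to this short computation.

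Next, exactly as in Lemma \ref{lemma:normpcz}, I would show that the linear map $\Poly{k}(T) \ni Q \mapsto \left(\lambda \mapsto \int_T \Pi Q \lambda\right) \in \Poly{k}(T)'$ is an isomorphism. Injectivity follows from positivity of $\Pi$: if $\int_T \Pi Q \lambda = 0$ for all $\lambda$, then taking $\lambda = Q$ gives $\norm{\sqrt{\Pi}\, Q}^2 = 0$, so $Q \equiv 0$. Since source and target have the same dimension, the map is onto, producing a $Q$, and hence a $P = \Pi Q \in \Poly{k + 2\seminorm{\FT}}(\overline{T})$ satisfying $P_{\vert \partial T} = 0$, $(\GRAD P)_{\vert \partial T} = \bvec{0}$ and $\lproj{k}{T} P = q$.

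Finally, the norm equivalence $\norm{P} \approx \norm{q}$ carries over unchanged: the bound $\norm{q} \lesssim \norm{P}$ is immediate since $q = \lproj{k}{T} P$, and for the reverse bound I would reproduce the chain of estimates of Lemma \ref{lemma:normpcz}, combining the discrete Sobolev inequality \cite[Lemma~1.25]{hho}, Lemma \ref{lemma:normpcz1} and the two-sided bounds on $\sqrt{\Pi}$ coming from Remark \ref{remark:normpcz2sq}, with the exponent $\seminorm{\FT}$ everywhere replaced by $2\seminorm{\FT}$. All constants depend only on $k$, $\rho$ and $\Omega$, so no step of the argument degrades.
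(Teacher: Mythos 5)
Your proposal is correct and follows exactly the route the paper intends: the paper's "proof" of this lemma is the single remark that one reuses the argument of Lemma \ref{lemma:normpcz} with the squared face polynomials of Remark \ref{remark:normpcz2sq}, which is precisely what you carry out (including the short verification that $\GRAD(\Pi Q)$ vanishes on $\partial T$, which the paper leaves implicit).
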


\section{Trace lifting.} \label{Tracelifting}
In order to prove consistency results we often need functions of Sobolev spaces with suitable properties. 
We construct them in this section.
\begin{theorem} \label{th:tracelift}
For all $\uvec{v}_T \in \uHv$ there is $\LiftNaF{\uvec{v}_T} \in \Hv{T}$ such that
\begin{equation}
\begin{aligned} \label{eq:traceliftbound}
\LiftNaF{\uvec{v}_T} =&\; \trnac \uvec{v}_F \text{ on } \partial T,\\
\norm[T]{\LiftNaF{\uvec{v}_T}} + \norm[T]{\TGRAD \LiftNaF{\uvec{v}_T}} \lesssim&\; \opnNa{\uvec{v}_T} + \opnLt{\uNaT \uvec{v}_T}.
\end{aligned}
\end{equation}
\end{theorem}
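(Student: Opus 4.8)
The plan is to realise $\LiftNaF{\uvec{v}_T}$ as a bounded $\Hv{T}$-extension of the single-valued boundary datum built from the face reconstructions. First I would glue the $\trnac\uvec{v}_F$ into one function $g$ on $\partial T$ with $g|_F=\trnac\uvec{v}_F$: by Lemma~\ref{lemma:normpczH2} each $\trnac\uvec{v}_F$ and its gradient vanish on the edges $E\in\EF$, so the pieces match continuously across the skeleton of $\partial T$ and $g\in\Hv{\partial T}$. I would then record two scaled boundary estimates. Using Lemma~\ref{lemma:boundtrna} together with $\norm[F]{\trnac\uvec{v}_F}\approx\norm[F]{\trna\uvec{v}_F}$, the controlling terms are exactly those appearing in $\opnNa{\uvec{v}_T}$, whence $\sum_{F\in\FT}h_F\norm[F]{g}^2\lesssim\opnNa{\uvec{v}_T}^2$. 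For the tangential gradient I would invoke Lemma~\ref{lemma:normgrad}, which controls $\norm[F]{\TGRAD\trna\uvec{v}_F}$ and the edge jumps $h_E^{-1}\norm[E]{\trna\uvec{v}_F-\bvec{v}_E}^2$ by $\opnLt[F]{\uNaF\uvec{v}_F}$, aiming at $\sum_{F\in\FT}h_F\norm[F]{\TGRAD\trnac\uvec{v}_F}^2\lesssim\opnNa{\uvec{v}_T}^2+\opnLt{\uNaT\uvec{v}_T}^2$.

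Next I would construct the extension with the correct $h_T$-dependence. After the affine scaling $\bvec{x}\mapsto(\bvec{x}-\bvec{x}_T)/h_T$, mesh regularity turns $T$ into a domain $\hat T$ of unit diameter containing $\Ball{0}{\rho}$, on which the harmonic extension furnishes a lifting $\hat g\mapsto\hat L$ with $\hat L|_{\partial\hat T}=\hat g$ and, crucially in \emph{seminorm} form, $\seminorm[\Hv{\hat T}]{\hat L}\lesssim\seminorm[\bvec{H}^{1/2}(\partial\hat T)]{\hat g}\lesssim\seminorm[\Hv{\partial\hat T}]{\hat g}$ (the last step is a Poincaré/interpolation bound on the connected boundary, up to constant functions), with constants depending only on $\rho$. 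Scaling back to $T$ then produces $\LiftNaF{\uvec{v}_T}\in\Hv{T}$ equal to $g$ on $\partial T$ and satisfying $\norm[T]{\TGRAD\LiftNaF{\uvec{v}_T}}^2\lesssim\sum_{F\in\FT}h_F\norm[F]{\TGRAD\trnac\uvec{v}_F}^2$ and $\norm[T]{\LiftNaF{\uvec{v}_T}}^2\lesssim\sum_{F\in\FT}\left(h_F\norm[F]{g}^2+h_F^3\norm[F]{\TGRAD\trnac\uvec{v}_F}^2\right)$. Inserting the two boundary estimates of the first paragraph and using $h_F\lesssim 1$ yields the claimed bound $\norm[T]{\LiftNaF{\uvec{v}_T}}+\norm[T]{\TGRAD\LiftNaF{\uvec{v}_T}}\lesssim\opnNa{\uvec{v}_T}+\opnLt{\uNaT\uvec{v}_T}$; it is exactly the seminorm (rather than full-norm) form of the lift that prevents the $L^2$ boundary term $\norm[\partial T]{g}^2\approx h_T^{-1}\opnNa{\uvec{v}_T}^2$ from blowing up in the gradient estimate.

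The main obstacle is the tangential-gradient estimate on $g$. Because the continuous reconstruction $\trnac\uvec{v}_F$ has degree $k+2\seminorm{\FT}$, strictly larger than that of $\trna\uvec{v}_F$, a bare inverse inequality gives only $\norm[F]{\TGRAD\trnac\uvec{v}_F}\lesssim h_F^{-1}\norm[F]{\trnac\uvec{v}_F}$, which loses a full power of $h$ and is fatal here. The bound $\sum_{F\in\FT}h_F\norm[F]{\TGRAD\trnac\uvec{v}_F}^2\lesssim\opnNa{\uvec{v}_T}^2+\opnLt{\uNaT\uvec{v}_T}^2$ must therefore be read off the explicit construction of Lemma~\ref{lemma:normpczH2}, checking that it transports the genuine gradient control of $\trna\uvec{v}_F$ (Lemma~\ref{lemma:normgrad}) and not merely its $L^2$ size; the vanishing of $\TGRAD\trnac\uvec{v}_F$ on the edges is what keeps the tangential derivatives consistent across the skeleton and has to be used here. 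A secondary point is that the reference lift must be uniform over the regularity class, which I would guarantee by an explicit convexity-based extension on $\hat T$ — exploiting that $T$ is star-shaped with respect to $\Ball{\bvec{x}_T}{\rho h_T}$ — so that the hidden constants depend only on $\rho$ and not on the particular shape of $T$.
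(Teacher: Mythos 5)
Your overall architecture is sound and close in spirit to the paper's: both proofs lift the glued, single-valued boundary datum built from the $\trnac\uvec{v}_F$ (continuity across edges coming from the vanishing of $\trnac\uvec{v}_F$ and its gradient there), and both reduce the $\bvec{H}^1(T)$-bound of the lift to an $L^2$ bound plus a tangential-gradient bound of the datum on $\partial T$. Where you scale to a unit-diameter reference configuration and invoke a harmonic extension with the seminorm chain $\seminorm[\Hv{\hat T}]{\hat L}\lesssim\seminorm[\bvec{H}^{1/2}(\partial\hat T)]{\hat g}\lesssim\seminorm[\Hv{\partial\hat T}]{\hat g}$, the paper instead applies a trace-extension theorem for uniformly Lipschitz boundaries directly on $T$, with constants explicit in the Lipschitz parameters $(\epsilon,L,M)\approx(h_T,1,1)$ and a Besov $B^{1/2,2}$ seminorm on the right-hand side, which it then bounds by the tangential gradient via the mean value theorem and the discrete Lebesgue embedding. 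Your route is viable, but note that $\hat T$ is not a single fixed reference element, so the uniformity of the harmonic-extension and boundary-interpolation constants over the whole regularity class is itself a claim that needs an argument; you defer it to a star-shapedness construction that you do not carry out, whereas the cited theorem gives this uniformity for free.

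The genuine gap is the step you yourself flag as the main obstacle: the bound $\sum_{F\in\FT}h_F\norm[F]{\TGRAD\trnac\uvec{v}_F}^2\lesssim\opnNa{\uvec{v}_T}^2+\opnLt{\uNaT\uvec{v}_T}^2$. You correctly observe that a bare inverse inequality on the higher-degree polynomial $\trnac\uvec{v}_F$ loses a power of $h$, but your proposed fix --- re-opening the explicit construction of Lemma \ref{lemma:normpczH2} to check that it ``transports the gradient control'' --- is both heavier than necessary and not actually performed, so the key estimate remains unproved. The paper closes this with a soft three-step chain that you are missing: apply the inverse inequality to the \emph{mean-zero part},
\begin{equation*}
\norm[F]{\TGRAD \trnac \uvec{v}_F}\;\lesssim\; h_F^{-1}\,\norm[F]{\trnac\uvec{v}_F-\tfrac{1}{\seminorm{F}}\textstyle\int_F\trnac\uvec{v}_F}\;\approx\; h_F^{-1}\,\norm[F]{\trna\uvec{v}_F-\tfrac{1}{\seminorm{F}}\textstyle\int_F\trna\uvec{v}_F}\;\lesssim\;\norm[F]{\TGRAD\trna\uvec{v}_F},
\end{equation*}
using only the $L^2$ norm equivalence of Lemma \ref{lemma:normpczH2} in the middle and a Poincar\'e--Wirtinger inequality at the end, after which Lemma \ref{lemma:normgrad} gives the claimed control by $\opnLt{\uNaT\uvec{v}_T}$. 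With that chain supplied (and the reference-element uniformity justified), your proof goes through; without it, the tangential-gradient estimate --- and hence the gradient bound on the lift --- is not established.
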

This lift is built upon \cite[Theorem~18.40]{firstcourseSobolev}:
Let $\Omega \subset \Real^N$,
$N \geq 2$ be an open set whose boundary $\partial \Omega$ is uniformly Lipschitz continuous of parameters $\epsilon$, $L$ and $M$ (see \cite[Definition~13.11]{firstcourseSobolev}).
Then for all $g \in \Besov{\partial \Omega}$, there is $c \in \Real$ depending only on $N$ and a function $u \in H^1(\Omega)$ such that $\Tr(u) = g$,
\begin{equation} \label{eq:btr1}
\norm[\Ls{\Omega}]{u} \leq M^{1/2} \epsilon^{1/2} \norm[\Ls{ \partial \Omega}]{g}
\end{equation}
and
\begin{equation} \label{eq:btr2}
\norm[\Lt{\Omega}]{\GRAD u} \leq c M (1 + L)^{3 + N/2} \epsilon^{-1/2} \norm[\Ls{\partial \Omega}]{g}
+ c M (1 + L)^{2 + (N + 1)/2} \semiBtr{g} .
\end{equation}
With the Besov seminorm defined by (see \cite[Definition~18.36]{firstcourseSobolev}):
\begin{equation}
\semiBtr{g} := \left ( \int_{\partial \Omega} \int_{\partial \Omega \cap \ball{x}{\epsilon}} 
\frac{\seminorm{g(x) - g(y)}^2}{\norm{x - y}^{N}}dy dx \right )^{1/2}.
\end{equation}
\begin{proof}[Proof of theorem \ref{th:tracelift}]
We apply the above-mentioned theorem \cite[Theorem~18.40]{firstcourseSobolev} to $\Omega = T$ and $g$ a component of $\trnac \uvec{v}_{F}$.
Here $N = 3$ and 
the mesh regularity \cite[Definition~1.9]{hho} allows us to take an open cover of $\partial T$ 
making it uniformly Lipschitz continuous in the sense of
\cite[Definition~13.11]{firstcourseSobolev} with 
$L = 1$, $M \approx 1$ and $\epsilon \approx h_T$.

Let $\LiftNaF{\uvec{v}_T}$ be such that $\Tr(\LiftNaF{\uvec{v}_T}) = \trnac \uvec{v}_{F}$ and that $\LiftNaF{\uvec{v}_T}$ satisfies \eqref{eq:btr1} and \eqref{eq:btr2}.
Let $g$ be a component of $\trnac \uvec{v}_{F}$ and $u$ be given by \eqref{eq:btr1} and \eqref{eq:btr2}.
Without loss of generality we assume that $\int_{\partial T} g = 0$:
Else we take instead $\overline{g} = \int_{\partial T} g$ and $u = \overline{g}$ so that
$\GRAD u = 0$ and $\norm[\Ls{T}]{u} \approx h_T^\frac{3}{2} \seminorm{\overline{g}}$, 
$\norm[\Ls{\partial T}]{\overline{g}} \approx h_T \seminorm{\overline{g}}$
and $u$, $\overline{g}$ satisfy \eqref{eq:traceliftbound}.
Hence we reduce to the case $\int_{\partial T} g' = 0$ for $g' = g - \overline{g}$.

Equation \eqref{eq:btr1} with Lemma \ref{lemma:boundtrna} gives $\norm[T]{\LiftNaF{\uvec{v}_T}} \lesssim \opnNa{\uvec{v}_T}$
since $\epsilon \approx h_T$.
Let $A_F := \frac{1}{\seminorm{F}} \int_F g_{\vert F}$ and $A_T := \sum_{F \in \FT} \seminorm{F} A_F$.
By assumption $A_T = 0$, and for all $F \in \FT$, 
\begin{equation*}
\begin{aligned}
\sum_{F \in \FT} \norm[F]{g_{\vert F}} 
\leq&\; \norm[F]{g_{\vert F} - A_F} + \norm[F]{A_F - A_T}\\
\lesssim&\; h_F \norm[F]{\TGRAD \trnac \uvec{v}_F} + h_F^\frac{1}{2} \opnNa{\uvec{v}_T}.
\end{aligned}
\end{equation*}
We used the Poincaré-Wirtinger inequality on each face on the first term in the right-hand side and the same proof as Lemma \ref{lemma:normgrad} 
(see \cite[Equation~5.12]{ddr}).
Hence, Lemma \ref{lemma:normgrad} gives $\norm[\Ls{\partial T}]{g} \lesssim h_T^\frac{1}{2} \opnNa{\uvec{v}_T}$,
and 
\begin{equation*}
\begin{aligned}
\norm[\Ls{\Omega}]{\GRAD u} \lesssim&\; h_T^{-\frac{1}{2}} \norm[\Ls{\partial T}]{g} + \semiBtr{g} \\
\lesssim&\; \opnNa{\uvec{v}_T}.
\end{aligned}
\end{equation*}
We concluded with the estimate on the Besov seminorm Lemma \ref{lemma:besovestimate}.
\end{proof}

\begin{lemma} \label{lemma:besovestimate}
Keeping the notations of the proof of Theorem \ref{th:tracelift}, it holds 
\begin{equation}
\semiBtr{g} \lesssim \opnNa{\uvec{v}_T} .
\end{equation}
\end{lemma}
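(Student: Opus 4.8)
The plan is to estimate the Gagliardo-type double integral defining $\semiBtr{g}$ by decomposing the boundary $\partial T$ according to the relative position of the two integration points. Recall that $g$ is a scalar component of the continuous, piecewise-polynomial field $\trnac\uvec{v}_F$ produced by Lemma \ref{lemma:normpczH2}: it is a polynomial on each face $F\in\FT$, continuous across $\partial T$, and \emph{both $g$ and $\TGRAD g$ vanish on every edge} $E\in\EF$. With $N=3$ and $\epsilon\approx h_T$, I would write
\[
\semiBtr{g}^2 = \sum_{F,F'\in\FT}\ \int_F\int_{F'\cap\ball{x}{\epsilon}}\frac{|g(x)-g(y)|^2}{|x-y|^3}\,dy\,dx ,\qquad \epsilon\approx h_T ,
\]
and treat the diagonal blocks ($F'=F$), the edge-adjacent blocks ($F,F'$ sharing an edge), and the separated blocks (faces meeting in at most a vertex) in turn.

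For the diagonal blocks the integral is a truncation of the fractional $H^{1/2}(F)$ seminorm of the polynomial $g_{\vert F}$. Estimating $|g(x)-g(y)|\le|x-y|\,\|\TGRAD g\|_{L^\infty(F)}$ on the near region $|x-y|<\epsilon$ and integrating the kernel over the two–dimensional face produces a factor $\epsilon\,|F|\approx h_F^3$, so each diagonal block is $\lesssim h_F^3\|\TGRAD g\|_{L^\infty(F)}^2\lesssim h_F\,\norm[F]{\TGRAD g}^2$ after the discrete inverse estimate of Lemma \ref{lemma:discretepoincare} converts the $L^\infty$ norm into the $L^2$ norm. Summing over faces, the resulting quantities $\sum_F h_F\,\norm[F]{\TGRAD\trnac\uvec{v}_F}^2$ are controlled by the discrete norms through Lemma \ref{lemma:normgrad} (for the gradient) together with Lemma \ref{lemma:boundtrna} (for the lower-order pieces of $\trna\uvec{v}_F$).

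The edge-adjacent blocks are the crux, and I expect them to be the main obstacle. When $F,F'$ share an edge $E$, the kernel $|x-y|^{-3}$ becomes singular as both points approach $E$, and a naive bound by $\|g\|_{L^\infty}$ diverges. Here the vanishing of $g$ and $\TGRAD g$ on $E$ is essential: a second-order Taylor expansion about the nearest points of $E$ gives $|g(x)|\lesssim d_E(x)^2\,M$ (and likewise for $y$), where $d_E(\cdot)$ is the distance to $E$ and $M$ bounds the second derivatives of $g$ on the relevant face, so the numerator is quadratically small near $E$ and absorbs the singularity; performing the integration in coordinates adapted to $E$ then yields a convergent integral again controlled by $h_F\,\norm[F]{\TGRAD g}^2$-type quantities handled as above. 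For the separated blocks, mesh regularity forces $\mathrm{dist}(F,F')\gtrsim h_T$, so the constraint $|x-y|<\epsilon\approx h_T$ confines the integration to $|x-y|\approx h_T$ where the kernel is bounded by $h_T^{-3}$; the crude bound $|g(x)-g(y)|\le 2\|g\|_{L^\infty(\partial T)}$ then gives a contribution $\lesssim h_T^{-1}\norm[\partial T]{g}^2$, which is $\lesssim\opnNa{\uvec{v}_T}^2$ since $\norm[\partial T]{g}\lesssim h_T^{1/2}\opnNa{\uvec{v}_T}$ as established in the proof of Theorem \ref{th:tracelift}. Collecting the three contributions yields $\semiBtr{g}\lesssim\opnNa{\uvec{v}_T}$, the edge-adjacent estimate being the only step where the $H^2$-continuity of $\trnac\uvec{v}_F$ across edges is genuinely used and where the kernel singularity must be resolved by hand.
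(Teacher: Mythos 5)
Your proposal is correct and shares the paper's core mechanism --- replace the difference quotient by a gradient bound so that the kernel $\vert x-y\vert^{-3}$ becomes the integrable $\vert x-y\vert^{-1}$ over the two-dimensional boundary, gain a factor $\epsilon\approx h_T$ from the radial integration, pass from $\norm[L^\infty(F)]{\GRAD g}$ to $\norm[F]{\TGRAD\trnac\uvec{v}_F}$ by the discrete Lebesgue embedding, and close with Lemma \ref{lemma:normgrad} --- but it organizes the argument differently. The paper does not decompose $\partial T\times\partial T$ into face pairs at all: it applies a single mean value theorem bound $\vert g(x)-g(y)\vert\lesssim\norm[L^\infty]{\GRAD g}\,\vert x-y\vert$ uniformly and integrates in polar coordinates, implicitly relying on $g$ being globally $C^1$ on $\partial T$ (which is exactly what the vanishing of $g$ and $\TGRAD g$ on the edges guarantees). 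Your block decomposition is more careful about the cross-face pairs, where the paper's segment $(1-c)\bvec{x}+c\bvec{y}$ strictly speaking leaves $\partial T$; however, the edge-adjacent blocks are not the obstacle you make them out to be. Since $g$ is continuous across the shared edge and piecewise $C^1$ with a uniform gradient bound, and since convexity plus shape regularity make the intrinsic distance on $\partial T$ comparable to the Euclidean one, the first-order Lipschitz estimate $\vert g(x)-g(y)\vert\lesssim\norm[L^\infty(\partial T)]{\GRAD g}\,\vert x-y\vert$ already holds for $x,y$ on adjacent faces and reduces the kernel to $\vert x-y\vert^{-1}$, exactly as in the diagonal case; the second-order Taylor expansion exploiting $g_{\vert E}=0$ and $(\TGRAD g)_{\vert E}=0$ is sound but unnecessary machinery. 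Two small slips: the conversion $\norm[L^\infty(F)]{\GRAD g}^2\approx h_F^{-2}\norm[F]{\GRAD g}^2$ is the discrete Lebesgue embedding of \cite[Lemma~1.25]{hho}, not the inverse Poincar\'e inequality of Lemma \ref{lemma:discretepoincare}; and before invoking Lemma \ref{lemma:normgrad} you need the intermediate step $\norm[F]{\TGRAD\trnac\uvec{v}_F}\lesssim\norm[F]{\TGRAD\trna\uvec{v}_F}$ (inverse inequality plus Poincar\'e--Wirtinger plus the norm equivalence of Lemma \ref{lemma:normpczH2}), since Lemma \ref{lemma:normgrad} controls the gradient of $\trna\uvec{v}_F$, not of its lift $\trnac\uvec{v}_F$.
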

\begin{proof}
We know that $g \in C^1(\FT)$, 
hence by the mean value theorem,
$\forall y \in \ball{x}{\epsilon}$, $\exists c \in ]0,1[$ such that 
\begin{equation*}
g(\bvec{y}) = g(\bvec{x}) + \GRAD{g}((1 - c)\bvec{x} + c \bvec{y}) \cdot (\bvec{y} - \bvec{x}),
\end{equation*}
thus
\begin{equation*}
\frac{\seminorm{g(\bvec{x}) - g(\bvec{y})}^2}{\norm{\bvec{x} - \bvec{y}}^3} 
= \frac{\seminorm{\GRAD{g}((1 - c)\bvec{x} + c \bvec{y})}^2\norm{\bvec{x} - \bvec{y}}^2}{\norm{\bvec{x} - \bvec{y}}^3} 
\lesssim \norm[L^\infty(F)]{\GRAD{g}}^2 \seminorm{(\bvec{x} - \bvec{y})}^{-1}.
\end{equation*}
Switching to polar coordinates gives
\begin{equation*}
\begin{aligned}
\int_{\ball{x}{\epsilon}} \frac{\seminorm{g(x) - g(y)}^2}{\norm{x - y}^3} 
\lesssim\; \norm[L^\infty(F)]{\GRAD{g}}^2 \int_0^\epsilon \frac{1}{r} r \lesssim\; \norm[L^\infty(F)]{\GRAD{g}}^2 \epsilon.
\end{aligned}
\end{equation*}
Lemma \ref{lemma:discretepoincare} and a Poincaré-Wirtinger inequality show that
\begin{equation*}
\begin{aligned}
\norm[F]{\TGRAD \trnac \uvec{v}_F} 
\lesssim&\; h_F^{-1} \norm[F]{\trnac \uvec{v}_F - \frac{1}{\seminorm{F}} \int_F \trnac \uvec{v}_F} \\
\approx&\; h_F^{-1} \norm[F]{\trna \uvec{v}_F - \frac{1}{\seminorm{F}} \int_F \trna \uvec{v}_F} \\
\lesssim&\; \norm[F]{\TGRAD \trna \uvec{v}_F}.
\end{aligned}
\end{equation*}
Hence the discrete Lebesgue embedding \cite[Lemma~1.25]{hho} and Lemma \ref{lemma:normgrad} give
\begin{equation*}
\norm[L^\infty(F)]{\GRAD{g}}^2 \approx h_F^{-2} \norm[L^2(F)]{\GRAD{g}}^2 
\lesssim h_F^{-3} h_F \norm[F]{\TGRAD \trna \uvec{v}_F}^2
\lesssim h_T^{-3} \opnNa{\uvec{v}_T}^2.
\end{equation*}
Inferring $\epsilon \approx h_T$ and $\seminorm{\partial T} \approx h_T^2$ we find
\begin{equation*}
\begin{aligned}
\semiBtr{g} 
\lesssim&\; \left ( \int_{\partial T} \epsilon \norm[L^\infty(F)]{\GRAD g}^2 \right )^\frac{1}{2} \\
\lesssim&\; \left ( h_T h_T^{-3} \opnNa{\uvec{v}_T}^2 \int_{\partial T} 1 \right )^\frac{1}{2} \\
\lesssim&\; \opnNa{\uvec{v}_T}.
\end{aligned}
\end{equation*}
\end{proof}

\begin{theorem} \label{th:divlift}
If $p \in H^2_0(\Omega)$ then there is $\bvec{u} \in \bvec{H}^3(\Omega)$ such that $\DIV \bvec{u} = p$, 
$\norm[\bvec{H}^{3}]{\bvec{u}} \lesssim \norm[{H}^2]{p}$,
$\norm[\bvec{H}^{2}]{\bvec{u}} \lesssim \norm[{H}^1]{p}$ and $\norm[\bvec{H}^{1}]{\bvec{u}} \lesssim \norm[L^2]{p}$.
\end{theorem}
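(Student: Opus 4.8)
The plan is to reduce the construction of the right inverse to a single scalar Poisson problem and then read off all three estimates from elliptic regularity applied at three different orders. The key observation is that, since no boundary condition is imposed on $\bvec{u}$, we are free to take $\bvec{u}$ to be the gradient of a potential: if $\Delta \phi = p$ then $\bvec{u} := \GRAD \phi$ satisfies $\DIV \bvec{u} = \Delta \phi = p$ automatically, so the whole difficulty is moved into obtaining a sufficiently regular $\phi$ with the correct norm bounds.

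First I would extend $p$ by zero outside $\Omega$. Because $p \in H^2_0(\Omega)$, the zero extension $\tilde p$ belongs to $H^2(\Real^3)$, is supported in $\overline{\Omega}$, and satisfies $\norm[H^s(\Real^3)]{\tilde p} = \norm[H^s(\Omega)]{p}$ for every $s \in \{0,1,2\}$: the weak derivatives of $\tilde p$ up to order two are exactly the zero extensions of those of $p$, since the traces of $p$ and of $\GRAD p$ vanish on $\partial\Omega$. I would then fix a ball $B \supset \overline{\Omega}$ and let $\phi \in H^1_0(B)$ be the weak solution of the Dirichlet problem $\Delta \phi = \tilde p$ in $B$, $\phi = 0$ on $\partial B$.

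Since $B$ is smooth, elliptic regularity for the Dirichlet Laplacian gives, for each integer $m \ge 0$, the bound $\norm[H^{m+2}(B)]{\phi} \lesssim \norm[H^m(B)]{\tilde p}$, the $L^2$ part of the left-hand side being controlled through Poincaré and coercivity. Setting $\bvec{u} := (\GRAD \phi)_{\vert \Omega}$, one has $\DIV \bvec{u} = p$ on $\Omega$ and $\norm[\bvec{H}^{m+1}(\Omega)]{\bvec{u}} \le \norm[H^{m+2}(B)]{\phi}$. Taking $m = 2,1,0$ and combining with the norm identity for $\tilde p$ yields the three stated estimates for one and the same $\bvec{u}$; crucially, they hold simultaneously precisely because they all concern the single potential $\phi$.

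The only genuinely delicate point is the zero-extension step: I must make sure that the zero extension of a function in $H^2_0(\Omega)$ lies in $H^2$ of the whole space with no loss in norm, which is exactly where the hypothesis $p\in H^2_0(\Omega)$ (rather than merely $p\in H^2(\Omega)$) is essential, since it guarantees that no distributional boundary contributions arise when differentiating $\tilde p$ across $\partial\Omega$. An alternative route, slightly more technical, would replace the Poisson problem by the Bogovskii integral operator, which is bounded $H^m_0 \to \bvec{H}^{m+1}$ on Lipschitz domains; this avoids the auxiliary ball but requires the compatibility condition $\int_\Omega p = 0$ and a separate argument at each regularity level, so I would prefer the potential-theoretic approach above.
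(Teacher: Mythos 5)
Your proposal is correct and follows essentially the same route as the paper: extend $p \in H^2_0(\Omega)$ by zero to a smooth larger domain, solve the Dirichlet Poisson problem $\Delta \phi = \tilde p$ there, invoke elliptic regularity at orders $m=0,1,2$, and set $\bvec{u} = (\GRAD\phi)_{\vert\Omega}$. The only cosmetic difference is that you take $B$ to be a ball containing $\overline{\Omega}$ while the paper takes a $C^{3,1}$ extension domain; both suffice for the regularity shifts used.
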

\begin{proof}
Let $B$ be a smooth bounded extension (at least $C^{3,1}$) of $\Omega$.
For all function $g \in H^{-1}(B)$, following \cite[Theorem~III.4.1]{MathTools}, there is a unique solution $f \in H^1_0(B)$ 
to the equation $\Delta f = g \text{ in } B$. 
Moreover this solution satisfies $\norm[H^1]{f} \lesssim \norm[H^{-1}]{g}$
and \cite[Theorem~III.4.2]{MathTools} shows that if $B$ is $C^{k+1,1}$, $k \ge 0$ and $g \in H^k(B)$ then 
$\norm[H^{k+2}]{f} \lesssim \norm[H^k]{g}$.
Since $p \in H^2_0(\Omega)$ we can extend $p$ by zero and define $\tilde{p} \in H^2_0(B)$ with $\norm[H^k(B)]{\tilde{p}} = \norm[H^k(\Omega)]{p}$ (\cite[Theorem~3.33]{McLean2000}).
Hence, since $\tilde{p} \in H^2(B)$,
if we take $f \in H^1_0(B)$ such that $\Delta f = \DIV \GRAD f = \tilde{p}$ we get $f \in H^4(B)$ with
$\norm[H^2(B)]{f} \lesssim \norm[L^2]{p}$,
$\norm[H^3(B)]{f} \lesssim \norm[H^1]{p}$,
and $\norm[H^4(B)]{f} \lesssim \norm[H^2]{p}$.
Let $\bvec{u} = \GRAD f_{\vert \Omega}$ then we have $\DIV \bvec{u} = p \text{ in } \Omega$ and the expected bounds.
\end{proof}

We can adapt the theorem to cover other boundary conditions.
However, we still need a smoother domain, and if we want to enforce a condition on the boundary of $\Omega$ 
we can no longer consider a larger domain.
Instead, we take a smaller domain, and we will have to do some work to recover the correct values on $\Omega$.
\begin{theorem} \label{th:divlift0}
If $B \subset \Omega$ is $C^{2,1}$, $p \in H^2(B)$ such that $\int_\Omega p = 0$ then there is $\bvec{u} \in \bvec{H}^3(B) \cap \bvec{H}^1_0(B)$ such that
$\DIV \bvec{u} = p$, $\norm[\bvec{H}^{3}]{\bvec{u}} \lesssim \norm[{H}^2]{p}$, $\norm[\bvec{H}^{2}]{\bvec{u}} \lesssim \norm[{H}^1]{p}$
and $\norm[\bvec{H}^{1}]{\bvec{u}} \lesssim \norm[L^2]{p}$.
\end{theorem}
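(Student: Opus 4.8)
The plan is to realize $\bvec{u}$ as the image of $p$ under a Bogovskii-type right inverse of the divergence on the smooth subdomain $B$, in parallel with the Laplacian argument of Theorem \ref{th:divlift} but now producing homogeneous Dirichlet data. Note first that the route of Theorem \ref{th:divlift} (setting $\bvec{u} = \GRAD f$ with $\Delta f = p$) cannot be reused: forcing $\GRAD f = \bvec{0}$ on $\partial B$ would over-determine the Laplace problem. Instead I would invoke the explicit right inverse of the divergence with vanishing boundary trace. Since $B$ is $C^{2,1}$ it is in particular bounded Lipschitz, hence a finite union of domains each star-shaped with respect to an open ball, so the construction of \cite[Section~III.3]{MathTools} applies. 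The mean-value hypothesis on $p$ supplies the compatibility condition $\int_B p = 0$, which is exactly what makes $\DIV \bvec{u} = p$ solvable with $\bvec{u} \in \bvec{H}^1_0(B)$.

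Concretely, first I would fix the Bogovskii operator $\mathcal{B}$ attached to $B$ and set $\bvec{u} := \mathcal{B} p$. By \cite[Theorem~III.3.1]{MathTools} together with its extension to finite unions of star-shaped domains, $\mathcal{B}$ is a \emph{single} linear operator satisfying $\DIV (\mathcal{B} g) = g$ and $\mathcal{B} g \in \bvec{H}^1_0(B)$ for every $g$ of zero mean, and obeying the graded bounds $\norm[\bvec{H}^{m+1}(B)]{\mathcal{B} g} \lesssim \norm[H^m(B)]{g}$ for each integer $m \ge 0$, the regularity $C^{2,1}$ of $B$ being enough to reach $m = 2$. Because $\mathcal{B}$ does not depend on $m$, applying these estimates to $g = p \in H^2(B)$ at the three levels $m = 0, 1, 2$ simultaneously yields $\bvec{u} \in \bvec{H}^3(B) \cap \bvec{H}^1_0(B)$ with $\DIV \bvec{u} = p$ and $\norm[\bvec{H}^1(B)]{\bvec{u}} \lesssim \norm[L^2]{p}$, $\norm[\bvec{H}^2(B)]{\bvec{u}} \lesssim \norm[H^1]{p}$ and $\norm[\bvec{H}^3(B)]{\bvec{u}} \lesssim \norm[H^2]{p}$, which is the claim.

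The hard part will be purely the correct importation of the higher-order theory rather than any new estimate. The lowest-order statement ($m = 0$) is classical, but the $\bvec{H}^3$ bound needs both the full $C^{2,1}$ smoothness of $B$ (two derivatives above Lipschitz) and the observation that a \emph{single} field $\bvec{u} = \mathcal{B}p$ realizes all three estimates at once, rather than three different solutions. A secondary bookkeeping point is the compatibility of means: one should confirm that in the intended application (Remark \ref{rem:adaptinvdiv}) the relevant source has zero mean over $B$, the mismatch between $B$ and $\Omega$ being absorbed by the separate extension step that recovers the correct values on $\Omega \setminus B$.
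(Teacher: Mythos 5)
There is a genuine gap in the step that carries the Bogovski\u{\i} operator to the levels $m=1,2$. The graded estimates $\norm[\bvec{H}^{m+1}(B)]{\mathcal{B}g} \lesssim \norm[H^m(B)]{g}$ for the Bogovski\u{\i} operator on (unions of) star-shaped domains are only valid for data $g \in H^m_0(B)$, i.e.\ with vanishing trace (and, for $m=2$, vanishing first derivatives) on $\partial B$; this is how the higher-order theorems are stated in the standard references, and it is not a cosmetic restriction. Differentiating $\mathcal{B}g$ twice produces a kernel that is no longer integrable and forces an integration by parts in $y$, which leaves a single-layer-type boundary term carrying the trace of $g$ on $\partial B$; that term is generically no better than $\bvec{H}^{3/2+\varepsilon}$ near $\partial B$, so $\mathcal{B}p \notin \bvec{H}^2(B)$ for a general zero-mean $p \in H^1(B)$. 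Here the theorem assumes only $p \in H^2(B)$ with zero mean and no boundary conditions, so the claimed bounds $\norm[\bvec{H}^2]{\bvec{u}} \lesssim \norm[H^1]{p}$ and $\norm[\bvec{H}^3]{\bvec{u}} \lesssim \norm[H^2]{p}$ do not follow from the cited Bogovski\u{\i} theory. Your own write-up contains the warning sign: the Bogovski\u{\i} construction on star-shaped or Lipschitz domains makes no use of boundary regularity at all, so if your version of the estimates were true the theorem would hold on any Lipschitz $B$ and the $C^{2,1}$ hypothesis would be vacuous.

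The correct route --- and the one the paper takes --- is through elliptic regularity up to the boundary for the Stokes problem with prescribed divergence (the cited results of the reference \emph{MathTools}): one solves $-\TLAPLACIAN \bvec{u} + \GRAD \pi = 0$, $\DIV \bvec{u} = p$ in $B$, $\bvec{u} = \bvec{0}$ on $\partial B$, obtaining $\bvec{u} \in \bvec{H}^{k+1}(B) \cap \bvec{H}^1_0(B)$ with $\norm[\bvec{H}^{k+1}]{\bvec{u}} \lesssim \norm[H^k]{p}$ for $k \in \lbrace 0,1,2 \rbrace$; the $C^{2,1}$ smoothness of $B$ is exactly what the Stokes regularity theory consumes, and the three estimates automatically hold for the same field by uniqueness of the Stokes solution. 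Your observation about the mean-value compatibility and about needing a single field for all three bounds is correct, and in the intended application (Remark \ref{rem:adaptinvdiv}) the source does in fact vanish near $\partial B$, so a Bogovski\u{\i}-based argument could be salvaged there; but as a proof of the theorem as stated, the higher-order estimates you invoke are not available.
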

\begin{proof}
Since $\int_B p = 0$ we can use \cite[Theorem~IV.5.2 and Theorem~IV.5.8]{MathTools} 
to get $\bvec{u} \in \bvec{H}^{k+1}(B)$ such that $\DIV \bvec{u} = p$ in $B$, $\bvec{u} = \bvec{0}$ on $\partial B$  
and $\norm[\bvec{H}^{k+1}]{\bvec{u}} \lesssim \norm[H^{k}]{p}$,
$k \in \lbrace 0, 1, 2 \rbrace$.
\end{proof}

\begin{remark}[Adaptation of Lemma \ref{lemma:rightinvdiv}] \label{rem:adaptinvdiv} 
The continuous (for the Sobolev norm) extension of a function in $H^1_0$ is not necessarily by zero.
However, we only need to take $L^2$-orthogonal projections,
hence we can afford to use a smaller domain $B$.
Let $B \subset \Omega$ be a $C^{2,1}$ domain 
containing all interior elements and most of the elements next to the boundary.
Formally we want $B$ such that $\forall X \in \Mh$, 
$\overline{X} \cap \partial \Omega = \emptyset \implies X \subset B$
and 
$\overline{X} \cap \partial \Omega \subsetneq \overline{X} \implies \seminorm{X \cap B} \geq \seminorm{X}/2$.

To adapt Lemma \ref{lemma:rightinvdiv} we take another mesh $\Mh'$ 
with the same interior elements as $\Mh$ 
and with elements crossing $\partial B$ collinear to those of $\Mh$ 
but cut before. 
This way, the domain on which $\Mh'$ is defined lies inside $B$.
Figure \ref{fig:subdomain} illustrates the construction.

Given $\ul{p}_h \in \uLshstar$ we construct 
$\ul{p}_h'$, a piecewise polynomial on each cell of $\Mh'$ 
with the same degree and moments as $\ul{p}_h$ has on the corresponding cell of $\Mh$.
We can now proceed as done in the proof of Lemma \ref{lemma:rightinvdiv}, 
with $\ul{p}_h'$ (extended by zero on $B$) instead of $\ul{p}_h$ 
and with a slightly different interpolator.
The modified interpolator is the same on interior elements,
and on elements $X \in \Mh$ crossing $\partial B$ of corresponding element $X' \in \Mh$ 
we substitute the $L^2$-orthogonal projector by $\pi$ such that 
\begin{equation*}
\int_X (\pi_P f) \, g := \int_{X'} f\, g ,\; \forall f \in L^2(X'), \forall g \in P.
\end{equation*}
One can check that the proof still works since $\bvec{u} \in \bvec{H}^1_0(B)$.
\end{remark}

\begin{figure}
\centering
\includegraphics[width=0.5\textwidth]{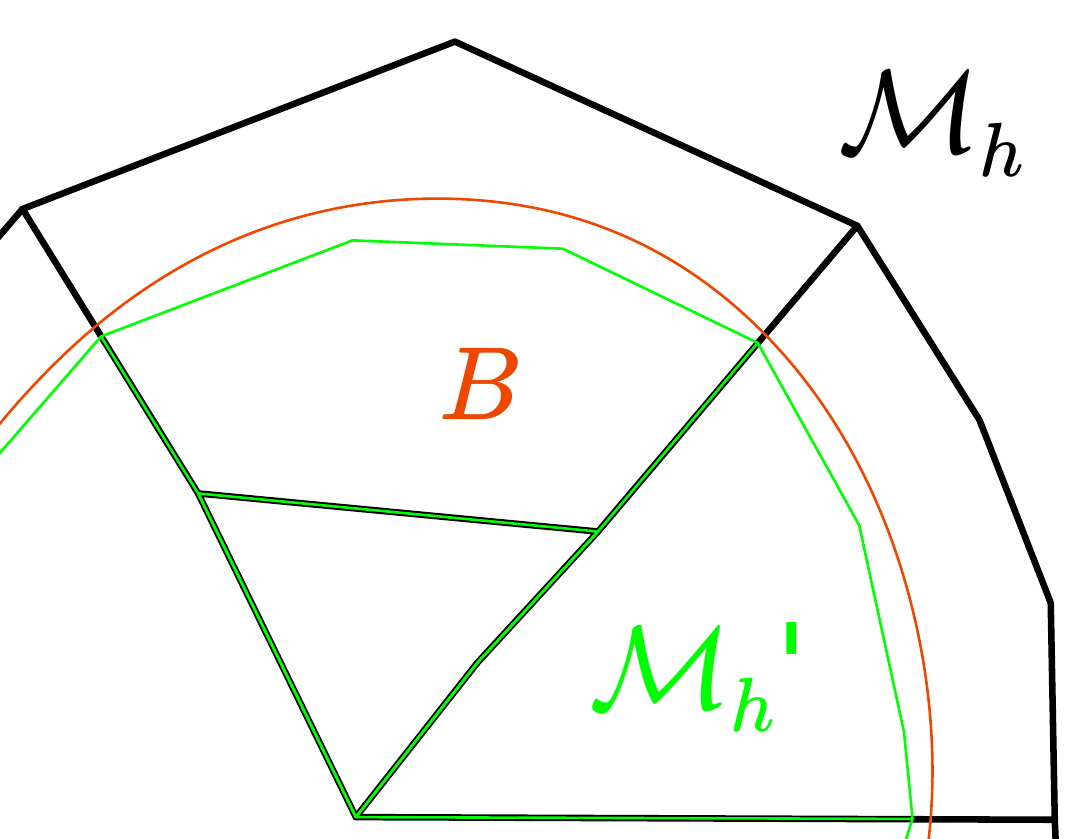}
\caption{Construction of the smooth subdomain.}
\label{fig:subdomain}
\end{figure}

\section{$2$-Dimensional complex.} \label{2DComplex}

The Stokes complex also exists in $2$ dimensions. 
Let $\Omega$ be a domain of $\Real^2$ instead of $\Real^3$.
The differential complex now reads:
\begin{equation} \label{cd:2d.L2deRham}
\begin{tikzcd}
\Real \arrow[r,"i_\Omega"] & H^1(\Omega) \arrow[r,"\ROT"] & \bvec{H}(\text{div}, \Omega) \arrow[r,"\DIV"] & L^2(\Omega) \arrow[r,"0"] & \lbrace 0 \rbrace.
\end{tikzcd}
\end{equation}

We can construct a discrete complex similar to the complex in Figure \ref{fig:tikzcddiff}.
However the construction is not merely the restriction of the $3$-dimensional complex to $2$-dimensional objects.
It is in fact much simpler.

\paragraph{Discrete spaces.}\mbox{}\\
The $2$-dimensionnal complex only needs four discrete spaces $\uHrot$, $\uHv$, $\uLt$ and $\uLs$.
They are defined by:
\begin{align}
\uHroth :=& \lbrace \ul{v}_h = ((\rdofV{v})_{V \in \Vh},(v_E ,\srdofE{v})_{E \in \Eh}, (v_F)_{F \in \Fh})
\st \nonumber \\
& \quad
\begin{aligned}[t]
&\rdofV{v} \in \Real^2(V), \forall V \in \Vh,
v_{E} \in \Poly[c]{k+1}(\Ech), 
\srdofE{v} \in \Poly{k}(E), \forall E \in \Eh, \\
& v_F \in \Poly{k-1}(F),
\forall F \in \Fh
\rbrace , 
\end{aligned}\displaybreak[1]\\
\uHvh :=& \lbrace \uvec{w}_h =  ((\bvec{w}_{E})_{E \in \Eh}, (\bgvec[F]{w},\bgcvec[F]{w})_{F \in \Fh})
\st \nonumber \\
& \quad
\begin{aligned}[t]
&\bvec{w}_{E} \in \bPoly[c]{k+2}(E), \forall E \in \Eh 
\bgvec[F]{w} \in \Goly{k}(F), \bgcvec[F]{w} \in \Golyb{c,k}(F), \forall F \in \Fh
\rbrace ,
\end{aligned}\displaybreak[1]\\
\uLth :=& \lbrace \uvec{W}_h = ((\bvec{W}_E)_{E \in \Eh}, (\bvec{W}_F)_{F \in \Fh}) \st \nonumber \\
& \quad 
\begin{aligned}[t]
& \bvec{W}_E \in \bPoly{k+1}(E), \forall E \in \Eh,
\bvec{W}_F \in \RTb{k+1}(F), \forall F \in \Fh
\rbrace , 
\end{aligned}\displaybreak[1]\\
\uLsh :=& \lbrace \ul{q}_h = ( (q_F)_{F \in \Fh}) \st 
\begin{aligned}[t]
q_F \in \Poly{k}(F), \forall F \in \Fh \rbrace .
\end{aligned}
\end{align}
Figure \ref{fig:2d.tikzcddiff} is the $2$-dimensional equivalent of Figure \ref{fig:tikzcddiff}.

\begin{figure}
\begin{tikzcd}
F: & \Poly{k-1}(F) \arrow[r,"\VROT"] & \Goly{k-1}(F) \times \Golyb{c,k}(F) \arrow[r,"\DIV"] \arrow[dr,blue,"\TGRAD"]&
\Poly{k}(F) \\
E: & \Poly{k}(E) \arrow[dr,"\text{Id}"] & & {\color{blue}\RTb{k+1}(F)}\\
& \Poly{k-1}(E) \arrow[r,"\VROT"{name=U}] & \bPoly{k}(E) \arrow[r,blue,"\TGRAD"{name=D}] & {\color{blue}\bPoly{k+1}(E)}\\
V: & \Real = \Poly{k+1}(V) \arrow[dash, to=U] & \bPoly{k+2}(V) \arrow[dash,blue,to=D] \\
 & \Real^2 = \bPoly{k+2}(V) \arrow[ur,"\text{Id}"]
\end{tikzcd}
\caption{Usage of the local degrees of freedom for the discrete differential operators in $2$ dimensions.}
\label{fig:2d.tikzcddiff}
\end{figure}

The interpolator on the space $\uHroth$ is defined for any $v \in C^1(\overline{\Omega})$ by
\begin{equation} \label{eq:defIrot}
\uIroth{v} = ((v_E, \lproj{k}{E}(\VROT v \cdot \nE ))_{E \in \Eh}, (\VROT v (V))_{V \in \Vh}, (\lproj{k-1}{F}(v))_{F \in \Fh}),
\end{equation}
where for any edge $E \in \Eh$, $v_E$ is such that $\lproj{k-1}{E}(v_E) = \lproj{k-1}{E}(v)$
and for any vertex $V \in \VE$, $v_{E} (\bvec{x}_V) = v(\bvec{x}_V)$.

The interpolator on the space $\uHvh$ is defined for any $\bvec{w} \in \bvec{C}^0(\overline{\Omega})$ by
\begin{equation} 
\uIHh{\bvec{w}} = ((\bvec{w}_{E})_{E \in \Eh}, (\Gproj{k-1}(\bvec{w}),\Gbcproj{k}(\bvec{w}))_{F \in \Fh}),
\end{equation}
where for any edge $E \in \Eh$, $\bvec{w}_E$ is such that $\vlproj{k}{E}(\bvec{w}_E) = \vlproj{k}{E}(\bvec{w})$
and for any vertex $V \in \VE$, $\bvec{w}_{E}(\bvec{x}_V) = \bvec{w}(\bvec{x}_V)$.

The interpolator on the space $\uLth$ is defined for any $\bvec{W} \in (\bvec{C}^0(\overline{\Omega})^\intercal)^2$ by
\begin{equation}
\uILh{\bvec{W}} = ((\vlproj{k+1}{E}(\bvec{W} \cdot \nE))_{E \in \Eh},(\RTbproj{k+1}(\bvec{W}))_{F \in \Fh}) .
\end{equation}

The interpolator on the space $\uLsh$ is $\lproj{k}{\Fh}$, the piecewise $L^2$-orthogonal projection on spaces $\Poly{k}(F), F \in \Fh$.

\paragraph{Operators and properties.}\mbox{}\\
The discrete operators are defined similarly to the faces operators in Section \ref{Discreteoperators}.
Thus all properties of the $3$-dimensional complex still appliable hold.
Furthermore, the complex property Theorem \ref{th:complexe} holds 
(barring the missing operator $\uGh$ and equation \eqref{eq:complexe1}).
The consistency results proven in Section \ref{Consistencyresults} also hold 
substituting the faces for the edges and the cells for the faces.

\section*{Acknowledgment}
This work has been realized with the support of MESO@LR-Platform at the University of Montpellier

\printbibliography

@book{feec-cbms,
  author = {Arnold, Douglas N.},
  title = {Finite {E}lement {E}xterior {C}alculus},
  series = {CBMS-NSF Regional Conference Series in Applied Mathematics},
  volume = {93},
  publisher = {Society for Industrial and Applied Mathematics (SIAM), Philadelphia, PA},
  year = {2018},
  pages = {xii+120},
  isbn = {978-1-611975-53-6}
}

@book{hho,
  TITLE = {{The Hybrid High-Order Method for Polytopal Meshes}},
  AUTHOR = {Di Pietro, Daniele Antonio and Droniou, Jérôme},
  URL = {https://hal.archives-ouvertes.fr/hal-02151813},
  PUBLISHER = {{Springer International Publishing}},
  SERIES = {Modeling, Simulation and Applications series},
  YEAR = {2020},
  MONTH = Mar,
  DOI = {10.1007/978-3-030-37203-3},
  KEYWORDS = {Leray-Lions ; Advection-diffusion-reaction ; Polytopal methods ; Hybrid High-Order methods ; Linear elasticity ; Discrete functional analysis ; Navier-Stokes ; Virtual Element methods ; Third Strang lemma},
  PDF = {https://hal.archives-ouvertes.fr/hal-02151813v3/file/hho-book.pdf},
  HAL_ID = {hal-02151813},
  HAL_VERSION = {v3},
}

@article{ThirdStrang,
   title={A third Strang lemma and an Aubin–Nitsche trick for schemes in fully discrete formulation},
   volume={55},
   ISSN={1126-5434},
   url={http://dx.doi.org/10.1007/s10092-018-0282-3},
   DOI={10.1007/s10092-018-0282-3},
   number={3},
   journal={Calcolo},
   publisher={Springer Science and Business Media LLC},
   author={Di Pietro, Daniele A. and Droniou, Jérôme},
   year={2018},
   month = Sep
}

@article{ddr,
  doi = {10.1007/s10208-021-09542-8},
  url = {https://doi.org/10.1007/s10208-021-09542-8},
  year = {2021},
  month = nov,
  publisher = {Springer Science and Business Media {LLC}},
  author = {Daniele A. Di Pietro and J{\'{e}}r{\^{o}}me Droniou},
  title = {An Arbitrary-Order Discrete de Rham Complex on Polyhedral Meshes: Exactness,  Poincar{\'{e}} Inequalities,  and Consistency},
  journal = {Foundations of Computational Mathematics}
}

@book{MathTools,
   title =     {Mathematical Tools for the Study of the Incompressible Navier-Stokes Equations and Related Models},
   author =    {Franck Boyer, Pierre Fabrie (auth.)},
   publisher = {Springer-Verlag New York},
   year =      {2013},
   series =    {Applied Mathematical Sciences 183},
   edition =   {1},
   volume =    {}
}

@book{firstcourseSobolev,
   title =     {A first course in Sobolev spaces},
   author =    {Giovanni Leoni},
   publisher = {American Mathematical Society},
   year =      {2009},
   series =    {Graduate Studies in Mathematics},
   edition =   {},
   volume =    {}
}

@article{hu2020family,
      title={A family of finite element Stokes complexes in three dimensions}, 
      author={Kaibo Hu and Qian Zhang and Zhimin Zhang},
      year={2020},
      eprint={2008.03793},
      archivePrefix={arXiv},
      primaryClass={math.NA}
}

@article{Nei2015,
    title={Discrete and conforming smooth de Rham complexes in three dimensions},
    author={Michael Neilan},
    year={2015},
    journal={Math. Comp. 84 (2015), 2059-2081},
    DOI={10.1090/S0025-5718-2015-02958-5}
}

@misc{huang2020nonconforming,
      title={Nonconforming finite element Stokes complexes in three dimensions}, 
      author={Xuehai Huang},
      year={2020},
      eprint={2007.14068},
      archivePrefix={arXiv},
      primaryClass={math.NA}
}

@article{2020VEM,
  doi = {10.1142/s0218202520500128},
  url = {https://doi.org/10.1142/s0218202520500128},
  year = {2020},
  month = mar,
  publisher = {World Scientific Pub Co Pte Lt},
  volume = {30},
  number = {03},
  pages = {477--512},
  author = {L. Beir{\~{a}}o da Veiga and F. Dassi and G. Vacca},
  title = {The Stokes complex for Virtual Elements in three dimensions}
}

@article{2017DivConstraint,
  doi = {10.1137/15m1047696},
  url = {https://doi.org/10.1137/15m1047696},
  year = {2017},
  month = jan,
  publisher = {Society for Industrial {\&} Applied Mathematics ({SIAM})},
  volume = {59},
  number = {3},
  pages = {492--544},
  author = {Volker John and Alexander Linke and Christian Merdon and Michael Neilan and Leo G. Rebholz},
  title = {On the Divergence Constraint in Mixed Finite Element Methods for Incompressible Flows}
}

@article{ARNOLD2012,
  doi = {10.1142/s0218202512500248},
  url = {https://doi.org/10.1142/s0218202512500248},
  year = {2012},
  publisher = {World Scientific Pub Co Pte Lt},
  volume = {22},
  number = {09},
  pages = {1250024},
  author = {D. N. Arnold and R. S. Falk and J. Gopalakrishnan},
  title = {Mixed finite element approximation of the vector Laplacian with Dirichlet boundary conditions},
  journal = {Math. Models Methods Appl. Sci.}
}

@article{2020FullyDiscrete,
   title={Fully discrete polynomial de Rham sequences of arbitrary degree on polygons and polyhedra},
   volume={30},
   ISSN={1793-6314},
   url={http://dx.doi.org/10.1142/S0218202520500372},
   DOI={10.1142/s0218202520500372},
   number={09},
   journal={Mathematical Models and Methods in Applied Sciences},
   publisher={World Scientific Pub Co Pte Lt},
   author={Di Pietro, Daniele A. and Droniou, Jérôme and Rapetti, Francesca},
   year={2020},
   month={8},
   pages={1809–1855}
}

@book{McLean2000,
   title={Strongly Elliptic Systems and Boundary Integral Equations},
   author={William McLean},
   publisher={Cambridge University Press},
   isbn={9780521663755},
   year={2000},
   edition={1}
}

\end{document}